\numberwithin{equation}{section}
\theoremstyle{plain}
\newtheorem{thm}{Theorem}[section]
\newtheorem{lem}[thm]{Lemma}
\newtheorem{pro}[thm]{Proposition}
\newtheorem{cor}[thm]{Corollary}
\newtheorem{ex}[thm]{Example}
\newtheorem{rem}[thm]{Remark}
\def\R {{\Bbb R}}
\def\N {{\Bbb N}}
\def\I {{\mathcal I}}
\def\W {{\mathcal W}}
\def\E{{\mathbb E}}
\def\x{{\bf x}}
\begin{document}
\baselineskip 16pt
\title{Projections of planar Mandelbrot measures}

\author{Julien Barral}
\address{LAGA (UMR 7539), D\'epartement de Math\'ematiques, Universit\'e Paris 13 (Sorbonne-Paris-Cit\'e), 99 avenue Jean-Baptiste Cl\'ement , 93430  Villetaneuse, France, and DMA (UMR 8553), Ecole Normale Sup\'erieure, 45 rue d'Ulm, 75005 Paris, France}
\email{barral@math.univ-paris13.fr}
\author{De-Jun Feng}
\address{
Department of Mathematics\\
The Chinese University of Hong Kong\\
Shatin,  Hong Kong\\
} \email{djfeng@math.cuhk.edu.hk}

\keywords{Mandelbrot measures, Hausdorff dimension, multifractals, phase transitions, large deviations, branching random walk in a random environment}
\thanks {
2010 {\it Mathematics Subject Classification}: 28A78, 28A80, 60F10, 60G42, 60G57, 60K40}

\date{}

\begin{abstract}
Let $\mu$ be a planar Mandelbrot measure and $\pi_*\mu$ its orthogonal projection on one of the main axes.  We study the thermodynamic and geometric properties of $\pi_*\mu$. We first show that  $\pi_*\mu$ is  exactly dimensional, with $\dim(\pi_*\mu)=\min(\dim(\mu),\dim(\nu))$, where~$\nu$ is the Bernoulli product measure obtained as the expectation of $\pi_*\mu$. We also prove that $\pi_*\mu$ is absolutely continuous with respect to $\nu$ if and only if $\dim(\mu)>\dim(\nu)$, and find sufficient conditions for the equivalence of these measures. Our results provides a new proof of Dekking-Grimmett-Falconer formula for the Hausdorff and box dimension of the topological support of $\pi_*\mu$, as well as a new variational interpretation. We obtain the free energy function $\tau_{\pi_*\mu}$ of $\pi_*\mu$ on a wide subinterval $[0,q_c)$ of $\R_+$. For $q\in[0,1]$, it is given by a variational formula which sometimes yields phase transitions of order larger than~1. For $q>1$, it is given by $\min(\tau_\nu,\tau_\mu)$, which can exhibit first order phase transitions. This is in contrast with the analyticity of $\tau_\mu$ over $[0,q_c)$. Also, we prove the validity of the multifractal formalism for $\pi_*\mu$
at each $\alpha\in (\tau_{\pi_*\mu}'(q_c-),\tau_{\pi_*\mu}'(0+)]$.\end{abstract}

\maketitle

\section{Introduction}

Mandelbrot measures are statistically self-similar measures introduced in early seventies by B. Mandelbrot in~\cite{mandelbrot} as a simplified model for energy dissipation in intermittent turbulence. In $\R^2$, such a non-trivial  random measure $\mu$ is built on $[0,1]^2$ and  is characterized by the equality

\begin{equation}\label{self-similarity}
\mu=\sum_{0\le i,j\le m-1} W_{i,j}\,  \mu^{(i,j)}\circ S_{i,j}^{-1},
\end{equation}
 where $m$ is an integer $\ge 2$,  $S_{i,j}$ are similarity maps on $\R^2$ defined by
  $$
  S_{i,j}(x,y)=\left(\frac{x+i}{m},\frac{y+j}{m}\right),
  $$
$W_{i,j}$ are non-negative random variables satisfying
$$
\E\left (\sum_{0\le i,j\le m-1} W_{i,j}\right )=1$$
 and
 $$
D:=-\E\left (\sum_{0\le i,j\le m-1} W_{i,j}\log_m(W_{i,j})\right )>0,
$$
  $\mu^{(i,j)}$ are independent copies of $\mu$,  which are  also independent of the weights~$W_{i,j}$.

The topological support of $\mu$, denoted by $K$,  is a statistically self-similar limit set so that
$$
K=\bigcup_{\substack{0\le i,j\le m-1\\W_{i,j}>0}} S_{i,j}(K_{i,j}),
$$
where  $K_{i,j}$ are independent copies of $K$.

 The fine geometric properties of $\mu$ were initially studied by Mandelbrot himself in \cite{mandelbrot,mandel2}, as well as  by  Kahane and Peyri\`ere in \cite{KP}. It was established that $\mu$ is exactly $D$-dimensional, i.e. the local dimension of $\mu$ equals $D$ on a set of full  $\mu$-measure.  Moreover,  a statistical description of the mass distribution of $\mu$  at small scales was given by Mandelbrot by using  large deviations properties of the branching random walk naturally associated with $\mu$.

On the other hand, the topological and measure theoretic properties  of $K$ have been  studied intensively \cite{mandel3,Pey,CCDu,DekGri,Fal, GMW,DekMee,FalGri,BenNasr,Wa07,DekSi,MorSiSo,RamSi1,RamSi2,FalJin,PerRam}.

Mandelbrot measures, as well as self-similar measures and Gibbs measures, are typical objects illustrating the multifractal formalism,   which emerged  in the middle of the eighties from turbulence theory \cite{FrPa} and hyperbolic dynamical systems \cite{HaJeKaPrSh,Collet},
 in order to describe geometrically at small scales the distribution of a  measure, or the H\"older singularities of a function;
  this formalism can be viewed as  a geometric counterpart of large deviations theory.  For measures, it can be  defined as follows.

If $(X,d)$ is a locally compact metric space and $\mu$ is a positive and finite compactly supported measure, denoting its topological support as $\mathrm{supp}(\mu)$, the $L^q$-spectrum of $\mu$ is a kind of free energy concave function defined by
$$
\tau_\mu:q\in\R\mapsto  \liminf_{r\to 0+}\frac{\log \sup\Big \{\sum_i \mu(B(x_i,r))^q\Big \}}{\log(r)},
$$
where the supremum is taken over all the centered packings of $\mathrm{supp}(\mu)$ by closed balls of radius  $r$.
If $(X,d)$ possesses the Besicovitch property (like Euclidean $\R^d$ or any symbolic space endowed with the standard metric), for $\alpha\in\R$ one always has (see e.g. \cite{BMP,Olsen1995,LN})
$$
\dim_H E(\mu,\alpha)\le \tau_\mu^*(\alpha):=\inf\{\alpha q-\tau_\mu(q):q\in\R\},
$$
where
$$
E(\mu,\alpha)=\left \{x\in\mathrm{supp}(\mu): \lim_{r\to 0+} \frac{\log(\mu(B(x,r)))}{\log(r)}=\alpha\right \},
$$
 here $\dim_H$ stands for the Hausdorff dimension, and  we adopt the convention that $$\dim_H \emptyset=-\infty.$$
 One says that {\it the multifractal formalism holds for $\mu$ at $\alpha$} if $\dim_H E(\mu,\alpha)=\tau_\mu^*(\alpha)$, and one says that {\it it holds for $\mu$}  if this equality holds for all $\alpha$, i.e. the Hausdorff spectrum $\alpha\mapsto \dim_H E(\mu,\alpha)$ and $\tau_\mu$ form a Legendre pair. Furthermore, one says that there is { \it $k$-th order phase transition} at $q$ for $\mu$ if $\tau_\mu$ has a $(k-1)$-th order derivative but no $k$-th order derivative at $q$.

In this paper we will investigate the multifractal structure of  the orthogonal projections of a Mandelbrot measure $\mu$ on the horizontal and vertical axes, and its relation with that of $\mu$. For this purpose,  we recall that  under mild assumptions, defining for $q\in \R$
$$
T(q)=-\log_m\sum_{0\le i,j\le m-1}\mathbb E(\mathbf{1}_{\{W_{i,j}>0\}}W_{i,j}^q),
$$
one has $\tau_\mu=T$, hence $\tau_\mu$ is analytic, on  the interval $\{q\in\R: T^*(T'(q))\ge 0\}$ (see Section~\ref{MAMM}).

In our study of projections of $\mu$, we will  consider the range $q\ge 0$ for the $L^q$-spectrum. This restriction is often met in the geometric study of measures obtained via projection schemes, like self-similar measures obtained as projections of Bernoulli products on self-similar sets satisfying the weak separation condition (see  e.g. \cite{FengLau} and  the references therein) or  self-affine measures obtained as projections of Bernoulli products on almost all the attractors associated with a given finite collection of contractive linear maps \cite{Fal99,BF}.

The case of {orthogonal projections ${\pi_\ell}_*\mu$ of $\mu$} on almost every line {$\ell$} passing through the origin is essentially similar to that of Gibbs measures treated in \cite{BaBh}. In this case, due to Mastrand projection theorem, one is naturally led to consider the case where $D\le 1$, for otherwise the projection of $\mu$ is absolutely continuous with respect to Lebesgue measure and it is hard to say more  about the multifractality  in general. Then, since $D\le 1$, the dimension of the projection is still $D$, and there are two possible behaviors in terms of the $L^q$-spectrum. If $\dim_H K\le 1$ as well, then {$\tau_{{\pi_\ell}_*\mu}=\tau_\mu$} on the interval $[0,q_2]$, where $q_2$ is defined by $\tau_\mu(q_2)=2$ (notice that  $q_2\ge 3$ due to the concavity of $\tau_\mu$ and the facts that $\tau_\mu(1)=0$ and $\tau_\mu(0)\ge - \dim_H\mathrm{supp}(\mu)=-1$ in this case). If $\dim_H K> 1$, there is a second order phase transition at the unique $\widetilde q\in[0,1]$ at which $\tau_{\mu}^*(\tau_\mu'(\widetilde q))=1$; more precisely,  the $L^q$-spectrum {$\tau_{{\pi_\ell}_*\mu}$} is analytic over $(0,\widetilde q)$ and $(\widetilde q,q_2)$ but not twice differentiable at $\widetilde q$; specifically, it is linear on $[0,\widetilde q]$ and equals $\tau_\mu$ on $[\widetilde q,q_2]$. {Also, the multifractal formalism is valid at any $\alpha\in \tau_{{\pi_\ell}_*\mu}'([0,q_2])$. It is worth mentioning that the preservation of the $L^q$-spectrum over $[1,q_2]$ is a fact valid for any measure (see \cite{HK,BB}).}

The situation is significantly different with the main axes. At first, it is worth noticing that for a Gibbs measure associated with a H\"older potential on the unit square, e.g. for  the self-similar measures  obtained when the weights $W_{i,j}$ are constant, its projection on any of the main directions is still a Gibbs measure of this kind \cite{ChU}, so no special new phenomenon appears related to its multifractal nature.  Things turn out to be more interesting with (random) Mandelbrot measures.

Denote by $\pi$ the orthogonal projection on one of the main axes. It is known (Dekking and Grimmett \cite{DekGri}, Falconer \cite{Fal}), that $\dim_H \pi (K)$ in general differs from the typical value obtained by  Mastrand's projection theorem when one projects on almost every line. Instead of being equal to $\min (\dim_HK,1)$, $\dim_H \pi (K)$ is given by a variational formula: denoting $N_i=\#\{0\le j\le m-1: W_{i,j}>0\}$, one has
\begin{equation}\label{dimproj}
\dim_H \pi (K)=\inf_{0\le h\le 1} \log_m\sum_{i=0}^{m-1}\E(N_i)^h.
\end{equation}
Moreover, this dimension equals the box counting dimension of $\pi (K)$.  It turns out that understanding the geometric structure of the projection $\pi_*\mu$ of the Mandelbrot measure $\mu$ heavily relies on its expectation, which is the Bernoulli product measure~$\nu$ associated with the probability vector $\left (p_i=\sum_{j=0}^{m-1}\E(W_{i,j})\right )_{0\le i\le m-1}$, for which it is known that
$$
\tau_\nu(q)=-\log_m\sum_{\substack{0\le i\le m-1\\ p_i>0}} p_i^q.
$$

In this paper, we show (Theorems~\ref{ABS} and \ref{DIM}) that when $\mu\neq 0$, $\pi_*\mu$ is exactly dimensional with $\dim (\pi_*\mu)=\dim(\mu)=D$ if and only if $\dim (\mu)\le \dim (\nu)$, in which case $\pi_*\mu$ is singular with respect to $\nu$, while if $\dim (\mu)>\dim (\nu)$ then $\pi_*\mu$ is absolutely continuous with respect to~$\nu$. We also find sufficient conditions for the two measures to be equivalent. Exact dimensionality and ``dimension conservation properties'' of projections of Mandelbrot measures on {\it all} the lines  have already been established in \cite{FalJin}; however, the result of \cite{FalJin} is not quantitative, whilst for the main axes we provide the precise values for the dimensions, which differ from those given by Mastrand's theorem for almost every line when $\nu$ is not the Lebesgue measure. Also, as a consequence of Theorem~\ref{DIM} we get  a new variational interpretation of Dekking-Grimmett-Falconer formula for $\dim_H\pi(K)$ (Corollary~\ref{DGF}).

Regarding the multifractal analysis (Theorem~\ref{MA}), for $q\ge 1$ we prove that  $$\tau_{\pi_*\mu}(q)=\min (\tau_\mu(q),\tau_\nu(q))$$ on a  non-trivial interval $[1,\widetilde q_c)$. This fact is a source of first order phase transitions when  the graphs of $\tau_\nu$ and $T$ cross each other transversally. For $0< q\le 1$, we prove that $\tau_{\pi_*\nu}$ is given by the following  variational formula:
$$
\tau_{\pi_*\mu}(q)=\displaystyle -\inf\left\{ \log_m\sum_{i=0}^{m-1}\left (\sum_{j=0}^{m-1} \E(W_{i,j}^s)\right )^{q/s}:q\le s\le 1\right \},
$$
which converges to the value of $\dim_H \pi (K)$ given by \eqref{dimproj} as $q$ tends to $0$. The function $\tau_{\pi_*\mu}$ is differentiable over $[0,1]$.  It coincides with $\tau_\mu(q)$ when the infimum is attained at $s=q$ and $\tau_\nu(q)$ when it is attained at $s=1$. Otherwise, the infimum  is attained at a unique $s(q)\in (q,1)$, and this property holds on a neighborhood of $q$ over which by definition of $s(q)$ one has $\tau_{\pi_*\mu}(q)> \max (\tau_\mu(q),\tau_\nu(q))$. These possible changes of analytic expressions  lead to  phase transitions  of orders greater than or equal to 2. In particular,  each transversal crossing of  the graphs of $\tau_\nu$ and $T$ gives rise to such a phase transition.

We also verify  the validity of the multifractal formalism over $(\tau_{\pi_*\mu}'(q_c-),\tau_{\pi*\mu}'(0+)]$. When applied to the so-called branching  measure on $K$, our result yields a partial multifractal classification of the asymptotic number of squares of a given generation necessary to cover the fibers $\pi^{-1}(\{x\})$, $x\in \pi(K)$ (see Corollary~\ref{covnumb}).

Let us finally  mention  that Mandelbrot martingales in various Bernoulli random environments play an important role in our study.

The paper is organized as follows. We will work with Mandelbrot measures on the symbolic space $\{0,\ldots,m-1\}^{ \N}\times \{0,\ldots,m-1\}^{ \N}$, for this offers a simpler framework to expose ideas and techniques. We explain in Appendix~\ref{D} the general simple principle which makes it possible to transfer the results to the Euclidean case. In Section~\ref{GenMA}  we recall basic facts from multifractal formalism, as well as the  formal definition of Mandelbrot measures and a precise known result for their multifractal analysis. In Section~3 we present  in complete rigor our main results in this symbolic context, while Section~4 contains comments and examples related to phase transitions. Section~\ref{pfDIM} provides the proof of our results related to the dimension of the projected measures, as well as the new variational interpretation of the Hausdorff dimension of their topological support. Sections~6 to~8 provide the proof of Theorem~\ref{MA} about the multifractal analysis of the projection. Specifically, Section 6 deals with the differentiability property of the function identified to be the $L^q$-spectrum of $\pi_*\mu$, Section 7 exhibits the sharp lower bound for the $L^q$-spectrum, and Section 8 deals both with the sharp upper bound for the  $L^q$-spectrum and the Hausdorff spectrum. Sections~5,~7 and~8 use moments estimates developed in Section~\ref{MEST} for quantities related to Mandelbrot martingales in Bernoulli  environments, as well as other basic results gathered in the Appendix.

\section{Preliminaries on multifractal formalism and Mandelbrot measures}\label{GenMA}

Throughout this paper, we use $\N$ to denote the set of natural numbers, i.e. $\N=\{1,2,\ldots\}$.  Let us first restate the multifractal formalism in this context.

\subsection{Multifractal formalism on symbolic spaces}

Let $m\ge 2$ be an integer. For  $n\ge 0$ let $\Sigma_n=\{0,\ldots,m-1\}^n$.  By convention, $\Sigma_0$ consists of the empty word $\epsilon$. Then define $\Sigma_*=\bigcup_{n\ge 0}\Sigma_n$, $(\Sigma\times\Sigma)_*=\bigcup_{n\ge 0}(\Sigma_n\times\Sigma_n)$, and $\Sigma=\{0,\ldots,m-1\}^\N$.  The sets $\Sigma_*$ and $(\Sigma\times\Sigma)_*$ act in the standard way by concatenation on $\Sigma_*\cup \Sigma $ and $(\Sigma\times\Sigma)_*\cup(\Sigma\times\Sigma)$ respectively. We denote by  $\sigma$ the standard left shift operation on $\Sigma_*\cup(\Sigma\times\Sigma)$. The length of a word $w\in\Sigma_*$, i.e. its number of letters, is denoted as $|w|$.

For $x=x_1\cdots x_p\cdots \in\Sigma$,   set $x_{|n}=x_1\cdots x_n$ if $n\ge 1$ and $\epsilon$ if $n=0$. For $u\in\Sigma_*$, set $[u]=\{x\in\Sigma: x_{||u|}=u\}$.

The set $\Sigma$ is endowed with the standard metric distance $$d(x,x')=m^{-\sup\left\{n: \, x_{|n}=x'_{|n}\right\}},$$ and $\Sigma\times \Sigma$ is endowed with the distance $d((x,y),(x',y'))=\max (d(x,x'),d(y,y'))$.

Given a positive and finite Borel measure $\rho$ on $\Sigma$ or $\Sigma\times\Sigma$, its topological support, i.e. the smallest closed set carrying the whole mass of $\mu$ is denoted as $\mathrm{supp}(\rho)$, and its lower and upper local dimensions at $x \in\mathrm{supp}(\rho)$ are defined as
$$
\underline \dim_{\rm loc}(\rho, x)=\displaystyle \liminf_{n\to \infty} \frac{\log (\rho([ x_{|n}]))}{(-n\log (m))} \quad\text{and}\quad \overline \dim_{\rm loc}(\rho,x)=\displaystyle \limsup_{n\to \infty} \frac{\log (\rho([x_{|n}]))}{(-n\log (m))}
$$
respectively.  Let
\begin{align*}
\underline \dim_H(\rho)&=\inf\{\dim_H E: \, \rho(E)>0 \mbox{ and $E$ is a Borel set}\}  \; \text{and} \\
\quad  \overline \dim_P(\rho)&=\inf\{\dim_P E:  \rho(E)=\|\rho\|  \mbox{ and $E$ is a Borel set}\},
\end{align*}
 where $\dim_PE$ stands for the packing dimension of $E$ (see e.g. \cite{Mattila}) and $\|\rho\|$ stands for the total mass of~$\rho$ .

 It is well known that  (see e.g. \cite{Cut0,Cut})
\begin{align*}
\underline \dim_H(\rho)&=\sup\{ s:\;  \underline \dim_{\rm loc}(\rho,x)\geq s \mbox{ for $\rho$-almost every $x$}\} \; \mbox{and}\\
\overline \dim_P(\rho)&=\inf\{s:\;    \overline \dim_{\rm loc} (\rho,x)\leq s \mbox{ for $\rho$--almost every $x$}\};
\end{align*}
when these two dimensions coincide, one says that  $\rho$ is exactly  dimensional  and  writes~$\dim(\rho)$ for the common value.

The $L^q$-spectrum of $\rho$ is the mapping  $\tau_\rho:\; \R\to \R\cup\{-\infty\}$ given by
$$
\tau_\rho(q)=\liminf_{n\to\infty} \frac{-1}{n}\log_m\sum_{w\in S_n}\mathbf{1}_{\{\rho([w])>0\}}\rho([w])^q\quad (q\in\R),
$$
where $S_n$ stands for $\Sigma_n$ or $\Sigma_n\times \Sigma_n$. It is well known that (cf. \cite{Ngai})
$$
\tau_\rho'(1+)\le \underline \dim_H(\rho)\le \overline \dim_P(\rho)\le \tau_\rho'(1-).
$$

For all $\alpha\in\R $, set

\begin{align*}
\underline E(\rho,\alpha)&=\{x\in\mathrm{supp}(\rho):  \underline \dim_{\rm{loc}}(\rho,x)=\alpha\},\\
\overline E(\rho,\alpha)&=\{x\in\mathrm{supp}(\mu):  \overline \dim_{\rm{loc}}(\rho,x)=\alpha\}
\end{align*}

and
$$
E(\rho,\alpha)=\underline E(\rho,\alpha)\cap \overline E(\rho,\alpha).
$$
Then one also always has (see e.g. \cite{LN, Olsen1995}) that
$$
\dim_H E(\rho,\alpha)\le \max (\dim_H \underline E(\rho,\alpha), \dim_H \overline E(\rho,\alpha))\le \tau_\rho^*(\alpha),
$$
where the Legendre transform of $f:\R\to\R\cup\{-\infty\}$ is defined as $$f^*:\alpha\in \R\mapsto \inf_{q\in\R}(\alpha q-f(q)),$$ and  a negative dimension means that the set is empty. One says that the multifractal formalism holds at $\alpha$ if $$\dim_H E(\rho,\alpha)= \tau_\rho^*(\alpha).$$
Also, if $\alpha\le \tau_\rho'(0+)$, one has (see e.g. \cite{LN, Olsen1995})
\begin{equation}\label{MF}
\dim_H \underline E^{\le }(\rho,\alpha)\le \tau_\rho^*(\alpha),
\end{equation}
where
$$
\underline E^{\le }(\rho,\alpha)=\{x\in\mathrm{supp}(\rho):  \underline \dim_{\rm{loc}}(\rho,x)\le \alpha\}.
$$

\subsection{Multifractal analysis of the Mandelbrot measures on  $\Sigma\times\Sigma$}\label{MAMM}

 Now let us formally define the  Mandelbrot measures on $\Sigma\times\Sigma$. We consider a non-negative random vector $$W=(W_{i,j})_{(i,j)\in\Sigma_1\times\Sigma_1}$$ whose entries are integrable. For $q\in \R$ we define
\begin{equation}\label{defpsi}
T(q)=T_W(q)=-\log_m\sum_{(i,j)\in\Sigma_1\times\Sigma_1}\mathbb E(\mathbf{1}_{\{W_{i,j}>0\}}W_{i,j}^q).
\end{equation}

We denote $N=\sum_{(i,j)\in\Sigma_1\times\Sigma_1}\mathbf{1}_{\{W_{i,j}>0\}}$, and assume that $\mathbb P(N\in\{0,1\})<1$.

To build a Mandelbrot  measure on $\Sigma\times \Sigma$ we assume that $T(1)=0$ and consider  a sequence $(W(u,v))_{(u,v)\in \bigcup_{n\ge 0}\Sigma_n\times\Sigma_n}$ of independent copies of $W$, defined on a probability space $(\Omega,\mathcal A,\mathbb{P})$.

For each $n\ge 1$ let $\mu_n=\mu_{W,n}$ be the measure on $\Sigma\times \Sigma$ whose density with respect to the measure of maximal entropy is constant over each cylinder $[u,v]:=[u]\times[v]$ of generation $n$ and given by $m^{2n}Q(u,v)$, where
$$
Q(u,v)=\prod_{j=1}^{n}W_{u_j,v_j}(u_{|j-1},v_{|j-1}).
$$
Denote the total mass of $\mu_n$ as $Y_n$, i.e. $$Y_n=\sum_{|u|=|v|=n} Q(u,v).$$ By construction the sequence $(Y_n)_{n\ge 1}$ is a non-negative martingale of expectation $1$ with respect to the filtration $(\sigma(W(u,v): |u|=|v|\le n-1))_{n\ge 1}$, thus it converges to a limit, which we denote by $Y$.

Let $T_n=\{(u,v)\in\Sigma_n\times\Sigma_n: Q(u,v)>0\}$. The sequence $(T_n)_{n\ge 1}$ represents the generations of a Galton-Watson process with offspring distribution given by that of $N$. We have $$K_n:={\rm supp}(\mu_n)=\bigcup_{(u,v)\in T_n}[u]\times[v].$$

For $n\ge k\ge 1$ and $(u,v)\in \Sigma_k\times\Sigma_k$,  the statistical self-similarity of the construction yields $(\mu_n([u]\times[v])= Q(u,v) Y_{n-k}(u,v)$, with $(Y_{n-k}(u,v))_{(u,v)\in\Sigma_k\times\Sigma_k}$ a family of independent copies of $Y_{n-k}$, also independent of $\sigma(W(u,v): |u|=|v|\le k-1)$.

Consequently, with probability 1, there exists a family $(Y(u,v))_{(u,v)\in \Sigma_k\times\Sigma_k,k\ge 1})$ of copies of $Y$ such that for each $k\ge 1$ and  $(u,v)\in \Sigma_k\times\Sigma_k$,
\begin{equation}\label{Yuv}
\lim_{n\to\infty} \mu_n([u]\times[v])=Q(u,v) Y(u,v).
\end{equation}
Moreover, the random variables  $Y(u,v)$, $(u,v)\in \Sigma_k\times\Sigma_k$, are independent,  and generate a $\sigma$-field independent of   $\sigma(W(u,v): |u|=|v|\le k-1)$. By construction, this means that $\mu_n$ weakly converges to a measure $\mu$ defined by
$$
\mu([u]\times[v])= Q(u,v) Y(u,v).
$$
Moreover, $\mu$ is positive  (i.e. $Y>0$) with positive probability if and only if $T'(1-)>0$; and this is also equivalent to the uniform integrability of $(Y_n)_{n\ge 1}$, that is $\E(Y)=1$ (\cite{KP,DL}). From now on we assume that this condition (i.e., $T'(1-)>0$) holds; in this case, it is known (cf. \cite{KP,K87}) that  the measure $\mu$, if non-degenerate,  is exactly dimensional  and
$$
\dim(\mu)=T'(1-)\text{ almost surely on }\{\mu\neq 0\}.
$$
Also, the events $\{\mu\neq 0\}$ and $\{K:=\bigcap_{n\ge 1} K_n\neq\emptyset\}$ coincide up to a set of probability~0 over which we have $K=\mathrm{supp}(\mu)$ (see Proposition~\ref{Kmu} for a proof). In addition, the inequality $T'(1-)>0$ and the concavity of $T$ imply that $T(0)=-\log_m(\E(N))<0$, i.e. $\E(N)>1$.

We have the following result regarding the multifractal analysis  of $\mu$ (see also \cite{HoWa,Falconer1994,Olsen1994,Mol,Ba00} for slightly less sharp versions).

\begin{thm}[\cite{AB}]\label{AB} Suppose that $T$ is finite on a neighborhood of $0$ and that conditionally on $N\neq 0$ one has $N\ge 2$. Define $f(\alpha)=T^*(\alpha)$ if $T^*(\alpha)\ge 0$ and $f(\alpha)=-\infty$ otherwise. With probability 1,  conditionally on $\{\mu\neq 0\}$, $\tau_\mu=f^*$ and the multifractal formalism holds at all $\alpha$ in the domain of $\tau_\mu^*=f$. In particular, $\tau_\mu(q)=T(q)$ at each $q\in\R$ such that $T^*(T'(q))\ge 0$.
\end{thm}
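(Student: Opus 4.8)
The plan is to prove Theorem~\ref{AB} by combining a large deviations upper bound coming directly from the martingale structure with a matching lower bound obtained by constructing, for each admissible exponent $\alpha$, an auxiliary Mandelbrot measure concentrated on $E(\mu,\alpha)$. The starting point is the observation that $\mu([u,v]) = Q(u,v) Y(u,v)$, so that the logarithmic behaviour of $\mu$ on cylinders is governed by the branching random walk $(u,v)\mapsto -\log_m Q(u,v)$, whose almost sure and mean behaviour are classical.

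\textbf{Upper bound for the spectrum.} First I would establish that, with probability one on $\{\mu\neq 0\}$, $\tau_\mu(q)\le T(q)$ for all $q$, and more precisely that $\dim_H E(\mu,\alpha)\le f(\alpha)=T^*(\alpha)$ (with the convention $\dim_H\emptyset=-\infty$). The key tool is a first-moment / Borel--Cantelli estimate: for a fixed $q$ with $T$ finite near $q$, one controls $\E\big(\sum_{(u,v)\in T_n}\mu([u,v])^q\big)$. Here one must handle the fact that $\mu([u,v])=Q(u,v)Y(u,v)$ contains the tail factor $Y(u,v)$; for $q\ge 0$ this is absorbed using $\E(Y^q)<\infty$ (finite for $q$ in a right-neighbourhood of $1$ by the assumption $N\ge2$ and standard results on Mandelbrot martingales, and trivially bounded for $q\le 1$ since $Y$ has mean one — more care is needed for large $q$, which is exactly why $T$ is only assumed finite near $0$ and why the conclusion about $\tau_\mu$ is phrased via $f^*$). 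Summing over $n$ and applying Markov's inequality along a subsequence gives $\tau_\mu(q)\ge T(q)$ in the $\liminf$ sense; combined with the standard general inequality $\dim_H E(\mu,\alpha)\le\tau_\mu^*(\alpha)$ recalled in the excerpt and concavity of $T$, one gets $\dim_H E(\mu,\alpha)\le T^*(\alpha)$ whenever $T^*(\alpha)\ge 0$, and $E(\mu,\alpha)=\emptyset$ (so dimension $-\infty=f(\alpha)$) when $T^*(\alpha)<0$, because no cylinders of that exponent survive.

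\textbf{Lower bound via auxiliary measures.} For the matching lower bound, fix $\alpha$ in the interior of $\{f\ge 0\}$ and let $q=q(\alpha)$ be the unique solution of $T'(q)=\alpha$ (using strict concavity of $T$, which follows from $N\ge 2$ conditionally on $N\neq 0$, so $T$ is not affine). Consider the tilted weights $\widetilde W_{i,j}=m^{T(q)}W_{i,j}^q$, which satisfy $\E(\sum\widetilde W_{i,j})=1$, and let $\widetilde\mu$ be the associated Mandelbrot measure (non-degenerate because $T_{\widetilde W}'(1-)=$ something one computes to be positive exactly when $T^*(\alpha)>0$). A Peyri\`ere-type change-of-measure / Kahane argument then shows that for $\widetilde\mu$-a.e.\ point, $\underline\dim_{\rm loc}(\mu,x)=\overline\dim_{\rm loc}(\mu,x)=\alpha$ and $\dim(\widetilde\mu)=T^*(\alpha)$, whence $\dim_H E(\mu,\alpha)\ge T^*(\alpha)$. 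One then treats the two boundary exponents $\alpha=T'(q_{\max}-)$ and $\alpha=T'(q_{\min}+)$, where $T^*(\alpha)=0$, by a direct (degenerate) argument showing $E(\mu,\alpha)\neq\emptyset$. Finally, $\tau_\mu=f^*$ follows: $\tau_\mu\ge T$ was shown above, $\tau_\mu\le$ (concave hull considerations give $f^*$), and the Legendre duality together with the already-proved Hausdorff spectrum formula pins down $\tau_\mu=f^*$; in particular $\tau_\mu(q)=T(q)$ at every $q$ with $T^*(T'(q))\ge0$.

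\textbf{Main obstacle.} The principal difficulty is the presence of the random tail factors $Y(u,v)$ in $\mu([u,v])$: one cannot directly equate $\log\mu([u,v])$ with the branching random walk, and must control the multiplicative perturbations uniformly in $n$ and over the (random, Galton--Watson) index set $T_n$. This requires good moment bounds on $Y$ — finiteness of $\E(Y^q)$ for $q$ in a suitable interval, and (for the almost-sure lower bound) a uniform law-of-large-numbers control on $\frac1n\log Y(x_{|n})\to 0$ along $\widetilde\mu$-typical rays, which one gets from Biggins-type martingale convergence for the tilted branching random walk together with a Borel--Cantelli argument using the moment estimates. Managing the interplay between these tail estimates and the large deviation rate function $T^*$ — and identifying exactly the range of $\alpha$ (equivalently $q$) for which everything is valid, which is precisely the domain of $f$ — is where the real work lies; the rest is the now-standard Mandelbrot-measure multifractal machinery.
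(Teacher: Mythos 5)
You should first note that the paper itself contains no proof of Theorem~\ref{AB}: it is imported verbatim from \cite{AB}, so there is no internal argument to compare yours with. Judged on its own merits, your sketch reproduces the classical route — first-moment/Borel--Cantelli upper bound, tilted weights $W_q=m^{T(q)}W^q$ with a Peyri\`ere/Kahane change of measure for the lower bound, and linearization of $\tau_\mu$ past the critical exponents via concavity and the general super-additivity bound (cf.\ Proposition~\ref{linearization}). Carried out carefully, this yields: for each fixed $\alpha$ with $T^*(\alpha)>0$, almost surely $\dim_H E(\mu,\alpha)=T^*(\alpha)$, and hence, working along a countable dense set of exponents, $\tau_\mu=f^*$ almost surely. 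That is exactly the content of what the paper calls the ``slightly less sharp versions'' (\cite{HoWa,Falconer1994,Olsen1994,Mol,Ba00}).

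The genuine gap is that the quoted statement is stronger than what your strategy delivers: it asserts a \emph{single} event of probability one, conditionally on $\{\mu\neq 0\}$, on which the multifractal formalism holds at \emph{every} $\alpha$ in the (uncountable) domain of $f$, including the endpoints where $T^*(\alpha)=0$. Your per-$\alpha$ Peyri\`ere argument produces a full-measure event that depends on $\alpha$, and these cannot simply be intersected; moreover at the endpoint exponents the auxiliary measure $\mu_q$ is degenerate (its dimension $T^*(T'(q))$ vanishes), so the ``direct (degenerate) argument'' you invoke is precisely where new ideas are needed — in \cite{AB} this uniformity is obtained through uniform large-deviations estimates for the branching random walk and simultaneously constructed inhomogeneous (concatenated) Mandelbrot measures, not by the fixed-$q$ tilting alone. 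Two smaller inaccuracies: the hypothesis that $T$ is finite on a neighborhood of $0$ concerns small and \emph{negative} $q$ (negative-order moments, i.e.\ the left part of the spectrum), not the large-$q$ issue, and the phrasing of the conclusion through $f^*$ reflects the linearization of $\tau_\mu$ where $T^*(T'(q))<0$, which occurs even when all moments are finite; also, strict concavity of $T$ does \emph{not} follow from ``$N\ge 2$ conditionally on $N\neq 0$'' (the branching measure has $N\ge 2$ and affine $T$), so the uniqueness of $q(\alpha)$ needs a separate (easy, but different) justification, the role of that hypothesis being rather to control negative moments of $Y$ and the behaviour at the boundary exponents.
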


Since  we mainly want to focus on new phenomena associated with $\pi_*\mu$, we will avoid to deal with too many technicalities and discard the case  when  
 $$\sup\{q\ge 1: T(q)>-\infty\}= \sup\{q\ge 1: T^*(T'(q))>0\}.$$

  Thus, when we  study the validity of the multifractal formalism for $\pi_*\mu$, our assumptions will be:
\begin{equation}\label{assumMA}
\begin{split}
&\bullet \ \mathbb P(N\in\{0,1\})<1,\ T'(1-)>0;\\
&\bullet \ T \text{ is finite on a neighborhood of $0$};\\
 &\bullet \   \text{either $\exists$   $q_{c}>1$ such that $T^*(T'(q_{c}^-))=0$}\\
& \quad \; \text{or $T^*(T'(q))>0$ for all $q\ge 0$, in which case we set $q_{c}=\infty$}.
\end{split}
\end{equation}

We drop the assumption that $N\ge 2$ when $N\neq 0$ because this  does not affect the validity of  Theorem~\ref{AB} for the local dimensions  $\alpha$  associated with non-negative $q$ by Legendre duality,  and our study of $\pi_*\mu$ we will only focus on the case $q\ge 0$.

\section{Main results for projections of Mandelbrot measures}

 Throughout this section we assume that $\mathbb P(N\in\{0,1\})<1$ and $T'(1-)>0$. We are interested in the geometric properties of the measure $\pi_*\mu$, where $\pi$ stands for the canonical projection onto the first factor of $\Sigma\times \Sigma$. We are also concerned with   the disintegrations of $\mu$ associated with the projection  $\pi$.

\medskip

For $0\le i,j\le m-1$ set
$$
p_i=\sum_{j=0}^{m-1}\E(W_{i,j}) \text{ and }
V_{i,j}=\begin{cases} W_{i,j}/p_i&\text{  if $p_i>0$,}\\
{1}/{m}&\text{ otherwise.}
\end{cases}
$$
(In fact those $i$ for which $p_i=0$ will play no role  in our study.)   
Then  write  $V_i:=(V_{i,j})_{j\in \Sigma_1}$ and  define 
\begin{equation}\label{defpsii}
T_i(q)=T_{V_i}(q)=-\log_m\sum_{j\in\Sigma_1}\mathbb E(\mathbf{1}_{\{V_{i,j}>0\}}V_{i,j}^q),\qquad q\in \R.
\end{equation}
 Notice that $T_i(1)=0$ for all $0\le i\le m-1$.

Let $\nu$ stand for the Bernoulli product measure on $\Sigma$ associated with the probability vector $(p_0,\ldots,p_{m-1})$, that is $$\nu([x_1\ldots x_n])=p_{x_1}\ldots p_{x_n}$$ for $n\geq 1$ and $x_1,\ldots, x_n\in \{0,1,\ldots, m-1\}$.

By construction  we have
\begin{equation}\label{psi}
m^{-T(q)}=\sum_{i,j}\mathbb{E}(\mathbf{1}_{\{W_{i,j}>0\}}W_{i,j}^q)=\sum_{i,j}\mathbf{1}_{\{p_i>0\}}p_i^q\mathbb{E}(V_{i,j}^q)=\sum_{i}\mathbf{1}_{\{p_i>0\}}p_i^qm^{-T_i(q)}.
\end{equation}
Consequently,
\begin{equation}\label{dimrel}
T'(1-)=\sum_{i}p_i(T'_i(1-)-\log_m(p_i))=\Big (\sum_i p_iT'_i(1-)\Big )+\dim(\nu),
\end{equation}
where we recall that
$$
\dim(\nu)=-\sum_{i=0}^{m-1} p_i\log(p_i)/\log(m).
$$

%
Notice that $\nu=\mathbb{E} (\pi_*\mu)$, and recall that a direct calculation yields
$$
\tau_\nu(q)=-\log_m\sum_{i=0}^{m-1}p_i^q \quad (q\in\R).
$$
For $q\in\R$, we denote by $\nu_q$ the Bernoulli product measure on $\Sigma$ associated with the probability vector $\big(p_0^qm^{\tau_\nu(q)},\ldots,p_{m-1}^qm^{\tau_\nu(q)}\big )$.

Below we discard two trivial situations.

We first discard the case when $p_i=1$ for some $0\le i\le m-1$, which means that the measure $\mu$ is supported on a deterministic vertical line hence is a Mandelbrot measure on a line, for which the multifractal nature is analogue to that of a 1-dimensional Mandelbrot measures. For $0\le i\le m-1$, we set
\begin{equation}\label{Ni}
N_i=\#\{0\le j\le m-1:W_{i,j}>0\}.
\end{equation}
We also discard the case when $N_i=1$ almost surely for all $0\le i\le m-1$, which implies that $\pi_*\mu$ is a Mandelbrot measure on a line as well.



\subsection{Absolute continuity and dimension}
This section gathers our results on the absolute continuity/singularity of $\pi_*\mu$ with respect to $\nu=\mathbb{E}(\pi_*\mu)$, and on the dimension of  $\pi_*\mu$ and its associated conditional measures in the natural disintegration of $\mu$ along $\pi_*\mu$-almost every fiber $\{x\}\times\Sigma$. The result on $\dim(\pi_*\mu)$ also yields a new variational principle for $\dim \pi(K)$.

\begin{thm}\label{ABS}
With probability 1, conditionally on $\{\mu\neq 0\}$:
\begin{enumerate}
\item If $\dim (\mu)>\dim(\nu)$, then
\begin{itemize}
\item[(i)] $\pi_*\mu$ is absolutely continuous with respect to $\nu$.
\item[(ii)]
Suppose that $T$ is finite in a neighborhood of $1$. Then the  density of $\pi_*\nu$  with respect to $\nu$ is in $L^s$ for all $s$ in the following non-empty set $$\left\{s\in (1,2]: \; T(s)>0 \text{ and }\sum_{i=0}^{m-1}p_im^{-T_i(s)} <1\right\}.$$

\item[(iii)] Suppose, moreover, that  there exists $c\in (0,1)$ such that both $$\Bbb P\left(\sup_{0\leq j\leq m-1} V_{i,j}>c\right)=1 \mbox{ and }\; \E(\#\{j: V_{i,j}>c\})>1$$ hold for all $i$ such that $p_i>0$. Then $\nu$ is absolutely continuous with respect to $\pi_*\mu$, and its density with respect to $\pi_*\mu$ is in $L^s$ for some $s>1$.
 \end{itemize}
\item If $\dim (\mu)\le \dim(\nu)$, then $\pi_*\mu$ and $\nu$ are mutually singular.

\end{enumerate}
\end{thm}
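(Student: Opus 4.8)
The plan is to realise the density of the absolutely continuous part of $\pi_*\mu$ with respect to $\nu$ as the almost sure limit of a Mandelbrot martingale in the Bernoulli random environment $\nu$, and to reduce the whole statement to the Kahane--Peyri\`ere non-degeneracy dichotomy for such martingales.

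For $x\in\Sigma$ and $n\ge 1$ introduce
$$
Z_n(x)=\sum_{|v|=n}\prod_{k=1}^{n}V_{x_k,v_k}(x_{|k-1},v_{|k-1}),\qquad
D_n(x)=\frac{\pi_*\mu([x_{|n}])}{\nu([x_{|n}])}=\sum_{|v|=n}\Big(\prod_{k=1}^{n}V_{x_k,v_k}(x_{|k-1},v_{|k-1})\Big)Y(x_{|n},v),
$$
the second identity coming from $\pi_*\mu([x_{|n}])=\sum_{|v|=n}Q(x_{|n},v)Y(x_{|n},v)$ and $\nu([x_{|n}])=\prod_k p_{x_k}$. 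For a fixed $x$, $(Z_n(x))_n$ is a non-negative $\mathbb{P}$-martingale of expectation $1$ --- namely the Mandelbrot martingale of the cascade whose weight vector at generation $k$ has the law of $V_{x_k}$ --- and under $\nu$ the environment $(x_k)_k$ is i.i.d.\ with law $(p_i)_i$, so its annealed free energy is $-\overline T(q)$, where $\overline T(q):=\sum_i p_iT_i(q)$; thus $\overline T(1)=0$ and, by \eqref{dimrel}, $\overline T'(1-)=\sum_i p_iT_i'(1-)=T'(1-)-\dim(\nu)=\dim(\mu)-\dim(\nu)$. On the other hand, for $\mathbb{P}$-a.e.\ realisation, $(D_n(x))_n$ is a non-negative martingale for $\nu$ and the cylinder filtration, with $\int D_n\,d\nu=\|\pi_*\mu\|=Y$; it therefore converges $\nu$-a.e.\ to $g:=d(\pi_*\mu)^{\mathrm{ac}}/d\nu$, and one has $\pi_*\mu\ll\nu$ iff $\int g\,d\nu=Y$ (i.e.\ $(D_n)_n$ is uniformly integrable in $L^1(\nu)$), while $\pi_*\mu\perp\nu$ iff $g=0$ $\nu$-a.e. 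A standard consistency argument for Mandelbrot cascades (controlling $D_n(x)-Z_n(x)$, which conditionally on the weights of generations $<n$ is a sum of independent centred terms) identifies $g(x)$ with $Z(x):=\lim_n Z_n(x)$ for $\mathbb{P}\times\nu$-a.e.\ $(\omega,x)$.

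Everything then hinges on the sign of $\overline T'(1-)$. If $\dim(\mu)>\dim(\nu)$, i.e.\ $\overline T'(1-)>0$, the random-environment Kahane--Peyri\`ere criterion (the quenched non-degeneracy statement for Mandelbrot martingales in a Bernoulli environment, obtained from the moment estimates of Section~\ref{MEST} together with ergodicity of the shift on $(\Sigma,\nu)$ and a zero--one law) gives $\mathbb{E}_{\mathbb{P}}(Z(x))=1$ for $\nu$-a.e.\ $x$; then $\mathbb{E}_{\mathbb{P}}\mathbb{E}_\nu(g)=\mathbb{E}_\nu\mathbb{E}_{\mathbb{P}}(Z(x))=1=\mathbb{E}_{\mathbb{P}}(Y)$, and since $\mathbb{E}_\nu(g)\le Y$ $\mathbb{P}$-a.s.\ by Fatou, we conclude $\mathbb{E}_\nu(g)=Y$ $\mathbb{P}$-a.s., i.e.\ $\pi_*\mu\ll\nu$ with density $Z$: this is $(1)(\mathrm{i})$. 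For $(1)(\mathrm{ii})$ I would bound $(D_n(x))_n$ in $L^s(\mathbb{P}\times\nu)$ for $s\in(1,2]$: a Jensen inequality gives $\mathbb{E}_{\mathbb{P}}(D_n^s\mid\mathcal F_n,x)\le \mathbb{E}(Y^s)\,Z_n(x)^s$ (with $\mathcal F_n$ generated by the weights of generations $<n$), so it suffices that $Y\in L^s$ --- which is exactly $T(s)>0$ --- and that $\sup_n\mathbb{E}_{\mathbb{P}\times\nu}(Z_n^s)<\infty$; the latter is the $L^s$-boundedness of the Mandelbrot martingale of the cascade in the Bernoulli environment, which holds precisely under the averaged contraction condition $\sum_i p_im^{-T_i(s)}<1$ (boundedness of $N$ by $m^2$ taking care of the $(\sum_j V_{i,j})^s$-integrability). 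The parameter set is non-empty: $T$ being finite, hence analytic, near $1$, one has $\frac{d}{ds}\big|_{s=1}T(s)=T'(1-)>0$ and $\frac{d}{ds}\big|_{s=1}\sum_i p_im^{-T_i(s)}=-\log(m)\,(\dim(\mu)-\dim(\nu))<0$. For $(1)(\mathrm{iii})$ I would keep in each cascade step only the ``heavy'' children $\{j:V_{i,j}>c\}$: $\mathbb{P}(\sup_j V_{i,j}>c)=1$ makes this pruned branching produce at least one child at every step (extinction probability $0$) and $\mathbb{E}(\#\{j:V_{i,j}>c\})>1$ makes it strictly supercritical; feeding this into the functional inequality $\phi_x(t)\le f_{L_{x_1}}\big(\phi_{\sigma x}(ct)\big)$ for $\phi_x(t)=\mathbb{E}_{\mathbb{P}}(e^{-tZ(x)})$ --- $L_i=\#\{j:V_{i,j}>c\}$, $f_{L_i}$ its generating function, $f_{L_i}(0)=0$ by no-extinction and strict contraction near $0$ by supercriticality --- and iterating along the environment, in the spirit of Liu's estimates for negative moments, yields a polynomial decay of $\phi_x$, uniform in $x$, hence $\mathbb{E}_\nu\mathbb{E}_{\mathbb{P}}(Z(x)^{-\varepsilon})<\infty$ for some $\varepsilon>0$; since also $Z>0$ $\mathbb{P}\times\nu$-a.s.\ (again by no-extinction), we get $\nu\ll\pi_*\mu$ with $d\nu/d\pi_*\mu\in L^{1+\varepsilon}(\pi_*\mu)$, using $\int Z^{-(1+\varepsilon)}\,d\pi_*\mu=\int Z^{-\varepsilon}\,d\nu$.

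If $\dim(\mu)\le\dim(\nu)$, i.e.\ $\overline T'(1-)\le 0$, the cascade in the Bernoulli environment is subcritical or critical, so $Z(x)=0$ $\mathbb{P}$-a.s.\ for $\nu$-a.e.\ $x$; by Fubini, $\mathbb{P}$-a.s.\ $Z(x)=0$ for $\nu$-a.e.\ $x$, hence $g=0$ $\nu$-a.e.\ and $(\pi_*\mu)^{\mathrm{ac}}=0$, i.e.\ $\pi_*\mu\perp\nu$: this is $(2)$. I expect the main obstacle to be the quenched theory of Mandelbrot martingales in a Bernoulli environment underlying every step --- establishing, for $\nu$-a.e.\ fixed environment, the sharp non-degeneracy dichotomy at $\overline T'(1-)=0$, degeneracy in the critical case, the $L^s$-bounds of $(1)(\mathrm{ii})$ and the negative-moment bounds of $(1)(\mathrm{iii})$ --- each a careful adaptation of the classical Kahane--Peyri\`ere, Durrett--Liggett, Biggins and Liu arguments, with the passage from ``annealed'' to ``$\nu$-a.e.\ $x$'' controlled via ergodicity and zero--one laws; the identification of $g$ with $Z$ is routine but must be done with care.
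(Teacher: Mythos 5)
Your overall strategy is close in spirit to the paper's: both reduce everything to the Mandelbrot martingale $\widetilde X_n$ in the Bernoulli environment $\nu$ and to the non-degeneracy criterion of Theorem~\ref{thmBK}; your treatment of (1)(ii) (Jensen plus an $L^s$ bound for $Z_n$ under $\sum_i p_i m^{-T_i(s)}<1$) is essentially Proposition~\ref{mom+estimate'}, and your heavy-children/Laplace-transform argument for (1)(iii) is the content of Propositions~\ref{negmom1} and \ref{negmom2} (with the caveat that the decay of $\phi_x$ you get is $\nu$-averaged rather than uniform in $x$, which is all you need). The genuine gap is the step you dismiss as routine: the $\mathbb{P}\otimes\nu$-a.e.\ identification of the density $g=\lim_n D_n$ with $Z=\lim_n Z_n=\widetilde X$. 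Controlling $D_n-Z_n$ as a sum of independent centred terms (conditionally on the weights of the first $n$ generations) is a von Bahr--Esseen/second-moment type argument and requires $\E(Y^{s})<\infty$ for some $s>1$, i.e.\ $T$ finite and positive just to the right of $1$; this is exactly the hypothesis of Proposition~\ref{XntoX}, which is the tool the paper invokes whenever such an a.s.\ identification is needed. But parts (1)(i) and (2) of the theorem carry no such hypothesis: under the standing assumptions alone ($\E(Y)=1$, $T'(1-)>0$), $Y$ may have no finite moment of any order $>1$, and then your centred-sum estimate is unavailable, so both the equality $\E_{\mathbb P}\E_\nu(g)=\E_\nu\E_{\mathbb P}(Z(x))$ in (1)(i) and the conclusion $g=0$ in (2) are unproved as written.

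The paper avoids this by never identifying $g$ with $\widetilde X$ almost surely in those cases: it proves only convergence in law of $X_n$ to $\widetilde X$ under $\mathbb{P}\otimes\nu$ (via the Peyri\`ere-type change of measure showing $M_n=\max_v\prod_k V_{x_k,v_k}\to 0$, and the Laplace-transform expansion \eqref{LT}), combines this with uniform integrability of $X_n^\lambda$, $\lambda\in(0,1)$, to get $\E_{\mathbb{P}\otimes\nu}(f^\lambda)=\E_{\mathbb{P}\otimes\nu}(\widetilde X^\lambda)$, and lets $\lambda\uparrow 1$; in case (2) the same device handles the critical case, while strict inequality is dispatched by $\overline\dim_P(\pi_*\mu)\le\dim(\mu)<\dim(\nu)$. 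Your route can be repaired without moments by downgrading the identification to convergence in probability: conditionally on the first $n$ generations, $D_n-Z_n=\sum_v A_v(Y_v-1)$ with $\max_v A_v\to 0$ and $\sum_v A_v=Z_n$ tight, and a truncation (weak-law) argument using only $\E|Y-1|<\infty$ gives $D_n-Z_n\to 0$ in $\mathbb{P}\otimes\nu$-probability, which, combined with the $\nu$-a.e.\ martingale convergence $D_n\to g$ and $Z_n\to \widetilde X$, yields $g=\widetilde X$ a.e. Either this truncation argument or the paper's in-law argument is the missing ingredient; under the hypotheses of (ii)--(iii) the identification is legitimate via Proposition~\ref{XntoX} and those parts of your proposal are sound.
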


\begin{thm}\label{DIM}With probability 1, conditionally on $\{\mu\neq 0\}$:
\begin{enumerate}
\item If $\dim (\mu)>\dim(\nu)$ then $\pi_*\mu$ is exactly dimensional with dimension $\dim(\nu)$; while  if $\dim (\mu)\le \dim(\nu)$ and $T$ is finite in a neighborhood of $1$, then $\pi_*\mu$ is exactly dimensional with dimension $\dim(\mu)$.

\item 

For $\pi_*\mu$-almost every $x$, the conditional measure $\mu^x$ is exactly dimensional with dimension $\dim (\mu)-\dim(\pi_*\mu)=\dim(\mu)-\dim(\nu)=\sum_{i=0}^{m-1}p_iT'_i(1)$ if $\dim(\mu)>\dim(\nu)$, and dimension 0 if $\dim (\mu)\le \dim(\nu)$ and $T$ is finite in a neighborhood of~$1$.

\end{enumerate}
\end{thm}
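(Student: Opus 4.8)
The plan is to reduce everything to the exact dimensionality of $\pi_*\mu$ established in part (1), and then to analyze the conditional measures $\mu^x$ via the natural disintegration. For part (1), the case $\dim(\mu)>\dim(\nu)$ is essentially immediate from Theorem~\ref{ABS}(1)(i): since $\pi_*\mu\ll\nu$ and $\nu$ is a Bernoulli product measure, which is exactly dimensional with $\dim(\nu)=-\sum_i p_i\log_m p_i$, and since $\pi_*\mu$ is a finite measure absolutely continuous with respect to $\nu$, a Radon--Nikodym argument (the density is finite $\nu$-a.e.\ and positive on a set of full $\pi_*\mu$-measure) transfers exact dimensionality and the dimension value from $\nu$ to $\pi_*\mu$. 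For the case $\dim(\mu)\le\dim(\nu)$, I would first establish the lower bound $\underline\dim_H(\pi_*\mu)\ge\dim(\mu)$: for $\pi_*\mu$-a.e.\ $x$, write $\pi_*\mu([x_{|n}])=\mu([x_{|n}]\times\Sigma)=\sum_{|v|=n}Q(x_{|n},v)Y(x_{|n},v)$, and compare this with $\mu([x_{|n}]\times[y_{|n}])$ for a $\mu$-typical $(x,y)$; using $\dim(\mu)=T'(1-)$ and the fact that projection cannot increase the dimension in the relevant direction, together with a covering/martingale estimate showing $\pi_*\mu([x_{|n}])$ does not decay faster than $m^{-n\dim(\mu)}$, one gets $\overline\dim_{\rm loc}(\pi_*\mu,x)\le\dim(\mu)$ as well. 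The finiteness of $T$ near $1$ is used to control the moments of $Y$ and of the relevant branching sums so that the $\limsup$ and $\liminf$ of $\log\pi_*\mu([x_{|n}])/(-n\log m)$ coincide and equal $\dim(\mu)$.

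For part (2), I would use the disintegration $\mu=\int \mu^x\,d(\pi_*\mu)(x)$ over the fibers $\{x\}\times\Sigma$, which exists since $\pi_*\mu$ is the pushforward of $\mu$. The key identity is the "dimension conservation" decomposition
\begin{equation*}
\frac{\log\mu([x_{|n}]\times[y_{|n}])}{-n\log m}=\frac{\log\pi_*\mu([x_{|n}])}{-n\log m}+\frac{\log\mu^x([x_{|n}]\times[y_{|n}])/\pi_*\mu([x_{|n}])\text{-part}}{-n\log m},
\end{equation*}
more precisely one shows that for $\pi_*\mu$-a.e.\ $x$ and $\mu^x$-a.e.\ $y$,
\begin{equation*}
\underline\dim_{\rm loc}(\mu^x,(x,y))=\underline\dim_{\rm loc}(\mu,(x,y))-\underline\dim_{\rm loc}(\pi_*\mu,x),
\end{equation*}
and similarly for upper local dimensions. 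Since $\mu$ is exactly dimensional with dimension $\dim(\mu)=T'(1-)$ and, by part (1), $\pi_*\mu$ is exactly dimensional with dimension $\dim(\pi_*\mu)$, this forces $\mu^x$ to be exactly dimensional with dimension $\dim(\mu)-\dim(\pi_*\mu)$. In the case $\dim(\mu)>\dim(\nu)$ this equals $\dim(\mu)-\dim(\nu)$, and the identity $\dim(\mu)-\dim(\nu)=\sum_i p_iT_i'(1-)$ is exactly the relation \eqref{dimrel}. In the case $\dim(\mu)\le\dim(\nu)$, part (1) gives $\dim(\pi_*\mu)=\dim(\mu)$, so $\mu^x$ has dimension $0$.

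The main obstacle is the rigorous justification of the dimension-conservation identity for both upper and lower local dimensions simultaneously, i.e.\ showing that the "error terms" relating $\mu^x([x_{|n}]\times[y_{|n}])$ to the ratio $\mu([x_{|n}]\times[y_{|n}])/\pi_*\mu([x_{|n}])$ are negligible on a logarithmic scale. For the lower bound on $\dim(\mu^x)$ this is a Fubini-type argument (entropy is subadditive under disintegration, so $\underline\dim_H\mu^x\ge\dim(\mu)-\dim(\pi_*\mu)$ holds for $\pi_*\mu$-a.e.\ $x$ by a standard slicing lemma), but the matching upper bound requires more care: one must show that for $\pi_*\mu$-a.e.\ $x$, the conditional measure $\mu^x$ concentrates, at scale $m^{-n}$, on a set whose $\mu^x$-mass is bounded below by $m^{-n(\dim(\mu)-\dim(\pi_*\mu)+\epsilon)}$, which amounts to a large-deviation estimate for the branching sums $\sum_{|v|=n}Q(x_{|n},v)Y(x_{|n},v)$ in the random environment determined by the fiber over $x$. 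Here the Mandelbrot martingales in Bernoulli random environments alluded to in the introduction enter: conditionally on the "horizontal" data $(x,(W_{x_j,\cdot})_j)$, the vertical construction is a Mandelbrot measure in a varying (but stationary under $\pi_*\mu$, via the measure $\nu$ or rather a Peyrière-type measure) environment, and the exact dimensionality of such fiber measures — with the value $\sum_i p_i T_i'(1-)$ coming from averaging the per-level dimension $T_i'(1-)$ against the asymptotic frequency $p_i$ of the symbol $i$ under the Peyrière measure associated with $\pi_*\mu$ — is the technical heart, presumably deferred to the moment estimates of Section~\ref{MEST}.
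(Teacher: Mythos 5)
There is a genuine gap, and it sits exactly where you locate "the main obstacle": in part (2) you assert the local-dimension additivity $\underline\dim_{\rm loc}(\mu^x,y)=\underline\dim_{\rm loc}(\mu,(x,y))-\underline\dim_{\rm loc}(\pi_*\mu,x)$ (and its upper analogue) and then concede it is unproved. Worse, your division into an "easy" and a "hard" half is backwards. The Fubini/Borel--Cantelli argument (bounding $\mu\{\mu^x([y_{|n}])<m^{-\epsilon n}\mu([x_{|n}]\times[y_{|n}])/\pi_*\mu([x_{|n}])\}\le m^{-\epsilon n}$) yields the \emph{upper} bound $\overline\dim_{\rm loc}(\mu^x,y)\le\dim(\mu)-\dim(\pi_*\mu)$; the \emph{lower} bound is not a "standard slicing lemma" --- dimension conservation fails for general measures, and entropy subadditivity under disintegration bounds the average fiber entropy from above, not below --- so it is precisely this direction that needs the Mandelbrot structure. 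In the paper this is done by writing, under the Peyri\`ere measure, $\widetilde\mu^x([y_{|n}])=\frac{Q(x_{|n},y_{|n})}{\nu([x_{|n}])}\,X^{x_{|n},y_{|n}}(\sigma^nx)$ exactly, getting the main term $\dim(\mu)-\dim(\nu)=\sum_ip_iT_i'(1-)$ from the strong law of large numbers (this is where \eqref{dimrel} enters, as you say), and then proving $\log X^{x_{|n},y_{|n}}(\sigma^nx)=o(n)$ by $(1\pm\epsilon)$-moment estimates resting on Proposition~\ref{mom+estimate'} (alternatively, by the reduction to Mandelbrot martingales in random environment plus Kahane's percolation method). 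Your proposal names none of these mechanisms beyond a pointer to Section~\ref{MEST}, so the heart of part (2) is missing; note also that in the case $\dim(\mu)\le\dim(\nu)$ the paper obtains exact dimensionality of $\mu^x$ by invoking \cite{FalJin}, which your scheme would equally need (or an equivalent).

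Part (1) has a smaller but real gap of the same nature. The case $\dim(\mu)>\dim(\nu)$ via Theorem~\ref{ABS}(1)(i) and the density argument is fine and is the paper's first remark. But in the case $\dim(\mu)\le\dim(\nu)$ the only nontrivial statement is the \emph{lower} bound $\underline\dim_{\rm loc}(\pi_*\mu,x)\ge\dim(\mu)$ for $\pi_*\mu$-a.e.\ $x$; the comparison $\pi_*\mu([x_{|n}])\ge\mu([x_{|n}]\times[y_{|n}])$ you invoke gives only the trivial upper bound $\overline\dim_{\rm loc}(\pi_*\mu,x)\le\dim(\mu)$, and your sentence mixes the two. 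The paper's proof of the lower bound is quantitative: it uses the sharp moment estimate $\mathbb{E}\sum_{|u|=n}\pi_*\mu([u])^q\le f_q(n)\,m^{-n\min(\tau_\nu(q),T(q))}$ for $q>1$ close to $1$ (Corollary~\ref{momestimate'}, whose proof is the inductive moment analysis of Section~\ref{MEST} and is where finiteness of $T$ near $1$ is used), combined with Lemma~\ref{Borel} and the Markov/Borel--Cantelli Lemma~\ref{lemma2.3}; note this argument simultaneously reproves the case $\dim(\mu)>\dim(\nu)$ without absolute continuity. "Control the moments of $Y$ and of the relevant branching sums" is not yet an argument: the specific estimate on $\mathbb{E}_{\mathbb{P}\otimes\eta}(X_n^q)$ with the correct exponential rate $\max(1,\sum_ip_i'm^{-T_i(q)})^n$ is what makes the limsup and liminf collapse to $\dim(\mu)$, and it is nontrivial.
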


\begin{rem}{\rm Recall that in \cite{FalJin} Falconer and Jin have  already proven that with probability~1, conditionally on $\mu\neq 0$, for $\pi_*\mu$-almost every $x$ one has $\dim (\mu^x)=\dim (\mu)-\dim(\pi_*\mu)$, without specifying the value of $\dim(\pi_*\mu)$, hence of $\dim (\mu^x)$. Here we give an alternative proof of the exact dimensionality of  $\pi_*\mu$ and find its dimension, and in the case when $\dim (\mu)>\dim(\nu)$, we also give an alternative proof of the exact dimensions of $\mu^x$ and the dimension preservation $\dim(\mu)=\dim(\pi_*\mu)+\dim(\mu^x)$. When  $\dim (\mu)\le \dim(\nu)$, the exact dimensionality of $\mu^x$ follows from \cite{FalJin}, but our result on $\dim (\pi_*\mu)$ yields that the dimension of $\mu^x$ is 0. Without  using the work in \cite{FalJin}, our result on  $\dim (\pi_*\mu)$ and simple arguments  would give $\underline{\dim}_H(\mu^x)=0$.
}\end{rem}

The previous statement makes it possible to derive the   dimension  formula of  $\pi(K)$ by using an adapted Mandelbrot measure, whilst in \cite{Fal} Falconer builds statistically self-similar subsets of $\pi(K)$ of Hausdorff dimension smaller than but arbitrarily close to the value given by \eqref{dimproj}.   The new point is the variational principle invoking Mandelbrot measures in \eqref{vp} and the related uniqueness property.

\begin{cor}[Dekking-Grimmett-Falconer formula revisited]\label{DGF} Let
\begin{equation}\label{varphi}
\varphi:h\ge 0\mapsto  \log\left (\sum_{i=0}^{m-1}\E(N_i)^h\right )/\log(m).
\end{equation}
With probability 1, conditionally on $K\neq\emptyset$, we have
\begin{equation}
\begin{split}
\dim_H \pi(K)&=\dim_{B}(\pi(K))\\
&=\inf_{0\le h\le 1}\varphi(h)\\
\label{vp}&=\max\{\dim (\pi_*\mu'): \mu' \ \text{{\rm is a Mandelbrot measure supported on $K$}}\}.
\end{split}
\end{equation}
Moreover, the maximum in \eqref{vp} is attained at a unique point if and only if $\varphi'(0)\le 0$, i.e. $\sum_{i=0}^{m-1}\log(\E(N_i))\le 0$.
\end{cor}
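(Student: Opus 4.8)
The plan is to establish the three equalities first — deducing the Dekking--Grimmett--Falconer formula as a by-product of our approach rather than invoking it — and then to settle the uniqueness criterion by a rigidity analysis of the maximizers. Throughout, $\varphi$ is read with the sum ranging over $\{i:p_i>0\}$. For the upper bound, for $u\in\Sigma_n$ put $Z_n(u)=\#\{v\in\Sigma_n:(u,v)\in T_n\}$; since $K=\bigcap_nK_n$ we have $\pi(K)\subseteq\pi(K_n)=\bigcup_{u:\,Z_n(u)\ge1}[u]$, so it suffices to bound $\#\{u\in\Sigma_n:Z_n(u)\ge1\}$. For fixed $u$, the process $(Z_k(u))_{0\le k\le n}$ is a branching process with offspring law $N_{u_k}$ at generation $k$, so $\E(Z_n(u))=\prod_{k=1}^n\E(N_{u_k})$. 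As $Z_n(u)$ is $\{0,1,\ldots\}$-valued, $\mathbf 1_{\{Z_n(u)\ge1\}}\le Z_n(u)^h$ for $h\in(0,1]$, and Jensen's inequality for $t\mapsto t^h$ gives $\P(Z_n(u)\ge1)\le\E(Z_n(u)^h)\le\E(Z_n(u))^h$; summing over $u$ factorizes:
\[
\E\big(\#\{u\in\Sigma_n:Z_n(u)\ge1\}\big)\le\sum_{u\in\Sigma_n}\prod_{k=1}^n\E(N_{u_k})^h=\Big(\sum_{i:\,p_i>0}\E(N_i)^h\Big)^n=m^{n\varphi(h)}.
\]
Markov's inequality and the Borel--Cantelli lemma give, for each $h$ and $\varepsilon>0$, $\#\{u:Z_n(u)\ge1\}\le m^{n(\varphi(h)+\varepsilon)}$ eventually a.s., hence $\dim_B\pi(K)\le\inf_{[0,1]}\varphi$ a.s.

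For the lower bound and the achievability in \eqref{vp}, let $h_0\in[0,1]$ minimize $\varphi$ on $[0,1]$ and $c=\sum_{i:\,p_i>0}\E(N_i)^{h_0}=m^{\varphi(h_0)}$. Consider the weight vector $W'_{i,j}=c^{-1}\E(N_i)^{h_0-1}\mathbf 1_{\{W_{i,j}>0\}}$ when $p_i>0$ (and $0$ otherwise): then $\E(\sum_{i,j}W'_{i,j})=1$ and $\{W'_{i,j}>0\}=\{W_{i,j}>0\}$, so the associated Mandelbrot measure $\mu'$ is carried by $K$ and $T_{W'}$ is finite everywhere. One computes that its column vector is $p'_i=\E(N_i)^{h_0}/c$, so $\dim(\nu')=-\sum_ip'_i\log_mp'_i=\varphi(h_0)-h_0\varphi'(h_0)$, and its within-column vectors are $V'_{i,j}=\E(N_i)^{-1}\mathbf 1_{\{W_{i,j}>0\}}$, for which $T'_{V'_i}(1-)=\log_m\E(N_i)$, so by \eqref{dimrel} $\dim(\mu')=\varphi(h_0)+(1-h_0)\varphi'(h_0)$. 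A short case check using the convexity of $\varphi$ and the fact that $h_0$ minimizes it on $[0,1]$ shows $\min(\dim(\mu'),\dim(\nu'))=\varphi(h_0)=\inf_{[0,1]}\varphi$, and that $\dim(\mu')>0$ (using $\varphi(0)>0$, which holds since the case $p_i=1$ was discarded, and $\varphi(h)\ge h\log_m\E(N)>0$ for $h>0$), so $\mu'$ is non-degenerate. Theorem~\ref{DIM} then gives $\dim(\pi_*\mu')=\min(\dim(\mu'),\dim(\nu'))=\inf_{[0,1]}\varphi$, and since $\pi_*\mu'$ is carried by $\pi(K)$, $\dim_H\pi(K)\ge\underline\dim_H(\pi_*\mu')=\dim(\pi_*\mu')=\inf_{[0,1]}\varphi$. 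Together with the upper bound and $\dim_H\le\dim_B$ this yields the three equalities; moreover, for any Mandelbrot measure $\mu'$ carried by $K$ one has $\dim(\pi_*\mu')=\underline\dim_H(\pi_*\mu')\le\dim_H(\mathrm{supp}(\pi_*\mu'))\le\dim_H\pi(K)$, so the maximum in \eqref{vp} equals $\dim_H\pi(K)$ and is attained by the measure just built.

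For the uniqueness criterion, encode a Mandelbrot measure $\mu'$ carried by $K$ (with the same branching structure) by its column vector $(p'_i)$ — a probability vector on $\{i:p_i>0\}$ — and its within-column vectors $V'_i$; by \eqref{dimrel} and Theorem~\ref{DIM},
\[
\dim(\pi_*\mu')=\min\Big(\sum_ip'_iT'_{V'_i}(1-)+H(p'),\ H(p')\Big),\qquad H(p'):=-\sum_ip'_i\log_mp'_i=\dim(\nu').
\]
The key point is the sharp columnwise inequality $T'_{V'_i}(1-)\le\log_m\E(N_i)$, with equality if and only if $V'_{i,j}=\E(N_i)^{-1}\mathbf 1_{\{W_{i,j}>0\}}$ a.s.; writing $V'_{i,j}=\E(N_i)^{-1}(1+\xi_{i,j})$ on the support, this reduces, via $x\log x\ge x-1$ and $\E(\sum_j\xi_{i,j})=0$, to the vanishing of a sum of nonnegative terms. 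Hence $\dim(\pi_*\mu')\le\Psi(p'):=H(p')+\min(A(p'),0)$ with $A(p'):=\sum_ip'_i\log_m\E(N_i)$. The functional $\Psi$ is strictly concave on the simplex (strictly concave $H$ plus concave $\min(A,0)$), so it has a unique maximizer $p'_\star$; by the Gibbs formula $\varphi(h)=\sup_{p'}(hA(p')+H(p'))$ and a minimax exchange, $\max\Psi=\inf_{[0,1]}\varphi$, the maximizer being $p'_{\star,i}=\E(N_i)^{h_0}/m^{\varphi(h_0)}$, with $A(p'_\star)=\varphi'(h_0)$. Thus $\dim(\pi_*\mu')=\inf_{[0,1]}\varphi$ forces $p'=p'_\star$ and equality in the bound above. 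If $\varphi'(0)\le0$ then $\varphi'(h_0)\le0$, so $\Psi(p'_\star)=H(p'_\star)+A(p'_\star)$ and equality forces $\sum_ip'_{\star,i}T'_{V'_i}(1-)=\sum_ip'_{\star,i}\log_m\E(N_i)$, i.e. $T'_{V'_i}(1-)=\log_m\E(N_i)$ for every $i$ with $p'_{\star,i}>0$; by the equality case this determines all $V'_i$, hence $\mu'$, uniquely. If $\varphi'(0)>0$ then $h_0=0$, $p'_\star$ is uniform on $\{i:p_i>0\}$, and $A(p'_\star)=\varphi'(0)>0$, so only $\sum_ip'_{\star,i}T'_{V'_i}(1-)\ge0$ is needed; the strict slack allows, besides the branching within-column vectors, small perturbations of them in a column with $\E(N_i)>1$ (such a column exists when $\varphi'(0)>0$), so the maximum is not attained at a unique point. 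I expect the main obstacle to be precisely this sharp columnwise bound with its rigid equality case — equivalently, that the branching weight vector is the unique Mandelbrot weight vector attaining the maximal exponent compatible with a given support law — together with the clean reduction of "Mandelbrot measure carried by $K$" to the pair (column vector, within-column vectors) so that the one-dimensional fact applies columnwise; the minimax identity $\max\Psi=\inf_{[0,1]}\varphi$ and the strict concavity of $\Psi$ are routine.
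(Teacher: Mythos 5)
Your proposal is correct and follows essentially the same route as the paper: the same $h$-th moment covering bound for the upper estimate, the same extremal weights $W'_{i,j}=c^{-1}\E(N_i)^{h_0-1}\mathbf 1_{\{W_{i,j}>0\}}$ combined with Theorem~\ref{DIM} for the lower bound and the attainment in \eqref{vp}, and for the uniqueness criterion the same two ingredients (the columnwise bound $T'_{V''_i}(1-)\le \log_m\E(N_i)$ with its rigid equality case forcing the branching vectors, plus uniqueness of the optimal column vector), together with a small multiplicative perturbation of one column playing exactly the role of the paper's $U_\lambda$ when $\varphi'(0)>0$. Your repackagings---the equality case via $x\log x\ge x-1$ instead of linearity of $T_{V''_i}$ on $[0,1]$, and the optimal $p'$ identified through the concave functional $\Psi=H+\min(A,0)$ and the Gibbs formula instead of Lagrange multipliers---are equivalent bookkeeping; note only that $\E\bigl(\sum_j\xi_{i,j}\bigr)\ge 0$ rather than $=0$ when a competitor's support pattern may be strictly smaller than that of $W$, which only strengthens the inequality and is exactly what the equality case must rule out.
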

\begin{rem}{\rm One has $\dim_H \pi(K)=\dim_H K$ if and only if $\sum_{i=0}^{m-1} \E(N_i)\log \E(N_i)\le 0$, i.e.  in \eqref{vp} the infimum is attained at $h=1$.}

\end{rem}
%

%
%

\subsection{Multifractal formalism for $\pi_*\mu$}
 Assume \eqref{assumMA} and define
 $$
 \widetilde q_c=
 \begin{cases}
q_{c}&\text{if $q_c<\infty$ and }\tau_{\nu}(q_{c})\ge T(q_{c})\\
\inf\{q> q_c: \tau_\nu(q)\ge T(q)\}&\text{ otherwise}
\end{cases},
$$
with the convention that $\inf \emptyset=q_c$.
Let
$$
\tau: q\mapsto \begin{cases}
\displaystyle -\inf\left \{\log_m\sum_{i=0}^{m-1}\E(N_i)^h: 0\le h\le 1\right \}&\text{ if }q=0,\\
\displaystyle -\inf\left\{ \log_m\sum_{i=0}^{m-1} p_i^q m^{-qT_i(s)/s}:q\le s\le 1\right \}&\text{ if }0<q\le 1,\\
\min(\tau_\nu(q),T(q))&\text{ if }1<q< \widetilde q_c\text{ or }q=\widetilde q_c <\infty.
\end{cases}
$$

\begin{thm}\label{MA}
The function $\tau$ is differentiable everywhere except at the possible points in $(1,\widetilde q_c)$ at which the graphs of $T$ and $\tau_\nu$ cross each other transversally.
Moreover,
\begin{enumerate}
\item with probability 1, conditionally on $\{\mu\neq 0\}$, for all   $ q\in [0,\widetilde q_c)$  we have
\begin{equation}\label{convtaun}
\tau(q)=\lim_{n\to\infty}\frac{-1}{n}\log_m\sum_{|u|=n}\mathbf{1}_{\{\pi_*\mu([u])>0\}}\pi_*\mu([u])^q.
\end{equation}
In particular $\tau_{\pi_*\mu}(q)=\tau(q)$. Also, if $\widetilde q_c=q_c<\infty$, we have $\tau_{\pi_*\mu}(q)=qT'(q_c-)$
 for $q>q_c$.
\item If $\alpha\in(\tau'(\widetilde q_c-),\tau'(0+)]$, with probability 1, conditionally on $\{\mu\neq 0\}$, the multifractal formalism holds at $\alpha$.


\end{enumerate}
\end{thm}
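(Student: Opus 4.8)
The plan is to establish, in turn, (i) the differentiability of $\tau$, (ii) the identity $\tau_{\pi_*\mu}=\tau$ on $[0,\widetilde q_c)$ as a genuine limit, and (iii) the multifractal formalism. Throughout I will use the disintegration $\pi_*\mu([u])=\nu([u])\,Z(u)$, where $\nu([u])=p_{u_1}\cdots p_{u_{|u|}}$ is deterministic and $Z(u):=\sum_{|v|=|u|}\bigl(\prod_{k=1}^{|u|}V_{u_k,v_k}(u_{|k-1},v_{|k-1})\bigr)Y(u,v)$ is the mass of the ``vertical'' Mandelbrot measure carried by the fibre over $u$, built in the Bernoulli random environment $u$; thus $\E(Z(u))=1$, and along a fixed path $v$ the factors $V_{u_k,v_k}$ are independent and independent of $Y(u,v)$. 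I will also use $a_i(s):=\sum_j\E(\mathbf{1}_{\{W_{i,j}>0\}}W_{i,j}^s)=p_i^sm^{-T_i(s)}$, so that $\sum_i a_i(s)=m^{-T(s)}$, $a_i(1)=p_i$, $a_i(0^+)=\E(N_i)$, and, after the substitution $h=q/s$, $\tau(q)=-\inf_{q\le h\le1}\log_m\sum_i a_i(q/h)^h$ for $q\in(0,1]$.

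\emph{Differentiability.} Since $s\mapsto\log a_i(s)$ is convex, $(q,h)\mapsto h\log a_i(q/h)$ is a perspective function, hence jointly convex, so $(q,h)\mapsto\log_m\sum_i a_i(q/h)^h$ is jointly convex on $\{0<q\le h\le1\}$; partial minimisation therefore makes $-\tau$ convex on $(0,1]$, with an essentially unique continuous minimiser $h(q)$, and the envelope theorem makes $\tau$ of class $C^1$ off the set where $h(q)$ moves between the interior and the endpoints of $[q,1]$. The endpoint $h(q)=1$ yields $\tau(q)=T(q)$, the endpoint $h(q)=q$ (i.e. $s=1$) yields $\tau(q)=\tau_\nu(q)$, and an interior $h(q)$ yields $\tau(q)>\max(T(q),\tau_\nu(q))$; at each junction the one-sided derivatives agree, so $\tau\in C^1(0,1)$. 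At $q=1$ one computes from \eqref{dimrel} that the relevant derivative of $\log_m\sum_i a_i(q/h)^h$ at $(q,h)=(1,1)$ is $\dim(\mu)-\dim(\nu)$; its sign shows that near $q=1$ one has $\tau=\tau_\nu$ if $\dim(\mu)>\dim(\nu)$ and $\tau=T$ if $\dim(\mu)<\dim(\nu)$, consistent with $\min(\tau_\nu,T)$ for $q>1$, the borderline case being handled by continuity of the envelope; continuity at $q=0$ follows from $a_i(0^+)=\E(N_i)$. On $(1,\widetilde q_c)$, $\tau=\min(\tau_\nu,T)$ is a minimum of two real-analytic functions, so it can fail to be differentiable only at their transversal crossings.

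\emph{The $L^q$-spectrum.} For $\tau_{\pi_*\mu}\ge\tau$ with $q\in(0,1]$ and $s\in[q,1]$: subadditivity of $x\mapsto x^s$ on the sum defining $Z(u)$ together with $\E(Y(u,v)^s)\le1$ gives $\E(Z(u)^s)\le\prod_{k\le|u|}m^{-T_{u_k}(s)}$, whence by Jensen $\E(Z(u)^q)\le\bigl(\prod_k m^{-T_{u_k}(s)}\bigr)^{q/s}$ and $\E\bigl(\sum_{|u|=n}\pi_*\mu([u])^q\bigr)\le\bigl(\sum_i p_i^qm^{-qT_i(s)/s}\bigr)^n$; since $\tau(q)=\sup_{q\le s\le1}\bigl(-\log_m\sum_i p_i^qm^{-qT_i(s)/s}\bigr)$, Markov's inequality and Borel--Cantelli over a countable dense set of $(q,s)$, with concavity (hence continuity) of the $L^q$-spectrum, give $\tau_{\pi_*\mu}\ge\tau$ on $(0,1]$. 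On $(1,\widetilde q_c)$ the same scheme works once $\E(Z(u)^q)$ is controlled by the moment estimates of Section~\ref{MEST} (valid here since $\E(Y^q)<\infty$), giving $\E(\sum_u\pi_*\mu([u])^q)\le Cm^{-n\tau_\nu(q)}$, which combined with $\E(\sum_u\pi_*\mu([u])^q)\le m^{-nT(q)}$ (from $\pi_*\mu([u])=\sum_v\mu([u]\times[v])$ and superadditivity of $x\mapsto x^q$) gives $\tau_{\pi_*\mu}(q)\ge\min(\tau_\nu(q),T(q))$; the case $q=0$ is Corollary~\ref{DGF}. For the reverse inequality, superadditivity gives $\tau_{\pi_*\mu}(q)\le T(q)$ on $[1,\widetilde q_c)$, and where $\tau_\nu(q)\le T(q)$ a Paley--Zygmund argument gives $\tau_{\pi_*\mu}(q)\le\tau_\nu(q)$: since the vertical martingale in the $\nu_q$-random environment is uniformly integrable in this range (by Section~\ref{MEST}), $\int Z(x_{|n})\,d\nu_q(x)$ converges a.s. to a positive limit, and then (again via Section~\ref{MEST}) a fixed positive $\nu_q$-proportion of the generation-$n$ cylinders has $Z(u)\ge\varepsilon$ for all large $n$, so $\sum_u\pi_*\mu([u])^q\ge\varepsilon^q\sum_{Z(u)\ge\varepsilon}\nu([u])^q\gtrsim m^{-n\tau_\nu(q)}$. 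On $(0,1]$ the matching lower bound $\sum_{|u|=n}\pi_*\mu([u])^q\gtrsim m^{-n(\tau(q)+\varepsilon)}$ comes from the auxiliary measure $\mu_q$ below (whose construction and dimension do not rely on $\tau_{\pi_*\mu}=\tau$): $\pi_*\mu_q$ charges $\gtrsim m^{n(\tau^*(\tau'(q))-\varepsilon)}$ generation-$n$ cylinders $u$ on each of which $\pi_*\mu([u])\asymp m^{-n\tau'(q)}$, and $\tau^*(\tau'(q))-q\tau'(q)=-\tau(q)$. Gathering these estimates on one full-probability event over a countable dense set of exponents and using continuity of $\tau$ yields \eqref{convtaun} for all $q\in[0,\widetilde q_c)$, hence $\tau_{\pi_*\mu}=\tau$; when $\widetilde q_c=q_c<\infty$, the value $qT'(q_c-)$ for $q>q_c$ follows from $\tau_{\pi_*\mu}\le\tau_\mu=qT'(q_c-)$ (Theorem~\ref{AB}) together with a matching lower bound obtained by projecting the $\mu$-generic points of minimal local exponent $T'(q_c-)$.

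\emph{The Hausdorff spectrum, and the main obstacle.} For $\alpha\in(\tau'(\widetilde q_c-),\tau'(0+)]$ the bound $\dim_H E(\pi_*\mu,\alpha)\le\tau_{\pi_*\mu}^*(\alpha)=\tau^*(\alpha)$ is the general one from Section~\ref{GenMA} (the infimum defining $\tau_{\pi_*\mu}^*$ is attained at some $q\in(0,\widetilde q_c)$, where $\tau_{\pi_*\mu}=\tau$, using $\alpha\le\tau'(0+)$ to discard $q<0$). For the matching lower bound I would pick $q\in(0,\widetilde q_c)$ with $\tau'(q)=\alpha$ — possible precisely because the differentiability step excludes the transversal crossings — and construct a Mandelbrot measure $\mu_q$ supported on $K$, built in the Bernoulli random environment obtained by tilting the $W_{i,j}$ according to the pair $(q,s(q))$ (horizontally the letters are biased by the probability vector $\propto p_i^qm^{-qT_i(s(q))/s(q)}$, and fibrewise row $i$ carries the Mandelbrot measure associated with the $s(q)$-tilt of the $V_{i,j}$), arranged so that $\pi_*\mu_q$-a.e. $x$ satisfies $\dim_{\mathrm{loc}}(\pi_*\mu,x)=\alpha$ and $\dim(\pi_*\mu_q)=\tau^*(\alpha)$; then $\dim_H E(\pi_*\mu,\alpha)\ge\underline{\dim}_H(\pi_*\mu_q)=\tau^*(\alpha)$, and a gluing over a countable dense set of $\alpha$'s with continuity finishes. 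For $q\in(1,\widetilde q_c)$ this simplifies, $\mu_q$ being the Gibbs-type reweighting of $\mu$ from the proof of Theorem~\ref{AB} when $\tau=T$ near $q$, and a Mandelbrot measure with horizontal marginal $\nu_q$ when $\tau=\tau_\nu$ near $q$. The main obstacle — and the only genuinely new one relative to the one-dimensional case or to projections onto a generic line — is the $(0,1]$ range: building and analysing the $\mu_q$, which requires transporting the exact-dimensionality and dimension-conservation results for Mandelbrot measures and their projections (Theorem~\ref{DIM}) to the random-environment setting, together with the uniform-in-the-environment moment and large-deviation estimates of Section~\ref{MEST} for the fibre martingales $Z(u)$ (which also control \emph{both} one-sided local dimensions of $\pi_*\mu$ along $\pi_*\mu_q$-a.e. fibre). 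The structural source of the difficulty is that on a whole range of exponents the optimal interpolation parameter $s(q)$ lies strictly inside $(q,1)$, so $\pi_*\mu$ genuinely mixes the horizontal randomness of $\nu$ with the fibrewise vertical randomness, and the two must be balanced at exactly the right scale throughout the estimates.
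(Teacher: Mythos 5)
Your plan parallels the paper's strategy in most respects (same variational function, same tilted weights $p'_i$, $V'_{s,i,j}$, same moment machinery for $q>1$, and a convexity/envelope route to differentiability that is arguably slicker than the paper's implicit-function-theorem computation, though the $C^1$ matching at the junctions and the uniqueness of the minimiser are asserted rather than proved). However, there is one genuine gap. The theorem claims the multifractal formalism for \emph{every} $\alpha\in(\tau'(\widetilde q_c-),\tau'(0+)]$, while $\tau$ is explicitly allowed to have first order phase transitions at transversal crossings of $T$ and $\tau_\nu$ in $(1,\widetilde q_c)$ (this is exactly the exception in the differentiability statement, and the paper's examples show such crossings occur). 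For such a $q$, every $\alpha$ strictly inside the derivative gap $(\tau'(q^+),\tau'(q^-))$ lies in the stated range but is \emph{not} of the form $\tau'(q')$ for any $q'$; your sentence ``pick $q$ with $\tau'(q)=\alpha$ --- possible precisely because the differentiability step excludes the transversal crossings'' misreads the differentiability statement, and these exponents are left completely untreated in your proposal. No single tilted Mandelbrot measure of the type you construct captures them: the paper has to build inhomogeneous cascades $\widetilde\mu_\lambda$ that alternate, along a sparse sequence of scales $n_k$ with $n_k=o(N_{k-1})$, between the $W_q$-tilt and the $\nu_q$-type weights, prove uniform moment bounds for the resulting non-homogeneous martingale, and control two-sidedly both $\pi_*\widetilde\mu_\lambda$ and $\pi_*\mu$ along $\widetilde\mu_\lambda$-typical fibres (Propositions~\ref{pro-4.13}--\ref{pro-4.15} and Corollary~\ref{cor-4.16}). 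This is a substantial missing construction, not a routine extension of your $\mu_q$.

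Two smaller points. For $q>q_c$ your proposed ``matching lower bound obtained by projecting the $\mu$-generic points of minimal local exponent'' goes the wrong way: exhibiting many cylinders of mass about $m^{-nT'(q_c-)}$ bounds $\sum_u\pi_*\mu([u])^q$ from below, i.e.\ it gives an \emph{upper} bound for $\tau_{\pi_*\mu}(q)$, which you already have from $\tau_{\pi_*\mu}\le\tau_\mu$; the needed inequality $\tau_{\pi_*\mu}(q)\ge qT'(q_c-)$ follows instead from the elementary fact $\tau_\rho(q)\ge (q/q_c)\tau_\rho(q_c)$ (Proposition~\ref{linearization}) applied at $q_c$, where $\tau_{\pi_*\mu}(q_c)=T(q_c)=q_cT'(q_c-)$. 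Also, your Paley--Zygmund step on $(1,\widetilde q_c)$ tacitly assumes the fibre martingale in the $\nu_q$-environment is uniformly integrable, i.e.\ $\sum_i p_{q,i}T_i'(1)>0$; equality can occur (the degenerate set $\widetilde S$ of Lemma~\ref{lem-4.8}), in which case that martingale dies and one must argue differently (the paper switches to $\pi_*\mu_{W'_1}$, or derives the upper bound of the $L^q$-spectrum from the Hausdorff-spectrum lower bounds by counting, which avoids the issue altogether). Finally, the control of \emph{both} one-sided local dimensions of $\pi_*\mu$ along typical fibres when $s(q)\in(q,1)$ is not delivered by the moment estimates of Section~\ref{MEST} alone: it needs the separate large-deviation (G\"artner--Ellis) analysis of the fibre partition functions and the stationarity argument guaranteeing their limits are simultaneously positive on $\pi(K)$, which you flag as the obstacle but do not supply.
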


\begin{rem}
 Notice that when $q_c<\infty$, the equality $\tau_\nu(q_c)=T(q_c)$ cannot hold if $\tau_\nu'(q_c)\le T'(q_c-)$, for this would imply that $\tau_\nu^*(\tau_\nu'(q_c))\le T^*(T'(q_c-))=0$, while $\tau_\nu^*\circ \tau_\nu'$ is always positive over $\R$.
\end{rem}

\begin{rem}
\label{rem-2.9}
The same conclusion as in \eqref{convtaun} and Theorem~\ref{MA}(2) holds  if we replace $\pi_*\mu([x_{|n}])$ by $\pi_*\mu_n([x_{|n}])$ in the definition of the level sets $E(\pi_*\mu,\alpha)$.
\end{rem}

\begin{rem}\label{specialtau}
(1) One has  $\tau=T=\tau_\nu$ over $[0,q_c)$ if and only if for all $0\le i\le m-1$ such that $p_i>0$ one has $\mathbb{E}(N_i)=1$ and $W_{i,j}\in \{0,p_i\}$ almost surely  for all $0\le j\le m-1$, i.e. $T_i$ is identically equal to 0 (see Section~\ref{difftau}).

\medskip

(2) On the other hand, a sufficient condition to have $\tau=\tau_\nu$ over $\R_+$ and $\tau_\nu>T$ over $(0,1)$ and  $\tau_\nu<T$ over $(1,\infty)$ is   $(\mathcal P)$: there exists a  partition $\{I_1,\ldots, I_L\}$ of $\{0\le i\le m-1:p_i>0\}$ such that $(i)$ $p_i$ does not depend on $i\in I_k$ and $\prod_{i\in I_k}\mathbb E(N_i)=1$ for each $1\le k\le L$; $(ii)$ there exists $1\le k\le L$ such that $\# I_k\ge 2$ and $\E(N_i)\neq 1$ for at least two values of $i\in I_k$; $(iii)$ for all $0\le i\le m-1$ such that $p_i>0$, one has $W_{i,j}\in \{0,p_i/\mathbb{E}(N_i)\}$ almost surely  for all $0\le j\le m-1$. See the proof of Lemma~\ref{lem-4.8}.

\end{rem}

\begin{rem}\label{intersections}
In all the examples we have examined numerically and for which we do not have $\tau_\nu=T$, the functions $\tau_\nu$ and~$T$ coincide at  three points at most. We do not know whether this is a general fact.
\end{rem}

\begin{rem} We think (and know that it is true on some intervals) that the validity of the multifractal formalism for $\pi_*\mu$ holds almost surely for all $\alpha\in (\tau'(\widetilde q_c-)),\tau'(0+)]$. However, we dediced to limit the technicalities as most as possible, the most important facts being the new behaviors associated with the projection. In particular, the proof of the validity of the multifractal formalism will show that the possible phase transitions separate the domain of possible exponents $\alpha$  into intervals over which the computation of the Hausdorff dimension of  the sets $E(\mu,\alpha)$ uses different arguments, this being in contrast with what happens for the measure $\mu$ itself (see \cite{AB}). 
\end{rem}

\begin{rem} [Similar result for critical Mandelbrot measures] When $T'(1-)=0$, under mild assumptions there exists a substitute to the degenerate Mandelbrot measure $\mu$, namely a critical Mandelbrot measures $\widetilde\mu$, which satisfies the same statistical self-similarity \eqref{self-similarity} with the set $K$ as its support, but $\mathbb E(\|\mu\|)=\infty$; the multifractal analysis of this measure is considered in \cite{Ba00}. Defining $q_c$ like when   $T'(1-)>0$, we have $q_c=1$. Furthermore, defining  $\widetilde q_c=1$ and $\nu$ as for $\mu$,  the conclusions of Theorem~\ref{MA} holds for $\pi_*\widetilde\mu$.
\end{rem}

\section{Phases transitions. Remarks and examples}
This section gathers a series of remarks and examples related to phase transitions associated with $\pi_*\mu$.
\begin{rem}
{\rm Let $S$ denote the set of non-analytic points of $\tau$ in  $(0,\widetilde q_c)$. Then $S$ is  discrete and  possibly empty. Moreover,  the cardinality of $S\cap (0,1]$ is not less than the number of times that the graphs of $T$ and $\tau_\nu$ cross each other transversally over $(0,1)$. These properties will be established in Section~5.
}
\end{rem}


%

Now we give some examples to illustrate Theorem~\ref{MA}.

\begin{ex}[Lognormal canonical cascades] Let us consider the standard lognormal canonical cascade, for which the weights $W_{i,j}$ are independent and  $W_{i,j}\sim m^{-2} \exp (\beta N-\beta^2/2)$, where $\beta\ge 0$ and  $N\sim \mathcal N(0,1)$. We have
$$
T(q)=2(q-1)-\frac{\beta^2}{2\log(m)}q(q-1).
$$
A necessary and sufficient condition for $\mu$ to be almost surely positive is $T'(1)=2-\frac{\beta^2}{2\log(m)}>0$, i.e. $\beta\in [0,2\sqrt{\log m})$.

Fix $\beta\in (0,2\sqrt{\log m})$ (we discard the case $\beta=0$ which corresponds to $\mu$ being the resctriction of  the Lebesgue measure to $[0,1]^2$). Then, the dimension of $\mu$ equals $2-\frac{\beta^2}{2\log(m)}$, and the measure $\nu$ is simply the Lebesgue measure restricted to $[0,1]$, so $\tau_\nu(q)=q-1$. Also, due to Theorem~\ref{ABS}, the measure $\pi_*\mu$ is almost surely equivalent to the Lebesgue measure if and only if $T'(1)>1$, i.e. $\beta\in [0, \sqrt{2\log(m)})$.

We also have $T'(q)q-T(q)=2-\frac{\beta^2q^2}{2\log(m)}$, so $q_c=2\sqrt{\log(m)}/\beta$. Moreover, $T(q)=\tau_\nu(q)$ if and only if  $q=1$ or $q=q_0:=2\log(m)/\beta^2$.

Thus, if $\beta\in [0,\sqrt{\log(m)}]$, we have $q_c\le\widetilde q_c=q_0$;  if $\beta\in (\sqrt{\log(m)}, \sqrt{2\log(m)})$,  $q_c=\widetilde q_c$ and $\tau_\nu$ and $T$ cross once transversally at $q_0\in (1,q_c)$, and do not cross over $[0,1)$; if $\beta= \sqrt{2\log(m)}$ then $\tau_\nu$ and $T$ cross at $1=q_0$ only and $q_c=\widetilde q_c$; if $\beta\in (\sqrt{2\log(m)}, 2\sqrt{\log(m)})$, then $T$ and $\tau_\nu$ cross once  transversally at  $q_0\in (0,1)$ and do not cross over $(1,\infty)$.

The previous observations and the definition of $\tau$ yield, with probability 1:

\begin{itemize}
\item if $\beta\in (0, \sqrt{\log m}]$, then $\tau(q)=q-1$ over $[0,q_0=\widetilde q_c]$ (and $q_0>1$).

\item If $\beta\in (\sqrt{\log(m)}, \sqrt{2\log(m)})$, $\tau(q)=q-1$ over $[0,q_0]$, $\tau(q)=T(q)$ over $[q_0,q_c=\widetilde q_c]$, and $q_0\in (1,q_c)$.

In this case $\pi_*\mu$ provides new examples  of statistically self-similar measures absolutely continuous with respect to the Lebesgue measure over $[0,1]$, with a non trivial Hausdorff spectrum and a first order phase transition, here at $q_0$ (see also \cite{Feng12} for deterministic examples for which, however, the Hausdorff spectrum is not described at the phase transition).

\item If  $\beta=\sqrt{2\log(m)}$, $\tau(q)=q-1$ over $[0,1=q_0]$ and $\tau(q)=T(q)$ over $[1,q_c=\widetilde q_c]$.

\item  If $\beta\in (\sqrt{2\log(m)}, 2\sqrt{\log(m)})$ then $q_0<1$, and a calculation using the definition of $\tau$ over $[0,1]$ shows that $\tau(q)= -1+T'(\sqrt{q_0}) q$ over  $[0, \sqrt{q_0}]$ and $\tau(q)=T(q)$ over $[ \sqrt{q_0}, q_c=\widetilde q_c]$.

In the last two cases, for which $\dim (\mu)\le 1$, our result provides, for the special directions of projection considered in this paper, the same information as that given by \cite{BaBh} for almost every direction, and recalled in Section 1.

\end{itemize}

Illustrations are provided by Figure~\ref{Fig0}.

\end{ex}

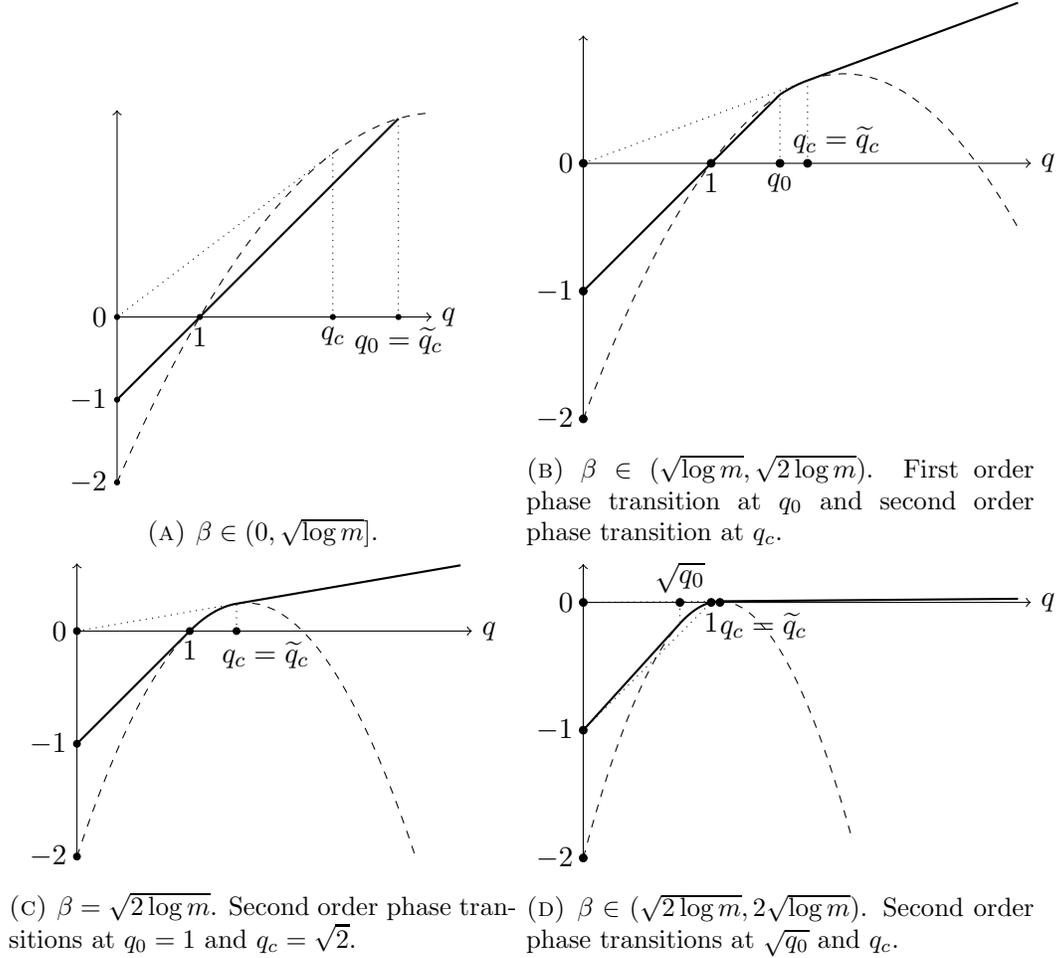
\begin{figure}[ht]
\begin{subfigure}[b]{0.45\textwidth}
\centering
\begin{tikzpicture}[xscale=1.1,yscale=1.1]
\draw [->] (0,-2) -- (0,2.5)  ;
\draw [->] (0,0) -- (3.8,0) node [right] {$q$};
\draw [thick, domain=0:3.4] plot (\x, {\x-1});
\draw [dashed, domain=0:3.8] plot (\x, {2*(\x-1)-(1/3.4)*\x*(\x-1)});
\draw [fill] (0,0) circle [radius=0.03] node [left] {$0$};
\draw [fill] (1,0) circle [radius=0.03] node [below] {$1$};
\draw [fill] (0,-1) circle [radius=0.03] node [left] {$-1$};
\draw [fill] (0,-2) circle [radius=0.03] node [left] {$-2$};
\draw [dotted] (3.4,0) -- (3.4, 2.4)  ;
\draw [dotted] ({2*sqrt{1.7}},0) -- ({2*sqrt{1.7}}, {2*((2*sqrt{1.7})-1)-(1/3.4)*(2*sqrt{1.7})*((2*sqrt{1.7})-1)})  ;
\draw [fill] (3.4,0) circle [radius=0.03] node [below] {$q_0=\widetilde q_c$};
\draw [fill] ({2*sqrt{1.7}},0) circle [radius=0.03] node [below] {$q_c$};
\draw [dotted, domain=0:2*sqrt{1.7}] plot (\x, {\x*((2*((2*sqrt{1.7})-1)-(1/3.4)*(2*sqrt{1.7})*((2*sqrt{1.7})-1))/(2*sqrt{1.7}))});
\end{tikzpicture}
\caption{$\beta\in (0, \sqrt{\log m}]$.}\label{fig1a}
\end{subfigure}
\begin{subfigure}[b]{0.45\textwidth}
\centering
\begin{tikzpicture}[xscale=1.7,yscale=1.7]
\draw [->] (0,-2) -- (0,1)  ;
\draw [->] (0,0) -- (3.5,0) node [right] {$q$};
\draw [thick, domain=0:2/1.3] plot (\x, {\x-1});
\draw [thick, domain=2/1.3:2/(sqrt{1.3})] plot (\x, {2*(\x-1)-.65*\x*(\x-1)});
\draw [dashed, domain=0:2/1.3] plot (\x, {2*(\x-1)-.65*\x*(\x-1)});
\draw [dotted, domain=0:2/(sqrt{1.3})] plot (\x, {\x*((2*(2/(sqrt{1.3})-1)-.65*(2/(sqrt{1.3}))*((2/(sqrt{1.3}))-1))/(2/(sqrt{1.3}))});
\draw [thick, domain=2/(sqrt{1.3}):3.4] plot (\x, {\x*((2*(2/(sqrt{1.3})-1)-.65*(2/(sqrt{1.3}))*((2/(sqrt{1.3}))-1))/(2/(sqrt{1.3}))});
\draw [dashed, domain=2/(sqrt{1.3}):3.4] plot (\x, {2*(\x-1)-.65*\x*(\x-1)});
\draw [dotted] (2/1.3,0) -- (2/1.3, 2/1.3-1)  ;
\draw [dotted] ({2/(sqrt{1.3})},0) -- ({2/(sqrt{1.3})}, {2*((2/(sqrt{1.3}))-1)-.65*(2/(sqrt{1.3}))*((2/(sqrt{1.3}))-1)})  ;
\draw [fill] (0,0) circle [radius=0.03] node [left] {$0$};
\draw [fill] (1,0) circle [radius=0.03] node [below] {$1$};
\draw [fill] (0,-1) circle [radius=0.03] node [left] {$-1$};
\draw [fill] (0,-2) circle [radius=0.03] node [left] {$-2$};
\draw [fill] (2/1.3,0) circle [radius=0.03] node [below] {$q_0$};
\draw [fill] (2/sqrt{1.3},0) circle [radius=0.03] node [above] {$\quad\quad q_c=\widetilde q_c$};
\end{tikzpicture}
\caption{$\beta\in (\sqrt{\log m}, \sqrt{2\log m})$. First order phase transition at $q_0$ and  second order phase transition at $q_c$.}\label{fig1a}
\end{subfigure}
\begin{subfigure}[b]{0.45\textwidth}
\centering
\begin{tikzpicture}[xscale=1.5,yscale=1.5]
\draw [->] (0,-2) -- (0,.6)  ;
\draw [->] (0,0) -- (3.5,0) node [right] {$q$};
\draw [thick, domain=0:1] plot (\x, {\x-1});
\draw [thick, domain=1:sqrt(2)] plot (\x,{2*(\x-1)-\x*(\x-1)});
\draw [dashed, domain=0:1] plot (\x,{2*(\x-1)-\x*(\x-1)});
\draw [dashed, domain=sqrt(2):3] plot (\x,{2*(\x-1)-\x*(\x-1)});
\draw [dotted, domain=0:sqrt(2)] plot (\x, {\x*((2*(sqrt(2)-1)-(sqrt(2))*(sqrt(2)-1))/(sqrt(2)))});
\draw [thick, domain=sqrt(2):3.4] plot (\x, {\x*((2*(sqrt(2)-1)-(sqrt(2))*(sqrt(2)-1))/(sqrt(2)))});
\draw [dotted] ({sqrt(2)},0) -- ({sqrt(2)}, {2*((sqrt(2))-1)-(sqrt(2))*((sqrt(2))-1)})  ;
\draw [fill] (0,0) circle [radius=0.03] node [left] {$0$};
\draw [fill] (1,0) circle [radius=0.03] node [below] {$1$};
\draw [fill] (0,-1) circle [radius=0.03] node [left] {$-1$};
\draw [fill] (0,-2) circle [radius=0.03] node [left] {$-2$};
\draw [fill] ({sqrt(2)},0) circle [radius=0.03] node [below] {$\quad\quad q_c=\widetilde q_c$};
\end{tikzpicture}
\caption{$\beta= \sqrt{2\log m}$. Second order phase transitions at $q_0=1$  and $q_c=\sqrt{2}$. }\label{fig1a}
\end{subfigure}
\begin{subfigure}[b]{0.45\textwidth}
\centering
\begin{tikzpicture}[xscale=1.7,yscale=1.7]
\draw [->] (0,-2) -- (0,.3)  ;
\draw [->] (0,0) -- (3.5,0) node [right] {$q$};
\draw [dotted, domain=0:1] plot (\x, {\x-1});
\draw [thick, domain=sqrt(2)/sqrt(3.5):2/sqrt(3.5)] plot (\x,{2*(\x-1)-1.75*\x*(\x-1)});
\draw [dashed, domain=0:sqrt(2)/sqrt(3.5)] plot (\x,{2*(\x-1)-1.75*\x*(\x-1)});
\draw [dashed, domain=2/sqrt(3.5):2.1] plot (\x,{2*(\x-1)-1.75*\x*(\x-1)});
\draw [dotted, domain=0:2/sqrt(3.5)] plot (\x, {\x*((2*((2/sqrt(3.5))-1)-1.75*(2/sqrt(3.5))*((2/sqrt(3.5))-1))/((2/sqrt(3.5)))});
\draw [thick, domain=2/sqrt(3.5):3.4] plot (\x, {\x*((2*((2/sqrt(3.5))-1)-1.75*(2/sqrt(3.5))*((2/sqrt(3.5))-1))/((2/sqrt(3.5)))});
\draw [thick] (0,-1) -- ({sqrt(2)/sqrt(3.5)}, {2*((sqrt(2)/sqrt(3.5))-1)-1.75*(sqrt(2)/sqrt(3.5))*((sqrt(2)/sqrt(3.5))-1)})  ;
\draw [dotted] ({sqrt(2)/sqrt(3.5)},0) -- ({sqrt(2)/sqrt(3.5)}, {2*((sqrt(2)/sqrt(3.5))-1)-1.75*(sqrt(2)/sqrt(3.5))*((sqrt(2)/sqrt(3.5))-1)})  ;
\draw [fill] (0,0) circle [radius=0.03] node [left] {$0$};
\draw [fill] (1,0) circle [radius=0.03] node [below] {$1$};
\draw [fill] (0,-1) circle [radius=0.03] node [left] {$-1$};
\draw [fill] (0,-2) circle [radius=0.03] node [left] {$-2$};
\draw [fill] ({sqrt(2)/sqrt(3.5)},0) circle [radius=0.03] node [above] {$\sqrt{q_0}$};
\draw [fill] ({2/sqrt(3.5)},0) circle [radius=0.03] node [below] {$\quad\quad \quad q_c=\widetilde q_c$};
\end{tikzpicture}
\caption{$\beta\in (\sqrt{2\log m}, 2\sqrt{\log m})$. Second order phase transitions at $\sqrt{q_0}$  and $q_c$.}\label{fig1a}
\end{subfigure}
\caption{The  thick curve represents $\tau_{\pi_*\mu}$ over $[0,\widetilde q_c]$ in case (A) and $[0,\infty]$ in the other cases, while the dashed curve represents $T$.}\label{Fig0}
\end{figure}


Below we construct a concrete example so that $q_c=\infty$ and the function $\tau_{\pi_*\mu}$ has a non-differentiable point in $(1, \infty)$ (i.e. first order phase transition), and a non-$C^\infty$ smooth point in $(0,1)$ (i.e. phase transition of order $\geq 2$). It is illustrated in Figure~\ref{Fig1}.

\begin{ex}\label{Ex1}{\rm    Let $(p_0,\ldots, p_{m-1})$ be a positive probability vector different from the vector $(m^{-1},\ldots,m^{-1})$. We have $p_{\max}=\max\{p_i:0\le i\le m-1\}>m^{-1}$. We assume that $p_0=p_{\max}>\sqrt{p_{1}}=\ldots =\sqrt{p_{m-1}}$. Fix $\beta$ in the interval $(m,mp_{\max}^{-1})$ and $\lambda\in (0,1)$. Let $(V_{i,j})_{1\le i\leq {m-1},0\leq j\le m-1}$ be a family of random variables which take value $\beta/m$ with probability $\lambda\beta^{-1}$ and $c_{m,\beta,\lambda}= \frac{\beta(1-\lambda)}{m(\beta-\lambda)}$ with probability $ 1-\lambda\beta^{-1}$. Let $$V_{0,0}\in \left(\max_{1\le i\le m-1}\frac{p_i}{p_{\max}^2},\; 1\right),$$
 $V_{0,1}=1-V_{0,0}$, and $V_{0,j}=0$ if $j\ge 2$; also suppose that $V_{0,1}<V_{0,0}$. Set $W_{i,j}=p_iV_{i,j}$ for all $0\le i,j\le m-1$ and define the functions $T_i$ and $T$ as previously.

For all $1\le i\le m-1$,  for all $0\le j\le m-1$ by construction we have $W_{i,j} \le p_i\beta/m<W_{0,0}<p_{0}<1$, and we also have $W_{0,1}<W_{0,0}<1$. Consequently,  $T'(1)>0$. Also,  for all $0\le i\le m-1$, we have
$
\E\left(\sum_{j=0}^{m-1} V_{i,j}\right )=1
$
and for $1\le i\le m-1$, we have
$$
-\log(m)T_i'(1)=\E\left(\sum_{j=0}^{m-1} V_{i,j}\log( V_{i,j})\right )=\lambda \log\left (\frac{\beta}{m} \right ) +m (1-\lambda\beta^{-1}) c_{m,\beta,\lambda}\log(c_{m,\beta,\lambda})
$$
and $-\log(m)T_0'(1)=V_{0,0}\log(V_{0,0})+V_{0,1}\log(V_{0,1})$. Thus, if we take  $\lambda$ close enough to 1 and $V_{0,0}$ close enough to 1,  we have $\sum_{i=0}^{m-1} p_iT'_{i}(1)<0$. Thus $0<T'(1)<\tau_\nu'(1)$, and   $T<\tau$ near $1+$. Now let us make  $T(q)$ explicit:
$$
T(q)=-\log_m\left ((p_0V_{0,0})^q+(p_0V_{0,1})^q+\sum_{i=1}^{m-1}m\lambda\beta^{-1}\Big (p_i\frac{\beta}{m} \Big )^q+m(1-\lambda\beta^{-1})(p_ic_{m,\beta,\lambda}) ^q\right ).
$$
We have $T(q)=-q\log_m (p_0V_{0,0})+o(1)$ as $q\to\infty$, with $-\log_m (p_0V_{0,0})>-\log_m (p_0)>0$ since $V_{0,0}<1$. This shows that $T^*\circ T'$ does not vanish over $\R_+$ and $q_c=\infty$. Moreover $\tau_\nu(q)=-q\log_m (p_{\max})+o(1)$ as $q\to\infty$, so $\tau_\nu(q)<T(q)$ near $\infty$. It follows that there is a first order phase transition over $(1,\infty)$.

Now let us look at the situation over $[0,1]$. We have  $-\log_m(m(m-1)+2)=T(0)<\tau_\nu(0)=-1$,  and $T'(1)<\tau_\nu'(1)$ implies that $\tau_\nu<T$ near $1-$. Thus, the graphs of $\tau_\nu$ and $T$ cross each other on $[0,1]$, and we know from Theorem~\ref{MA} that there is at least one phase transition of order at least 2. Let us be a little bit more precise. Set
$$
G(q,s)=\sum_{i=0}^{m-1}p_i^qm^{-T_i(s)q/s}.
$$
We have $$\frac{\partial G}{\partial s} (q,s)= -s^{-2}q\log (m) \sum_{i=0}^{m-1}p_i^qm^{-T_i(s)q/s} T_i^*(T_i'(s)).$$
 By construction, we have  $\frac{\partial G}{\partial s} (1,1)= -\log (m) \sum_{i=0}^{m-1}p_iT_i'(1)>0$.

Thus, by continuity of $\frac{\partial G}{\partial s} (q,s)$, for $q$ near $1-$ we have $\frac{\partial G}{\partial s} (q,s)>0$ for all $s\in [q,1]$, which   implies that $\tau$ is attained at $s=q$ and $\tau(q)=T(q)$. Also $T_i^*(T_i'(0))=-T_i(0)=1>0$ if $i\ge 1$, $T_0^*(T_0'(0))=-T_0(0)=\log_m(2)>0$ and by construction  $\sum_{i=0}^{m-1} T_i'(1)<0$. Consequently, for all $q$ near $0+$ we have $\frac{\partial G}{\partial s} (q,q)<0$ and $\frac{\partial G}{\partial s} (q,1)>0$, which implies that $\tau(q)$ is attained at some $s\in (q,1)$ and $\tau(q)>\max(\tau_\nu(q),T(q))$.
}
\end{ex}

\begin{center}
\begin{figure}
       {\includegraphics[width=0.3\textwidth,height=0.4\textwidth]{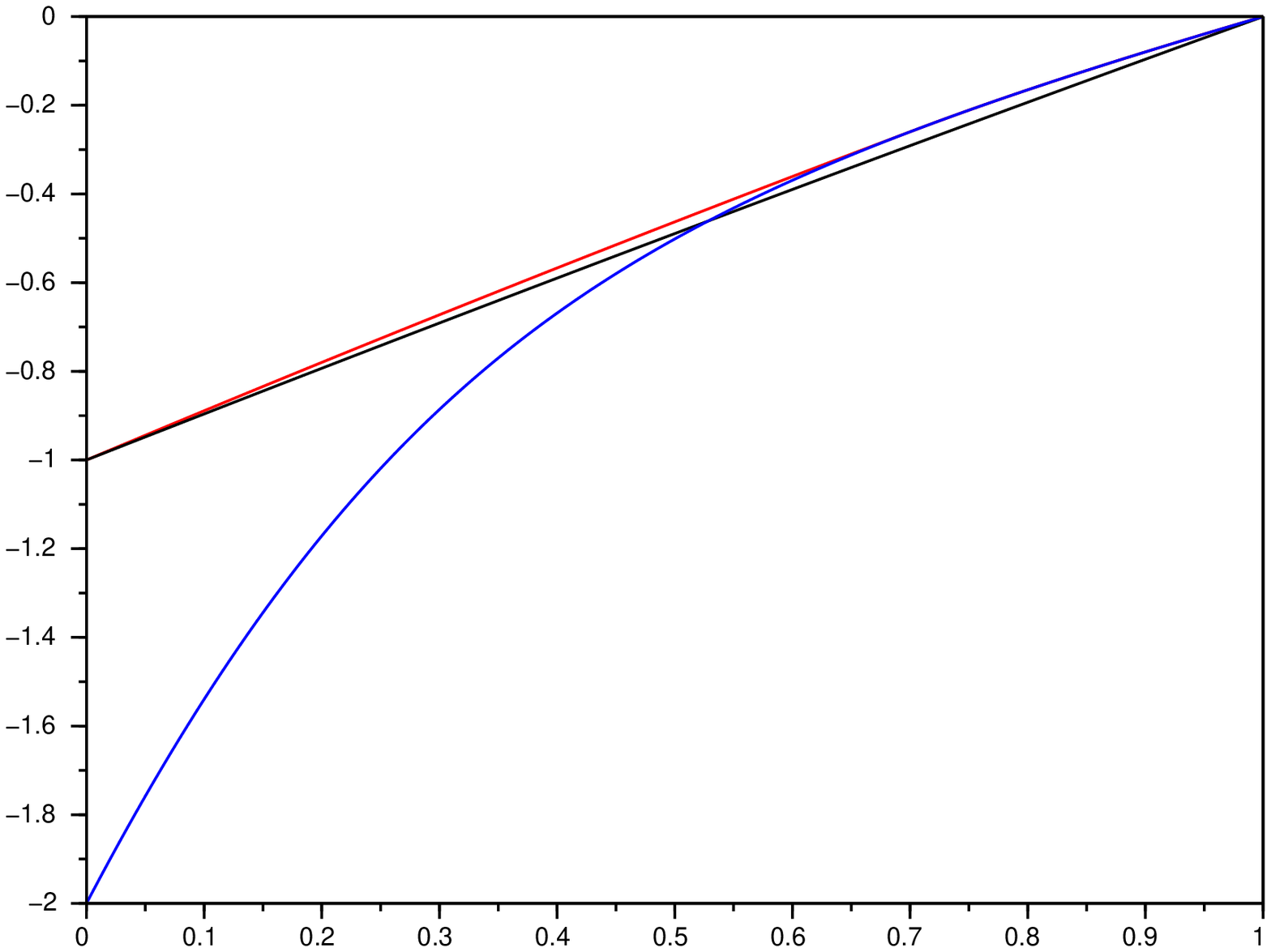}} {\includegraphics[width=0.56\textwidth,height=0.4\textwidth]{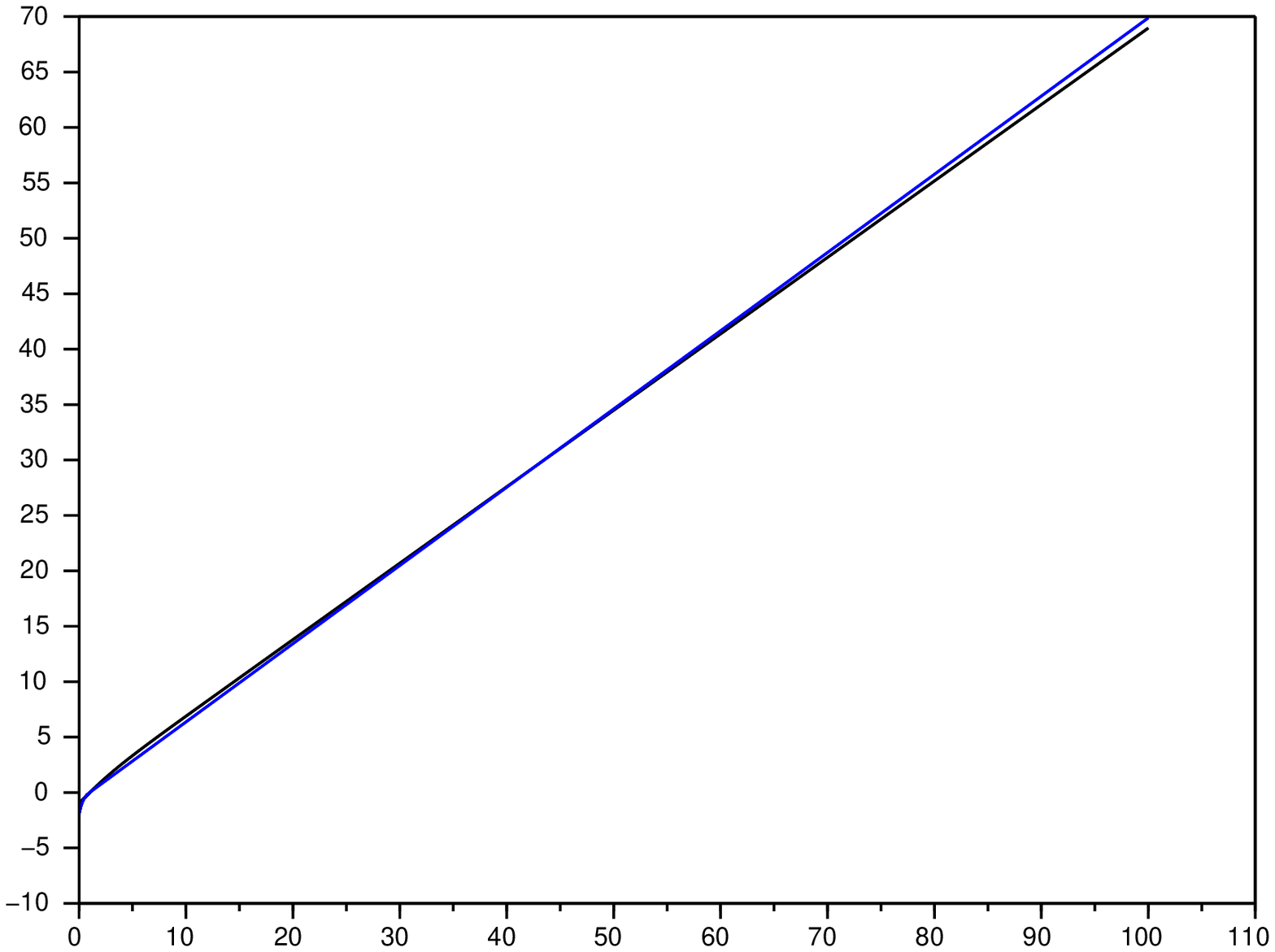}}
\vskip 0cm
\caption{Illustration of Example~\ref{Ex1} with $m=2$, $p_0=.62$, $\beta=3.22$, $\lambda=.99$ and $V_{00}=.99$ (blue curve: $T$; black curve: $\tau_\nu$; red curve: $\tau$). $q_c=\widetilde q_c=\infty$. One second order phase transition at some $q_0\in (0,1)$ and one first order phase transition at some $q'_0 \in (1,\infty)$.  $\tau_{\pi_*\mu}=\tau>\max(\tau_\nu,T)$ over $[0,q_0)$, $\tau_{\pi_*\mu}=T=\tau_\mu$ over $[q'_0,q_0]$, and $\tau_{\pi_*\mu}=\tau_\nu$ over $[q_0,\infty)$.}
\label{Fig1}
\end{figure}
\end{center}
\begin{ex}\label{Ex2}{\rm  This example exhibits two phase transitions over $[0,1]$ and no first order phase transition over $(1,q_c)$, with $q_c<\infty$ and $\tau_{\pi_*\mu}=\tau_\mu$ over $(1,\infty)$. Take $m=2$, $p_0\in (0,1)$, and $N_0$ and $N_1$ two  random integers taking values in $\{0,1,2\}$ and with positive expectation. Then for $0\le i,j\le 1$ define $V_{i,j}=(\E(N_i))^{-1} \mathbf{1}_{\{j\le N_i-1\}}$. This yields $T_i(q)=(q-1)\log \E(N_i)$, hence $T_i^*(T_i'(s))=-T_i(0)=T_i'(1)=\log \E(N_i)$ for all $s\ge 0$. Also,
$$
T(q)=-\log_2 \left (p_0^q \E(N_0)^{1-q}+(1-p_0)^q \E(N_1)^{1-q}\right ).
$$

We require $\E(N_0)<1$,
\begin{eqnarray}
\label{r1} &&\E(N_0) \E(N_1)>1,\\
\label{r2}&&  \E(N_0)^{p_0}\E(N_1)^{1-p_0}<1,\\
 \label{r3} &&\left(\frac{\E(N_0)}{p_0}\right)^{p_0}\left(\frac{\E(N_1)}{1-p_0}\right)^{1-p_0}>1.
\end{eqnarray}
Properties \eqref{r2} and \eqref{r3} yield $\tau_\nu'(1)>T'(1)>0$. Also \eqref{r1} implies $\E(N_1)+\E(N_2)>2$ hence $T(0)<-1=\tau_\nu (0)$.  The graphs of $\tau_\nu$ and $T$ cross each other on $[0,1]$.  Let $G$ be defined as in the previous example. Property \eqref{r1} yields $\frac{\partial G}{\partial s} (q,s)<0$ for all $s\in [q,1]$ if $q$ is close enough to 0, hence $\tau(q)$ is attained at $q=1$; $\tau(q)=\tau_\nu(q)$. Moreover, \eqref{r2} implies that $\frac{\partial G}{\partial s} (q,s)>0$ for all $s\in [q,1]$ if $q$ is close to $1$, hence $\tau(q)$ is attained at $s=q$: $\tau(q)=T(q)$. Then our study of $\tau$ in Section~\ref{study of tau} implies that on a non trivial interval we have $\tau(q)>\max(\tau_\nu(q),T(q))$, i.e. $\tau$ is given by a third analytic expression.

It is also possible to choose the parameters so that $T'(1)<-\log_2(p_0)=\tau_\nu'(+\infty)$ and $p_0>\E(N_0)$, hence $\tau_\nu>T$ over $(1,\infty)$ and $q_c<\infty$, which implies that $\tau_{\pi_*\mu}(q)=\tau_\mu(q)$ over $[1,\infty)$. A concrete choice is $p_0=.8$, $\E(N_0)=.6$ and  $\E(N_1)=1.8$.
}
\end{ex}
\begin{center}
\begin{figure}
       {\includegraphics[width=0.6\textwidth,height=0.6\textwidth]{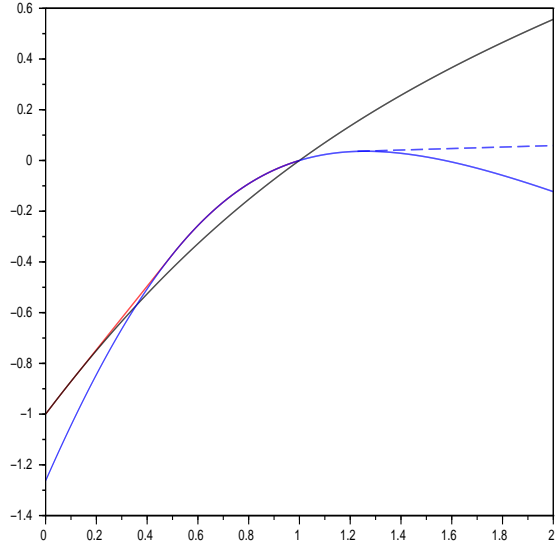}}
\caption{Illustration of Example~\ref{Ex2} with $p_0=.8$, $\E(N_0)=.6$ and  $\E(N_1)=1.8$ (blue curve: $T$; dashed blue curve: $\tau_\mu$; black curve: $\tau_\nu$; red curve: $\tau$). $q_c=\widetilde q_c\simeq 1.229<\infty$. Second order phase transition at some $q_0<q'_0$ in $(0,1)$, no first order phase transition over $(1,q_c)$, and one second order phase transition at $q_c$. $\tau_{\pi_*\mu}=\tau_\nu$ over $[0,q_0]$, $\tau_{\pi_*\mu}=\tau>\max(\tau_\nu,T)$ over $(q_0,q'_0)$, $\tau_{\pi_*\mu}=T=\tau_\mu$ over $[q'_0,q_c]$, and $\tau_{\pi_*\mu}(q)=\tau_\mu(q)=T'(q_c) q$ over $[q_c,\infty)$.}
\label{Fig2}
\end{figure}
\end{center}
\begin{ex}[Previous example continued] We can use the same model as in Example~\ref{Ex2} to get other different behaviors. See Figures~\ref{Fig3} to \ref{Fig5}.
\end{ex}
\begin{center}
\begin{figure}
       {\includegraphics[width=0.6\textwidth,height=0.55\textwidth]{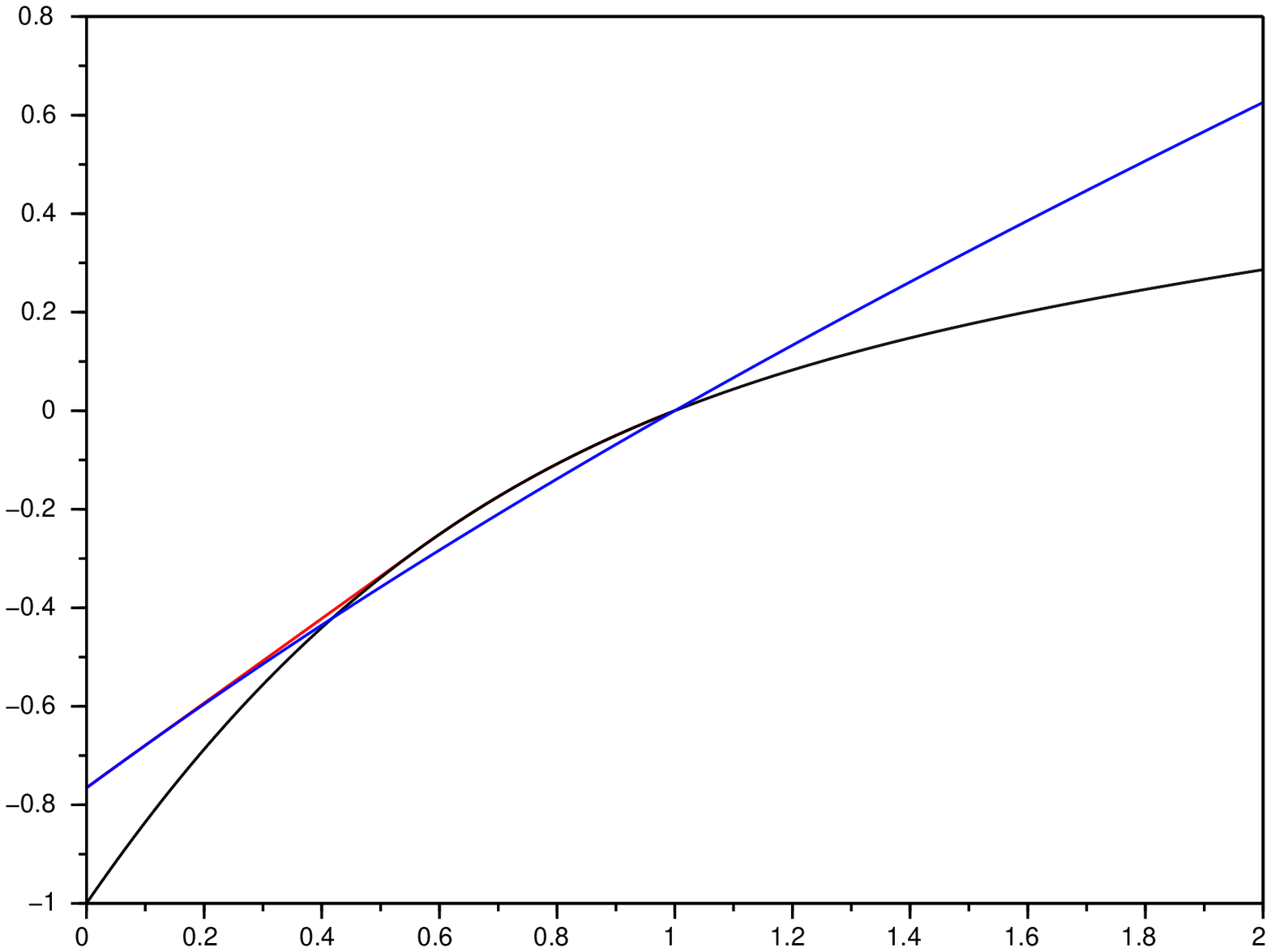}}
\caption{Same model as in  Example~\ref{Ex2} with $p_0=.1$, $\E(N_0)=.4$ and  $\E(N_1)=1.3$ (blue curve: $T$; black curve: $\tau_\nu$; red curve: $\tau$).  $q_c=\widetilde q_c=\infty$. Second order phase transitions at some $q_0<q'_0$ in $(0,1)$,  no first order phase transition over $(1,\infty)$. $\tau_{\pi_*\mu}=T=\tau_\mu$ over $[0,q_0]$, $\tau_{\pi_*\mu}=\tau>\max(\tau_\nu,T)$ over $(q_0,q'_0)$, and $\tau_{\pi_*\mu}=\tau_\nu$ over $[q'_0,\infty)$.}
\label{Fig3}
\end{figure}
\end{center}

\begin{center}
\begin{figure}
       {\includegraphics[width=0.6\textwidth,height=0.57\textwidth]{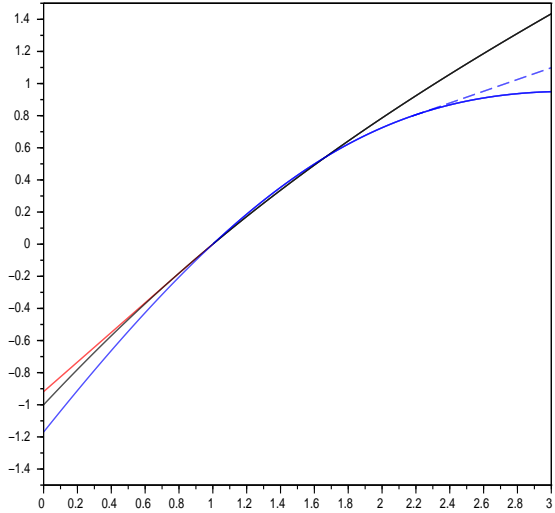}}
\caption{Same model as in  Example~\ref{Ex2} with $p_0=.3$, $\E(N_0)=.25$ and  $\E(N_1)=2$ (blue curve: $T$; dashed blue curve: $\tau_\mu$; black curve: $\tau_\nu$; red curve: $\tau$). $q_c=\widetilde q_c\simeq 2.176<\infty$. One second order phase transition at some $q_0\in (0,1)$. One first order phase transition at some $q'_0\in (1,q_c)$, and one second order phase transition at $q_c$.  $\tau_{\pi_*\mu}=\tau>\max(\tau_\nu,T)$ over $[0,q_0)$ (in particular $-\tau(0)=\dim_H \pi(K) <\min (-\tau_\nu(0),-T(0))$), $\tau_{\pi_*\mu}=\tau_\nu$ over $[q_0,q'_0]$ and $\tau_{\pi_*\mu}=T=\tau_\mu$ over $[q_0,q_c]$, and $\tau_{\pi_*\mu}(q)=\tau_\mu(q)=T'(q_c) q$ over $[q_c,\infty)$.}
\label{Fig4}
\end{figure}
\end{center}

\begin{center}
\begin{figure}
       {\includegraphics[width=0.6\textwidth,height=0.57\textwidth]{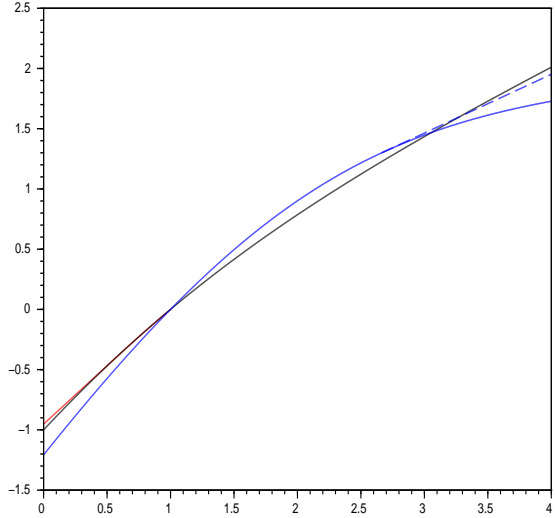}}
\caption{Same model as in  Example~\ref{Ex2} with $p_0=.3$, $\E(N_0)=.3$ and  $\E(N_1)=2$ (blue curve: $T$; dashed blue curve: $\tau_\mu$; black curve: $\tau_\nu$; red curve: $\tau$). $q_c\simeq 2.665<\infty$ and $\widetilde q_c\simeq 3.059$. One second order phase transition at some $q_0\in (0,1)$.   No first order phase transition over $[1,\widetilde q_c)$.  $\tau_{\pi_*\mu}=\tau>\max(\tau_\nu,T)$ over $[0,q_0)$ (in particular $-\tau(0)=\dim_H \pi(K) <\min (-\tau_\nu(0),-T(0))$) and $\tau_{\pi_*\mu}=\tau_\nu$ over $[q_0,\widetilde q_c]$.}\label{Fig5}
\end{figure}
\end{center}

%

\section{Proofs of Theorem~\ref{ABS}, Theorem~\ref{DIM}, and Corollary~\ref{DGF}}\label{pfDIM}

We first introduce the following new notations and definitions.


For each $u\in\Sigma_*$, we have
\begin{equation}\label{pimu}
\pi_*\mu([u])=\sum_{v\in\Sigma_{|u|}} \mu([u,v])
=\sum_{v\in\Sigma_{|u|}}Q(u,v)Y(u,v)=\nu(u) X(u),
\end{equation}
where
\begin{equation}\label{X(u)}
X(u)=\sum_{v\in\Sigma_{|u|}}Y(u,v)\prod_{j=1}^{|u|}V_{u_j,v_j}(u_{|j-1},v_{|j-1}).
\end{equation}

Define also
$$
\widetilde X(u)=\sum_{v\in\Sigma_{|u|}}\prod_{j=1}^{|u|}V_{u_j,v_j}(u_{|j-1},v_{|j-1}),
$$
and for all $x\in\Sigma$ and $n\ge 0$, set
$$
X_n(x)=X(x_{|n}) \quad  \text{and}\quad \widetilde X_n(x)=\widetilde X(x_{|n}).
$$

Now, let us start by presenting three results that will be used in this section. They will be proved in Section~\ref{MEST}, where they appear as Proposition~\ref{mom+estimate}, Corollary~\ref{momestimate}, and Proposition~\ref{negmom1} respectively.
\begin{pro}\label{mom+estimate'} Let $q>1$ such that $T(q)>0$.  Let $\eta$ be the Bernoulli product measure on $\Sigma$ generated by  a probability vector $(p_0',\ldots, p_{m-1}')$.  Set $A:=\max\{1, \sum_{i=0}^{m-1} p_i' m^{-T_i(q)}\}$. Then there exists  a polynomial  $f_q$ depending on $W$ and $q$ such that
\begin{equation}
\label{e-p1}
 A^n \leq  \mathbb{E}_{\mathbb{P}\otimes \eta} (X_n^q)\leq  f_q(n)  A^n,\quad \forall n\in \N.
\end{equation}
Furthermore, when $q\in (1,2]$, $f_q(n)$ can be taken as a constant.
\end{pro}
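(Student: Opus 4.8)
The plan is to establish the two-sided bound on $\mathbb{E}_{\mathbb{P}\otimes\eta}(X_n^q)$ by a recursion on $n$ that exploits the statistical self-similarity \eqref{X(u)} of $X(u)$. First I would record the one-step decomposition: writing $x=x_1 x'$ with $x'\in\Sigma$, and conditioning on the first generation of weights, one gets
$$
X_n(x)=\sum_{j=0}^{m-1} V_{x_1,j}\, X_{n-1}^{(j)}(x'),
$$
where, given $x_1$, the variables $X_{n-1}^{(j)}(x')$ are (for $\eta$-a.e.\ $x'$, after integrating the second coordinate) independent copies of $X_{n-1}$ evaluated along the shifted environment, independent of the $V_{x_1,j}$. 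Integrating against $\mathbb{P}\otimes\eta$ and using that $\eta$ is a Bernoulli product with vector $(p_i')$, the $n$-th moment of $X_n$ is controlled by $\sum_i p_i'\,\mathbb{E}\big[(\sum_j V_{i,j} X_{n-1}^{(j)})^q\big]$, with the $X_{n-1}^{(j)}$ i.i.d.\ copies of $X_{n-1}$ under $\mathbb{P}\otimes\eta$.

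The lower bound $A^n\le \mathbb{E}_{\mathbb{P}\otimes\eta}(X_n^q)$ should come quickly: by Jensen (since $q>1$ and $\mathbb{E}_{\mathbb{P}\otimes\eta}(X_n)=1$, as each $Y(u,v)$ has expectation $1$), $\mathbb{E}_{\mathbb{P}\otimes\eta}(X_n^q)\ge 1$; and iterating the recursion while keeping only the ``diagonal'' contribution $\sum_i p_i' \mathbb{E}(V_{i,j}^q)\,\mathbb{E}(X_{n-1}^q)$ — more precisely bounding $(\sum_j V_{i,j}X_{n-1}^{(j)})^q$ below by $\sum_j V_{i,j}^q (X_{n-1}^{(j)})^q$ is false for $q>1$, so instead one isolates a single term or uses convexity the other way — yields the factor $\sum_i p_i' m^{-T_i(q)}$ per step. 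Combining the two gives $\mathbb{E}_{\mathbb{P}\otimes\eta}(X_n^q)\ge \max(1,\sum_i p_i'm^{-T_i(q)})^n = A^n$ after checking that the per-step multiplicative constant is at least $\sum_i p_i'm^{-T_i(q)}$ and at least $1$ separately, then taking the larger base.

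For the upper bound I would use the standard branching-random-walk moment technique: expand $(\sum_j V_{i,j} X_{n-1}^{(j)})^q$ and split into the ``aligned'' part, where one factor carries most of the mass, and the ``cross'' terms. Using the inequality $(a+b)^q\le a^q+b^q+C_q(a^{q-1}b+ab^{q-1})$ for $q\in(1,2]$ (and its $\lceil q\rceil$-term analogue for general $q$), the aligned part contributes $\big(\sum_i p_i'm^{-T_i(q)}\big)\mathbb{E}_{\mathbb{P}\otimes\eta}(X_{n-1}^q)$, while the cross terms are bounded by a constant (depending on $W$ and $q$, hence the polynomial $f_q$) times $\big(\mathbb{E}_{\mathbb{P}\otimes\eta}(X_{n-1}^{q'})\big)$ for some $q'<q$, which by interpolation and $\mathbb{E}_{\mathbb{P}\otimes\eta}(X_{n-1})=1$ is $O(A^{n-1})$ up to lower-order factors. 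Setting $a_n=\mathbb{E}_{\mathbb{P}\otimes\eta}(X_n^q)$, one obtains $a_n\le \big(\sum_i p_i'm^{-T_i(q)}\big) a_{n-1}+ C\cdot B^{n-1}$ for some $B\le A$; solving this linear recursion gives $a_n\le f_q(n)A^n$ with $f_q$ a polynomial of degree at most one when $\sum_i p_i'm^{-T_i(q)}\ge 1$ (so $A=\sum_i p_i'm^{-T_i(q)}$ and the two bases coincide, forcing a linear, not constant, prefactor) — and a genuine constant in the strictly subcritical/over case — matching the claim that for $q\in(1,2]$ the prefactor is constant once one absorbs the at-most-linear term into a constant times $A^n$ when $A>B$, and notes $T(q)>0$ precisely ensures the relevant comparison so no extra polynomial growth survives.

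The main obstacle I expect is the bookkeeping in the upper-bound recursion: correctly identifying which moment $\mathbb{E}_{\mathbb{P}\otimes\eta}(X_{n-1}^{q'})$ the cross terms reduce to, verifying it is $O(A^{n-1})$ (this is where $T(q)>0$, equivalently $\sum_i p_i'm^{-T_i(q)}$ not too small relative to the subleading base, must be used, together with $q'<q$ and Jensen), and then checking the claim that for $q\in(1,2]$ the polynomial degenerates to a constant — which really requires that when $A=1$ the recursion is $a_n\le \theta a_{n-1}+C$ with $\theta<1$ giving a bounded sequence, and when $A>1$ the geometric term $A^{n-1}$ dominates any arising linear factor. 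One must also handle the measurability/independence structure carefully so that the conditional i.i.d.\ claim for $X_{n-1}^{(j)}(x')$ under $\mathbb{P}\otimes\eta$ is rigorous, using \eqref{Yuv} and the independence of the $Y(u,v)$ from the environment up to the relevant generation.
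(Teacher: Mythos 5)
Your outline reproduces the paper's overall recursion-on-$n$ strategy, but two steps as you describe them would fail.

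First, the lower bound. You dismiss the inequality $\bigl(\sum_j V_{i,j}X_{n-1}^{(j)}\bigr)^q\ge\sum_j V_{i,j}^q\bigl(X_{n-1}^{(j)}\bigr)^q$ as ``false for $q>1$'', but it is true: for non-negative numbers and $q\ge 1$ the map $x\mapsto x^q$ is super-additive (normalize the sum to $1$ and use $a^q\le a$ for $a\in[0,1]$), and this is exactly the mechanism of the paper (Lemma~\ref{lem-2.1}(i)), giving $\E(X(u)^q)\ge m^{-T_{u_1}(q)}\E(X(\sigma u)^q)$, hence after iteration and $\E(Y^q)\ge\E(Y)^q=1$ the bound $\bigl(\sum_i p_i'm^{-T_i(q)}\bigr)^n$, which combined with Jensen's $\E_{\mathbb{P}\otimes\eta}(X_n^q)\ge1$ yields $A^n$. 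The substitutes you propose do not deliver this factor: isolating a single term $j$ only produces one summand $\E(V_{i,j}^q)$ rather than $m^{-T_i(q)}=\E\bigl(\sum_jV_{i,j}^q\bigr)$, and ``convexity the other way'' gives an upper, not a lower, bound. So as written the left inequality in \eqref{e-p1} is not established, although the cure is precisely the inequality you discarded.

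Second, the upper bound for general $q$. For $q\in(1,2]$ your scheme coincides with the paper's (the cross terms carry exponents $q/2\le1$, hence are bounded by a constant, and the recursion $a_n\le Ba_{n-1}+C$ closes; note though that the constant prefactor requires $B=\sum_ip_i'm^{-T_i(q)}<1$ when $A=1$, as in Proposition~\ref{mom+estimate}, the borderline $B=1$ only giving a linear factor). But for larger $q$ the multinomial expansion of $\bigl(\sum_jV_{i,j}^{q/\lceil q\rceil}X^{(j)\,q/\lceil q\rceil}\bigr)^{\lceil q\rceil}$ does not reduce the cross terms to a single moment $\E(X_{n-1}^{q'})$ with $q'<q$: already for $\lceil q\rceil=4$ one meets products such as $\E(X_{n-1}^{q/2})^2$, and in general products $\prod_j\E(X(u)^{t_j})$ with several $t_j>1$, $\sum_jt_j\le q$, coupled to the environment inside $\sum_u\eta([u])\prod_j\E(X(u)^{t_j})$. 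Showing that these contribute only $O(\mathrm{poly}(n)\,A^{n-1})$ is the heart of the proof and is not obtained by ``interpolation'': a crude H\"older bound $\prod_j\E(X^{t_j})\le\max(1,\E(X^q))$ feeds $a_{n-1}$ back into the recursion and ruins the sharp base $A$, while bounding each factor separately produces bases whose product may exceed $A$. The paper handles this by a simultaneous induction over multi-indices $(t_1,\ldots,t_k)$ (Proposition~\ref{mom+estimate}(ii)), whose engine is the letter-by-letter concavity comparison $\sum_iT_j(q_i)\ge\min\{0,T_j(q)\}$ and its averaged form $\sum_jp_j'm^{-\sum_iT_j(q_i)}\le A$ (Lemmas~\ref{lem-1.1} and~\ref{lem-1.2}), plus a termination argument (Lemma~\ref{lem-n}) showing the family of multi-indices generated by the recursion is exhausted after finitely many steps, each step costing one polynomial degree — whence $f_q(n)$. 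You correctly identify this bookkeeping as the main obstacle, but the idea that resolves it (the joint induction together with the concave-function lemmas) is missing from your proposal, so the claimed $O(A^{n-1})$ control of the cross terms is unjustified for $q>2$.
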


 \begin{cor}\label{momestimate'} Let $q>1$ such that $T(q)>0$. Then there exists  a polynomial  $f_q$ depending on $W$ and $q$ such that
\begin{equation}
\label{e-pp1}
 m^{-n \min \{\tau_\nu(q), T(q)\}} \leq  \mathbb{E}\Big (\sum_{u\in\Sigma_n}\pi_*\mu([u])^q\Big )\leq  f_q(n)  m^{-n \min \{\tau_\nu(q), T(q)\}}
\end{equation}
for all $n\in \N$.
 \end{cor}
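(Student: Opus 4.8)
The plan is to deduce the estimate directly from Proposition~\ref{mom+estimate'} by exploiting the decomposition \eqref{pimu}, namely $\pi_*\mu([u])=\nu(u)\,X(u)$. First I would write
$$
\mathbb{E}\Big(\sum_{u\in\Sigma_n}\pi_*\mu([u])^q\Big)=\sum_{u\in\Sigma_n}\nu(u)^q\,\mathbb{E}(X(u)^q).
$$
Since the random variables $X(u)$, $u\in\Sigma_n$, are all copies of $X_n$ (their law depends only on $|u|=n$ through the i.i.d.\ structure of the weights), one has $\mathbb{E}(X(u)^q)=\mathbb{E}_{\mathbb{P}}(X_n(x)^q)$ for any $x$ with $x_{|n}=u$; this quantity does not depend on $u$ only if the $T_i$ are all equal, so instead the right normalization is to average against $\nu$ itself. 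Concretely, introduce the probability vector $p_i'=p_i$ (so $\eta=\nu$) and observe that
$$
\sum_{u\in\Sigma_n}\nu(u)^q\,\mathbb{E}(X(u)^q)=\Big(\sum_{u\in\Sigma_n}\nu(u)^q\Big)\cdot\frac{\sum_{u\in\Sigma_n}\nu(u)^q\,\mathbb{E}(X(u)^q)}{\sum_{u\in\Sigma_n}\nu(u)^q},
$$
which is awkward; the clean route is rather to rewrite $\nu(u)^q=\nu(u)\cdot\nu(u)^{q-1}$ and compare with the measure $\mathbb{P}\otimes\nu$ after a change of the Bernoulli vector.

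Here is the step I would actually carry out. Fix $q>1$ with $T(q)>0$. Using \eqref{psi}, set $m^{\tau_\nu(q)}p_i^q=:\widehat p_i$, which defines the probability vector of $\nu_q$. Then
$$
\sum_{u\in\Sigma_n}\pi_*\mu([u])^q=\sum_{u\in\Sigma_n}\nu(u)^q\,X(u)^q
= m^{-n\tau_\nu(q)}\sum_{u\in\Sigma_n}\nu_q(u)\,X(u)^q = m^{-n\tau_\nu(q)}\,\mathbb{E}_{\nu_q}\big(X_n^q\big),
$$
where the last expectation is over the symbolic variable $x$ distributed according to $\nu_q$. Taking $\mathbb{P}$-expectation and applying Proposition~\ref{mom+estimate'} with $\eta=\nu_q$, i.e.\ with $p_i'=\widehat p_i=m^{\tau_\nu(q)}p_i^q$, gives
$$
m^{-n\tau_\nu(q)}A^n\;\le\;\mathbb{E}\Big(\sum_{u\in\Sigma_n}\pi_*\mu([u])^q\Big)\;\le\; f_q(n)\,m^{-n\tau_\nu(q)}A^n,
$$
with $A=\max\{1,\ \sum_i \widehat p_i\,m^{-T_i(q)}\}=\max\{1,\ m^{\tau_\nu(q)}\sum_i p_i^q m^{-T_i(q)}\}$. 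By \eqref{psi}, $\sum_i p_i^q m^{-T_i(q)}=m^{-T(q)}$, so $A=\max\{1,\ m^{\tau_\nu(q)-T(q)}\}$ and hence $m^{-n\tau_\nu(q)}A^n = \max\{m^{-n\tau_\nu(q)},\ m^{-nT(q)}\}=m^{-n\min\{\tau_\nu(q),T(q)\}}$. This yields \eqref{e-pp1} with the same polynomial $f_q$; when $q\in(1,2]$ the factor $f_q(n)$ is a constant, as recorded in Proposition~\ref{mom+estimate'}.

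The only genuinely delicate point is the bookkeeping identity $\sum_{u\in\Sigma_n}\nu_q(u)X(u)^q=\mathbb{E}_{\nu_q}(X_n^q)$ together with the fact that Proposition~\ref{mom+estimate'} is stated for an arbitrary Bernoulli base measure $\eta$, so one must check that the hypotheses there ($q>1$, $T(q)>0$) are exactly the ones available and that the constant $A$ there produces precisely $m^{-n\min\{\tau_\nu(q),T(q)\}}$ after the $\tau_\nu(q)$-renormalization — which is what the computation above verifies via \eqref{psi}. No further probabilistic input is needed; the corollary is a direct corollary of the proposition, as its placement in the paper suggests.
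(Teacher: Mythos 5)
Your proof is correct and is essentially the paper's own argument: rewrite $\nu([u])^q=m^{-n\tau_\nu(q)}\nu_q([u])$, apply Proposition~\ref{mom+estimate'} with $\eta=\nu_q$, and use \eqref{psi} to identify $A=\max\{1,m^{\tau_\nu(q)-T(q)}\}$, so that $m^{-n\tau_\nu(q)}A^n=m^{-n\min\{\tau_\nu(q),T(q)\}}$. The exploratory first paragraph is superfluous but harmless, and the final side remark about a constant $f_q$ for $q\in(1,2]$ goes slightly beyond what the statement asks (the sharper versions in the paper require in addition $\tau_\nu(q)<T(q)$), but this does not affect the validity of the proof of \eqref{e-pp1}.
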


 \begin{pro}\label{negmom2'}
 Let $\eta$ be the Bernoulli product measure on $\Sigma$ generated by  a probability vector $(p_0',\ldots, p_{m-1}')$. Assume that $T(q)>0$ and $\sum_{i=0}^{m-1} p_i' m^{-T_i(q)}<1$ for some $q\in (1,2]$, and that there exists $c\in (0,1)$ such that the following two conditions are satisfied:
 \begin{itemize}
 \item[(i)] $\Bbb P(\sup_{0\leq j\leq m-1} V_{i,j}>c)=1$ for all $0\leq  i\leq m-1$;
 \item[(ii)] $\E(\#\{j: V_{i,j}>c\})>1$  for all $0\leq  i\leq m-1$.
 \end{itemize}
 Then there exists $b>0$ such that $$
 \sup_{n\ge 1}\E_{\mathbb P\otimes \eta} (X_n^{-b})<\infty.
 $$
 \end{pro}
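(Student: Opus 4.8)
The statement to prove is Proposition~\ref{negmom2'}: under the hypotheses $T(q)>0$ and $\sum_i p_i'm^{-T_i(q)}<1$ for some $q\in(1,2]$, together with the uniform lower bound conditions (i) and (ii) on the $V_{i,j}$'s, there is $b>0$ with $\sup_n\E_{\mathbb P\otimes\eta}(X_n^{-b})<\infty$. The plan is to exploit the natural recursive (statistical self-similarity) structure of $X_n$ and to run a Mandelbrot-martingale-type contraction argument to control negative moments, in the spirit of Liu's arguments on negative moments of fixed points of smoothing transforms, but carried out in the Bernoulli random environment $\eta$.

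\emph{Step 1: the recursion for $X_n$.} Writing out \eqref{X(u)}, for $x\in\Sigma$ one has, conditioning on the first generation,
\[
X_{n}(x)=\sum_{j=0}^{m-1}V_{x_1,j}\,\widehat X_{n-1}^{(j)},
\]
where, given the environment coordinate $x_1$ and the first-level weights $(V_{x_1,j})_j$, the $\widehat X_{n-1}^{(j)}$ are independent copies of $X_{n-1}$ in the shifted environment $\sigma x$ — more precisely of $X_{n-1}$ under $\mathbb P\otimes\eta$ (the environment being i.i.d.\ product, the shift leaves the law unchanged). Note $X_0\equiv Y$, which is $\ge$ some positive random variable of positive probability; more usefully, $\E(Y)=1$ and $Y>0$ a.s.\ on $\{\mu\ne 0\}$, and since here we are free to enlarge the probability space, the key structural fact is the self-similar recursion above. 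Because of hypothesis (i), with probability one there is at least one index $j_0=j_0(x_1,\omega)$ with $V_{x_1,j_0}>c$; hence $X_n(x)\ge c\,\widehat X_{n-1}^{(j_0)}$ deterministically, which already gives a crude lower bound, but to get a \emph{uniform} negative-moment bound we need the multiplicativity of the ``many large children'' condition (ii).

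\emph{Step 2: a Laplace-transform / exponential-moment formulation.} Set $\phi_n(t)=\E_{\mathbb P\otimes\eta}(e^{-tX_n})$ for $t\ge 0$. The recursion gives
\[
\phi_n(t)=\E\Big[\prod_{j=0}^{m-1}\phi_{n-1}\big(tV_{x_1,j}\big)\Big],
\]
the expectation over the environment letter $x_1\sim(p_i')_i$ and over $(V_{x_1,j})_j$. Keeping only the children with $V_{x_1,j}>c$ and bounding $\phi_{n-1}\le 1$ for the others, and using $\phi_{n-1}(tv)\le\phi_{n-1}(tc)$ for $v>c$ (since $\phi_{n-1}$ is decreasing), we get
\[
\phi_n(t)\le \E\big[\phi_{n-1}(tc)^{\,M}\big],\qquad M:=\#\{j:V_{x_1,j}>c\},
\]
where $M\ge 1$ a.s.\ and $\E(M)>1$ (uniformly in the environment letter, by (ii)). A convexity/Jensen argument together with $\E(M)=:\kappa>1$ shows that $\psi_n:=-\log\phi_n$ satisfies, roughly, $\psi_n(t)\ge \rho\,\psi_{n-1}(tc)$ for a constant $\rho\in(1,\kappa]$ once $\psi_{n-1}(tc)$ is large (one has to be a bit careful: one uses that $e^{-M\psi}$ is convex in $M$, so $\E[\phi^M]\le$ something, and instead bounds $\phi_n(t)\le (\text{something}<1)$ when $t$ is large). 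Iterating this inequality over the scales $t, tc, tc^2,\dots$ produces a bound of the form $\phi_n(t)\le \exp(-\gamma\, t^{\beta})$ for $t$ in a range growing geometrically with $n$, with exponents $\gamma,\beta>0$ \emph{independent of $n$}. Then the standard identity
\[
\E_{\mathbb P\otimes\eta}(X_n^{-b})=\frac{1}{\Gamma(b)}\int_0^\infty t^{\,b-1}\phi_n(t)\,dt
\]
turns a uniform-in-$n$ Gaussian-type (or stretched-exponential) decay of $\phi_n$ at infinity into a uniform bound on $\E(X_n^{-b})$, for every $b>0$ — here we also must check that $\phi_n(t)$ is integrable near $0$, which holds for any $b>0$ trivially since $\phi_n\le 1$.

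\emph{Step 3: where the hypotheses $T(q)>0$, $\sum_i p_i'm^{-T_i(q)}<1$ enter.} The pure contraction argument of Step 2 gives a bound of the form $\sup_n\E(X_n^{-b})\le C_b<\infty$ for \emph{all} $b$ provided one can start the induction, i.e.\ provided $\phi_{n_0}(t)$ is already small for some scale — and this is where one must guarantee that $X_{n_0}$ is not concentrated near $0$ at the initial scale, uniformly in $n$. The positive-moment control from Proposition~\ref{mom+estimate'} and Corollary~\ref{momestimate'} gives, under $T(q)>0$, that $\E_{\mathbb P\otimes\eta}(X_n^q)$ is bounded when $\sum_i p_i'm^{-T_i(q)}\le 1$ and polynomially controlled otherwise; combined with the a.s.\ convergence $X_n\to X_\infty$ (the limiting density), this pins down the law of $X_n$ away from degeneracy at $0$ uniformly in $n$, which feeds the base step of the induction. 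The strict inequality $\sum_i p_i'm^{-T_i(q)}<1$ is precisely what is needed so that the relevant Mandelbrot martingale in the $\eta$-environment is \emph{uniformly integrable with a spectral gap}, making $X_\infty>0$ a.s.\ and giving the needed non-degeneracy; equivalently it guarantees $A=1$ in Proposition~\ref{mom+estimate'}, so $\E(X_n^q)$ is \emph{bounded}, not just polynomially bounded.

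\emph{Main obstacle.} The crux is Step 2: running the geometric iteration of the Laplace-transform inequality $\phi_n(t)\le\E[\phi_{n-1}(tc)^{M}]$ carefully enough to extract constants $\gamma,\beta$ independent of $n$ — the difficulty is that $M$ is random and possibly equal to $1$ with positive probability, so one cannot simply use $\phi^M\le\phi^{\kappa}$; instead one must argue that over several generations the number of ``surviving large branches'' grows like $\kappa^k$ with high probability (a Galton–Watson supercriticality estimate, using $\E(M)>1$ from hypothesis (ii)), and only then does the product of $\phi_{n-1}$'s over those branches become small. Handling the random-environment dependence (the law of $M$ depends on the environment letter $x_1$, but conditions (i)–(ii) hold for \emph{all} $i$, so this is uniform) is routine once the deterministic-environment case is done. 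I expect the bulk of the work to be this branching/large-deviation bookkeeping, with the connection to $T(q)$ and to the results of Section~\ref{MEST} being comparatively soft.
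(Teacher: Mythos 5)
Your plan stalls exactly where the paper warns the real difficulty lies: the bound must be uniform in $n$ for \emph{all} $t$, and your Step 2, even if carried out, only controls $\phi_n(t)=\E_{\mathbb{P}\otimes\eta}(e^{-tX_n})$ on a range $t\lesssim c^{-Cn}$ growing with $n$. The identity $\E(X_n^{-b})=\Gamma(b)^{-1}\int_0^\infty t^{b-1}\phi_n(t)\,dt$ then still requires decay for $t$ beyond that range, i.e.\ control of the event that $X_n$ is smaller than the smallest scale $c^n$ the branching can resolve at generation $n$; there the behaviour of $\phi_n$ is governed by the left tails of the terminal factors $Y(x_{|n},v)$, and the naive bound carries a constant of order $c^{-n\beta}$ which destroys uniformity. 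This is precisely why the paper states that the negative moments of $X_n$ ``cannot be directly derived'' from those of the limit, and why its proof of the uniform bound has a second, separate half: finiteness of $\E(Y^{-\beta})$ for small $\beta$ (which uses hypothesis (i) through Liu's theorem), the recursive estimate $\E_{\mathbb{P}\otimes\eta}(X_n^{-\beta})\le g(\beta)^n\,\E(Y^{-\beta})$ obtained from the first-generation decomposition and convexity, the choice $u=\log(t)^2/t$ turning this into $\E_{\mathbb{P}\otimes\eta}(e^{-tX_n})=O(t^{-\beta/3})$ uniformly in $n$ once $t\ge m^{3\theta n}$, and a log-convexity interpolation for the intermediate range. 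Nothing in your proposal plays this role. Relatedly, your claims of stretched-exponential decay of $\phi_n$ and of finiteness of $\E(X_n^{-b})$ for \emph{every} $b>0$ are false in general: the left tail of $X_n$ (and of its limit) is only polynomial, e.g.\ when $\mathbb{P}(N_i=1)>0$, so only sufficiently small $b$ can work, which is all the statement asserts.

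Two further points would need repair even in the moderate-$t$ regime. First, the recursion $\phi_n(t)=\E\big[\prod_j\phi_{n-1}(tV_{x_1,j})\big]$ is wrong for the annealed transform: the sibling copies $\widehat X_{n-1}^{(j)}$ are independent only \emph{conditionally on the environment} $\sigma x$, which they share, so the product factorizes only for the quenched transform $\phi_x(t)$ (and the correlation goes the wrong way for the upper bound you want). The paper therefore works quenched and averages over $\eta$ at the end; the ``Galton--Watson supercriticality estimate'' you defer to then becomes a genuine large-deviation bound for the branching process \emph{in random environment} $N_n(x_{|n})$ (the paper invokes Huang--Liu), since the offspring law depends on the environment letter and the growth rate fluctuates with $x$. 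Second, your Step 3 misreads the role of the hypotheses $T(q)>0$ and $\sum_i p_i'm^{-T_i(q)}<1$: bounded positive moments say nothing about non-degeneracy of $X_n$ near $0$. In the paper these hypotheses are used, after passing to $s\in(1,q]$ with $\sum_i p_i'T_i(s)>0$, to get $\E_{\mathbb{P}\otimes\eta}(X^s)<\infty$ for the limit, which controls the quenched Laplace transform at scale $t\asymp1$ (the quantity $K_x$, bounded in $\eta$-measure by Markov's inequality) and thereby starts the multiplicative iteration; the left-tail input comes entirely from conditions (i)--(ii) and from $\E(Y^{-\beta})<\infty$, not from the $L^q$ bounds.
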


\subsection{Proof of Theorem~\ref{ABS}} \

\begin{proof}[Proof of Theorem~\ref{ABS}(1)] (i)
%
Since $(\Sigma,d)$ satisfies the Besicovitch covering property, we have almost surely
$
\pi_*\mu_\omega(\mathrm{d}x)= f(\omega,x) \, \nu(\mathrm{d}x)+\rho_\omega(\mathrm{d}x)$, where $\rho_\omega$ is a Borel measure singular with respect to $\nu$ and
\begin{equation}\label{density}
f(\omega,x)=\lim_{n\to\infty} \left(X_n(\omega,x)=\frac{\pi_*\mu_\omega([x_{|n}])}{\nu(x_{|n})}\right ),
\end{equation}
$\nu$-almost everywhere. Thus, if $\mathbb{E}_{\mathbb{P}\otimes \nu}( f)= \mathbb{E}(\|\pi_*\mu\|)=1$, we have $\rho_\omega=0$ almost surely, i.e. $\pi_*\mu$ is almost surely absolutely continuous with respect to $\nu$.

We know by  the construction of $\mu$ that for all $n\ge 1$ we have $\mathbb{E}_{\mathbb{P}\otimes \nu}(X_n)=\E(\|\mu_n\|)=1$. This implies that for all $\lambda\in (0,1)$ the sequence $(X_n^\lambda)_{n\ge 1}$ is uniformly integrable with respect to $\mathbb{P}\otimes \nu$,  hence by \eqref{density} we have  $\lim_{n\to\infty} \mathbb{E}_{\mathbb{P}\otimes \nu}(X_n^\lambda)=\mathbb{E}_{\mathbb{P}\otimes \nu}(f^\lambda)$.

Next  we claim that under $\mathbb{P}\otimes \nu$, $X_n$ converges in law to a  random variable $\widetilde X$. We postpone its proof to the next paragraph. Since  for any given $\lambda\in (0,1)$ the sequence $(X_n^\lambda)_{n\ge 1}$ is uniformly integrable,  we have $\lim_{n\to\infty}\mathbb{E}_{\mathbb{P}\otimes \nu}(X_n^\lambda)=\E_{\mathbb{P}\otimes \nu}(\widetilde X^\lambda)$. Hence $\mathbb{E}_{\mathbb{P}\otimes \nu}(f^\lambda)=\E_{\mathbb{P}\otimes \nu}(\widetilde X^\lambda)$ for all $\lambda\in (0,1)$. Furthermore, if $\E_{\mathbb{P}\otimes \nu}(\widetilde X)=1$, letting $\lambda$  tend to 1 we get $\mathbb{E}_{\mathbb{P}\otimes \nu}( f)=1$.

We now prove that $X_n$ converges in law to a  random variable $\widetilde X$ such that $\E_{\mathbb{P}\otimes \nu}(\widetilde X)=1$.

 By the definition of $X_n(x)$, for any $t>0$ we have
$$
\mathbb E_{\mathbb P\otimes\nu}(e^{-t X_n})=\mathbb E_{\mathbb P\otimes\nu} \Big (\prod_{v\in\Sigma_n} \phi_Y\Big (t\prod_{j=1}^{n}V_{x_j,v_j}(x_{|j-1},v_{|j-1})\Big ),
$$
where  $\phi_Y$ stands for the Laplace transform of $Y$, i.e. $\phi_Y(t)=\mathbb{E}(e^{-tY})$.

Let us show that $$M_n(x):=\max_{v\in\Sigma_n}\prod_{j=1}^{n}V_{x_j,v_j}(x_{|j-1},v_{|j-1})$$ converges in law to~0 under $ \mathbb P\otimes\nu$,  as $n$ tends to $\infty$. For $x\in \Sigma$, let  $\mathbb{Q}_x$ be the probability measure on $$(\Omega\times \Sigma,\sigma(V_{x_n}(x_{|n-1},v): n\ge 1,\, v\in \Sigma_{n-1})\otimes \mathcal{B}(\Sigma))$$
 whose restriction to $$\sigma(V_{x_j}(x_{|n-1},v): 1\le j\le n,\, v\in \Sigma_{j-1})\otimes \sigma([v]: v\in\Sigma_n)$$ is determined  by $$\mathbb{Q}_{x,n}(A\times [v])= \mathbb{E}\big (\mathbf{1}_A(\omega) \prod_{j=1}^{n}V_{x_j,v_j}(x_{|j-1},v_{|j-1})\big )$$ for $A\in \sigma(V_{x_j}(v): 1\le j\le n,\, v\in \Sigma_{j-1})$ and $v\in\Sigma_n$.
This yields a new skew product measure $\rho(\mathrm{d}\omega,\mathrm{d}x,\mathrm{d}y)=\nu (\mathrm{d}x)\mathbb{Q}_x(\mathrm{d}\omega,\mathrm{d}y)$ on $\Omega\times \Sigma^2$. A direct computation shows that  the random variables $(\omega,x,y)\mapsto V_{x_j,y_j}(x_{|j-1},y_{|j-1})$ are i.i.d. with respect to $\rho$, and their logarithms are of expectation
$$
\sum_{i=0}^{m-1} p_i \sum_{j=0}^{m-1} \E(V_{i,j} \log V_{i,j})=-\log(m) \sum_{i=0}^{m-1}p_i T_i'(1-)<0.
$$ It follows from the strong law of large numbers that for $\rho$-almost every  $(\omega,x,y)$ one has $$\lim_{n\to\infty} \prod_{j=1}^{n}V_{x_j,y_j}(x_{|j-1},y_{|j-1})=0.$$

 Now fix $\epsilon>0$. We have
\begin{eqnarray*}
\mathbb{P}\otimes\nu (M_n(x)\ge \epsilon)&\le& \E_{\mathbb{P}\otimes\nu}\Big (\sum_{v\in \Sigma_n}\mathbf{1}_{\{\prod_{j=1}^{n}V_{x_j,v_j}(x_{|j-1},v_{|j-1})\ge \epsilon\}}\Big )\\&\le & \E_{\mathbb{P}\otimes\nu}\Big (\sum_{v\in \Sigma_n}\epsilon^{-1} \mathbf{1}_{\{\prod_{j=1}^{n}V_{x_j,v_j}(x_{|j-1},v_{|j-1})\ge \epsilon\}} \prod_{j=1}^{n}V_{x_j,v_j}(x_{|j-1},v_{|j-1})\Big )\\
&=&\epsilon^{-1}\rho \Big(\Big\{\prod_{j=1}^{n}V_{x_j,y_j}(x_{|j-1},y_{|j-1})\ge \epsilon\Big\}\Big),
\end{eqnarray*}
and the right hand side converges to 0.

Consequently, since $\mathbb E(Y)=1$, we have $\phi_Y(u)=e^{-u+o(u)}$ near $0+$, so for each $t>0$ we can write
\begin{eqnarray}\label{LT}
\mathbb E_{\mathbb P\otimes\nu}(e^{-t X_n})&=&\mathbb E_{\mathbb P\otimes\nu}\Big (\mathbf{1}_{\{M_n(x)<\epsilon\}}e^{-t\widetilde X_n(1+O(\epsilon))}\Big )+ \mathbb E_{\mathbb P\otimes\nu}\Big (\mathbf{1}_{\{M_n(x)\ge \epsilon\}}e^{-t X_n}\Big )\\
\nonumber&=& \mathbb E_{\mathbb P\otimes\nu}\Big (e^{-t\widetilde X_n(1+O(\epsilon))}\Big )+R_n,
\end{eqnarray}
where $|R_n|\le 2 \mathbb P\otimes\nu (M_n(x)\ge \epsilon)$ and $\widetilde X_n(1+O(\epsilon))\ge 0$. On the other hand, the information gathered in Appendix~\ref{B1} applied with $\eta=\nu$ and $U_i=V_i$  shows that $(\widetilde X_n(x,\cdot))_{n\ge 1}$ is a Mandelbrot martingale in the random environment defined by $\nu$, and $\widetilde X_n$  converges $\mathbb{P}\otimes\nu$-almost surely to a limit $\widetilde X$. We then deduce from the bounded convergence theorem and the fact that $\mathbb P\otimes\nu (M_n(x)\ge \epsilon)$ tends to 0 as $n\to\infty$ that  $\E_{\mathbb P\otimes \nu}(e^{-tX_n})$ converges to $\E_{\mathbb P\otimes \nu}(e^{-t\widetilde X})$. Moreover, the condition $\dim(\mu)-\dim(\nu)=\sum_{i=0}^{m-1}p_iT_i'(1-)>0$ is sufficient for $(\widetilde X_n)_{n\ge 1}$ to be uniformly integrable (Theorem~\ref{thmBK}), hence $\E_{\mathbb P\otimes\nu}(\widetilde X)=1$.
\medskip

(ii) Since $T'(1-)>0$, the assumption that $T$ is finite on a neighborhood of $1$ implies that $T(s)>0$, hence $\E(Y^s)<\infty$ on a right neighborhood of $1$ (see \cite{KP} or \cite{DL}). Moreover, the assumption $\dim (\mu)>\dim(\nu)$ is equivalent to $\sum_{i=0}^{m-1} p_iT_i'(1)>0$, hence we have $\sum_{i=0}^{m-1}p_im^{-T_i(s)}<1$ on a right neighborhood of $1$. Also, if $s\in (1,2]$ and both $\E(Y^s)<\infty$ and $\sum_{i=0}^{m-1}p_im^{-T_i(s)}<1$, then $\sup_{n\ge 1}\mathbb E_{\mathbb P\otimes\nu}(X_n(x)^{s})<\infty$ by Proposition~\ref{mom+estimate'}. For  any such $s>1$, using \eqref{pimu} we get
\begin{eqnarray*}
\int_{\Sigma}\Big (\frac{\pi_*\mu([x_{|n}])}{\nu([x_{|n}])}\Big )^{s-1}\,\pi_*\mu(\mathrm{d}x)=\sum_{u\in\Sigma_n}\mathbf{1}_{\{\nu([u])>0\}} \Big (\frac{\pi_*\mu([u])}{\nu([u])}\Big )^{s-1}\pi_*\mu([u])= \sum_{u\in\Sigma_n}\nu([u]) X(u)^s.
\end{eqnarray*}
Thus
\begin{eqnarray*}
\sup_{n\ge 1}\E\left (\int_{\Sigma}\Big (\frac{\pi_*\mu([x_{|n}])}{\nu([x_{|n}])}\Big )^{s-1}\,\pi_*\mu(\mathrm{d}x)\right )=\sup_{n\ge 1}\mathbb E_{\mathbb P\otimes\nu}(X_n(x)^{s})<\infty.
\end{eqnarray*}
Consequently,  by the Fatou lemma,
\begin{eqnarray*}
\E\left (\liminf_{n\to\infty}\int_{\Sigma}\Big (\frac{\pi_*\mu([x_{|n}])}{\nu([x_{|n}])}\Big )^{s-1}\,\pi_*\mu(\mathrm{d}x)\right )&\le& \liminf_{n\to\infty}\E\left (\int_{\Sigma}\Big (\frac{\pi_*\mu([x_{|n}])}{\nu([x_{|n}])}\Big )^{s-1}\,\pi_*\mu(\mathrm{d}x)\right )\\
&<&\infty,
\end{eqnarray*}
from which we get
$$
\liminf_{n\to\infty}\int_{\Sigma}\Big (\frac{\pi_*\mu([x_{|n}])}{\nu([x_{|n}])}\Big )^{s-1}\,\pi_*\mu(\mathrm{d}x)<\infty\quad \text{a.s.}
$$
Due to \cite[Theorem 2.12(3)]{Mattila}, this implies both the absolute continuity of $\pi_*\mu$ with respect to $\nu$ and the desired result about the density of $\pi_*\mu$ with respect to $\nu$.

\medskip

(iii) At first notice that our assumption implies that the support of $\pi_*\mu$ equals that of $\nu$ almost surely. In particular, $X(u)>0$ for all $u$ such that $\nu(u)>0$. Thus, for any $s>1$ we have almost surely
$$
\sum_{u\in\Sigma_n}\mathbf{1}_{\{\pi_*\mu([u])>0\}}  \Big (\frac{\nu([u])}{\pi_*\mu([u])}\Big )^{s-1}\nu([u])= \sum_{u\in\Sigma_n}\mathbf{1}_{\{\nu([u])>0\}}\nu([u]) X(u)^{1-s}
$$
and
\begin{eqnarray*}
\E\Big (\sum_{u\in\Sigma_n}\mathbf{1}_{\{\pi_*\mu([u])>0\}}  \Big (\frac{\nu([u])}{\pi_*\mu([u])}\Big )^{s-1}\nu([u])\Big)=\mathbb E_{\mathbb P\otimes\nu}(X_n(x)^{1-s}).
\end{eqnarray*}
Due to our assumption on the random vectors $V_i$ and the fact that for $q$ close enough to~1 we have $\sum_{i=0}^{m-1}p_im^{-T_i(q)}<1$,  Proposition~\ref{negmom2'} yields $\sup_{n\ge 1} \mathbb E_{\mathbb P\otimes\nu}(X_n(x)^{1-s})<\infty$ if $s$ is close enough to 1. Similarly to (ii), this implies
$$
\liminf_{n\to\infty}\int_{\Sigma}\Big (\frac{\nu([x_{|n}])}{\pi_*\mu([x_{|n}])}\Big )^{s-1}\,\nu(\mathrm{d}x)<\infty\quad \text{a.s.}
$$
hence both the absolute continuity of $\nu$ with respect to $\pi_*\mu$ and the desired result about the density of $\nu$ with respect to $\pi_*\mu$.
\end{proof}

\begin{proof}[Proof of Theorem~\ref{ABS}(2)] If $\dim(\mu)<\dim(\nu)$, there is nothing to prove since $\overline \dim_P(\pi_*\mu)\le \dim(\mu)$.

Suppose now that $\dim(\mu)=T'(1)=\dim(\nu)$. This time, under $\mathbb P\otimes\nu$,  the martingale  $\widetilde X_n(\omega,x)$ converges to 0 almost surely since  $\sum_{i=0}^{m-1}p_iT_i'(1-)=0$ (see Theorem~\ref{thmBK} again). This implies that $M_n(x)=\max_{v\in\Sigma_n}\prod_{j=1}^{n}V_{x_j,v_j}(x_{|j-1},v_{|j-1})$ converge  to~0 almost surely under $ \mathbb P\otimes\nu$. Using \eqref{LT} this time yields the convergence in law to 0 for $X_n$, and $\E_{\mathbb P\otimes\nu}(f^\lambda)=0$ for all $\lambda\in (0,1)$. Consequently, $f=0$ with $\mathbb P\otimes\nu$ probability 1, which is equivalent to the fact that  $\pi_*\mu$ and $\nu$ are almost surely mutually singular.
\end{proof}

\subsection{Proof of Theorem~\ref{DIM}(1)}When $\dim(\mu)>\dim(\nu)$, since by Theorem~\ref{ABS}(1)(i) $\pi_*\mu$ is absolutely continuous with respect to $\nu$, we already know that if $\mu\neq 0$, we have $\dim(\pi_*\mu)=\dim(\nu)$. However, under the assumption that $T$ is finite in a neighborhood of 1, we give an alternative proof which works regardless of the respective positions  of~$\dim(\mu)$ and $\dim(\nu)$, and independently of absolute continuity considerations.

We will use Corollary~\ref{momestimate'} and the following elementary lemma.

\begin{lem}\label{lemma2.3} Let $\rho$ be a positive and finite Borel measure on $\Sigma$. Let $D\ge 0$. If for all $\epsilon>0$ there exists $q>1$ such that $\sum_{n\ge 1} m^{n (q-1)(D-\epsilon)}\sum_{|u|=n} \rho([u])^q<\infty$, then $\underline \dim_{\rm loc}(\rho,x)\ge D$ for $\rho$-almost every $x$. Also, if for all $\epsilon>0$ there exists $q\in(0,1)$ such that $\sum_{n\ge 1} m^{n(q-1)(D+\epsilon)}\sum_{|u|=n} \rho([u])^q<\infty$, then $\overline \dim_{\rm loc}(\rho,x)\le D$ for $\rho$-almost every $x$.
\end{lem}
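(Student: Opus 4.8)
The plan is to prove both assertions by the same elementary scheme: a Chebyshev-type estimate at the level of generation-$n$ cylinders, followed by a Borel--Cantelli argument with respect to the measure $\rho$ itself. The only measure-theoretic input needed is that for any sequence of Borel sets $(A_n)_{n\ge1}$, convergence of $\sum_{n\ge1}\rho(A_n)$ forces $\rho(\limsup_n A_n)=0$, which is immediate since $\rho$ is a finite measure.

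First I would establish the lower bound for $\underline\dim_{\rm loc}$. Fix $\epsilon>0$ and let $q>1$ be as supplied by the hypothesis. Put $A_n=\{x\in\Sigma:\rho([x_{|n}])>m^{-n(D-\epsilon)}\}$. Writing $\rho(A_n)$ as the sum of $\rho([u])$ over those $u\in\Sigma_n$ with $\rho([u])>m^{-n(D-\epsilon)}$, and bounding each such term by $\rho([u])=\rho([u])^q\,\rho([u])^{1-q}\le\rho([u])^q\,\big(m^{-n(D-\epsilon)}\big)^{1-q}=m^{n(q-1)(D-\epsilon)}\rho([u])^q$ (valid because $1-q<0$ makes $t\mapsto t^{1-q}$ decreasing), one gets
$$\rho(A_n)\le m^{n(q-1)(D-\epsilon)}\sum_{|u|=n}\rho([u])^q.$$
By hypothesis the right-hand side is summable in $n$, so $\rho(\limsup_n A_n)=0$; that is, $\rho$-almost every $x$ lies in only finitely many $A_n$, and for such $x$ we have $\rho([x_{|n}])\le m^{-n(D-\epsilon)}$ for all large $n$, hence $\underline\dim_{\rm loc}(\rho,x)\ge D-\epsilon$. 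Taking $\epsilon$ along a sequence decreasing to $0$ and intersecting the corresponding full-measure sets yields $\underline\dim_{\rm loc}(\rho,x)\ge D$ for $\rho$-almost every $x$.

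The upper bound for $\overline\dim_{\rm loc}$ is handled symmetrically, with $q\in(0,1)$ in place of $q>1$. Fix $\epsilon>0$, take such a $q$, and set $B_n=\{x\in\Sigma:0<\rho([x_{|n}])<m^{-n(D+\epsilon)}\}$. The same splitting $\rho([u])=\rho([u])^q\,\rho([u])^{1-q}$, now with $1-q>0$ so that $t\mapsto t^{1-q}$ is increasing while $q-1<0$, gives $\rho(B_n)\le m^{n(q-1)(D+\epsilon)}\sum_{|u|=n}\rho([u])^q$, which is summable by hypothesis. One extra bookkeeping point enters here: the set $\{x:\rho([x_{|n}])=0\text{ for some }n\}=\bigcup_{n\ge1}\bigcup_{|u|=n,\ \rho([u])=0}[u]$ is $\rho$-null, so for $\rho$-almost every $x$ one has $\rho([x_{|n}])>0$ for all $n$. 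Combining this with $\rho(\limsup_n B_n)=0$, for $\rho$-almost every $x$ we obtain $\rho([x_{|n}])\ge m^{-n(D+\epsilon)}$ for all large $n$, hence $\overline\dim_{\rm loc}(\rho,x)\le D+\epsilon$; letting $\epsilon\to0$ completes the proof.

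There is no substantive obstacle in this lemma. The only points requiring attention are a matter of bookkeeping: keeping track of the sign of $q-1$ so that the Chebyshev step runs in the correct direction in each of the two cases, and recording the trivial but necessary fact that $\rho([x_{|n}])>0$ for $\rho$-almost every $x$ and all $n$ in the second part, without which the estimate on $B_n$ would not translate into an upper bound on $\overline\dim_{\rm loc}$.
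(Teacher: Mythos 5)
Your proof is correct and follows essentially the same route as the paper: the cylinder-level splitting $\rho([u])\le m^{n(q-1)(D\mp\epsilon)}\rho([u])^q$ is exactly the paper's Markov inequality applied to $\int_\Sigma\rho([x_{|n}])^{q-1}\,\rho(\mathrm{d}x)=\sum_{|u|=n}\rho([u])^q$, followed by the same Borel--Cantelli argument and passage $\epsilon\to0$.
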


\begin{proof}
Fix $\epsilon>0$. For all $q>1$ and $n\ge 1$, applying Markov's inequality we have
\begin{eqnarray*}
\rho\Big ( \Big \{x\in\Sigma: \; \frac{\log(\rho([x_{|n}]))}{-n\log(m)}\le D-\epsilon\Big\}\Big)&=&\rho\Big (\Big \{x\in\Sigma: \; \rho([x_{|n}])^{q-1}\ge m^{-n(q-1)(D-\epsilon)}\Big\}\Big)\\
&\le& m^{n(q-1)(D-\epsilon)}\ \int_\Sigma  \rho([x_{|n}])^{q-1}\, \rho(\mathrm{d}x)\\
&=&m^{n(q-1)(D-\epsilon)} \sum_{|u|=n} \rho([u])^q .
\end{eqnarray*}
Consequently, if $\sum_{n\ge 1}m^{n(q-1)(D-\epsilon)}\sum_{|u|=n} \rho([u])^q <\infty$, by the Borel-Cantelli lemma we get $\underline \dim_{\rm loc}(\rho,x)\ge D-\epsilon$ for $\rho$-almost every $x$.

The upper local dimension of $\rho$ is dealt with similarly.
\end{proof}

Recall that $\dim(\nu)=\tau_\nu'(1)$ and that almost surely, conditionally on $\mu\neq 0$, $\dim(\mu)=T'(1)$.  We deduce from corollary~\ref{momestimate'} that for $q>1$ close enough to 1, there exists a polynomial function $f_q$ such that for all $n\ge 1$ we have
\begin{eqnarray*}
\mathbb{E}\Big (\sum_{u\in\Sigma_n}\pi_*\mu([u])^q\Big )\le f_q(n)\cdot \begin{cases}
 m^{-n (q-1)\dim(\nu)+o(q-1)}&\text{if } T'(1)> \tau_\nu'(1)\\
m^{-n (q-1)T'(1)+o(q-1)}&\text{if } T'(1)\le \tau_\nu'(1)
\end{cases}
\end{eqnarray*}
as $q\to 1+$.  Fix $\epsilon>0$. Take $q$ close enough to $1$ so that the previous upper bound holds with $|o(q-1)|\le \epsilon(q-1)/4$. By Lemma~\ref{Borel} we conclude that, with probability 1, conditionally on $\mu\neq 0$,  for $n$ large enough we have $$m^{n(q-1)(D-\epsilon)}\sum_{|u|=n} \pi_*\mu([u])^q\le m^{-n\epsilon(q-1)/2},$$ with $D=\tau_\nu'(1)$ if $T'(1)>\tau_\nu'(1)$, and $D=T'(1)$ otherwise. Then Lemma~\ref{lemma2.3} yields the expected lower bound for $\underline \dim_{\rm loc}(\pi_*\mu,x)$, $\pi_*\mu$-almost everywhere.

To control $\overline \dim_{\rm loc}(\pi_*\mu,x)$, $\pi_*\mu$-almost everywhere, we only need to deal with the case $T'(1)> \tau_\nu'(1)$. Indeed, for $\pi_*\mu$-almost every $x$, we obviously have $\overline \dim_{\rm loc}(\pi_*\mu,x)\le \dim (\mu)$.

Now assume $T'(1)> \tau_\nu'(1)$. Let $q\in (0,1)$. We have
\begin{eqnarray*}
\mathbb{E}\Big (\sum_{u\in\Sigma_n}\pi_*\mu([u])^q\Big )
&=&\sum_{u\in\Sigma_n}\nu([u])^q \mathbb{E} (X(u)^q)\\
&\le & \sum_{u\in\Sigma_n}\nu([u])^q \mathbb{E} (X(u))^q= \sum_{u\in\Sigma_n}\nu([u])^q=m^{-n\tau_\nu(q)}.
\end{eqnarray*}
This is enough to conclude that $\overline \dim_{\rm loc}(\pi_*\mu,x)\le \tau_\nu'(1)$ for $\pi_*\mu$-almost every $x$
by using again Lemmas~\ref{Borel} and~\ref{lemma2.3}.

Putting together the previous arguments we conclude that with probability 1, conditionally on $\mu\neq 0$, $\pi_*\mu$ is exactly dimensional with $\dim(\pi_*\mu)=\dim(\nu)$ if $T'(1)>\tau_\nu'(1)$ and $\dim(\pi_*\mu)=\dim(\mu)$ if $T'(1)\le \tau_\nu'(1)$.

\subsection{Proof of Theorem~\ref{DIM}(2): Two approaches to the dimension of the conditional measures}\label{sec-3.2.1} We will give two different approaches to the calculation of the dimension of the conditional measures. The first one will assume that $T$ is finite in a neighborhood of $1$ and adapt the original approach by Peyri\`ere \cite{KP} and Liu-Rouault \cite{LR} to compute the dimensions of Mandelbrot measures. The second one, more, conceptual, will require no additional assumption, and combine a reduction to the case of Mandelbrot measures in random environment   with the  percolation approach developed by Kahane {\cite{K87}} to remove the extra hypothesis assumed on the moments of orders greater than 1 in \cite{KP,LR} in the case of Mandelbrot measures.

\medskip

{We start with preliminary definitions.} When $\mu\neq 0$, for $\pi_*\mu_\omega$-almost every $x$, there exists a conditional measure $\mu^x_\omega $ supported on $K^x=\pi^{-1} (\{x\})\cap K$, obtained as the weak-star limit, as $n\to\infty$, of the measures $\mu_{\omega,n}^x$  obtained on $\pi^{-1} (\{x\})$ by assigning uniformly the mass $\frac{\mu_\omega([x_{|n}]\times [J])}{\pi_*\mu_\omega([x_{|n}])}$ to each cylinder $[J]$ of generation~$n$, so that we have
$$
\mu_\omega (\mathrm{d}x,\mathrm{d}y)=\pi_*\mu_\omega (\mathrm{d}x)  \mu^x_\omega (\mathrm{d}y).
$$
{To be more specific, for any cylinder $[J]$, almost surely, the measurable set $$A_J=\{(\omega,x)\in\Omega\times \Sigma: \lim_{n\to\infty}\frac{\mu_\omega([x_{|n}]\times [J])}{\pi_*\mu_\omega([x_{|n}])} \text{ exists}\}$$ is of full $\widehat {\mathbb Q}$-probability, where  we define $\widehat {\mathbb Q }({\rm d}\omega,{\rm d}x)=\mathbb{P}({\rm d}\omega) \pi_*\mu_\omega(\mathrm{d}x)$, and for all $(\omega,x)$ in a subset $A'_J$ of $A_J$ of full $\widehat {\mathbb Q}$-probability, we have $ \mu^x_\omega (J)=\lim_{n\to\infty}\frac{\mu_\omega([x_{|n}]\times [J])}{\pi_*\mu_\omega([x_{|n}])}$. }

{ Suppose now that $T(1-)>\tau_\nu'(1)$, so that $\mathbb P$-almost surely,  $\pi_*\mu_\omega$ is absolutely continuous with respect to $\nu$.  There exists a measurable set $A'$ of full $\widehat {\mathbb Q}$-probability such that for all $(\omega,x)\in A'$, we have $$f_\omega(x)=\lim_{n\to\infty} \left(f_{\omega,n}(x):=\frac{\pi_*\mu_\omega ([x_{|n}])}{\nu([x_{|n}])}\right),$$
 where the limit exists and is positive. 

Set $A=A'\cap \bigcap_{J\in \Sigma_*}A'_J$. For all $(\omega,x)\in A$,   the sequence of measures $\widetilde\mu_{\omega,n}^x=  f_{\omega,n}(x) \mu_{\omega,n}^x $ weakly converges to the measure $\widetilde\mu_\omega^x$ defined as $ f_\omega(x) \mu_\omega^x$.

 Let
\begin{align*}
\Omega_A&=\{\omega:\  (\omega,x)\in A\text{ for some $x\in\Sigma$}\},\\
F^\omega&=\{x\in\Sigma: \ (\omega,x)\in A\}, \quad  \forall\,\omega\in\Omega_A.
\end{align*}

 Now, if $(\omega,x)\not\in  A$, set $\mu_\omega^x=\widetilde\mu_\omega^x=0$.

Also, for $n\ge 1$ and $(u,J)\in\Sigma_n\times\Sigma_n$ and $(L,K)\in \Sigma_p\times\Sigma_p$ we define
$$
Q^{u,J}(L,K)= \prod_{\ell=1}^p W_{L_\ell,K_\ell}(u (L_{|\ell-1}),J(K_{|\ell-1})),
$$
where $L_\ell$ and $K_\ell$ stand for the $\ell$-th letter of $L$ and $K$ respectively.

For $x\in  F^\omega$ and $J\in\Sigma_n$, we have
\begin{eqnarray*}
\widetilde \mu^x([J])=\lim_{p\to\infty} \widetilde \mu_p^x([J])= \lim_{p\to\infty} \frac{\mu([x_{|n}\cdot (\sigma^nx)_{|p}]\times [J])}{\nu ([x_{|n+p}])},
\end{eqnarray*}
and by construction of $\mu$, we have
$$
\frac{\mu([x_{|n}\cdot (\sigma^nx)_{|p}]\times [J])}{\nu ([x_{|n+p}])}= \frac{Q(x_{|n},J)}{\nu([x_{|n}])} \, X^{x_{|n},J}((\sigma^nx)_{|p}),
$$
where
\begin{eqnarray}\label{XnJ}
X^{x_{|n},J}((\sigma^nx)_{|p})=\sum_{K\in\Sigma_p}\frac{Q^{x_{|n},J}((\sigma^n x)_{|p},K)Y(x_{|n+p},JK)}{\nu([(\sigma^n x)_{|p}])}.
\end{eqnarray}
 Subsequently, we have
$$
\widetilde \mu^x([J])=\frac{Q(x_{|n},J)}{\nu([x_{|n}])} X^{x_{|n},J}(\sigma^nx),\text{ where }
X^{x_{|n},J}(\sigma^nx)=\lim_{p\to\infty} X^{x_{|n},J}((\sigma^n x)_{|p}),
$$
and for $y\in K^x$,
$$
\log (\widetilde\mu^x([y_{|n}]))=\log (Q(x_{|n},y_{|n})-\log (\nu([x_{|n}])) +\log (X^{x_{|n},y_{|n}}(\sigma^nx)).
$$

\noindent
{\bf First approach:}  Now observe that by  construction, ``$\mathbb{P}\otimes \nu$-almost surely, $\widetilde \mu^x$-almost everywhere'' is equivalent to ``$\mathbb{P}$-almost-surely, if $\mu\neq 0$, $\mu$-almost everywhere'', i.e. almost surely under the Peyri\`ere probability measure $\mathbb{P}(\mathrm{d}\omega)\mu_\omega (\mathrm{d}x,\mathrm{d}y)$. Under this measure, the random variables $\log (W_{x_k,y_k}(x_{|k-1},y_{|k-1})) - \log (p_{x_k})$, $k\ge 1$, are i.i.d.~and integrable, with expectation $\sum_{i,j}\E(W_{i,j}\log(W_{i,j}))-\sum_{i}p_j\log(p_j)$, hence by the strong law of large numbers we have
$$
\lim_{n\to\infty} \frac{\log (Q(x_{|n},y_{|n})-\log (\nu([x_{|n}]))}{-n\log m}=\dim (\mu)-\dim(\nu),
$$
$\mathbb{P}\otimes \nu$-almost surely, $\widetilde \mu^x$-almost everywhere.

To conclude  with this first approach, we  show that $\lim_{n\to\infty}\log (X^{x_{|n},y_{|n}}(\sigma^nx))/n=0$,  $\mathbb{P}\otimes \nu$-almost surely, $\widetilde \mu^x$-almost everywhere. To do so, we assume that $T$ is finite in a neighborhood of $1$. In particular, there exists  $\epsilon\in (0,1]$ such that $\mathbb E(Y^{1\pm\epsilon})<\infty$. By construction,  we have
\begin{eqnarray*}
\mathbb{E}\Big (\int_\Sigma \int_\Sigma (X^{x_{|n},y_{|n}}(\sigma^nx))^{\pm \epsilon}\widetilde \mu^x({\rm d}y)\nu(\mathrm{d}x)\Big )&=&\sum_{J\in\Sigma_n}\mathbb{E}\Big (\int_\Sigma  (X^{x_{|n},J}(\sigma^nx))^{\pm\epsilon}\widetilde \mu^x([J])\nu(\mathrm{d}x)\Big )\\
&=&\sum_{J\in\Sigma_n}\mathbb{E}\Big (\int_\Sigma  \frac{Q(x_{|n},J)}{\nu([x_{|n}])}(X^{x_{|n},J}(\sigma^nx))^{1\pm\epsilon}(\mathrm{d}x)\Big ).
\end{eqnarray*}
By the Fatou lemma we have
\begin{align*}
 \mathbb{E}\Big (& \int_\Sigma  \frac{Q(x_{|n},J)}{\nu([x_{|n}])}(X^{x_{|n},J}(\sigma_n(x)))^{1\pm\epsilon}\nu(\mathrm{d}x)\Big )\\
&\le\liminf_{p\to\infty} \mathbb{E}\Big (\int_\Sigma  \frac{Q(x_{|n},J)}{\nu([x_{|n}])}(X^{x_{|n},J}(\sigma^nx_{|p}))^{1\pm\epsilon}\nu(\mathrm{d}x)\Big )\\
&=\liminf_{p\to\infty} \mathbb{E}\Big ( \sum_{I\in\Sigma_n}Q(I,J) \sum_{L\in\Sigma_p}(X^{I,J}(L))^{1\pm\epsilon}\nu([L])\Big )\\
&=\liminf_{p\to\infty}  \sum_{I\in\Sigma_n}\mathbb{E}(Q(I,J))\mathbb{E}\Big ( \sum_{L\in\Sigma_p}(X^{I,J}(L))^{1\pm\epsilon}\nu([L])\Big ).
\end{align*}

Now we notice that $(X^{I,J}(L))_{L\in\Sigma_p}$ has the same probability distribution as  $(X(L))_{L\in\Sigma_p}$. Since $\sum_{i=0}^{m-1}p_iT_i'(1)>0$, using   Proposition~\ref{mom+estimate'}, for $\epsilon$ small enough we can get $C_\epsilon\ge 1$ such that for all $I,J\in\Sigma_n$ and $p\ge 1$
$$
\mathbb E\Big ( \sum_{L\in\Sigma_p}(X^{I,J}(L))^{1+\epsilon}\nu([L])\Big )\le C_{\epsilon} \max \left (1,\sum_{i=0}^{m-1}p_im^{-T_i(1+\epsilon)}\right)^n\le C_{\epsilon}.
$$
Moreover, $\mathbb E(X(L)^{1-\epsilon})\le \mathbb E(X(L))^{1-\epsilon}=1$. For such an $\epsilon$, we finally get that for all $\eta>0$, for all $n\ge 1$,
\begin{equation*}
\begin{split}
\mathbb{E} & \Big (\int_\Sigma \int_\Sigma (e^{\mp n\eta}X^{x_{|n},y_{|n}}(\sigma_n(x)))^{\pm\epsilon}\widetilde \mu^x({\rm d}y)\nu(\mathrm{d}x)\Big )\\
&\mbox{}\quad  \le C_{\epsilon} e^{-n\eta\epsilon}\sum_{(I,J)\in\Sigma_n\times\Sigma_n} \mathbb{E}(Q(I,J))=C_{\epsilon} e^{-n\eta\epsilon},
\end{split}
\end{equation*}
hence
$$
\mathbb{E}\int_\Sigma \int_\Sigma \sum_{n\ge 1}(e^{\mp n\eta}X^{x_{|n},y_{|n}}(\sigma_n(x)))^{\pm\epsilon}\widetilde \mu^x({\rm d}y)\nu(\mathrm{d}x)<\infty.
$$
It follows that with probability 1, if $\mu\neq 0$,  for $\nu$-almost every $x\in \mathrm{supp}(\mu)$,
$$
-\eta\le  \liminf_{n\to\infty}\log (X^{x_{|n},y_{|n}}(\sigma^n x))/n \le \limsup_{n\to\infty}\log (X^{x_{|n},y_{|n}}(\sigma^n x))/n\le \eta.
$$
Since $\eta>0$ can be taken arbitrarily small, we get the desired limit.

\medskip

\noindent
{\bf Second approach:} Now let us explain the more conceptual approach which does not assume anything else but $T'(1-)> \tau_\nu'(1)$.

Recall \eqref{XnJ} and write
\begin{eqnarray*}
X^{x_{|n},J}((\sigma^nx)_{|p})&=&\sum_{K\in\Sigma_p}\frac{Q^{x_{|n},J}((\sigma^n x)_{|p},K)Y(x_{|n+p},JK)}{\nu([(\sigma^n x)_{|p}])}\\
&=&\sum_{K\in\Sigma_p}Y(x_{|n+p},JK)\prod_{\ell=1}^pV_{x_{n+\ell},K_\ell}({x_{|n+\ell-1},J(K_{|\ell -1}})).\end{eqnarray*}

The proof of Theorem~\ref{DIM}(1)(a)(i) yields the convergence in law of $X_p$ to that of $\widetilde X_V$ as $p\to\infty$, where  $\widetilde X_V$ is the limit of the Mandelbrot martingale $\widetilde X_{V,n}$ defined in Appendix~\ref{B1} (take $U=V$ and $\eta=\nu$ there). This property  extends to the convergence, for every $n\ge 1$,  of the law of the vector $(X^{x_{|n},J}((\sigma^nx)_{|p}))_{J\in\Sigma_n}$ under $ \mathbb{P}\otimes\nu$  to that of the vector $(\widetilde X^{x_{|n},J}(\sigma^nx))_{J\in\Sigma_n}$, where  $\widetilde X^{x_{|n},J}(\sigma^nx)$ is the random variable  $\widetilde X_V^{x_{|n},J}(\sigma^nx)$ defined in \eqref{XUnx} in  Appendix~\ref{B1}. Consequently, since for each $p\ge 2$ we have the branching  property
$$
X^{x_{|n},J}((\sigma^nx)_{|p})=\sum_{j=0}^{m-1} X^{x_{|n+1},Jj}((\sigma^{n+1}x)_{|p-1}) V_{x_{n+1},j}(x_{|n},J),
$$
setting, for $p\ge n$,
$$
\rho_{\omega,n,p}^x:\; J\in \bigcup_{k=0}^n\Sigma_k\mapsto \widetilde X^{x_{||J|},J}((\sigma^{|J|}x)_{p})\cdot \prod_{k=1}^{|J|} V_{x_k,J_k}(x_{k-1},J_{k-1}),
$$
we get the convergence in law of $\mathbf{1}_{A}\rho_{\omega,n,p}^x$ to $\mathbf{1}_{A}\widetilde\mu^x_V$ restricted to $\bigcup_{k=0}^n\Sigma_k$ as $p\to\infty$, where $\widetilde\mu^x_V$ is the  Mandelbrot measure in a random environment described in \eqref{muUnx}.
However, by definition, for $(\omega,x)\in A$,   $\rho_{\omega,n,p}^x$ converges almost surely to $\widetilde\mu^x$ restricted to $\bigcup_{k=0}^n\Sigma_k$. Hence for all $n\ge 1$, we have the identity in distribution of the restrictions to  $\bigcup_{k=0}^n\Sigma_k$ of  $\widetilde\mu^x$ and $\mathbf{1}_{A}\widetilde\mu^x_V$, that is the equality in distribution of $\widetilde\mu^x$ and $\mathbf{1}_{A}\widetilde\mu^x_V$. Moreover, we notice that by construction,  up to a $\mathbb{P}\otimes \nu$ negligible set, $A$ is a subset of the set  $\Sigma_{A,V}$ of those points $(\omega,x)$ for which $\widetilde\mu^x_V\neq 0$. Consequently, if, conditionally on $\widetilde\mu^x_V\neq 0$, we have that $\widetilde\mu^x_V$ is exactly dimensional with dimension $\dim(\mu)-\dim(\nu)=\sum_{i=0}^{m-1}p_iT'_i(1-)$, then the same holds for $\widetilde\mu^x$.

Now, it is straightforward to adapt Kahane's percolation approach \cite{K87} developed for Mandelbrot measures in the so-called canonical case to get the conclusion.

At first we notice that conditionally on $\mu_V^x\neq 0$, the proof of the first approach applied to $\widetilde \mu^x_V$ instead of $\widetilde \mu^x$ yields $\overline \dim_{\mathrm{loc}}(\widetilde \mu^x_V,y)\le \sum_{i=0}^{m-1}p_iT'_i(1-))$  for $\widetilde \mu_V^x$-almost every $y$ (in the proof it just corresponds to proving that $-\eta\le \liminf_{n\to\infty}\widetilde X^{x_{|n},y_{|n}}_V(\sigma^nx)$ for all $\eta>0$, which holds because $\E_{\mathbb{P}\otimes \nu}(\widetilde X_V^{1-\epsilon})<\infty$ without additional assumption). Thus  $\overline\dim_P(\rho)\le \sum_{i=0}^{m-1}p_iT'_i(1-)$.

For each $\alpha\in (0,1)$, let $W^{(\alpha)}$ be a random variable taking value $m^\alpha$ with probability $m^{-\alpha}$ and value 0 with probability $1-m^{-\alpha}$. Then let $V^{(\alpha)}$ be a random vector whose coordinates are independent copies of $W^{(\alpha)}$, and consider $V^{(\alpha,\omega')}(u,v)_{(u,v)\in\Sigma_n\times\Sigma_n, n\ge 0}$, a sequence of independent copies of $V^{(\alpha)}$ defined on a space $(\Omega',\mathcal A',\mathbb{P}') $. For each $(\omega',x)\in\Omega'\times\Sigma$, consider the sequence of operators $(Q_{\lambda,n} (\omega',x))_{n\ge 1}$ acting on the finite non-negative Borel measures on $\pi^{-1}(\{x\})$ as follows:
$$
Q_{\alpha,n}(\omega',x)(\rho)(\mathrm{d}y)=  \left (\prod_{k=1}^n V^{(\alpha,\omega')}_{x_k,y_k}(x_{|k-1},y_{|k-1})\right)\cdot \rho (\mathrm{d}y).
$$
For each such measure $\rho$, $Q_{\alpha,n}(\omega',x)(\rho)$ is a martingale which converges $\mathbb{P}'$-almost surely in the weak-star topology to a measure denoted as $Q_\alpha(\omega',x)\cdot\rho$.  Moreover, one deduces from \cite[Corollaire du th\'eor\`eme 1]{K87} that if $\rho\neq 0$ and the martingale $(\|Q_{\alpha,n}(\omega',x)(\rho)\|)_{n\ge 1}$ is uniformly integrable (in Kahane's terminology it means that $Q_{\alpha,n}$ acts fully on $\rho$), then $\underline\dim_H(\rho)\ge \alpha$.

Now we consider the product space $(\Omega\times\Sigma)\times\Omega'$ endowed with the tensor $\sigma$-field $\mathcal A\otimes \mathcal B(\Sigma)\otimes \mathcal A'$ and the product probability measure $\mathbb{P}\otimes\nu\otimes\mathbb{P}'$. It remains to prove that for all $\alpha\in (0,\sum_{i=0}^{m-1}p_iT'_i(1-))$, for $\mathbb{P}\otimes\nu$-almost every $(\omega,x)$, conditionally on $\mu_V^x\neq 0$, the martingale $(\|Q_{\alpha,n}(\omega',x)(\widetilde\mu_V^x)\|)_{n\ge 1}$ is uniformly integrable. The proof follows similar lines as in the deterministic environment case (see \cite{WW,FK} for details).  It comes from the fact that the Mandelbrot martingale in the variable $(\omega,\omega')$
$$
\widetilde X_{V^{(\alpha)}V,n}(\omega,\omega',x)=\sum_{|v|=n} \prod_{k=1}^n V^{(\alpha,\omega')}_{x_k,y_k}(x_{|k-1},y_{|k-1}) V^\omega_{x_k,y_k}(x_{|k-1},y_{|k-1})
$$
in random environment $x$ taken under $\nu$ is uniformly integrable. Indeed,
$$
-\E_{(\mathbb P\otimes\mathbb{P}') \otimes\nu}\left (\sum_{j=0}^{m-1}V^{(\alpha,\omega')}_{x_1,j}V^\omega_{x_1,j})\log (V^{(\alpha,\omega')}_{x_1,j}V^\omega_{x_1,j})\right )=\sum_{i=0}^{m-1}p_iT'_i(1-)-\alpha>0;
$$
consequently, Theorem~\ref{thmBK} yields the desided conclusion.

\subsection{Proof of Corollary~\ref{DGF}} Let $\varphi :\; h\in \R_+\mapsto \log_m\sum_{i=0}^{m-1}\E(N_i)^{h}$.

We begin with the proof of  \eqref{vp}. The  upper bound for the box dimension of $\pi(K)$ can be obtained as a consequence of our approach to the multifractal analysis, or by using Falconer's argument in \cite{Fal}   (see also \cite{DekGri}). To see it, notice that  at a given generation $n$ of the construction, $\pi(K_n)$ is covered by at most $\sum_{|u|=n} (\#\{v\in\Sigma_n: Q(u,v)>0\})^h$, for all $0\le h\le 1$, which yields that the expectation of this number is at most $(\inf_{0\le h\le 1} m^{\varphi(h)})^n$.  Applying Lemma \ref{Borel}, we obtain that $\overline{\dim}_B(\pi(K))\leq \inf_{0\le h\le 1} \varphi(h)$.
  Thus it remains to  derive a lower bound for the Hausdorff dimension  of $\pi(K)$.

Let $h_0$ be a  point at which $\inf_{0\le h\le 1} \varphi(h)$ is attained. Due to the convexity and the analyticity of $\varphi$, such a point is not unique if and only if $\E(N_i)=1$ when $\E(N_i)>0$.  Let us consider the Mandelbrot measure associated with the following weights:
$$
W'_{i,j}=p'_i V'_{i,j}\quad\text{with } V'_{i,j}= \begin{cases}
\displaystyle \frac{\mathbf{1}_{\{W_{i,j}>0\}}}{\E(N_i)}&\text{  if $\E(N_i)>0$}\\
0&\text{otherwise}
\end{cases},
$$
where $$
p'=(p'_i)_{0\le i\le m-1}=\left (\frac{\E(N_i)^{h_0}}{\sum_{i=0}^{m-1}\E(N_i)^{h_0}}\right )_{0\le i\le m-1}.
$$

Let $\mu'$ be the associated Mandelbrot measure and $\nu'$ the Bernoulli product associated with~$p'$. We have
$$
\dim(\nu')=-\frac{ h_0 \sum_{i=0}^{m-1} \E(N_i)^{h_0}\log_m(\E(N_i))}{\sum_{i=0}^{m-1}\E(N_i)^{h_0}}+\log_m\left (\sum_{i=0}^{m-1}\E(N_i)^{h_0}\right )=\varphi(h_0)-h_0\varphi'(h_0)
$$
and
\begin{equation}
\label{e-ef}
\sum_{i=0}^{m-1}p'_iT'_{V'_i}(1)=  \frac{\sum_{i=0}^{m-1}\E(N_i)^{h_0} \log_m (\E(N_i))}{\sum_{i=0}^{m-1}\E(N_i)^{h_0}}=\varphi'(h_0).
\end{equation}

 Next we show that $\dim_H \pi(K) \geq \dim_H\pi(\mu')\geq \varphi(h_0)$, by considering the scenarios  $h_0=1$, $h_0\in (0,1)$ and $h_0=0$, separately.  First
 suppose that $h_0=1$. Then $\mu'$ is the so-called branching measure on $K$, and we see that
 $$\dim(\nu')+\sum_{i=0}^{m-1}p'_iT'_{V'_i}(1)= \varphi(1)=\log(\E(N))/\log(m)>0,$$
  hence $\mu'$ is non-degenerate with positive probability (a fact that can also be directly seen from $T_{W'}'(1)$). Moreover, since on $[0,1]$  $\varphi$ takes the minimum at $h=1$,  by smoothness of $\varphi$ we must have $\varphi'(1)\le 0$, consequently $\sum_{i=0}^{m-1}p'_iT'_{V'_i}(1)\le 0$, and thus by Theorem \ref{DIM},
  $\dim (\pi_*\mu') =\dim(\mu')$ and $\dim_H \pi(K)=\dim_H (K)=\varphi(1)$ when $K\neq\emptyset$.

Next suppose that $0<h_0<1$. We have $\varphi'(h_0)=0$, hence $\sum_{i=0}^{m-1}p'_iT'_{V'_i}(1)=0$ and thus by Theorem \ref{DIM},
$$\dim (\pi_*\mu') =\dim(\mu')=\dim(\nu')=\varphi(h_0),$$ yielding $\dim_H\pi(K)\ge \varphi(h_0)$ when $K\neq\emptyset$.

Finally suppose that $h_0=0$. Then $\varphi'(0)\ge 0$, so  $\sum_{i=0}^{m-1}p'_iT'_{V'_i}(1)\geq 0$ and thus by Theorem \ref{DIM},  $\dim(\pi_*\mu')=\dim(\nu')=\varphi(0)\le \dim(\mu')$ when $\mu'\neq 0$, and consequently, $\dim_H \pi(K) \ge \varphi(0)$ when $K\neq\emptyset$.

So far we have proved  \eqref{vp}.  Below we discuss the uniqueness problem regarding  the last variational relation in \eqref{vp}.

  Notice that the  Mandelbrot measure $\mu'$ considered above has a dimension equal to $\dim_H K$ if and only if $T_{W'}'(1)=-T_{W'}(0)=\log_m(\E(N))$, that is $T_{W'}$ is linear. If $h_0=1$, $\mu'$ is the branching measure. If $h_0<1$, since
  $$T_{W'}(q)=q \log_m\Big(\sum_{i=0}^{m-1} \E(N_i)^{h_0}\Big)-\log_m\sum_{i=0}^{m-1}\mathbf{1}_{\{p_i>0\}}\E(N_i)^{1+q(h_0-1)}$$
and the second derivative of $T_{W'}$ vanishes,     we get that $\E(N_i)=1$ for each $i$ such that $p_i>0$. Once again $\mu'$ is the branching measure.

For the uniqueness problem, the case when $\dim_H K=\dim_H \pi(K)$ is clear from the above discussion,  since the same argument in fact shows that a Mandelbrot measure   supported on $K$ whose dimension equals that of $K$ must be the branching measure. Thus we can suppose that $\dim_H K>\dim_H \pi(K)$.

Suppose that the maximum in \eqref{vp} is attained at a Mandelbrot measure $\mu''$ defined simultaneously with $\mu'$ and supported on $K$ conditionally on non-vanishing. Then it is easily seen that $\mu''$ is generated by a random vector $W''$ such that $W''_{i,j}>0$ only if $W_{i,j}>0$, and one can associate with $W''$ the probability vector $(p''_i=\sum_{j=0}^{m-1}\E(W''_{i,j}))_{0\le i\le m-1}$ and the vectors $V''_i=(W''_{i,j}/p''_i)_{0\le j\le m-1}$ if $p''_i>0$ and $0$ otherwise. Moreover, $p'_i>0$ implies $p''_i>0$ for otherwise the formula $\inf_{0\le h\le 1} \log_m\sum_{i=0}^{m-1}\E(N_i)^h$ for the Hausdorff dimension of $\pi(K)$ would  give a strictly smaller dimension.
By Theorem~\ref{DIM} we have
\begin{equation}\label{dimmu''}
\dim (\pi_*\mu'')= \min \Big (\dim (\nu''), \;  \dim (\nu'')+\sum_{i=0}^{m-1} p''_iT_{V''_i}'(1-)\Big ).
\end{equation}
Now, let us observe that $\sum_{i=0}^{m-1} p''_iT_{V''_i}'(1-)$ is always smaller than or equal to $$\sum_{i=0}^{m-1} p''_iT_{V'_i}'(1-)=\sum_{i=0}^{m-1} p''_i\log_m\E(N_i)
.$$
 This is due to the fact that $T_{V''_i}$ is concave, equal to 0 at~1, and
 \begin{equation}
 \label{e-ee}
 T_{V''_i}(0)=-\log_m\E(\sum_{j=0}^{m-1}\mathbf{1}_{\{W''_{i,j}>0\}})\ge -\log_m\E(N_i),
  \end{equation}
  implying that $T_{V''_i}'(1-)\le -T_{V''_i}(0) \le \log_m\E(N_i).$

Consequently, in order to optimize $ \dim (\nu'')+\sum_{i=0}^{m-1} p''_iT_{V''_i}'(1-)$, $\mu''$ must satisfy the condition that $T'_{V''_i}(1-)= T'_{V'_i}(1-)=\log_m\E(N_i)$. On the other hand, by concavity of $T_{V''_i}$ on $[0,1]$, we have  that $T_{V''_i}(0)\le -T'_{V''_i}(1-)$. Finally, since by \eqref{e-ee},  $T_{V''_i}(0)\ge -\log_m\E(N_i)=-T'_{V''_i}(1-)$, we get $T_{V''_i}(0)=-\log_m\E(N_i)=-T'_{V''_i}(1-)$, hence $T_{V''_i}$ is linear on $[0,1]$. This means that like for $V'_i$, the coordinates of the vector $V_i''$ equal either 0 or $1/\E(N_i)$. Since, moreover, we have $W''_{i,j}=0$ as soon as $W'_{i,j}=0$, we get $V''_i=V'_i$ almost surely. On the other hand, a simple study using Lagrange multipliers shows that $\dim (\nu'')+\sum_{i=0}^{m-1} p''_i\log_m\E(N_i)$ is optimal for $p''=p'$, the maximum being unique. In other words, the maximum over $\mu''$ of $\dim (\nu'')+\sum_{i=0}^{m-1} p''_iT_{V''_i}'(1-)$ is reached uniquely at~$\mu'$.

Now, suppose first that $\varphi'(0)\le 0$, i.e. the infimum of $\varphi$ over $[0,1]$ is reached at a unique $h_0\in (0,1 ]$, or at $h_0=0$ with $\varphi'(0)=0$. In both cases,  we have $\varphi'(h_0)\leq 0$, and  our study of $\mu'$  (cf.  \eqref{e-ef}) shows that  $\sum_{i=0}^{m-1} p'_i T_{V_i'}'(1^{-})= \sum_{i=0}^{m-1} p'_i\log_m\E(N_i)=\varphi'(h_0)\leq 0$, showing that $\dim (\pi_*\mu')=\dim (\mu')$ by Theorem \ref{DIM}. Consequently,  by the arguments in the last paragraph,  for any Mandelbrot measure $\mu''$ supported on $K$, we have

$$
\dim (\nu'')+\sum_{i=0}^{m-1} p''_iT_{V''_i}'(1-)\le \dim (\nu')+\sum_{i=0}^{m-1} p'_iT_{V'_i}'(1-)=\dim(\mu')= \dim (\pi_*\mu'),
$$
{where the first equality holds  if and only if $\mu''=\mu'$. Then, the relation \eqref{dimmu''} yields $\mu'$ as the unique Mandelbrot measure such that $\dim (\pi_*\mu')$ is maximal.

Next suppose that $\varphi'(0)>0$. Fix $\lambda>1$ and $U_\lambda$  a random variable independent of $V'$ and taking value $\lambda >1$ with probability $\lambda^{-1}$ and $0$ with probability $1-\lambda^{-1}$. Take $p''=p'$ and replace $V'$ by $V''=(V''_0,V''_1,V''_2,\ldots,V''_{m-1})$ with $V''_i=U_\lambda\cdot V'_i$. This yields a  Mandelbrot measure $\mu''$ different from $\mu'$, with the same expectation $\nu'$ and  $\sum_{i=0}^{m-1}p''_iT'_{V''_i}(1)=\sum_{i=0}^{m-1}p'_iT'_{V'_i}(1)-\log_m(\lambda)>0$ if $\lambda$ is close enough to 1. Consequently, $\dim (\pi_*\mu'')=\dim(\nu')=\dim(\pi_*\mu')$, and there is no uniqueness in this case.

\section{Proof of Theorem~\ref{MA}: Differentiability properties of the function $\tau$} \label{study of tau}

\subsection*{{\bf Differentiability  over $(0,1-]$}}\label{difftau} $\ $


Notice that the differentiability of $\tau$ over $(0,1-]$  automatically holds if $\tau\equiv T$ over $(0,1]$, and  that this holds in particular if $T_i$ is linear and $\E(N_i)=1$ for all $0\le i\le m-1$ such that $\E(N_i)>0$, i.e.  $T_i\equiv 0$ so that $T=\tau_\nu=\tau$ (the study achieved below shows that this  is also a necessary condition, which is equivalent to have $\E(N_i)=1$ and $V_{i,j}=\mathbf{1}_{\{W_{i,j}>0\}}$ for all $0\le j\le m-1$ almost surely). Moreover, still in this case, since we have excluded the case that $N_i=1$ for all $0\le i\le m-1$ such that $\E(N_i)>0$, by Theorem~\ref{ABS}(2),  $\pi_*{\mu}$ and $\nu$ are mutually singular, and thus $\pi_*\mu\neq\nu$ almost surely.

Now suppose that $\tau\not\equiv T$ over $(0,1]$. For $0<q\le s\le 1$ set
$$
G(q,s)= \sum_{i=0}^{m-1} p_i^q m^{-qT_i(s)/s}
$$
and
$$
g(q,s)=s^2(-q\log(m))^{-1}\frac{\partial G}{\partial s}(q,s)=\sum_{i=1}^{m-1}p_i^q m^{-qT_i(s))/s}T_i^*(T_i'(s)).
$$

Let $q\in (0,1]$. At first suppose that the infimum defining $\tau(q)$, i.e. the infimum of $G(q,\cdot)$, is reached at $s\in (q,1)$ (hence $q<1$). We claim that $s$ is unique and for all $q'$ in an open neighborhood of $q$ there exists a unique $s(q')\in (q',1)$ such that $\tau(q')=- \log_m G(q,s(q'))$.  To show this claim, notice  that at any $s_0\in (q,1)$ at which the infimum defining $\tau(q)$ is reached we have $g(q,s_0)=0$. Moreover, for all $s\in [q,1]$ we have
$$
\frac{\partial g}{\partial s}(q,s)= \sum_{i=1}^{m-1}p_i^q m^{-qT_i(s))/s}(-q\log(m)s^{-2}(T_i^*(T_i'(s)))^2+sT_i''(s))\le 0.
$$
Suppose that $T_i''(s)=0$ for some $i$. It means that
$$
\left (\mathbb{E}\sum_{j=0}^{m-1}V_{i,j}^s(\log(V_{i,j}))^2\right )\left (\mathbb{E}\sum_{j=0}^{m-1}V_{i,j}^s\right)= \left (\mathbb{E}\sum_{j=0}^{m-1}V_{i,j}^s(\log(V_{i,j}))\right )^2.
$$
It follows that by the Cauchy-Schwarz inequality, there exists a constant $c$ such that almost surely either $V_{i,j}=0$ or $V_{i,j}=c$, hence $c=1/\E(N_i)$. In this case we have $T_i^*(T_i'(s))=\log(\E(N_i))$.  Consequently, for $\frac{\partial g}{\partial s}(q,s)$ to be equal to 0 we need to have $\E(N_i)=1$  and $V_{i,j}=\mathbf{1}_{\{W_{i,j}>0\}}$ for all $0\le i,j\le m-1$ such that $p_i>0$, a situation that we have discarded by assuming that $\tau\neq T$ (notice that this property is equivalent to requiring that $T_i\equiv 0$ for all $0\le i\le m-1$ such that $p_i>0$). Thus $\frac{\partial g}{\partial s}(q,s)< 0$, hence $g(q,s)$ can vanish only at one point of $(q,1)$, that we denote by $s(q)$.  Then,  because $\frac{\partial g}{\partial s}(q,s(q))< 0$,  the implicit function theorem implies our claim, as well as the analyticity of $ s(\cdot)$ and   $\tau$ on any maximal interval of points $q$ such that $s(q)\in (q,1)$. In addition, $
s'(q)=-\frac{\frac{\partial g}{\partial q}(q,s(q))}{{\frac{\partial g}{\partial s}(q,s(q))}}
$. We also notice that the study of {$s\mapsto g(q,s)$ shows that $s\mapsto\frac{\partial G}{\partial s}(q,s)$} is negative on the left hand side of $s(q)$ and positive on the right hand side, so the  infimum of $G(q,\cdot)$  { over $[q,1]$} can be reached neither at $q$ nor at $1$.

Now suppose that  the infimum of $G(q,\cdot)$ is reached at $s_0\in\{q,1\}$. Suppose that  this infimum is reached  at another point of $[q,1]$ as well (this can hold only if $q<1$). Then, let $s_1\in (q,1)$ at which $G(q,\cdot)$ reaches a local maximum, hence $g(q,\cdot)$ vanishes. Our previous analysis of the sign of $g(q,\cdot)$, which is the opposite of the sign of $\frac{\partial G}{\partial s}(q,\cdot)$, shows that $ \frac{\partial G}{\partial s}(q,\cdot)$ is negative on the left of $s_1$, which is a contradiction. Thus the infimum of $G(q,\cdot)$ at $s_0$ is strict. We again denote this point $s_0$ by $s(q)$.

We notice that the  argument  in the above paragraph also shows that if $q$ is a point of $(0,1)$ at which $\tau_\nu$ and $T$ coincide, i.e. $G(q,1)=G(q,q)$, then  $\tau(q)$ cannot be attained at $q$ or $1$. This entails the fact that $\tau=T=\tau_\nu$  only if $T_i\equiv 0$ when $p_i>0$.

Next we prove that both  $\tau$ and $s(\cdot)$ are continuous over $(0,1]$. Suppose that $q\in (0,1]$. Let $(q_n)_{n\ge 1}$ be a sequence of points in $(0,1]$ such that $q_n\to q$. Without loss of generality, we can assume that $s(q_n)$ converges as well,  to a number, say $s_q$, which necessarily belongs to $[q,1]$ since $s(q_n)\in[q_n,1]$. It follows by continuity of $G$ that $G(q_n,s(q_n))\to G(q,s_q)$. Suppose that $s_q\neq s(q)$. Then, $G(q,s(q))<G(q,s_q)$, hence there exist $n_0>1$ and  $\epsilon>0$ such that for all $n\ge n_0$, for all $s\in[q_n,1]$ we have $$G(q_n,s)\geq G(q_n, s(q_n))>G(q,s(q))+\epsilon.$$
 However, there exists a sequence $(s_n)_{n\ge 1}$  such that $s_n\in[q_n,1]$ for all $n\ge n_0$ and $(q_n,s_n)\to (q,s(q))$. By continuity of $G$ over $[0,1]$, we have $G(q_n,s_n)\to G(q,s(q))$, but  $G(q_n,s_n)>G(q,s(q))+\epsilon$, which gives a contradiction. Consequently, we obtained the desired continuity property of $s(\cdot)$, and that of $\tau=-\log_mG(\cdot,s(\cdot))$.

Let us denote by $\mathcal I$ the set of the connected components  of $\{q\in(0,1): s(q)\in (q,1)\}$.

Let $E=(0,1]\setminus\bigcup_{I\in\mathcal I} I$. Let  $q_0\in E$. If $q_0$ is an interior point of $E$, then by continuity of $s$, we must have either $s(q)=q$ or $s(q)=1$ on the maximal interval $I_{q_0}$ containing $q_0$ and contained in $E$; as a consequence,   both $s(\cdot)$ and $\tau$ are analytic on  the interior of $I_{q_0}$.

Suppose that $q_0\in\partial E$ and $q_0<1$.  Notice that since $q_0$ is an accumulating point of $\bigcup_{I\in\mathcal I}  I$, by continuity of  $\frac{\partial G}{\partial s} $ and $s(\cdot)$, we have either $\frac{\partial G}{\partial s}(q_0,q_0)=0$ if $s(q_0)=q_0$ or $\frac{\partial G}{\partial s}(q_0,1)=0$ if $s(q_0)=1$.

Up to symmetry between the left and the right hand sides of $q_0$, there are essentially three situations.  There exists $\eta>0$ such that either $s(q)=q$ over $[q_0-\eta,q_0)$ and $s(q)\in (q,1)$ over $(q_0,q_0+\eta]$, $s(q)=1$ over $[q_0-\eta,q_0)$ and $s(q)\in (q,1)$ over $(q_0,q_0+\eta]$, or $s(q)\in (q,1)$ both over $[q_0-\eta,q_0)$ and $(q_0,q_0+\eta]$. It means that $q_0$ cannot be an accumulating point of boundary points of $E$. Indeed, suppose that on the contrary $q_0$ is such a point. Then $s(q_0)\in \{q_0,1\}$.  First assume that  $s(q_0)=q_0$. By the remark in  the last paragraph,  $\frac{\partial G}{\partial s}(q,q)$ should have    infinitely many zeros accumulating at $q_0$, which would imply that  $\frac{\partial G}{\partial s}(q,q)=0$ for all $q\in (0,1)$ by analyticity of $G$; but this does not hold, for otherwise we have $\tau=T$, a case that we discarded. Indeed if $\tau\neq T$, there exists $q_0\in(0,1)$ such that $s(q_0)\in (q_0,1)$. Then our previous study of $g(q_0,\cdot)$ shows that $\frac{\partial G}{\partial s}(q_0,q_0)=-(\log)g(q_0,q_0)/q_0<0$ since $g(q_0,\cdot)$ is strictly decreasing and $g(q_0,s(q_0))=0$. Next assume  $s(q_0)=1$. Again by the remark in the last paragraph we should have $\frac{\partial G}{\partial s}(q,1)=0$ and thus $g(q,1)=0$ for all $q\in (0,1)$, and  it follows that $g(q, s(q))>0$ whenever $s(q)\neq 1$, leading to a contradiction.

Finally suppose that $q_0=1$. The same approach as above shows that there exists $\eta>0$ such that either $s(q)=1$  or $s(q)\in (q,1)$ over $[1-\eta,1)$. Also, we notice that $0$ cannot be  an accumulating point of $\partial E$ since we assumed that the $T_i$ are finite and analytic in a neighborhood of $0$.

The previous arguments imply the following intermediate fact.

\begin{pro}\label{partition} The functions $\tau$ and $s(\cdot)$ are continuous over $(0,1]$. There exists a set $S$, finite or empty, such that for each connected component $I$ of $(0,1]\setminus S$, the functions $\tau$ and $s(\cdot)$ restricted to $I$ are analytic, and $I$ is a maximal interval over which either $s(q)=q$, $s(q)\in (q,1)$ or $s(q)=1$.
\end{pro}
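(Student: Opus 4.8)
The continuity of $\tau$ and $s(\cdot)$ on $(0,1]$ is already in hand from the discussion above, so the plan is to exhibit the finite set $S$ and verify the analyticity and maximality claims. I would take $S=\partial E$, the boundary in $(0,1]$ of the closed set $E=(0,1]\setminus\bigcup_{I\in\mathcal I}I$; since $\bigcup_{I\in\mathcal I}I$ is open, $S$ is exactly the set of those endpoints of the intervals $I\in\mathcal I$ that lie in $(0,1]$. Two things then remain: (a) that $S$ is finite (possibly empty), and (b) that on each connected component of $(0,1]\setminus S$ exactly one of $s(q)=q$, $s(q)\in(q,1)$, $s(q)=1$ holds throughout, the component being maximal for that alternative, and $\tau$ and $s(\cdot)$ being real-analytic on it.

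The main obstacle is (a), i.e. ruling out an accumulation point $q_0$ of $S$ in $(0,1]$. For $q_0\in(0,1)$ I would use the fact, recorded above, that continuity of $s(\cdot)$ and of $\partial G/\partial s$ forces $\partial G/\partial s(q_0,q_0)=0$ if $s(q_0)=q_0$ and $\partial G/\partial s(q_0,1)=0$ if $s(q_0)=1$; in either case the corresponding real-analytic function $q\mapsto\partial G/\partial s(q,q)$ (resp. $q\mapsto\partial G/\partial s(q,1)$) vanishes along a sequence accumulating at $q_0$, hence vanishes identically on $(0,1)$. But $\tau\not\equiv T$ produces some $q_1$ with $s(q_1)\in(q_1,1)$, and the strict decrease of $s\mapsto g(q_1,s)$ established above (using that the degenerate Cauchy--Schwarz case is excluded) gives $g(q_1,q_1)>0$, i.e. $\partial G/\partial s(q_1,q_1)<0$, as well as $\partial G/\partial s(q,1)\neq0$ for $q$ near $q_1$ --- a contradiction. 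The endpoint $q_0=1$ is excluded by the same mechanism (on a left-neighborhood of $1$ one has $s\equiv1$ or $s(q)\in(q,1)$), and $q_0=0$ is excluded because the $T_i$, hence $G$, are real-analytic near $0$, so the identical-vanishing argument again applies. Thus $S$ is closed, discrete, and bounded away from $0$, hence finite.

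For (b) I would observe that the connected components of $(0,1]\setminus S$ are of two kinds: the intervals $I\in\mathcal I$, on which $s(q)\in(q,1)$, and the connected components of the interior of $E$. On $I\in\mathcal I$, differentiating the relation $g(q,s(q))=0$ and using $\partial g/\partial s(q,s(q))<0$ (proved above), the implicit function theorem yields real-analyticity of $s(\cdot)$, hence of $\tau=-\log_mG(\cdot,s(\cdot))$, on $I$. On a component $J$ of the interior of $E$ one has $s(q)\in\{q,1\}$ for every $q\in J$; since $q<1$, the two closed subsets $\{q\in J:s(q)=q\}$ and $\{q\in J:s(q)=1\}$ are disjoint and cover the connected set $J$, so one of them is all of $J$. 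In the first case $\tau(q)=-\log_mG(q,q)=T(q)$ on $J$ (by \eqref{psi}), in the second $\tau(q)=-\log_mG(q,1)=\tau_\nu(q)$ on $J$, and in both cases $s(\cdot)$ and $\tau$ are visibly analytic. Finally, maximality holds because a point of $S$ adjacent to a component is, by definition of $\partial E$, an accumulation point of $\bigcup_{I\in\mathcal I}I$ from at least one side, so $s$ leaves the regime $\{q,1\}$ immediately past it, while the endpoints $0$ and $1$ cannot be crossed. Assembling (a) and (b) gives the proposition.
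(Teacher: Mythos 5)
Your proposal is correct and follows essentially the same route as the paper's proof: the same dichotomy between the intervals of $\mathcal I$ (handled by the implicit function theorem using $\frac{\partial g}{\partial s}(q,s(q))<0$) and the interior of $E$ (where continuity of $s(\cdot)$ forces a single regime $s=q$ or $s=1$, giving $\tau=T$ or $\tau=\tau_\nu$ there), combined with the same identity-theorem argument—contradicted at a point $q_1$ with $s(q_1)\in(q_1,1)$ via the strict monotonicity of $g(q_1,\cdot)$—to rule out accumulation of boundary points of $E$, including at $0$ and $1$. The differences are purely organizational (you name $S=\partial E$ explicitly and separate finiteness from analyticity/maximality), and the small imprecisions you carry (e.g.\ invoking $\tau\not\equiv T$ to produce $q_1$, which in the accumulation scenario is anyway supplied by $\mathcal I\neq\emptyset$) mirror the paper's own wording.
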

It remains to prove the differentiability of $\tau$ at each $q\in S$. Let $q_0\in S$. If $q_0=1$, then there exists $\eta>0$ such that $s(q)\in (q,1)$ over $[1-\eta,1)$. The formula
\begin{equation}\label{tau'q}
\tau'(q)=-\frac{\frac{\partial G}{\partial q}(q,s(q))}{\log(m) G(q,s(q))}
\end{equation}
implies that $\tau'(q)$ has a limit at $1-$, hence by the mean value theorem $\tau$ is left differentiable at $1$.

Suppose that $q_0<1$. If $s(q)\in(q,1)$ for all $q$ in  $[q_0-\eta,q_0+\eta]\setminus \{q_0\}$ for some $\eta>0$, then formula \eqref{tau'q} and the continuity  of $s(\cdot)$ combined with the mean value theorem yield the fact that $\tau$ is $C^1$ at $q_0$.  If $s(q)=q$ on $[q_0-\eta,q_0)$ and $s(q)\in (q,1)$ on $(q_0,q_0+\eta]$, we first notice that $s(q)/q$ tends to $1$ as $q\to q_0+$ by continuity of $s(\cdot)$.  It is then almost direct to see that $\tau'(q)$ given by \eqref{tau'q} converges to $T'(q_0)$ as $q\to q_0+$. Indeed, one has
\begin{eqnarray}
\label{tau'q1}
\frac{\partial G}{\partial q}(q,s(q))&=& \sum_{i=0}^{m-1} p_i^qm^{-T_i(s(q)) q/s(q)} (\log (p_i)-\log(m) T_i(s(q))/s(q))\\
\label{tau'q2}&=&\sum_{i=0}^{m-1} p_i^qm^{-T_i(s(q)) q/s(q)} (\log (p_i)-\log(m) T_i'(s(q)),
\end{eqnarray}
due to the equality $\frac{\partial G}{\partial s}(q,s(q))=0$. Then, letting $q$ tend to $q_0+$ and using the fact that  $s(q)/q$ tends to 1,  we get $\lim_{q\to q_0+} \tau'(q)=T'(q_0)$. On the other hand, $\tau=T$ over $[q_0-\eta,q_0)$, hence $\tau$ is $C^1$ at $q_0$. The other cases can be treated similarly.

\subsection*{\bf {Concavity of $\tau$}} We will show later that the differentiability of $\tau$ over $[0,1]$ combined with other arguments yield the equality of $\tau$ with the $L^q$-spectrum of $\pi_*\mu$, conditionally on $\mu\neq 0$. Consequently, $\tau$ is concave, and automatically differentiable at the right hand side of $0$ if it is right continuous at 0.

\subsection*{{\bf Continuity and differentiability at 0}} Due to the previous discussion, it is enough to prove the continuity at 0. However, we will examine the value of $\tau'(0+)$. We  distinguish two cases.

At first suppose that $h(q)=q/s(q)$ does not tend to 0 as $q$ tends to 0. It follows that $s(q)$ tends to 0. Suppose that for some sequence $(q_n)_{n\ge 0}$ tending to 0 we have $h(q_n)\to h_*\in (0,1]$. The study achieved above gives  $g(q_n,s(q_n))=0$ if $q_n<s(q_n)<1$ and $g(q_n,s(q_n))\le 0$ if $s(q_n)=q_n$. This  implies that
$$\sum_{i=0}^{m-1} \E(N_i)^{h_*}\log_{ m}(\E(N_i))=\lim_{n\to +\infty} g(q_n,s(q_n))$$ vanishes if $h_*<1$ and is non positive if $h_*=1$. By convexity of the mapping $h\in [0,1]\mapsto \log_m\sum_{i=0}^{m-1} \E(N_i)^{h}$, we conclude that in any case, $$- \log_m\sum_{i=0}^{m-1} \E(N_i)^{h_*}=-\inf_{0\le h\le 1} \log_m\sum_{i=0}^{m-1} \E(N_i)^{h},$$ i.e. $h_*$ is the  point at which the minimum in~\eqref{dimproj} is attained.  Moreover we have $\lim_{n\to\infty} \tau(q_n)=- \log_m\sum_{i=0}^{m-1} \E(N_i)^{h_*}=\tau(0)$.
It follows that $\tau$ is right continuous at~$0$.

Now suppose that $h(q)=q/s(q)$ tends to 0 as $q$ tends to 0. We have $q<s(q)\le 1$ for $q$ small enough. From this it follows that $g(q,s(q))\ge 0$. Consequently, since $\sum_{i=0}^{m-1} \mathbf{1}_{\{p_i>0\}} \log(\E(N_i))=\lim_{q\to 0+} g(q,s(q))$ (because $h(q)$ tends to 0), this number is non negative. This implies that $\log_m \sum_{i=0}^{m-1} \mathbf{1}_{\{p_i>0\}}=\inf_{0\le h\le 1} \log_m\sum_{i=0}^{m-1} \E(N_i)^{h} $. On the other hand  $\lim_{q\to 0+} \tau(q)=-\log_m \sum_{i=0}^{m-1} \mathbf{1}_{\{p_i>0\}}$, hence $\tau$ is right continuous at 0, and $\tau(0)=\tau_\nu(0)$. In this case we set $h_*=0$.

In all the cases, we set
\begin{equation}
\label{e-p'}
p'_i=\left (\frac{\E(N_i)^{h_*}}{\sum_{i'=0}^{m-1}\E(N_{i'})^{h_*}}\right )_{0\le i\leq  m-1},
\end{equation}
with the convention $0^0=0$, and we denote by $\nu'$ the associated Bernoulli product.

\subsection*{{\bf The value of $\tau'(0+)$}} Now we use Proposition~\ref{partition} to determine the value of $\tau'(0+)$ and examine more precisely the behavior of $s(q)$ at $0+$. This will be used to prove the validity of the multifractal formalism for $\pi_*\mu$ at $\tau'(0+)$. Our observation is the following:
{\begin{pro}\label{tau'0} Let $p_i'$ be defined as in \eqref{e-p'}. One of the three following situations occurs:
\begin{itemize}
\item [(i)] $\tau=T$ near $0+$ and $\tau'(0+)=T'(0)$.
\item [(ii)] $\tau=\tau_\nu$ near $0+$ and $\tau'(0+)=\tau_\nu'(0)$. Moreover, $\sum_{i=1}^{m-1}p'_i T_i^*(T_i'(1))\ge 0$.
\item [(iii)] $\tau>\max(T,\tau_\nu)$  near $0+$, and there exists $s_0\in [0,1]$ such that
$$
\tau'(0+)=-\sum_{i=0}^{m-1} p'_i(\log_m(p_i)-T'_i(s_0)).
$$
Moreover, $\sum_{i=0}^{m-1} p'_iT_i^*(T'_i(s_0))=0$.
\end{itemize}
\end{pro}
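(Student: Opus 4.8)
The plan is to read the three alternatives straight off Proposition~\ref{partition}: it provides $\eta>0$ such that on $(0,\eta)$ exactly one of the regimes $s(q)=q$, $s(q)=1$, $s(q)\in(q,1)$ holds throughout, and I treat these in turn. If $s(q)=q$ on $(0,\eta)$, then $\tau(q)=-\log_m G(q,q)=-\log_m\sum_i p_i^q m^{-T_i(q)}=T(q)$ by \eqref{psi}, so $\tau'(0+)=T'(0)$ since $T$ is analytic near $0$ by \eqref{assumMA}; this is (i) (and since here $h_*=1$, $p'_i=\E(N_i)/\E(N)$ and $s_0:=0$, one checks $T'(0)=-\sum_i p'_i(\log_m(p_i)-T'_i(0))$, so (i) is also the instance $s_0=0$ of the formula in (iii)). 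If $s(q)=1$ on $(0,\eta)$, then, using $T_i(1)=0$, $\tau(q)=-\log_m G(q,1)=-\log_m\sum_i p_i^q=\tau_\nu(q)$, so $\tau'(0+)=\tau_\nu'(0)$; for the ``moreover'', $s=1$ being the minimiser of $G(q,\cdot)$ on $[q,1]$ forces $\partial_s G(q,1)\le 0$, i.e. $g(q,1)\ge 0$, i.e. $\sum_i p_i^q\, T_i^*(T_i'(1))\ge 0$, and letting $q\to 0+$ (so $p_i^q\to\mathbf 1_{\{p_i>0\}}$, and $h_*=0$, $p'_i$ uniform on $\{i:p_i>0\}$) gives the stated inequality.

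The substantive case is $s(q)\in(q,1)$ on $(0,\eta)$. Then $s(q)$ is the \emph{unique} minimiser of $G(q,\cdot)$ on $[q,1]$, whence $G(q,s(q))<\min(G(q,q),G(q,1))$, i.e. $\tau>\max(T,\tau_\nu)$ near $0+$; and $s(q)$ being an interior critical point, $g(q,s(q))=0$ on $(0,\eta)$, so \eqref{tau'q}, rewritten via the critical-point relation as in \eqref{tau'q1}--\eqref{tau'q2}, expresses $\tau'(q)$ as a ratio in which only the weights $w_i(q):=p_i^q m^{-qT_i(s(q))/s(q)}$ and the quantities $T'_i(s(q))$ occur. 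I then choose $q_n\downarrow 0$ with $s(q_n)\to s_0\in[0,1]$, and the key point is $w_i(q_n)\to\E(N_i)^{h_*}$: if $s(q_n)\to 0$ this follows from $q_n T_i(s(q_n))/s(q_n)=h(q_n)\,T_i(s(q_n))\to h_*\,T_i(0)=-h_*\log_m\E(N_i)$ together with $p_i^{q_n}\to 1$ for $p_i>0$ (the $p_i=0$ terms, and $\E(N_i)^{h_*}$, being $0$); if $s(q_n)\to s_0>0$ then $h_*=0$ and $w_i(q_n)\to 1=\E(N_i)^0$. Passing to the limit in the formula for $\tau'(q)$ yields $\tau'(0+)=-\sum_{i=0}^{m-1}p'_i(\log_m(p_i)-T'_i(s_0))$, and passing to the limit in $g(q_n,s(q_n))=\sum_i w_i(q_n)\,T_i^*(T_i'(s(q_n)))=0$ yields $\sum_{i=0}^{m-1}p'_i\,T_i^*(T'_i(s_0))=0$; this is (iii). (That the value is independent of the subsequence uses that $\tau'(0+)$ exists, which holds by concavity of $\tau$, established in the subsection on concavity.)

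The hard part is the degenerate behaviour of $s(q)$ near $0$. First, $T_i(s)/s\to\pm\infty$ as $s\to 0$ whenever $T_i(0)\ne 0$, so the raw expression \eqref{tau'q1} for $\partial_q G(q,s(q))$ is unbounded there; this is neutralised precisely by using $g(q,s(q))=0$ to pass to \eqref{tau'q2}, after which the only surviving ingredients $T'_i(s(q))$ and $T_i^*(T_i'(s(q)))$ are continuous up to $s=0$ (recall $T_i$ is analytic near $0$ for $p_i>0$). Second, $s(q)$ need not converge as $q\to 0+$, which I circumvent by subsequence extraction plus the prior existence of $\tau'(0+)$. The rest is bookkeeping: justifying $w_i(q_n)\to\E(N_i)^{h_*}$ as above and checking that the $h_*$ here is the one singled out in the continuity-at-$0$ discussion, namely the minimiser of $h\mapsto\log_m\sum_i\E(N_i)^h$ on $[0,1]$, so that $p'_i$ is indeed the vector \eqref{e-p'}.
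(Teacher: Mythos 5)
Your proof is correct and follows essentially the same route as the paper's: the trichotomy of regimes for $s(q)$ near $0+$ coming from Proposition~\ref{partition}, the endpoint/criticality sign conditions on $g$, and passage to the limit in \eqref{tau'q}--\eqref{tau'q2} with the (normalized) weights converging to $p'$ via the continuity-at-$0$ identification of $h_*$. The only differences are organizational (you split case (iii) according to the limit of $s(q_n)$ rather than $h_*\in(0,1]$ versus $h_*=0$, and invoke concavity of $\tau$ for subsequence-independence), which does not change the substance.
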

}
\begin{proof} We treat  the  three cases considered in the statement separately.

{\bf Case 1: $\tau=T$ near $0+$.} In this case, we have $h_*=1$ and $\tau'(0+)=T'(0)$.

{{\bf Case 2: $\tau=\tau_\nu$ near $0+$.} We have $h_*=0$ and $\tau'(0+)=\tau_\nu'(0)$. Moreover,  for all $q>0$ close enough to 0 we have $s(q)=1$, which implies that  $g(q,s(q))=g(q,1)=\sum_{i=1}^{m-1}p_i^q T_i^*(T_i'(1))\ge 0$. Consequently, letting $q$ tend to 0 we get $\sum_{i=1}^{m-1}p'_i T_i^*(T_i'(1))\ge 0$.
}

{{\bf Case 3:  $\tau>\max(T,\tau_\nu)$ near $0+$.}
}

{Assume at first that $h_*\in (0,1]$. Letting $q$ tend to $0+$ in the equality $g(q,s(q))=0$ we obtain  $\sum_{i=0}^{m-1} p'_iT_i^*(T'_i(0))=0$. Then, applying \eqref{tau'q} and \eqref{tau'q2}  at $q$ close enough to $0+$ and letting $q$ tend to $0$ we obtain $\tau'(0+)=-\sum_{i=0}^{m-1} p'_i(\log_m(p_i)-T'_i(0))$; we then set $s_0=0$.
}

{Next assume that  $h_*=0$. From the discussion of the continuity of $\tau$ at 0 we deduce that $\tau(0)=\tau_\nu(0)$. Next, consider a sequence $(q_n)_{n\ge 1}$ converging to $0+$ such that $s(q_n)$ (which belongs to $(q_n,1)$) tends to $s_0\in [0,1]$. From the equality $g(q_n,s(q_n))=0$ we deduce that $\sum_{i=0}^{m-1} p'_iT_i^*(T'_i(s_0))=0$ by letting $n$ tend to $\infty$. Moreover, using   \eqref{tau'q} and \eqref{tau'q2} with $q_n$ and letting $n$ tend to $\infty$ yields $\tau'(0+)=-\sum_{i=0}^{m-1} p'_i(\log_m(p_i)-T'_i(s_0))$.}
\end{proof}
\subsection*{{\bf  Differentiability at 1}} Due to \eqref{tau'q}, if $q<s(q)<1$ in a left neighborhood of $1$, by \eqref{tau'q} we have $\tau'(1-)=T'(1)$. {This, together with the facts that $\tau\ge \max(\tau_\nu,T)$ over $[0,1]$ and $\tau(1)=T(1)=\tau_\nu(1)$ implies that $\tau_\nu'(1)\ge  T'(1)$.  Then, if the last inequality is strict, we have  $\tau_\nu>T$ hence $\tau=T$ on a right neighborhood of $1$, which yields the differentiability of $\tau$ at $1$.  If $\tau_\nu'(1)=T'(1)$, then $\min (\tau_\nu,T)$ must have a derivative equal to $T'(1)$ on the right of $1$, and we get the desired conclusion as well.}

If $s(q)=1$ in a left neighborhood  $1$, then there we have $\tau=\tau_\nu\ge T$, and  $\tau'(1-)=\tau_\nu'(1)$. {Then, similar argument as in the previous case (with the roles of $\tau_\nu$ and $T$ exchanged) yields the existence of $\tau'(1)$.}

The case $s(q)=q$ in a left neighborhood  $1$ is treated similarly.

In conclusion, we get
\begin{equation}\label{tau'1}
\tau'(1)=
\begin{cases}T'(1)&\text{if }T'(1)\le \tau_\nu'(1)\\
\tau'(1)=\tau_\nu'(1)&\text{otherwise }
\end{cases}.
\end{equation}

\subsection*{{\bf Differentiability and concavity over $(1,q_c)$}} Recall that $q_c$ is defined in \eqref{assumMA}. The definition of $\tau$ clearly implies its concavity and differentiability at points at which the graphs of $\tau_\nu$ and $T$ do not cross transversally. Due to the analyticity of $\tau_\nu$ and $T$, there are at most finitely many such points in a given bounded interval.

\section{Proof of Theorem~\ref{MA}: Lower bound for the $L^q$-spectrum}
\begin{pro}\label{tau}With probability 1, conditionally on $\pi_*\mu\neq 0$,
\begin{enumerate}
\item   for all  $q\ge 1$ we have the following properties:

\begin{itemize}
\item [(i)] $\tau_{\pi_*\mu}(q) \le \tau_\mu(q)$;

\item [(ii)] if $T(q)>0$, then $\tau_{\pi_*\mu}(q)\ge  \min (\tau_\nu(q),T(q))$;

\item [(iii)] if $T^*(T'(q))\ge 0$ then  $T(q)>0$. If, in addition,  $\min (\tau_\nu(q),T(q))=T(q)$,  then $\tau_{\pi_*\mu}(q)=T(q)$.
\end{itemize}

\item For all  $0< q\le 1$,  we have
$
\tau_{\pi_*\mu}(q)\ge \tau(q).
$
\end{enumerate}
\end{pro}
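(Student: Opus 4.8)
The plan is to split the statement into its natural pieces, treating $q\ge 1$ and $0<q\le 1$ separately, and to reduce everything to the moment estimates of Section~\ref{MEST} combined with the Borel--Cantelli lemma (Lemma~\ref{Borel}).

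For part (1)(i), the upper bound $\tau_{\pi_*\mu}(q)\le\tau_\mu(q)$ for $q\ge 1$ is a completely general fact for projections: since $\pi_*\mu([u])=\sum_{v\in\Sigma_{|u|}}\mu([u,v])$, the superadditivity of $t\mapsto t^q$ for $q\ge 1$ gives $\pi_*\mu([u])^q\ge\sum_{v}\mu([u,v])^q$, hence $\sum_{|u|=n}\pi_*\mu([u])^q\ge\sum_{|u|=|v|=n}\mu([u,v])^q$, and taking $-\frac1n\log_m$ and $\liminf$ yields the claim. This needs no randomness beyond what defines $\mu$.

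For part (1)(ii), assume $T(q)>0$. By Corollary~\ref{momestimate'} there is a polynomial $f_q$ with
$$
\mathbb E\Big(\sum_{u\in\Sigma_n}\pi_*\mu([u])^q\Big)\le f_q(n)\,m^{-n\min(\tau_\nu(q),T(q))}.
$$
Fix $\epsilon>0$. By Markov's inequality,
$$
\mathbb P\Big(\sum_{u\in\Sigma_n}\pi_*\mu([u])^q\ge m^{-n(\min(\tau_\nu(q),T(q))-\epsilon)}\Big)\le f_q(n)\,m^{-n\epsilon},
$$
which is summable, so by Borel--Cantelli, almost surely for $n$ large enough $\sum_{u}\pi_*\mu([u])^q\le m^{-n(\min(\tau_\nu(q),T(q))-\epsilon)}$; taking $-\frac1n\log_m$ and $\liminf$ gives $\tau_{\pi_*\mu}(q)\ge\min(\tau_\nu(q),T(q))-\epsilon$, and letting $\epsilon\to0$ along a countable sequence finishes this point. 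One must also check this holds conditionally on $\{\mu\neq 0\}$ (equivalently $\{\pi_*\mu\neq0\}$), but the estimate is deterministic once the good event occurs and $\{\mu\neq0\}$ has positive probability, so the conditional statement follows. For part (1)(iii): if $T^*(T'(q))\ge 0$ then by concavity of $T$ and $T(1)=0$, $T'(1-)>0$ one has $T(q)\ge T'(q)q-T^*(T'(q))> $ (a routine convexity computation showing $T(q)>0$ for $q>1$ in the range where $T^*(T'(q))\ge0$); alternatively $T(q)\ge qT'(1-)-\text{(something)}>0$. Then combining (i), (ii) with $\tau_\mu(q)=T(q)$ from Theorem~\ref{AB} (valid since $T^*(T'(q))\ge0$) and the hypothesis $\min(\tau_\nu(q),T(q))=T(q)$ squeezes $\tau_{\pi_*\mu}(q)=T(q)$.

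For part (2), fix $q\in(0,1]$ and recall $\tau(q)=-\inf\{\log_m G(q,s):q\le s\le1\}$ with $G(q,s)=\sum_i p_i^q m^{-qT_i(s)/s}$. Using $\pi_*\mu([u])=\nu(u)X(u)$ from \eqref{pimu}, one has $\sum_{|u|=n}\pi_*\mu([u])^q=\sum_{|u|=n}\nu(u)^q X(u)^q$. The idea is: for each fixed $s\in[q,1]$, apply Jensen (since $q/s\le1$) to bound $X(u)^q=(X(u)^s)^{q/s}$ after writing $X(u)^s$ in terms of the auxiliary sub/super-martingale-type quantity whose expectation grows like $(\sum_i p_i m^{-T_i(s)})^{|u|}$ — more precisely, one bounds $\mathbb E(\sum_{|u|=n}\nu(u)^q X(u)^q)$ by duality/Hölder against the Bernoulli environment and gets, for each admissible $s$, an upper bound of the form $C(n)\,m^{n\log_m G(q,s)}$, then optimizes over $s$ and applies the same Borel--Cantelli argument as above to get $\tau_{\pi_*\mu}(q)\ge\tau(q)-\epsilon$. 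At $q=0$ the infimum over $h$ in the definition of $\tau(0)$ plays the role of the infimum over $s$.

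I expect the main obstacle to be part (2): controlling $\mathbb E(\sum_{|u|=n}\nu(u)^q X(u)^q)$ for $q<1$ is delicate because $X(u)$ has exponent $q<1$, so one cannot directly use the clean moment bounds of Proposition~\ref{mom+estimate'} (which require exponent $>1$). The right tool is likely a change of measure to the Bernoulli environment $\nu_s$ (or the vectors $(p_i m^{-T_i(s)}/\sum_{i'} p_{i'} m^{-T_{i'}(s)})_i$), under which $\widetilde X_n$ or a tilted martingale is uniformly integrable when $\sum_i p_i^{(s)} T_i'(s)$-type quantities have the right sign — i.e. the same Mandelbrot-martingale-in-random-environment machinery used in the proof of Theorem~\ref{ABS}(1)(i), specialized to exponent $1$ after the Jensen step. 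Getting the bound uniformly in $s\in[q,1]$ with only polynomial (or subexponential) prefactor $C(n)$, so that Borel--Cantelli still applies after optimizing, is the technical crux; continuity of $s\mapsto G(q,s)$ on the compact interval $[q,1]$ lets one pass from "for each $s$" to "for the infimum" by a covering argument at the cost of an arbitrarily small $\epsilon$.
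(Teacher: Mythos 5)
Parts (1)(i) and (1)(ii) are fine and coincide with the paper's argument (super-additivity of $x\mapsto x^q$, and Corollary~\ref{momestimate'} combined with Markov plus Borel--Cantelli, which is exactly Lemma~\ref{Borel}). But there are two genuine gaps. First, in (1)(iii) the key claim that $T^*(T'(q))\ge 0$ forces $T(q)>0$ is not actually proved: your inequality ``$T(q)\ge qT'(q)-T^*(T'(q))$'' is an identity (it reads $T(q)\ge T(q)$) and yields nothing, and the alternative ``$T(q)\ge qT'(1-)-(\text{something})>0$'' is unsupported, since concavity gives upper, not lower, bounds of that shape. A real argument is needed: the paper supposes $T(q)\le 0$, uses concavity, $T(1)=0$ and $T'(1-)>0$ to produce a zero $q_0\in(1,q]$ of $T$ with $T'(q_0)<0$, so that $T^*(T'(q_0))=q_0T'(q_0)<0$, and then uses that $T^*\circ T'$ is non-increasing (its derivative is $q\mapsto qT''(q)\le 0$) to get $T^*(T'(q))\le T^*(T'(q_0))<0$, a contradiction. (Alternatively: since $T'$ is non-increasing, $T(q)=\int_1^qT'\ge (q-1)T'(q)$, hence $T^*(T'(q))\le T'(q)$, so $T'(q)\ge 0$, so $T'\ge 0$ on $[1,q]$ with $T'>0$ near $1$, giving $T(q)>0$.)

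Second, and more seriously, the proof of part (2) is incomplete: after the (correct) Jensen step $\E(X(u)^q)\le \E(X(u)^s)^{q/s}$ you declare the estimate of $\E(X(u)^s)$ for $s\in[q,1]$ to be the ``technical crux'' and propose a change of measure and uniform integrability of tilted Mandelbrot martingales, plus a covering argument in $s$, none of which you carry out. In fact no such machinery is needed: for $s\le 1$ sub-additivity of $x\mapsto x^s$ gives $X(u)^s\le \sum_{v}Y(u,v)^s\prod_{k}V_{u_k,v_k}(u_{|k-1},v_{|k-1})^s$, and taking expectations, using the branching property and $\E(Y^s)\le\E(Y)^s=1$, yields $\E(X(u)^s)\le \prod_{k=1}^n m^{-T_{u_k}(s)}$ exactly. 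Hence for every $n$ and every $s\in[q,1]$, $\E\big(\sum_{|u|=n}\nu([u])^qX(u)^q\big)\le \big(\sum_i p_i^q m^{-qT_i(s)/s}\big)^n$, and taking the infimum over $s$ inside (the bound holds for each $s$ with constant $1$, so no loss, no prefactor, no covering in $s$) gives $\E\big(\sum_{|u|=n}\pi_*\mu([u])^q\big)\le m^{-n\tau(q)}$; Lemma~\ref{Borel} then gives $\tau_{\pi_*\mu}(q)\ge\tau(q)$ almost surely. Finally, note that your argument gives each inequality almost surely for a fixed $q$; to get the statement for all $q$ simultaneously on one full-measure event you should invoke, as the paper does, the continuity of $\tau_{\pi_*\mu}$, $\tau$, $\tau_\nu$ and $T$ on $(0,\infty)$ and work along a countable dense set of $q$.
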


Since,  as a $L^q$-spectrum, the function $\tau_{\pi_*\mu}$ is  continuous over $(0,\infty)$ and $\tau$, $\tau_\nu$ and $T$ are continuous, we only need to get the desired inequalities for each $q>0$.

\begin{proof}
(1) (i) The fact that $\tau_{\pi_*\mu}(q) \le \tau_\mu(q)$ for $q\ge 1$ is general and comes from the super-additivity of $x\mapsto x^q$ over $\R_+$ applied to $\Big (\pi_*\mu([u])= \sum_{v\in\Sigma_n}\mu([u,v])\Big )^q$.

(ii) The almost sure inequality $\tau_{\pi_*\mu}(q)\ge \min(\tau_\nu(q),T(q))$ for a given  $q\ge 1$ such that
$T(q)>0$ is a direct consequence of Corollary~\ref{cor6.11}.

(iii) Let $q\ge 1$ be such that $T^*(T'(q))\ge 0$ and suppose that $T(q)\le 0$.  Recall that $T$ is concave, so its derivative is non increasing. Also, $T(1)=0$ and $T'(1)>0$. This implies that $T'$ is negative at some point of $(1,q)$, otherwise $T$ could not take non positive values over $(1,q]$. Since $T'$ is non increasing, it follows that $T$ has a unique zero $q_0$ over $(1,q]$ at which $T'(q_0)<0$. This implies that $T^*(T'(q_0))=q_0T'(q_0)<0$. Since  $T^*(T')$ is non increasing on $\R_+$ (its derivative is $q\mapsto qT''(q)$),  we get $T^*(T'(q))\le T^*(T'(q_0))<0$, which is a contradiction. So $T(q)>0$.

Now recall that by Theorem~\ref{AB}, we have $T(q)=\tau_\mu(q)$ as soon as $T^*(T'(q))\ge 0$. Thus, if $\min (\tau_\nu(q),T(q))=T(q)$, the equality $\tau_\mu(q)=T$ comes from (i) and (ii).


\medskip

(2)  For $0< q\le 1$ and $q\le s\le 1$, using Jensen's inequality, for each $n\ge 1$ we get
\begin{eqnarray*}
\E\Big (\sum_{u\in\Sigma_n}\nu([u])^q X(u)^q\Big )
&=& \E\Big (\sum_{u\in\Sigma_n}\nu([u])^q X(u)^{s \cdot q/s}\Big )\\
&\le& \sum_{u\in\Sigma_n}\nu([u])^q \E(X(u)^s)^{q/s}.
\end{eqnarray*}
Then, using the definition of $X(u)$, the fact that $\E(Y^s) \le \E(Y)^s=1$, and the branching property, we obtain
\begin{eqnarray*}
 \sum_{u\in\Sigma_n}\nu([u])^q \E(X(u)^s)^{q/s}&= &\sum_{u\in\Sigma_n}\nu([u])^q \E\Big (\sum_{v\in\Sigma_n} \Big (\frac{\mu([u,v])}{\nu([u])}\Big )^s\Big )^{q/s}\\
&= & \sum_{u\in\Sigma_n}\nu([u])^q \E\Big (\sum_{v\in\Sigma_n} Y(u,v)^s \prod_{k=1}^nV_{u_k,v_k}(u_{|k-1},v_{|k-1})^s\Big )^{q/s}\\
&=&\E(Y^s)^{q/s} \sum_{u\in\Sigma_n}\prod_{k=1}^n p_{u_k}m^{-T_{u_k}(s)q/s}\le \Big (\sum_{i=0}^{m-1} p_i^q m^{-qT_i(s)/s}\Big )^n.
\end{eqnarray*}
Since this holds for all $s\in [q,1]$, for each $n\ge 1$ we obtain
$$
\E\Big (\sum_{u\in\Sigma_n}\nu([u])^q X(u)^q\Big )\le \Big (\inf_{q\le s\le 1}\sum_{i=0}^{m-1} p_i^q m^{-qT_i(s)/s}\Big )^n.
$$
 Consequently, Lemma~\ref{Borel} yields $\tau_{\pi_*\mu}(q)\ge  \tau(q)$ almost surely.
\end{proof}

\section{Proof of Theorem~\ref{MA}: Upper bound for the $L^q$-spectrum and validity of the multifractal formalism}\label{sec-4.3}

Proposition~\ref{tau} yields the following lemma.

\begin{lem}With probability 1, conditionally on $\mu\neq 0$, we have $\tau_{\pi_*\mu}\ge \tau$ over $[0,\widetilde q_c)$.
\end{lem}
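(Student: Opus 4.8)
The plan is to extract the inequality $\tau_{\pi_*\mu}\ge\tau$ on $[0,\widetilde q_c)$ from Proposition~\ref{tau}, handling the endpoint $\{0\}$, the interval $(0,1]$, and the interval $(1,\widetilde q_c)$ separately; throughout, one first records that $\|\pi_*\mu\|=\|\mu\|$, so the events $\{\mu\neq 0\}$ and $\{\pi_*\mu\neq 0\}$ coincide and conditioning on one is the same as conditioning on the other. On $(0,1]$ there is nothing to prove: Proposition~\ref{tau}(2) states precisely that $\tau_{\pi_*\mu}(q)\ge\tau(q)$ for every $q\in(0,1]$ (and at $q=1$ both sides equal $0$). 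On $(1,\widetilde q_c)$ one has $\tau(q)=\min(\tau_\nu(q),T(q))$ by definition, so by Proposition~\ref{tau}(1)(ii) it is enough to check that $T(q)>0$ throughout this range; that gives $\tau_{\pi_*\mu}(q)\ge\min(\tau_\nu(q),T(q))=\tau(q)$.

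So the one genuine verification on $(1,\widetilde q_c)$ is the strict positivity of $T$, which I would do by comparing $q$ with $q_c$. For $q\in(1,q_c)$ — which, when $q_c=\infty$, is all of $(1,\widetilde q_c)$ — one has $T^*(T'(q))\ge 0$: the map $q\mapsto T^*(T'(q))=qT'(q)-T(q)$ is non-increasing on $\R_+$ (its derivative is $qT''(q)\le 0$), equals $T'(1)>0$ at $q=1$, and by the definition of $q_c$ it is positive for all $q\ge 0$ when $q_c=\infty$ and has left limit $0$ at $q_c$ when $q_c<\infty$; Proposition~\ref{tau}(1)(iii) then yields $T(q)>0$. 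For $q\in[q_c,\widetilde q_c)$ — a range that is non-empty only when $q_c<\infty$ and $\tau_\nu(q_c)<T(q_c)$ — the definition of $\widetilde q_c$ (together with this hypothesis at $q=q_c$) forces $\tau_\nu(q)<T(q)$, while $\tau_\nu(q)>0$ because $q\ge q_c>1$ and $\tau_\nu$ is strictly increasing (its derivative being a convex combination of the strictly positive numbers $-\log_m p_i$, $p_i>0$) with $\tau_\nu(1)=0$; hence $T(q)>\tau_\nu(q)>0$, and incidentally $\tau(q)=\tau_\nu(q)$ there, as it should be.

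The case $q=0$ is not covered by Proposition~\ref{tau}, and one cannot merely let $q\to 0+$ in the bound on $(0,1]$, since $\tau_{\pi_*\mu}$ need not be right-continuous at $0$ (concavity only gives lower semicontinuity at the left endpoint of its finiteness domain). Instead I would argue with covering numbers: if $\pi_*\mu([u])>0$ then $[u]\times\Sigma$ carries positive $\mu$-mass, hence meets $K=\mathrm{supp}(\mu)$, so $[u]\cap\pi(K)\neq\emptyset$; consequently $\#\{u\in\Sigma_n:\pi_*\mu([u])>0\}\le N_n$, where $N_n=\#\{u\in\Sigma_n:[u]\cap\pi(K)\neq\emptyset\}$ is exactly the covering number of $\pi(K)$ by closed balls of radius $m^{-n}$ (in $\Sigma$ such balls are precisely the length-$n$ cylinders). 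Taking $\liminf_n\frac{-1}{n}\log_m(\cdot)$ gives $\tau_{\pi_*\mu}(0)\ge -\limsup_n\frac1n\log_m N_n\ge -\overline{\dim}_B\pi(K)$, and the box-dimension estimate established in the proof of Corollary~\ref{DGF} gives, conditionally on $K\neq\emptyset$, $\overline{\dim}_B\pi(K)\le\inf_{0\le h\le1}\varphi(h)=-\tau(0)$. Hence $\tau_{\pi_*\mu}(0)\ge\tau(0)$, and combining the three cases proves the lemma.

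I do not expect a real obstacle here: the lemma is essentially bookkeeping on top of Proposition~\ref{tau}. The only points that need a little care are ensuring $T>0$ across the whole of $(1,\widetilde q_c)$ — exactly where the two cut-offs $q_c$ and $\widetilde q_c$ interact and where Proposition~\ref{tau}(1)(iii) and the strict monotonicity of $\tau_\nu$ enter — and the separate treatment of $q=0$, where the possible failure of right-continuity of $\tau_{\pi_*\mu}$ makes the covering-number detour through Corollary~\ref{DGF} necessary rather than optional.
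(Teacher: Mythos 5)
Your proposal is correct and follows essentially the same route as the paper, which simply deduces the lemma from Proposition~\ref{tau}; your contribution is to spell out the bookkeeping the paper leaves implicit, namely that $T>0$ on $(1,\widetilde q_c)$ (via monotonicity of $T^*\circ T'$ on $(1,q_c)$ and $T>\tau_\nu>0$ on $[q_c,\widetilde q_c)$) so that Proposition~\ref{tau}(1)(ii) applies, and the endpoint $q=0$ via $\tau_{\pi_*\mu}(0)=-\overline{\dim}_B\pi(K)\ge\tau(0)$, which is exactly how the paper itself treats $q=0$ later (in the proof of Corollary~\ref{4.6}, relying on the box-dimension bound from Corollary~\ref{DGF}). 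Your observation that the bound at $q=0$ cannot be obtained by letting $q\to 0+$ is a correct and worthwhile point of care.
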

Consequently, due to the general inequality $\dim E(\pi_*\mu,\alpha)\le \tau_{\pi_*\mu}^*(\alpha)$,  valid for all $\alpha$,
 to prove the validity of the multifractal formalism at any $\alpha\in [\tau'(q^+),\tau'(q^-)]$ for some $q\in (0,\widetilde q_c)$ or at $\alpha=\tau'(0+)$ almost surely, as well as  the almost sure equality of $\tau_{\pi_*\mu}=\tau$ over $[0,\widetilde q_c)$, it is enough to show that, for each $q\in [0,\widetilde q_c)$, with probability 1, conditionally on if $\mu\neq 0$, we have $\dim E(\pi_*\mu,\alpha)\ge \tau^*(\alpha)$ for $\alpha\in [\tau'(q^+),\tau'(q^-)]$ if $q>0$ and $\alpha=\tau'(0+)$ if $q=0$.

Indeed, once this is done, we automatically have that almost surely, conditionally on $\mu\neq 0$, $\tau^*(\alpha)=\alpha q-\tau(q)\le \dim E(\pi_*\mu,\alpha)\le  \tau_{\pi_*\mu}^*(\alpha)\le \alpha q-\tau_{\pi_*\mu}(q)\le \alpha q-\tau(q)$. Moreover,   the information $\dim E(\pi_*\mu,\alpha)\ge \tau^*(\alpha)$ for $\alpha=\tau'(q)$, where $q$ describes a dense countable subset of values of $q$ is enough to get the equality $\tau=\tau_{\pi_*\mu}$ over $[0,\widetilde q_c)$. Also, the fact   $\tau_{\pi_*\mu}(q)=qT'(q_c-)$
 for $q\ge q_c$ when $\widetilde q_c=q_c<\infty$ follows from Proposition~\ref{linearization}.

Then, to get \eqref{convtaun} for $q\in (0,\widetilde q_c)$, we notice that if $\alpha\in\{\tau'(q^+),\tau'(q^-)\}$, for any $\epsilon>0$,  for $n$ large enough, one has $\#\{u\in\Sigma_n: \pi_*\mu([u])\ge m^{-n(\alpha+\epsilon)}\}\ge m^{n(\tau^*(\alpha)-\epsilon)}$, for otherwise a simple covering argument would give $\dim E(\pi_*\mu,\alpha)<\tau^*(\alpha)$. This implies
$$
\sum_{|u|=n} \mathbf{1}_{\{\pi_*\mu([u])>0\}}\pi_*\mu([u])^q\ge m^{n(\tau^*(\alpha)-\epsilon)}m^{-nq(\alpha+\epsilon)}\ge m^{-n(\tau(q) +(q+1)\epsilon)}.
$$
Since $\epsilon$ is arbitrary, this yields $\limsup_{n\to\infty} -\frac{1}{n}\log_m\sum_{|u|=n} \mathbf{1}_{\{\pi_*\mu([u])>0\}}\pi_*\mu([u])^q\le \tau(q)$. Moreover, we already know (by Proposition~\ref{tau}) that
$$
\tau_{\pi_*\mu}(q)=\liminf_{n\to\infty} -\frac{1}{n}\log_m\sum_{|u|=n}\mathbf{1}_{\{\pi_*\mu([u])>0\}} \pi_*\mu([u])^q\ge \tau(q).
$$
The case $q=0$ just comes from the fact that $\dim_H K=\dim_B K$.

\begin{rem}\label{taupsitaunu} {\rm To follow the different cases distinguished below, it is useful to have the following properties in mind.
\begin{itemize}
\item[(1)] If $\alpha=\tau'(1)$, our study of the exact dimensionality of $\pi_*\mu$ and \eqref{tau'1} show that $\dim E(\pi_*\mu,\alpha)=\alpha=\tau^*(\alpha)$ almost surely conditionally on $\mu\neq 0$.

\item[(2)] The study of the differentiability of $\tau$ achieved in  Section~\ref{study of tau} shows that if $q\in (0,1)$ then either $\tau(q)=T(q)$ and  $\tau'(q)=T'(q)$ or $\tau(q)=\tau_\nu(q)$ and $\tau'(q)=\tau_\nu'(q)$.

\item[(3)] Simple considerations about the concave function $\min (\tau_\nu,T)$ show that at $q\in (1,q_c)$, if $\tau(q)=T(q)<\tau_\nu(q)$ then $\tau'(q)=T'(q)$, if $q\in (1,\widetilde q_c)$ and $\tau(q)=\tau_\nu(q)<T(q)$ then $\tau'(q)=\tau_\nu'(q)$, and if $q\in (1,q_c)$ and $\tau(q)=T(q)=\tau_\nu(q)$, then $\{\tau'(q^+),\tau'(q^-)\}=\{T'(q),\tau_\nu'(q)\}$.
\end{itemize}
}
\end{rem}

\subsection{The case $\alpha=T'(q)$ and $\tau(q)=T(q)$ with $q\in (0,q_c)\setminus\{1\}$}\label{tauegalpsi} At first we must recall some facts about the multifractal analysis of $\mu$.

For $q\ge 0$, let $\mu_q$ be the Mandelbrot measure built with the random vectors
$$
W_q(u,v)= (m^{T(q)}W_{i,j}(u,v)^q)_{0\le i,j\le m-1},\quad (u,v)\in \bigcup_{n\ge 0}\Sigma_n\times\Sigma_n.
$$
According to the study achieved in \cite{Ba00}, with probability 1, conditionally on $\mu\neq 0$, all the  Mandelbrot measures $\mu_q$, $q\in [0,q_c)$ are defined simultaneously and one has $\dim (\mu_q)=T^*(T'(q))>0$ and $E(\mu,T'(q))$ is of full $\mu_q$-measure.

\begin{pro}\label{preservdim} Fix $q\in (0,q_c)\setminus\{1\}$ such that $\tau(q)=T(q)$. With probability 1, conditionally on $\{\mu\neq 0\}$, we have $\dim(\pi_*\mu_q)= T^*(T'(q))$.
\end{pro}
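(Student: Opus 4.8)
The goal is to show that the projection $\pi_*\mu_q$ of the auxiliary Mandelbrot measure $\mu_q$ has dimension exactly $T^*(T'(q))$, under the hypothesis $\tau(q)=T(q)$ with $q\in(0,q_c)\setminus\{1\}$. The natural strategy is to apply Theorem~\ref{DIM} to $\mu_q$ in place of $\mu$. For this I first need to identify the data attached to $\mu_q$: its generating vector is $W_q=(m^{T(q)}W_{i,j}^q)$, so the associated probability vector is $p_i^{(q)}=\sum_{j}\E(m^{T(q)}W_{i,j}^q)=m^{T(q)}\sum_j p_i^q\E(V_{i,j}^q)=p_i^q m^{T(q)-T_i(q)}$ (using \eqref{psi}), and the associated Bernoulli measure is exactly $\nu_q$ composed appropriately — more precisely the measure generated by $(p_i^q m^{T(q)-T_i(q)})_i$, which I will denote $\nu^{(q)}$. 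One computes $\dim(\nu^{(q)})=-\sum_i p_i^{(q)}\log_m p_i^{(q)}$, and by an elementary Legendre-type manipulation this equals $q\,T'(q)-T(q)+\sum_i p_i^{(q)}\big(T_i'(q)\cdot\text{(something)}\big)$; the cleaner route is to use the analogue of \eqref{dimrel} for $W_q$, namely $\dim(\mu_q)=T^*(T'(q))=\big(\sum_i p_i^{(q)}T_{V_i^{(q)},}'(1-)\big)+\dim(\nu^{(q)})$, where $V_i^{(q)}$ is the normalized $i$-th row of $W_q$.

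The heart of the matter is then to check which branch of Theorem~\ref{DIM} applies, i.e. whether $\dim(\mu_q)>\dim(\nu^{(q)})$ or $\dim(\mu_q)\le\dim(\nu^{(q)})$, and to show that in the relevant case the dimension of $\pi_*\mu_q$ comes out to be $T^*(T'(q))=\dim(\mu_q)$. I expect that the hypothesis $\tau(q)=T(q)$ is precisely what forces the ``small fibre'' regime $\dim(\mu_q)\le\dim(\nu^{(q)})$: indeed, from the differentiability analysis of $\tau$ in Section~\ref{study of tau} (see Remark~\ref{taupsitaunu}(2),(3)), $\tau(q)=T(q)$ at $q\in(0,q_c)\setminus\{1\}$ means the infimum defining $\tau(q)$ is attained at $s=q$, which by the sign analysis of $\partial G/\partial s$ translates into $\sum_i p_i^{(q)}T_{V_i^{(q)}}'(1-)\le 0$, i.e. $\dim(\mu_q)\le\dim(\nu^{(q)})$. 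Granting that $T_{W_q}$ is finite near $1$ (which follows from $q\in(0,q_c)$ and $T$ finite near a neighborhood of $q$, part of the standing assumptions on $\mu_q$ from \cite{Ba00}), Theorem~\ref{DIM}(1) then gives that $\pi_*\mu_q$ is exactly dimensional with $\dim(\pi_*\mu_q)=\dim(\mu_q)=T^*(T'(q))$, which is the claim. In the borderline sub-case $\sum_i p_i^{(q)}T_{V_i^{(q)}}'(1-)=0$ both branches agree and there is nothing extra to do.

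The step I expect to be the main obstacle is the translation ``$\tau(q)=T(q)\ \Longrightarrow\ \sum_i p_i^{(q)}T_{V_i^{(q)}}'(1-)\le 0$'', i.e. correctly matching the variational characterization of $\tau$ from Section~\ref{study of tau} with the dimension inequality for the tilted measure $\mu_q$. Concretely, one needs the identity $T_{V_i^{(q)}}'(1-)=\log_m\big(\cdot\big)$ relating the derivative at $1$ of the normalized row of $W_q$ to the quantity $T_i^*(T_i'(q))/q$-type expressions appearing in $g(q,q)$, after which the sign statement $g(q,q)\le 0$ (equivalent to the infimum in the definition of $\tau(q)$ being attained at the left endpoint $s=q$) becomes exactly $\sum_i p_i^{(q)}T_{V_i^{(q)}}'(1-)\le 0$. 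Once this dictionary between the ``$s=q$'' branch and $\mu_q$ is in place, the proof is a direct invocation of Theorem~\ref{DIM}; I would also record, for later use in Section~\ref{sec-4.3}, that in this regime the conditional measures $(\mu_q)^x$ have dimension $0$, which is the companion statement from Theorem~\ref{DIM}(2) and reflects that the fibres of $\pi$ carry no extra dimension for $\mu_q$.
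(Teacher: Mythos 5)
Your overall scheme is the same as the paper's: identify $\E(\pi_*\mu_q)$ as the Bernoulli product $\nu^{(q)}$ with weights $p_i^{(q)}=p_i^qm^{T(q)-T_i(q)}$, note via the analogue of \eqref{dimrel} that $\dim(\mu_q)-\dim(\nu^{(q)})=\sum_i p_i^{(q)}T_{V_i^{(q)}}'(1-)=\sum_i p_i^{(q)}T_i^*(T_i'(q))$, show that $\tau(q)=T(q)$ forces this quantity to be $\le 0$, and then invoke Theorem~\ref{DIM}(1) (second branch) for $\mu_q$; this is exactly the content of Lemma~\ref{muq/espmuq} in the paper. For $q\in(0,1)$ your key step is correct and coincides with the paper's: by the very definition of $\tau$ on $(0,1]$, the equality $\tau(q)=T(q)$ means the infimum of $s\mapsto\log_m\sum_i p_i^qm^{-qT_i(s)/s}$ over $[q,1]$ is attained at the left endpoint $s=q$, and the first-order condition there gives $\sum_i p_i^qm^{-T_i(q)}T_i^*(T_i'(q))\le 0$.

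The genuine gap is the case $q\in(1,q_c)$. There the definition of $\tau$ is simply $\min(\tau_\nu(q),T(q))$ — there is no infimum over $s$ in the definition — so your assertion that ``$\tau(q)=T(q)$ means the infimum defining $\tau(q)$ is attained at $s=q$'' has no content, and neither the differentiability analysis of Section~\ref{study of tau} (which concerns $(0,1]$) nor Remark~\ref{taupsitaunu}(3) (which only records which one-sided derivative $\tau$ inherits) yields the sign of $\sum_i p_i^{(q)}T_i^*(T_i'(q))$. The paper closes this by a separate, probabilistic argument: for $1\le s\le q$, Jensen's inequality ($\E(X(u)^q)\ge\E(X(u)^s)^{q/s}$ and $\E(Y^s)^{1/s}\ge 1$) gives
\begin{equation*}
\E\Big(\sum_{u\in\Sigma_n}\pi_*\mu([u])^q\Big)\ \ge\ \Big(\sum_{i=0}^{m-1}p_i^qm^{-qT_i(s)/s}\Big)^n,
\end{equation*}
while the upper moment estimate of Corollary~\ref{momestimate} bounds the left-hand side by $f_q(n)\,m^{-n\min(\tau_\nu(q),T(q))}$; together these yield \eqref{e-sup}, i.e. $\sup_{1\le s\le q}\log_m\sum_i p_i^qm^{-qT_i(s)/s}\le-\tau(q)$, so that when $\tau(q)=T(q)$ this supremum is attained at the right endpoint $s=q$, and differentiating at $s=q$ gives the desired inequality. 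Without such an argument (an interpolation/H\"older argument could serve instead, but you give none), your proof only covers $q\in(0,1)$, whereas the proposition — and its later use for first-order phase transitions — needs precisely the range $q\in(1,q_c)$ as well. The remaining ingredients you cite (finiteness of $T_{W_q}$ near $1$ for $q<q_c$, non-degeneracy of $\mu_q$ from $T^*(T'(q))>0$, and the identity $T_{V_i^{(q)}}'(1-)=T_i^*(T_i'(q))$) are correct.
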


The following corollary is our main goal.
\begin{cor}\label{4.6}
Fix $q\in (0,q_c)\setminus\{1\}$ such that $\tau(q)=T(q)$. With probability 1, conditionally on $\{\mu\neq 0\}$, we have $\dim E(\pi_*\mu,T'(q))\ge  T^*(T'(q))=\tau^*(T'(q))$.
\end{cor}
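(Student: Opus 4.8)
The plan is to deduce Corollary~\ref{4.6} from Proposition~\ref{preservdim} by a transfer-of-mass argument, using that $E(\mu,T'(q))$ has full $\mu_q$-measure and relating local dimensions of $\pi_*\mu_q$ to those of $\pi_*\mu$ along $\pi_*\mu_q$-typical fibers. Concretely, I would first record that, conditionally on $\{\mu\neq 0\}$, the auxiliary measure $\mu_q$ is a nondegenerate Mandelbrot measure with $\dim(\mu_q)=T^*(T'(q))>0$ (by the hypothesis $q\in(0,q_c)$ and the theory recalled before Proposition~\ref{preservdim}), and that $\mu_q$ is carried by $E(\mu,T'(q))$. Pushing forward by $\pi$, the measure $\pi_*\mu_q$ is carried by $\pi(E(\mu,T'(q)))$, but what we really need is a pointwise statement about $\pi_*\mu$ itself on $\Sigma$, so the key is to control $\log\pi_*\mu([x_{|n}])$ for $\pi_*\mu_q$-a.e.\ $x$.

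The main computation I would carry out: for $x\in\Sigma$ write $\pi_*\mu([x_{|n}])=\sum_{v\in\Sigma_n}\mu([x_{|n},v])$ and $\pi_*\mu_q([x_{|n}])=\sum_{v\in\Sigma_n}\mu_q([x_{|n},v])$, where by construction $\mu_q([u,v])\approx m^{nT(q)}\,\mu([u,v])^q\,($martingale limit$)$ up to the usual fluctuation factors $Y_q(u,v)$. Since $q<1$ on $(0,1)$ and $q>1$ on $(1,q_c)$ the direction of the elementary inequality between $\big(\sum_v a_v\big)^q$ and $\sum_v a_v^q$ differs, so I expect to treat $q\in(0,1)$ and $q\in(1,q_c)$ separately — but in both regimes one gets, for $\pi_*\mu_q$-a.e.\ $x$,
$$
\lim_{n\to\infty}\frac{\log\pi_*\mu([x_{|n}])}{-n\log m}=T'(q),
$$
by combining: (a) the known fact $\underline\dim_{\mathrm{loc}}(\mu,(x,y))=\overline\dim_{\mathrm{loc}}(\mu,(x,y))=T'(q)$ for $\mu_q$-a.e.\ $(x,y)$ (so each individual term $\mu([x_{|n},v])$ in the sum defining the fiber mass is $m^{-n(T'(q)+o(1))}$ along the relevant $v$'s), (b) the dimension statement $\dim(\pi_*\mu_q)=T^*(T'(q))$ from Proposition~\ref{preservdim}, which pins the ``horizontal'' contribution, and (c) the additivity relation $T^*(T'(q))+\big(\text{fiber dimension}\big)=\dim(\mu_q)=T^*(T'(q))$, forcing the conditional fibers of $\mu_q$ over $\pi_*\mu_q$-a.e.\ $x$ to have dimension $0$, hence the number of relevant cylinders $[x_{|n},v]$ to be subexponential, so that the sum defining $\pi_*\mu([x_{|n}])$ is comparable (on the exponential scale) to its largest term $m^{-nT'(q)+o(n)}$. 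Once this limit is established, $\pi_*\mu_q$ is carried by $E(\pi_*\mu,T'(q))$, and since $\dim(\pi_*\mu_q)=T^*(T'(q))$ we get $\underline\dim_H(\pi_*\mu_q)=T^*(T'(q))$, whence $\dim_H E(\pi_*\mu,T'(q))\ge \dim_H(\pi_*\mu_q)=T^*(T'(q))$. The identification $T^*(T'(q))=\tau^*(T'(q))$ then follows from $\tau(q)=T(q)$, the local equality of derivatives $\tau'(q)=T'(q)$ at such $q$ (Remark~\ref{taupsitaunu}(2)--(3)), and the general upper bound $\dim_H E(\pi_*\mu,\alpha)\le \tau_{\pi_*\mu}^*(\alpha)\le \tau^*(\alpha)$ already available from the lower bound $\tau_{\pi_*\mu}\ge\tau$.

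I anticipate the main obstacle to be step (c): carefully justifying that the fiber of $\mu_q$ over $\pi_*\mu_q$-a.e.\ $x$ has dimension $0$ and, more importantly, that this forces the \emph{counting} estimate $\#\{v:\mu_q([x_{|n},v])>0\}=m^{o(n)}$ in the right uniform/almost-sure sense, since a priori ``dimension $0$'' for a conditional measure only controls a $\mu_q^x$-typical cylinder, not the whole fiber support. To get around this I would either invoke the exact-dimensionality and dimension-conservation package for $\pi_*\mu_q$ and $\mu_q^x$ from Theorem~\ref{DIM} applied to the auxiliary Mandelbrot measure $\mu_q$ (which already yields $\dim(\mu_q^x)=\dim(\mu_q)-\dim(\pi_*\mu_q)$, here $=0$), and then upgrade from the conditional-measure statement to the mass statement via a Borel--Cantelli / first-moment argument on $\sum_{v}\mu_q([x_{|n},v])^{1\pm\epsilon}$ under the Peyrière measure $\mathbb{P}(\mathrm{d}\omega)\pi_*\mu_{q,\omega}(\mathrm{d}x)$, exactly as in the ``first approach'' to Theorem~\ref{DIM}(2). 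The other technical point, harmless but needing care, is that $q=1$ is excluded precisely because there $\mu_q=\mu$ degenerates the argument; everything else — the sign juggling between $q<1$ and $q>1$, the $o(n)$ error terms, uniform integrability of the relevant martingales — is routine given Propositions~\ref{mom+estimate'}, \ref{preservdim} and the results of Section~\ref{pfDIM}.
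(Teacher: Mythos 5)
Your overall strategy (push the auxiliary measure $\mu_q$ through $\pi$ and use $\dim(\pi_*\mu_q)=T^*(T'(q))$ from Proposition~\ref{preservdim}) is the right starting point, but your step (c) contains a genuine gap. You want to show that for $\pi_*\mu_q$-a.e.\ $x$ the whole fiber sum $\pi_*\mu([x_{|n}])=\sum_v\mu([x_{|n},v])$ is $m^{-nT'(q)+o(n)}$, and you argue this by claiming that $\dim(\mu_q^x)=0$ forces a subexponential count of relevant cylinders so that the sum is comparable to its largest term. This does not work: dimension $0$ of the conditional measure $\mu_q^x$ gives no subexponential bound on the number of fiber cylinders, and more importantly the sum runs over \emph{all} $v$ with $\mu([x_{|n},v])>0$, not just the $\mu_q$-typical ones. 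The fiber of $K$ above a typical $x$ generically contains exponentially many $\mu$-charged cylinders (this is exactly what Corollary~\ref{covnumb} quantifies), and their $\mu$-masses are governed by exponents different from $T'(q)$; their aggregate contribution is precisely what can lower $\underline\dim_{\rm loc}(\pi_*\mu,x)$ below $T'(q)$ when $\tau(q)\neq T(q)$. Your sketch never uses the hypothesis $\tau(q)=T(q)$ quantitatively to rule this out, and your proposed fix — a Borel--Cantelli/first-moment bound on $\sum_v\mu_q([x_{|n},v])^{1\pm\epsilon}$ under the Peyri\`ere measure of $\mu_q$ — only controls $\mu_q$-masses (hence $\pi_*\mu_q$, which is already handled by Proposition~\ref{preservdim}), not the $\mu$-masses entering $\pi_*\mu([x_{|n}])$. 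So the lower bound $\underline\dim_{\rm loc}(\pi_*\mu,x)\ge T'(q)$ at $\pi_*\mu_q$-typical $x$ is left unproved.

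The paper's proof sidesteps exactly this difficulty. The easy half is the one you also have: $\overline\dim_{\rm loc}(\pi_*\mu,\pi(t))\le\overline\dim_{\rm loc}(\mu,t)=T'(q)$ for $\mu_q$-a.e.\ $t$, since projecting never increases upper local dimensions. For the other half, instead of estimating $\pi_*\mu([x_{|n}])$ from above, one first checks that $T'(q)\le\tau_{\pi_*\mu}'(0+)$ (this uses $\tau_{\pi_*\mu}(0)=\tau(0)=-\dim_B\pi(K)$, $\tau_{\pi_*\mu}\ge\tau$, $\tau(q)=T(q)$ and the fact that $\dim(\pi_*\mu_q)=T^*(T'(q))$ cannot exceed $\dim_B\pi(K)$), and then applies the general bound \eqref{MF}: for every $\alpha'<T'(q)$,
\begin{equation*}
\dim_H \underline E^{\le}(\pi_*\mu,\alpha')\le\tau_{\pi_*\mu}^*(\alpha')\le \alpha' q-\tau_{\pi_*\mu}(q)<T'(q)q-\tau(q)=T^*(T'(q)).
\end{equation*}
Since $\pi_*\mu_q$ is exact dimensional with dimension $T^*(T'(q))$, the set $\bigcup_{\alpha'<T'(q)}\underline E(\pi_*\mu,\alpha')$ is $\pi_*\mu_q$-null, so a set of positive $\pi_*\mu_q$-measure survives inside $E(\pi_*\mu,T'(q))$, giving $\dim_H E(\pi_*\mu,T'(q))\ge T^*(T'(q))$; the identification with $\tau^*(T'(q))$ via Remark~\ref{taupsitaunu} is as you say. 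If you want to salvage your direct route, you would need a genuine cross estimate of the $\mu$-masses along $\pi_*\mu_q$-typical fibers (in the spirit of the sets $\widetilde E_{N,\epsilon}$ used for the first-order phase transition case), which is substantially more work than your sketch suggests.
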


We start with the proof of the corollary.

\begin{proof} Suppose $\mu\neq 0$. At first, we show that $\tau_{\pi_*\mu}'(0+)\ge \tau'(0+)\ge  T'(q)$. To see this, observe at first  that $\tau_{\pi_*\mu}(0)=\tau(0)$ since  $-\tau_{\pi_*\mu}'(0)$ is the upper box dimension of $\pi(K)$ and by \eqref{dimproj} we have $-\tau(0)=\dim_B\pi(K)$.  Since, moreover, we have $\tau_{\pi_*\mu}\ge \tau$ over $(0,1]$ by Proposition~\ref{tau}(2), we get the first inequality. Now  if $\tau'(0+)< T'(q)$, the equality $\tau(q)=T(q)$ yields
$$
T^*(T'(q))=qT'(q)-T(q)>q\tau'(0+) -\tau(q)\ge  \tau^*(\tau'(0+))=-\tau(0)=\dim_B(\pi(K)).
$$
However, by Proposition~\ref{preservdim}, we have $\dim(\pi_*\mu_q)= T^*(T'(q))$, so $\dim(\pi_*\mu_q)>\dim_B\pi(K)$, which is impossible since $\pi_*\mu_q$ is supported on $\pi(K)$. Thus $\tau'(0)\ge T'(q)$.

There is a subset $F_q$ of $\mathrm{supp}(\mu)$ of full  $\mu_q$-measure such that for all $t\in F_q$, $\dim_{\rm loc}(\mu_q,t)=\dim_{\rm loc}(\pi_*\mu_q,\pi(t))=T^*(T'(q))$ (by Proposition~\ref{preservdim}) and $\dim_{\rm loc}(\mu,t)=T'(q)$ (by the multifractal analysis of $\mu$ \cite{Ba00}). This implies that for all $t\in F_q$ we  have $\underline \dim_{\rm loc}(\pi_*\mu, \pi(t))\le\overline \dim_{\rm loc}(\pi_*\mu, \pi(t))\le \overline\dim_{\rm loc}(\mu,t)= T'(q)$. On the other hand, since $T'(q)\le \tau_{\pi_*\mu}'(0+)$, for all $\alpha'<T'(q)$, by \eqref{MF} we have
\begin{eqnarray*}
\dim \underline E^{\le }(\pi_*\mu, \alpha') \le \tau_{\pi_*\mu}^*(\alpha')\le   \alpha' q-\tau_{\pi_*\mu}(q)< T'(q)q-\tau_{\pi_*\mu}(q)\le T'(q)q-\tau(q)=T^*(T'(q)).
  \end{eqnarray*}
Consequently, since the family $( \underline E(\pi_*\mu, \alpha'))_{\alpha'<T'(q)}$ is non decreasing and $\dim (\pi_*\mu_q)=T^*(T'(q))$, we get $\pi_*\mu_q\big (\bigcup_{\alpha'<T'(q)} \underline E(\pi_*\mu, \alpha'))=0$. Now, set $ \widetilde F_q=\pi(F_q)\setminus \bigcup_{\alpha'<T'(q)}\underline E(\pi_*\mu, \alpha')$. By construction we have $\widetilde F_q\subset E(\pi_*\mu,T'(q))$ and $\pi_*\mu_q(\widetilde F_q)>0$. Finally $\dim E(\pi_*\mu,T'(q))\ge  T^*(T'(q))$. Moreover, by  Remark~\ref{taupsitaunu}, if $q\le 1$ then  $\tau'(q)=T'(q)$, and  if $q>1$, then $T'(q)\in\{\tau'(q^+),\tau'(q^-)\}$, so $T^*(T'(q))=\tau^*(T'(q))$.
\end{proof}

Proposition~\ref{preservdim} is a consequence of Theorem~\ref{DIM} and the following lemma.

\begin{lem}\label{muq/espmuq}
If $q\in (0,q_c)\setminus\{1\}$ and $\tau(q)=T(q)$,   then, conditionally on $\mu_q\neq 0$,  $\dim(\mu_q)=T^*(T'(q))\le \dim (\E(\pi_*\mu_q))$.
\end{lem}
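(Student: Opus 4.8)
The plan is to compute $\E(\pi_*\mu_q)$ explicitly as a Bernoulli product measure, identify its dimension, and compare with $\dim(\mu_q)=T^*(T'(q))$, reducing the inequality to the hypothesis $\tau(q)=T(q)$. First, recall that $\mu_q$ is the Mandelbrot measure generated by the random vector $W_q=(m^{T(q)}W_{i,j}^q)_{0\le i,j\le m-1}$; since $T_{W}(1)=0$ by definition of $T(q)$, one checks $T_{W_q}(1)=0$, so $\mu_q$ is a genuine Mandelbrot measure, and by the results quoted from \cite{Ba00} (recalled just before Proposition~\ref{preservdim}), conditionally on $\mu\neq 0$ one has $\dim(\mu_q)=T^*(T'(q))>0$ since $q<q_c$. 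The expectation $\E(\pi_*\mu_q)$ is the Bernoulli product measure $\nu^{(q)}$ associated with the probability vector $\big(p_i^{(q)}\big)_{0\le i\le m-1}$, where $p_i^{(q)}=\sum_{j=0}^{m-1}\E(m^{T(q)}W_{i,j}^q)=m^{T(q)}\sum_{j=0}^{m-1}\E(W_{i,j}^q)$; using \eqref{psi} this equals $m^{T(q)}p_i^qm^{-T_i(q)}$, and $\sum_i p_i^{(q)}=1$ by the very definition \eqref{defpsi} of $T(q)$. Hence
$$
\dim\big(\E(\pi_*\mu_q)\big)=-\sum_{i=0}^{m-1}p_i^{(q)}\log_m\big(p_i^{(q)}\big)=-\sum_{i=0}^{m-1}p_i^{(q)}\big(T(q)+q\log_m(p_i)-T_i(q)\big).
$$

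Next I would simplify this sum. Writing it as $-T(q)-q\sum_i p_i^{(q)}\log_m(p_i)+\sum_i p_i^{(q)}T_i(q)$ and recalling that, by concavity and the normalization $T_i(1)=0$, one has $T_i(q)\ge (q-1)T_i'(1-)$ (equivalently $T_i(q)$ lies above its tangent at $1$ — but here I actually want a different comparison), the cleaner route is to recognize the right-hand side of the lemma. Since $\tau(q)=T(q)$ with $q\in(0,1)$ means the infimum in the definition of $\tau(q)$ is attained at $s=q$, while for $q\in(1,q_c)$ with $\tau(q)=T(q)$ one has $T(q)=\min(\tau_\nu(q),T(q))\le\tau_\nu(q)$; in both regimes the hypothesis $\tau(q)=T(q)$ translates into an inequality relating $T(q)$ to $\tau_\nu(q)$ and to the derivatives $T_i'$. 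The key computation is that $\dim(\E(\pi_*\mu_q))$, after using $\sum_i p_i^{(q)}=1$ and substituting, becomes $\dim(\nu^{(q)})$ where one can check $\dim(\nu^{(q)})=T^*(T'(q))+\big(\text{a nonnegative term}\big)$; concretely, the difference $\dim(\E(\pi_*\mu_q))-T^*(T'(q))$ equals a weighted average of the nonnegative quantities $T_i^*(T_i'(s(q)))$ or, in the $q>1$ case, equals $\tau_\nu(q)-T(q)\ge 0$ when the infimum is at $s=1$. I expect the identity to come out, via the relation $T'(q)=\sum_i p_i^{(q)}\big(-\log_m p_i+T_i'(q)\big)$ obtained by differentiating \eqref{psi} and evaluating at the tilted measure, so that $qT'(q)-T(q)=\sum_i p_i^{(q)}\big(q(-\log_m p_i+T_i'(q))\big)-T(q)$, and comparing term by term with $\dim(\E(\pi_*\mu_q))$.

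The main obstacle I anticipate is bookkeeping: getting the algebra of \eqref{psi}, its derivative, and the Legendre transform $T^*(T'(q))=qT'(q)-T(q)$ to line up so that the residual term is manifestly $\ge 0$, and in particular handling the two cases $q<1$ and $q>1$ uniformly (in the first case the infimum defining $\tau(q)$ is at the endpoint $s=q$, which forces $\frac{\partial g}{\partial s}(q,q)\le 0$ from Section~\ref{study of tau}, i.e. $\sum_i p_i^q m^{-T_i(q)}T_i^*(T_i'(q))\le 0$ — but this is about $g$, not directly about the dimension, so the link needs care; in the second case one directly uses $T(q)\le\tau_\nu(q)$). Once the residual nonnegative term is identified — I expect it to be essentially $\dim(\nu^{(q)})-T^*(T'(q))=\sum_i p_i^{(q)}T_i^*(T_i'(q))\cdot(\text{something})$ or the gap $\tau_\nu(q)-T(q)$ — the inequality $\dim(\mu_q)=T^*(T'(q))\le\dim(\E(\pi_*\mu_q))$ follows immediately, and then Proposition~\ref{preservdim} is obtained by feeding this into Theorem~\ref{DIM} applied to the Mandelbrot measure $\mu_q$ in place of $\mu$ (whose associated Bernoulli product is exactly $\E(\pi_*\mu_q)$), which gives $\dim(\pi_*\mu_q)=\min(\dim(\mu_q),\dim(\E(\pi_*\mu_q)))=\dim(\mu_q)=T^*(T'(q))$.
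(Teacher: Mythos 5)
Your overall strategy is the paper's own: identify $\E(\pi_*\mu_q)$ as the Bernoulli product with weights $p_i'=p_i^q m^{T(q)-T_i(q)}$, write its entropy, use the relation obtained by differentiating \eqref{psi}, and reduce the inequality to a sign condition forced by $\tau(q)=T(q)$. But the reduction itself is left as a guess, and the guessed forms of the residual are wrong. The exact identity, which is the heart of the paper's proof (its \eqref{e-diff}), is
\[
\dim(\mu_q)-\dim\big(\E(\pi_*\mu_q)\big)=\sum_{i=0}^{m-1}p_i^q m^{T(q)-T_i(q)}\,T_i^*(T_i'(q)),
\]
with $T_i^*\circ T_i'$ evaluated at $q$; the individual terms $T_i^*(T_i'(q))=qT_i'(q)-T_i(q)$ are \emph{not} nonnegative in general (for linear $T_i$ they equal $\log_m\E(N_i)$, which can have either sign), and the gap is not $\tau_\nu(q)-T(q)$ either. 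Indeed, if the residual were automatically a nonnegative combination, the hypothesis $\tau(q)=T(q)$ would play no role, whereas it is exactly what must force the weighted sum above to be $\le 0$, i.e.\ $g(q,q)\le 0$ in the notation of Section~\ref{study of tau} (equivalently $\frac{\partial G}{\partial s}(q,q)\ge 0$; your ``$\frac{\partial g}{\partial s}(q,q)\le 0$'' is a mislabel, though the displayed inequality $\sum_i p_i^q m^{-T_i(q)}T_i^*(T_i'(q))\le 0$ is the correct target). For $q\in(0,1)$ your argument then closes as you indicate: $\tau(q)=T(q)$ means the infimum of $G(q,\cdot)$ over $[q,1]$ is attained at the left endpoint $s=q$, so the one-sided derivative there is $\ge 0$, which is precisely the needed sign.

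The genuine gap is the case $q\in(1,q_c)$. There $\tau(q)=T(q)$ only says $T(q)\le\tau_\nu(q)$, i.e.\ $G(q,q)\ge G(q,1)$, a comparison of the two endpoint values of $s\mapsto G(q,s)$ on $[1,q]$; by itself this does not give the derivative inequality at $s=q$. The paper closes it with a moment estimate: for every $s\in[1,q]$, Jensen's inequality (writing $X(u)^q=(X(u)^s)^{q/s}$ with $q/s\ge1$), the branching property and $\E(Y^s)^{1/s}\ge \E(Y)=1$ give $\E\sum_{|u|=n}\pi_*\mu([u])^q\ge G(q,s)^n$, which combined with the upper bound of Corollary~\ref{momestimate} yields $-T(q)=-\tau(q)\ge\sup_{1\le s\le q}\log_m G(q,s)$ (see \eqref{e-sup}); hence $s=q$ maximizes $G(q,\cdot)$ on the whole interval $[1,q]$, and differentiating at $s=q$ gives the sign. (Alternatively, the computation in Section~\ref{study of tau} showing $\frac{\partial g}{\partial s}\le 0$ makes $g(q,\cdot)$ non-increasing, and then $g(q,q)>0$ would force $\frac{\partial G}{\partial s}<0$ on all of $[1,q]$, contradicting $G(q,q)\ge G(q,1)$.) Without one of these ingredients, your step ``for $q>1$ one directly uses $T(q)\le\tau_\nu(q)$'' does not close.
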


\begin{proof} We first show that for any $q\in (0,q_c)$, almost surely, conditionally on $\mu_q\neq0$, we have
\begin{equation}
\label{e-diff}
\dim(\mu_q)-\dim (\E(\pi_*\mu_q))=\sum_{i=0}^{m-1} p_i^qm^{T(q)-T_i(q)} T_i^*(T'_i(q)).
\end{equation}
To see this, notice that $\E(\pi_*\mu_q)$ is a Bernoulli product measure on $\Sigma$ generated by the probability vector $(p_0',\ldots, p_{m-1}')$ with
$$
p_i': = \sum_{j=0}^{m-1}m^{T(q)}\E(W_{i,j}^q)=p_i^q m^{T(q)-T_i(q)}.
$$
 A  simple computation yields that
 \begin{equation}
 \label{e-split}
 \begin{split}
 \dim(\E(\pi_*\mu_q))
 &=-\frac{1}{\log m} \sum_{i=0}^{m-1} p_i'\log p_i'\\
 &= -T(q)+\left(\sum_{i=0}^{m-1} p_i^q m^{T(q)-T_i(q)} T_i(q)\right)-\frac{q}{\log m}
 \left(\sum_{i=0}^{m-1} p_i^q \log p_i m^{T(q)-T_i(q)}\right).
 \end{split}
 \end{equation}
 In the meantime, since $\sum_{i=0}p_i^q m^{T(q)-T_i(q)}=1$,  differentiating with respect to $q$ yields
 \begin{equation}
 \label{e-psi'}
 T'(q)=\left( \sum_{i=0}^{m-1} p_i^q m^{T(q)-T_i(q)}T_i'(q)\right)-\frac{1}{\log m} \left(
 \sum_{i=0}^{m-1} p_i^q \log p_i m^{T(q)-T_i(q)}
 \right).
 \end{equation}
 Since $\dim(\mu_q)=T^*(T'(q))=T'(q)q-T(q)$ almost surely, by \eqref{e-split} and \eqref{e-psi'} we obtain~\eqref{e-diff}.

 Next we show that if $\tau(q)=T(q)$ for some $q\in (0, q_c)\setminus\{1\}$, then
 $$\sum_{i=0}^{m-1} p_i^qm^{T(q)-T_i(q)} T_i^*(T'_i(q))\leq 0.$$

We consider the cases $q\in (1, q_c)$ and $0<q<1$ separately.
First suppose $q\in (1, q_c)$. For $1\le s\le q$ and $n\ge 1$ we have
\begin{eqnarray*}
\E\Big (\sum_{u\in\Sigma_n}\pi_*\mu([u])^q\Big )&\ge& \sum_{u\in\Sigma_n}\nu([u])^q \E(X(u)^s)^{q/s}\\
&= & \sum_{u\in\Sigma_n}\nu([u])^q \E\Big (\sum_{v\in\Sigma_n} \Big (\frac{\mu([u,v])}{\nu([u])}\Big )^s\Big )^{q/s}\\
&= & \sum_{u\in\Sigma_n}\nu([u])^q \E\Big (\sum_{v\in\Sigma_n} Y(u,v)^s \prod_{k=1}^nV_{u_k,v_k}(u_{|k-1},v_{|k-1})^s\Big )^{q/s}\\
&=&\E(Y^s)^{q/s} \sum_{u\in\Sigma_n}\prod_{k=1}^n p_{u_k}m^{-T_{u_k}(s)q/s}\\
&\ge &
\Big (\sum_{i=0}^{m-1} p_i^q m^{-qT_i(s)/s}\Big )^n,
\end{eqnarray*}
since $1=\E(Y)\le \E(Y^s)^{1/s}$. Consequently, due to Corollary~\ref{momestimate}, we have
\begin{equation}
\label{e-sup}
-\tau(q)=\max (-\tau_\nu(q),-T(q))\ge \sup_{1\le s\le q} \log_m\sum_{i=0}^{m-1} p_i^q m^{-qT_i(s)/s}.
\end{equation}
Since $\tau(q)=T(q)$, this implies that the supremum is reached at $s=q$. Differentiating with respect to $s$ at $s=q$ then yields $\sum_{i=0}^{m-1} p_i^qm^{-T_i(q)} T_i^*(T'_i(q))\le 0$, hence $\sum_{i=0}^{m-1} p_i^qm^{T(q)-T_i(q)} T_i^*(T'_i(q))\le 0$.

In the end, suppose that $0< q<1$.  By the definition of $\tau$, the condition  $\tau(q)=T(q)$ also implies that the following infimum
$$
\inf_{q \le s\leq 1} \log_m\sum_{i=0}^{m-1} p_i^q m^{-qT_i(s)/s}
$$
is attained at $q$.  Hence  differentiating with respect to $s$ at $s=q$ yields $$\sum_{i=0}^{m-1} p_i^qm^{-T_i(q)} T_i^*(T'_i(q))\le 0.$$  This completes the proof of the lemma.
\end{proof}

\subsection{The case $\alpha=\tau'(q)$ with $\tau(q)\neq T(q)$ and $q\in (0,\widetilde q_c)\setminus\{1\}$,  or $\alpha\in \{\tau'(q^+),\tau'(q^-)\}$ when a first order phase transition occurs at $q\in(1,q_c)$}\label{taudifpsi}
$\ $

In this section, we suppose that we do not have $\tau\equiv\tau_\nu\equiv  T$ over $[0,\widetilde q_c)$, i.e. we are not in the case where  for each $0\le i\le m-1$ such that $p_i>0$ the function $T_i$ is equal to 0.

We will use the notations of Section~\ref{study of tau}, and we set $s(q)=1$ if both $q>1$ and $\tau(q)=\tau_\nu(q)$ hold. Also we recall Remark~\ref{taupsitaunu}.

For $q\in (0,\widetilde q_c)$ such that $s(q)$ is defined, for $0\le i\le m-1$ set
$$
p'_i=p'_{q,i}=m^{\tau(q)}p_i^qm^{-qT_i(s(q))/s(q)}.
$$
Also let $\nu'=\nu'_q$ be the Bernoulli measure associated with $p'=(p'_0,\ldots,p'_{m-1})$.

For $s>0$ and $0\le i,j\le m-1$,  set
\begin{equation}\label{V'i}
V'_{s,i,j}=\mathbf {1}_{\{V_{i,j}>0\}} V_{i,j}^sm^{T_i(s)},
\end{equation}
so that for $q'\ge 0$
$$
T_{V'_{s,i}}(q'):=-\log_m\sum_{j=0}^{m-1}\E({V'_{s,i,j}}^{q'})= T_i(q's)-q'T_i(s).
$$
Set $W'_s=(W'_{s,i,j}=p'_i V'_{s,i,j})_{0\le i,j\le m-1}$. We have
$$
T_{W'_s}(q')=\sum_{i=0}^{m-1}(p'_i)^{q'} m^{-T_{V'_{s,i}}(q')}.
$$
For all $(u,v)\in\bigcup_{n\ge 1}\Sigma_n\times\Sigma_n$, let $W'_{s,i,j}(u,v)=(p'_i \mathbf {1}_{\{V_{i,j}(u,v)>0\}} V_{i,j}(u,v)^sm^{T_i(s)})_{0\le i,j\le m-1}$. This family of random weights generates a Mandelbrot mesure $\mu_{W'_s}$ simultaneously with~$\mu_W$.

We start with a first lemma.

\begin{lem}\label{lem-4.8}
\begin{enumerate}
\item
If $q\in (0,1)$ and $s(q)\in(0,1)$, then for all $s\in (0,s(q))$  we have $\sum_{i=0}^{m-1}p_i'T_i^*(T_i'(s))>0$.

\item  If $q\in  (0,\widetilde q_c)\setminus\{1\}$ and $s(q)=1$, then either  $\sum_{i=0}^{m-1}p_i'T_i^*(T_i'(s))=0$ for all $s\in [0,1]$, or $\sum_{i=0}^{m-1}p_i'T_i^*(T_i'(s))>0$ for all $s\in (0,1)$ according to whether $T_i$ is affine (and equal to $q\mapsto (q-1)\log_m(\mathbb{E}(N_i)$) for each $i$ such that $p_i>0$  and $\sum_{i=0}^{m-1}p_i'T_i'(1)=0$, or not.

Moreover, either the set $\widetilde S$ of those $q\in (0,\widetilde q_c)$ for which $\sum_{i=0}^{m-1}p_i'T_i^*(T_i'(s))=0$ for all $s\in [0,1]$ is discrete or it is equal to $\mathbb (0,\widetilde q_c))$. The later case holds if and only if property~$(\mathcal P)$ of Remark~\ref{specialtau}(2) holds. In particular, $T$ is finite over $\R_+$, $\widetilde q_c=\infty$, and one has  $\tau=\tau_\nu>T$ over $(0,1)$ and $\tau=\tau_\nu<T$ over $(1,\infty)$.
\end{enumerate}
\end{lem}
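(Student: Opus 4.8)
The whole argument rests on one elementary identity: each $T_i$ is concave and real-analytic where finite, so its Legendre transform is attained at the conjugate point, $T_i^*(T_i'(s))=sT_i'(s)-T_i(s)$, and hence $\frac{d}{ds}T_i^*(T_i'(s))=sT_i''(s)\le 0$. The plan is to run both items through the auxiliary function $\Psi_q(s):=\sum_{i=0}^{m-1}p'_iT_i^*(T_i'(s))$. The vector $(p'_i)_i$ is a probability vector (since $\sum_ip_i^qm^{-qT_i(s(q))/s(q)}=G(q,s(q))=m^{-\tau(q)}$), so from $sT_i''(s)\le 0$ we get that $\Psi_q$ is non-increasing on $(0,\infty)$; moreover, by exactly the Cauchy--Schwarz argument already used in Section~\ref{study of tau} for $\partial_sg$, $T_i''(s)=0$ at any single $s$ forces $V_{i,j}\in\{0,1/\E(N_i)\}$ almost surely, i.e. $T_i$ affine equal to $q\mapsto(q-1)\log_m\E(N_i)$ with $T_i^*(T_i'(\cdot))\equiv\log_m\E(N_i)$. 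Hence $\Psi_q$ is strictly decreasing unless every $T_i$ with $p_i>0$ is affine, in which case $\Psi_q$ is the constant $\sum_ip'_i\log_m\E(N_i)$. Comparing the formula for $p'_i$ with the definitions of $G$ and $g$ from Section~\ref{study of tau}, one records the link $\Psi_q(s(q))=m^{\tau(q)}g(q,s(q))$.

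For item (1): $q\in(0,1)$, and since $\tau(q)\ne T(q)$ in the ambient setting of this subsection, $s(q)\in(q,1)$; thus the infimum defining $\tau(q)$ is reached at an interior point of $[q,1]$ where $G(q,\cdot)$ is smooth, so $\partial_sG(q,s(q))=0$, whence $g(q,s(q))=0$ and $\Psi_q(s(q))=0$. Monotonicity then yields $\Psi_q(s)\ge\Psi_q(s(q))=0$ for all $s\in(0,s(q))$, with strict inequality as soon as not all $T_i$ with $p_i>0$ are affine; in the remaining affine case $\Psi_q\equiv 0$, the degenerate situation described by the alternative in item (2). For item (2): $s(q)=1$ gives $\tau(q)=\tau_\nu(q)$ and $p'_i=m^{\tau_\nu(q)}p_i^q$; since $s=1$ minimizes $G(q,\cdot)$ over $[q,1]$ we have $\partial_sG(q,1^-)\le 0$, i.e. $g(q,1)\ge 0$, hence $\Psi_q(1)=m^{\tau_\nu(q)}g(q,1)=\sum_ip'_iT_i'(1)\ge 0$. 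If some $T_i$ with $p_i>0$ is not affine, $\Psi_q$ is strictly decreasing and $\Psi_q(s)>\Psi_q(1)\ge 0$ on $(0,1)$; if every such $T_i$ is affine, $\Psi_q\equiv\sum_ip'_iT_i'(1)$, a nonnegative constant that vanishes identically precisely when $\sum_ip'_iT_i'(1)=0$. This is the claimed dichotomy.

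For the ``moreover'': if $q\in\widetilde S$ then $\Psi_q'\equiv 0$ on $[0,1]$, which forces every $T_i$ with $p_i>0$ to be affine --- a condition on $W$ alone --- so if this fails then $\widetilde S=\emptyset$. Assuming it holds, $\Psi_q$ is the constant $\sum_ip'_i\log_m\E(N_i)$, so $\widetilde S=\{q\in(0,\widetilde q_c):\sum_ip'_i\log_m\E(N_i)=0\}$. Using that $s(\cdot)$ is continuous and piecewise analytic (Proposition~\ref{partition}, extended to $(0,\widetilde q_c)$), on each piece $q\mapsto\sum_ip'_i\log_m\E(N_i)$ is real-analytic; so if $\widetilde S$ is not discrete it contains an interval on which $s(\cdot)$ is analytic and this sum vanishes identically. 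Writing out $p'_i$ turns this into a Dirichlet-type identity $\sum_i\lambda_i^q\,c_i\equiv 0$, valid then on all of $\R$; grouping the indices by the common value of the base --- which, on a piece where $s(q)\equiv 1$, is just $p_i$ --- and invoking the standing exclusion of ``$T_i\equiv 0$ for all $i$ with $p_i>0$'' recovers exactly property $(\mathcal P)$ of Remark~\ref{specialtau}(2). Conversely, under $(\mathcal P)$ the computation in the proof of Remark~\ref{specialtau}(2) gives $\sum_ip_i^q\log_m\E(N_i)=\sum_k\rho_k^q\log_m\prod_{i\in I_k}\E(N_i)=0$ for every $q$, from which $\tau=\tau_\nu>T$ on $(0,1)$, $\tau=\tau_\nu<T$ on $(1,\infty)$, $T$ is finite on $\R_+$, $\widetilde q_c=\infty$, $s(q)=1$ for all $q$, and hence $\widetilde S=(0,\infty)$.

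The monotonicity mechanism behind (1) and (2) is a one-line differentiation once the Legendre identity is recorded, the only real substance being the re-use of the sign of $g(q,s(q))$ established in Section~\ref{study of tau}. The main obstacle I anticipate is the ``moreover'': ruling out, on a putative nondiscrete $\widetilde S$, the pieces where $s(q)=q$ or $s(q)\in(q,1)$, since there $p'_i$ carries the nontrivial factor $m^{-qT_i(s(q))/s(q)}$ and the Dirichlet bases become $p_i/\E(N_i)$ (or depend on $h(q)=q/s(q)$); handling these requires combining $\Psi_q(s(q))=0$ with the inequality $g(q,q)\le 0$ from Lemma~\ref{muq/espmuq} (respectively the interior critical-point equation) to force the analysis back into the regime $s(q)\equiv 1$ where $(\mathcal P)$ becomes visible.
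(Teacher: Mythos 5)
Your strategy is essentially the paper's: the monotonicity of $s\mapsto T_i^*(T_i'(s))$ (derivative $sT_i''(s)\le 0$, strictly negative unless $V_{i,j}\in\{0,1/\E(N_i)\}$, by the Cauchy--Schwarz argument of Section~\ref{study of tau}), the identity $\sum_{i}p_i'T_i^*(T_i'(s(q)))=m^{\tau(q)}g(q,s(q))$, the sign of $g(q,\cdot)$ coming from the optimality of $s(q)$, and, for the ``moreover'', the accumulation/analyticity argument leading to property $(\mathcal P)$. The genuine gap is in item (2) for $q\in(1,\widetilde q_c)$: there $s(q)=1$ is only a convention recording $\tau(q)=\tau_\nu(q)=\min(\tau_\nu(q),T(q))$, and $\tau(q)$ is \emph{not} defined as $-\inf_{q\le s\le 1}\log_m G(q,s)$ (the interval $[q,1]$ is empty), so your justification of $g(q,1)\ge 0$ via ``$s=1$ minimizes $G(q,\cdot)$ over $[q,1]$'' is vacuous in that range. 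The needed inequality $\Psi_q(1)=\sum_{i}p_i'T_i'(1)\ge 0$ (with $\Psi_q(s):=\sum_i p_i'T_i^*(T_i'(s))$) requires a separate argument; the paper obtains it from convexity: $t\mapsto\sum_i p_i'm^{-T_i(t)}$ is convex, equals $1$ at $t=1$, and equals $m^{\tau_\nu(q)-T(q)}\le 1$ at $t=q>1$ because $\tau(q)=\tau_\nu(q)$ forces $\tau_\nu(q)\le T(q)$; hence its derivative at $1$ is nonpositive, i.e. $\sum_i p_i'T_i'(1)\ge 0$. Without this step the $q>1$ half of item (2) --- precisely the case needed at first-order phase transitions --- is unproved.

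Two further remarks on the ``moreover''. A non-discrete set need not contain an interval; the correct step (and the paper's) is that an accumulation point of the relevant $q$'s, together with the fact that all $T_i$ with $p_i>0$ are then affine, turns the condition into the vanishing of the real-analytic function $q\mapsto\sum_i p_i^q\log\E(N_i)$ at a set with an accumulation point, hence identically on $\R$ by the identity theorem; no piecewise analyticity of $s(\cdot)$ is needed once affineness is secured, since at points with $s(q)=1$ one simply has $p_i'=m^{\tau_\nu(q)}p_i^q$. As for the reduction to points of $\widetilde S$ with $s(q)=1$, which you flag as your main obstacle: the paper's own proof sidesteps it by restricting attention to such $q$ (``for each $q$ such that $s(q)=1$ and\dots''), so you are on par with the paper there; relatedly, your hedge in item (1) (strictness only when not all $T_i$ are affine) is honest --- the paper's proof of (1) silently identifies $g(q,s)$ with $m^{-\tau(q)}\Psi_q(s)$ for $s\neq s(q)$, which is the same rough edge seen from the other side. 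But the missing convexity argument for $q>1$ is a real omission and should be added.
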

\begin{proof} (1) Suppose $q\in (0,1)$ and $s(q)\in (q,1)$.  The study of the differentiability of $\tau$ achieved in Section~\ref{difftau} yields $\sum_{i=0}^{m-1}p_i'T_i^*(T_i'(s(q)))=m^{\tau(q)}g(q,s(q))=0$ and since $\frac{\partial g}{\partial s}(q,s(q))<0$, we have $g(q,s)=m^{-\tau(q)} \sum_{i=0}^{m-1}p_i'T_i^*(T_i'(s))>0$ for all $s\in (0,s(q))$.

(2) Suppose  that $q\in(0,1)$ and $s(q)=1$. That means that we have $\tau(q)=\tau_\nu(q)$. Here again, we can use the study of $\tau$ to get that $\sum_{i=0}^{m-1}p_i'T_i^*(T_i'(1))=\sum_{i=0}^{m-1}p_i'T_i'(1) =m^{\tau_\nu(q)}g(q,1)\ge 0$. Now, notice that the derivative of $s\mapsto \sum_{i=0}^{m-1}p_i'T_i^*(T_i'(s))$ is $s\mapsto \sum_{i=0}^{m-1}p_i' sT_i''(s)$. If one of the $T_i$ is not affine, then by an argument given in the study of the differentiability of $\tau$ we have that $T_i''$ is strictly negative so  $\sum_{i=0}^{m-1}p_i'T_i^*(T_i'(s))>0$ for all $s\in (0,1)$. Otherwise, the function $\sum_{i=0}^{m-1}p_i'T_i^*\circ T_i'$ is identically equal to $ 0$ over its domain by analyticity.

Suppose now that $q\in (1,\widetilde q_c)$ and $s(q)=1$. We have $s(q)=1$. The condition $\tau(q)=\tau_\nu(q)\le T(q)$ implies that $\sum_{i=0}^{m-1}p'_i m^{-T_i(q)}=\sum_{i=0}^{m-1}p_i^q m^{\tau_\nu(q)-T_i(q)}\le 1$.  Since, moreover, $\sum_{i=0}^{m-1}p'_i m^{-T_i(1)}=1$, by convexity of $q\mapsto \sum_{i=0}^{m-1}p'_i m^{-T_i(q)}$, we must have $\sum_{i=0}^{m-1}p'_i T'_i(1)\ge 0$. Then, the same arguments as in previous  paragraph yield the same conclusion.

For each $q$ such that $s(q)=1$ and  $\sum_{i=0}^{m-1}p_i'T_i^*(T_i'(s))=0$ for all $s\in [0,1]$, the functions $T_i$ are linear and  we have  $p_i'=p_i^qm^{\tau_\nu(q)}$, so $\sum_{i=0}^{m-1}p_i^q T'_i(1)=-\sum_{i=0}^{m-1}p_i^q\log(\mathbb E(N_i))=0$. If the set of such points $q$ has an accumulating point, then by analyticity, we must have $\sum_{i=0}^{m-1}p_i^q \log(\mathbb E(N_i))=0$ for all $q$. It is then not hard to conclude that property $(\mathcal P)$ holds. Then, $T$ is finite over $\R_+$, and the study of $\inf_{q\le s\le 1} \log_m\sum_{i=0}^{m-1}p_i^qm^{-qT_i(s)/s}$ for $q\in (0,1)$ and $\sup_{1\le s\le q} \log_m\sum_{i=0}^{m-1}p_i^qm^{-qT_i(s)/s}$ for  $q\in (1,\infty)$ shows that both are uniquely reached  at $s=1$, so $\tau=\tau_\nu>T$ over $(0,1)$ and $\tau=\tau_\nu<T$ over $(1,\infty)$.
\end{proof}

\begin{lem}\label{4.9} Let $q\in (0,\widetilde q_c)$ such that $s(q)$ is defined. Suppose that  $s>0$ is such that $\sum_{i=0}^{m-1}p_i'T_i^*(T_i'(s))\ge 0$. With probability 1, the Mandelbrot measure $\mu_{W'_s}$ has the same topological support as $\mu$. If, moreover, $\sum_{i=0}^{m-1}p_i'T_i^*(T_i'(s))>0$ then, conditionally on $\mu_{W'_s}\neq 0$, the measure $\pi_*\mu_{W'_s}$ is absolutely continuous with respect to $\nu'$. In particular, $\nu'(\pi(K))>0$.
\end{lem}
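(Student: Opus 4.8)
The plan is to recognise $\mu_{W'_s}$ as a genuine Mandelbrot measure sharing the branching genealogy of $\mu$, and then to read off both assertions from Theorem~\ref{ABS}(1)(i) applied to $\mu_{W'_s}$ in place of $\mu$.

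First I would check that $\mu_{W'_s}$ is well defined and non-degenerate. Since $p'_i>0$ exactly when $p_i>0$ and, by \eqref{V'i}, $V'_{s,i,j}>0$ exactly when $V_{i,j}>0$, i.e.\ when $W_{i,j}>0$ (because $W_{i,j}>0$ forces $p_i>0$), the weight $W'_{s,i,j}=p'_iV'_{s,i,j}$ is positive precisely on the coordinates where $W_{i,j}$ is positive. Consequently the integer $N$, the Galton-Watson tree $(T_n)_{n\ge 1}$ and the limit set $K$ attached to $\mu_{W'_s}$ are those of $\mu$, and $\mathbb P(N\in\{0,1\})<1$ is inherited; for the relevant range $s\in(0,1]$ one also has $T_i(s)$ finite and $W'_{s,i,j}$ integrable. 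Next I would verify the normalisation $T_{W'_s}(1)=0$: from $m^{-T_{W'_s}(q')}=\sum_i(p'_i)^{q'}m^{q'T_i(s)-T_i(q's)}$ this reduces at $q'=1$ to $\sum_i p'_i=1$, which is immediate from the definitions of $\tau(q)$ and $s(q)$ (with the convention $s(q)=1$ when $q>1$ and $\tau(q)=\tau_\nu(q)$), since $\sum_i p'_i=m^{\tau(q)}\sum_i p_i^qm^{-qT_i(s(q))/s(q)}=m^{\tau(q)}m^{-\tau(q)}=1$. Differentiating $-T_{W'_s}(q')\log m=\log\big(\sum_i(p'_i)^{q'}m^{q'T_i(s)-T_i(q's)}\big)$ at $q'=1$ and using $\sum_i p'_i=1$ together with $T_i^*(T_i'(s))=sT_i'(s)-T_i(s)$ gives the key identity
\[
T'_{W'_s}(1-)=\dim(\nu')+\sum_{i=0}^{m-1}p'_i\,T_i^*(T_i'(s)).
\]
Because we have discarded the case $p_i=1$, at least two of the $p_i$---hence of the $p'_i$---are positive, so $\dim(\nu')>0$; with the standing hypothesis $\sum_i p'_iT_i^*(T_i'(s))\ge 0$ this yields $T'_{W'_s}(1-)>0$. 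Thus $\mu_{W'_s}$ is non-degenerate, $\{\mu_{W'_s}\neq 0\}$ coincides up to a null set with $\{K\neq\emptyset\}$, and on that event $\mathrm{supp}(\mu_{W'_s})=K=\mathrm{supp}(\mu)$ (Proposition~\ref{Kmu}), while on the complement both measures vanish; this is the first assertion.

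For the second assertion I would identify $\mathbb E(\pi_*\mu_{W'_s})$ as the Bernoulli product with probability vector $(p''_i)$, where $p''_i=\sum_j\mathbb E(W'_{s,i,j})=p'_im^{T_i(s)}\sum_j\mathbb E(\mathbf{1}_{\{V_{i,j}>0\}}V_{i,j}^s)=p'_im^{T_i(s)}m^{-T_i(s)}=p'_i$, so that $\mathbb E(\pi_*\mu_{W'_s})=\nu'$. By the key identity, $\dim(\mu_{W'_s})=T'_{W'_s}(1-)=\dim(\nu')+\sum_i p'_iT_i^*(T_i'(s))$, so the strengthened hypothesis $\sum_i p'_iT_i^*(T_i'(s))>0$ is precisely $\dim(\mu_{W'_s})>\dim(\mathbb E(\pi_*\mu_{W'_s}))$, and Theorem~\ref{ABS}(1)(i) gives that, with probability one conditionally on $\mu_{W'_s}\neq 0$, $\pi_*\mu_{W'_s}$ is absolutely continuous with respect to $\nu'$. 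Finally, conditionally on $\mu_{W'_s}\neq 0$ the measure $\pi_*\mu_{W'_s}$ is nonzero and carried by the closed set $\pi(K)=\pi(\mathrm{supp}(\mu_{W'_s}))$, so $\nu'(\pi(K))>0$, since otherwise absolute continuity would force $\pi_*\mu_{W'_s}(\pi(K))=0$; as $\{\mu_{W'_s}\neq 0\}=\{K\neq\emptyset\}$ up to a null set, $\nu'(\pi(K))>0$ holds almost surely conditionally on $K\neq\emptyset$.

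The main obstacle here is not conceptual: the reweighting $W\mapsto W'_s$ has been tailored so that $\dim(\mu_{W'_s})-\dim(\mathbb E(\pi_*\mu_{W'_s}))=\sum_i p'_iT_i^*(T_i'(s))$, which turns the lemma into a corollary of Theorem~\ref{ABS}(1)(i). The real work is the bookkeeping needed to legitimately invoke that theorem---establishing $\sum_i p'_i=1$ and $\mathbb E(\pi_*\mu_{W'_s})=\nu'$, controlling the integrability and the exact sign pattern of the weights $W'_{s,i,j}$, and checking that the standing hypotheses $\mathbb P(N\in\{0,1\})<1$ and non-degeneracy pass to $W'_s$---together with the short derivative computation that produces the key identity, which is the one slightly delicate point.
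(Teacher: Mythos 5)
Your argument is correct and follows essentially the same route as the paper: you establish the identity $T'_{W'_s}(1-)=\dim(\nu')+\sum_{i}p'_iT_i^*(T_i'(s))$ (the paper gets it by noting $T_{V'_{s,i}}'(1-)=T_i^*(T_i'(s))$ and invoking \eqref{dimrel}, you by differentiating $T_{W'_s}$ directly, which is the same computation), deduce non-degeneracy, use Proposition~\ref{Kmu} for the common support, and then apply Theorem~\ref{ABS}(1)(i) to get absolute continuity of $\pi_*\mu_{W'_s}$ with respect to $\E(\pi_*\mu_{W'_s})=\nu'$ and hence $\nu'(\pi(K))>0$. The extra bookkeeping you include (normalisation $\sum_i p'_i=1$, identification of $\E(\pi_*\mu_{W'_s})$, positivity of $\dim(\nu')$) is accurate and only makes explicit what the paper leaves implicit.
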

\begin{proof} At first we notice that $\sum_{i=0}^{m-1} p'_i T_{V'_{s,i}}'(1-)=\sum_{i=0}^{m-1}p_i'T_i^*(T_i'(s))$. Thus, due to \eqref{dimrel} our assumption implies  $T_{W'_s}'(1-)\ge \dim(\nu')$, hence $\mu_{W'_s}$ is non degenerate. Moreover, since the weights $W'_{s,i,j}$ and $W_{i,j}$ vanish simultaneously, Proposition~\ref{Kmu} shows that $\mu_{W'_s}$ and $\mu$ have   almost surely the same topological support. If, in addition,  $\sum_{i=0}^{m-1}p_i'T_i^*(T_i'(s))>0$, then $T_{W'_s}'(1-)> \dim(\nu')$ and by Theorem~\ref{ABS}(1)(a), this implies that  $\pi_*\mu_{W'_s}$ is almost surely absolutely continuous with respect to $\E(\pi_*\mu_{W'_s})=\nu'$, so  $\nu'(\pi(K))>0$.
\end{proof}

Now, for $q\in(0,\widetilde q_c)\setminus\{1\}$, if $s(q)<1$ or if $s(q)=1$ and $\sum_{i=0}^{m-1}p_i'T_i^*(T_i'(s))>0$ for all $s\in (0,1)$, let $\widetilde\nu_q=\nu'_q$. Otherwise, i.e. if $q\in \widetilde S$ ($\widetilde S$ is defined in Lemma~\ref{lem-4.8}) set $\widetilde\nu_q=\pi_*\mu_{W'_1}$ (recall that this Mandelbrot measure is defined before Lemma~\ref{lem-4.8} and it has teh same topological support as $\mu$ almost surely by Lemma~\ref{4.9}). The main result of this section is the following.

\begin{pro}\label{tauneqpsi}
Let $q\in (0,\widetilde q_c)\setminus\{1\}$ at which $\tau(q)\neq T(q)$ or $q\in (1,q_c)$ at which  $\tau(q)=\tau_\nu(q)=T(q)$. Set $\alpha=\tau'(q)$ if $s(q)<1$ and $\alpha=\tau_\nu'(q)$ otherwise.

With probability 1, conditionally on $\mu\neq 0$, we have $\widetilde \nu_q(E(\pi_*\mu,\alpha))>0$, and $\dim(\widetilde\nu_q)=\tau^*(\alpha)$. Consequently, $\dim_H E(\pi_*\mu,\alpha)\ge \tau^*(\alpha)$.
\end{pro}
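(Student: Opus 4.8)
The plan is to realize the exponent $\alpha = \tau'(q)$ (or $\tau_\nu'(q)$ when $s(q)=1$) as the almost sure local dimension of $\pi_*\mu$ at $\widetilde\nu_q$-almost every point, and simultaneously to identify $\dim(\widetilde\nu_q)$ with $\tau^*(\alpha)$. The auxiliary measure $\widetilde\nu_q$ is precisely designed for this: it is either the Bernoulli product $\nu'_q$ with weights $p'_{q,i}=m^{\tau(q)}p_i^q m^{-qT_i(s(q))/s(q)}$, or, in the degenerate case $q\in\widetilde S$, the projected Mandelbrot measure $\pi_*\mu_{W'_1}$, which by Lemma~\ref{4.9} has topological support equal to $\pi(K)$ almost surely. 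So the first step is a bookkeeping step: record the identity $\dim(\nu'_q) = \tau^*(\alpha)$, which follows by differentiating the defining relation $\tau(q) = -\log_m G(q,s(q))$ and using $\frac{\partial G}{\partial s}(q,s(q))=0$ (this is exactly the computation behind \eqref{tau'q}--\eqref{tau'q2} and the Legendre duality $\tau^*(\tau'(q)) = q\tau'(q)-\tau(q)$); in the $q\in\widetilde S$ case one uses instead that $T_i$ is affine and that $\dim(\pi_*\mu_{W'_1})$ can be computed from Theorem~\ref{DIM}.

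Next I would estimate $\pi_*\mu([u])$ along $u$ in the support of $\widetilde\nu_q$. Write $\pi_*\mu([u]) = \nu([u]) X(u)$ as in \eqref{pimu}. The heart of the matter is to show that, $\widetilde\nu_q$-almost everywhere, $\frac{-1}{n}\log_m \pi_*\mu([x_{|n}]) \to \alpha$. The contribution $\frac{-1}{n}\log_m \nu([x_{|n}])$ is controlled by the strong law of large numbers under $\widetilde\nu_q$: if $\widetilde\nu_q = \nu'_q$, the digit frequencies are governed by $p'_{q,i}$, giving $\lim \frac{-1}{n}\log_m\nu([x_{|n}]) = -\sum_i p'_{q,i}\log_m p_i$; in the $\widetilde S$ case one argues similarly using the structure of $\mu_{W'_1}$. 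For the term $\frac1n\log_m X(u)$ one needs it to go to zero $\widetilde\nu_q$-a.e.; this is where the machinery of Mandelbrot martingales in a random environment enters. The random variable $X(u)$ is, conditionally on $x_{|n}=u$, the terminal value of the branching-random-walk-in-random-environment martingale $\widetilde X^{u,\cdot}$ weighted by a copy of $Y$, and one shows that under $\mathbb P\otimes\widetilde\nu_q$ the relevant martingale is uniformly integrable — this is exactly the kind of statement furnished by Theorem~\ref{thmBK} and the moment estimates of Proposition~\ref{mom+estimate'} — together with a negative-moment bound (Proposition~\ref{negmom2'}) or a Borel--Cantelli argument to get the two-sided control $e^{-n\eta} \le X_n \le e^{n\eta}$ for every $\eta>0$ outside a null set. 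The point is that $\widetilde\nu_q$ was chosen so that the environment it defines makes $\sum_i p'_{q,i} T_i^*(T_i'(s(q))) = 0$ (by Lemma~\ref{lem-4.8}), which is the precise borderline condition under which $X_n$ neither blows up nor degenerates at an exponential rate. Combining the two limits gives $\lim \frac{-1}{n}\log_m\pi_*\mu([x_{|n}]) = -\sum_i p'_{q,i}\log_m p_i = \alpha$ for $\widetilde\nu_q$-a.e.\ $x$, where the last equality is the Legendre-duality identity from \eqref{tau'q1}--\eqref{tau'q2}.

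From there the conclusion is immediate: the set $E(\pi_*\mu,\alpha)$ has full $\widetilde\nu_q$-measure (on the event $\{\mu\neq 0\}$, after intersecting with the full-measure sets where $\widetilde\nu_q$ is non-degenerate, which is guaranteed because $s(q)$ being defined forces $T'_{W'_{s(q)}}(1-)\ge \dim(\nu')$), hence by the mass distribution principle $\dim_H E(\pi_*\mu,\alpha) \ge \underline\dim_H(\widetilde\nu_q) = \dim(\widetilde\nu_q) = \tau^*(\alpha)$. I expect the main obstacle to be the uniform control of $\frac1n\log_m X(u)$ along $\widetilde\nu_q$-typical $u$: one must verify that the moment estimates of Section~\ref{MEST} apply to the Mandelbrot martingale in the Bernoulli environment $\widetilde\nu_q$ rather than $\nu$ — in particular checking the borderline $\sum_i p'_{q,i}m^{-T_i(s)}$ behaves correctly so that both a small positive moment and a small negative moment of $X_n$ stay bounded in $n$ — and that the $q\in\widetilde S$ case, where $\widetilde\nu_q$ is itself random, is handled by replacing the Bernoulli SLLN step with the exact dimensionality of $\pi_*\mu_{W'_1}$ from Theorem~\ref{DIM} and an analogous typicality statement along its own construction.
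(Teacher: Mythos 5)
There is a genuine gap at the heart of your argument: the claim that $\frac1n\log_m X_n(x)\to 0$ for $\widetilde\nu_q$-almost every $x$ is false in the main case $s(q)\in(q,1)$, and with it your identification of the local dimension with $\alpha$ collapses. Under the tilted environment $\nu'_q$ the fibre ratio $X_n(x)=\pi_*\mu([x_{|n}])/\nu([x_{|n}])$ does decay exponentially, at rate $\sum_{i}p'_{q,i}T_i'(s(q))$, and this rate is exactly the piece of $\alpha$ that your computation drops: by \eqref{tau'q} and \eqref{tau'q2} one has $\tau'(q)=-\sum_i p'_{q,i}\bigl(\log_m p_i-T_i'(s(q))\bigr)$, so your asserted identity $-\sum_i p'_{q,i}\log_m p_i=\alpha$ fails whenever $\sum_i p'_{q,i}T_i'(s(q))\neq 0$, which is the generic situation when $s(q)<1$. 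The borderline condition $\sum_i p'_{q,i}T_i^*(T_i'(s(q)))=0$ that you invoke is the criticality condition for the auxiliary martingales $\widetilde Z_{s,n}(x)=\sum_v\bigl(\mu([x_{|n},v])/\nu([x_{|n}])\bigr)^{s}\prod_k m^{T_{x_k}(s)}$ at $s=s(q)$; it says nothing like "neither blows up nor degenerates" about $X_n$ itself, so uniform integrability via Theorem~\ref{thmBK}, Proposition~\ref{mom+estimate'} or Proposition~\ref{negmom2'} cannot deliver the two-sided subexponential control $e^{-n\eta}\le X_n\le e^{n\eta}$ you rely on.

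What is actually needed, and what the paper does, is an almost-sure fibrewise free-energy statement: for $\nu'_q$-a.e.\ $x\in\pi(K)$ and all $s$ in an interval, $\lim_n -\frac1n\log_m\sum_v\bigl(\mu([x_{|n},v])/\nu([x_{|n}])\bigr)^s=\sum_i p'_{q,i}T_i(s)$ (this uses the convergence of the Mandelbrot martingales in random environment for each $s$, and crucially Proposition~\ref{f-stat} to show that their limits are simultaneously positive on the non-extinction event for $\nu'_q$-a.e.\ $x$), followed by a Gärtner--Ellis / covering argument (Lemma~\ref{lem4.12}) which extracts the exponential rate of $X_n$ itself, namely $\sum_i p'_{q,i}T_i'(s_0)$ when $s_0\le1$ and $0$ when $s_0>1$. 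Adding this nonzero rate to the ergodic-theorem rate of $\nu([x_{|n}])$ is what produces $\alpha=\tau'(q)$; your version only recovers the correct exponent in the cases $s(q)=1$ (and the $\widetilde S$ case, which indeed must be treated through $\pi_*\mu_{W'_1}$ and Theorem~\ref{DIM}, together with the argument of Corollary~\ref{4.6} to pass from an upper bound on $\overline\dim_{\rm loc}(\pi_*\mu,\cdot)$ to positive $\widetilde\nu_q$-measure of $E(\pi_*\mu,\alpha)$). The bookkeeping identity $\dim(\nu'_q)=q\tau'(q)-\tau(q)=\tau^*(\alpha)$ in your first step is fine.
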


From now on we fix $q\in (0,\widetilde q_c)\setminus\{1\}$ at which $\tau(q)\neq T(q)$ or $\tau(q)=\tau_\nu(q)=T(q)$.

\begin{lem}
Suppose that $\widetilde\nu_q=\nu'$. Let $\mathscr S$ stand for a maximal open interval of points $s>0$ such that  $\sum_{i=0}^{m-1}p_i'T_i^*(T_i'(s))>0$ and $\E(Y^s)<\infty$. With probability 1, conditionally on $\mu\neq 0$,   for $\nu'$-almost every $x$ in  $\pi(K)$,  for all $s\in \mathscr S$ we have
$$
\lim_{n\to\infty} \frac{-1}{n} \log_m\sum_{v\in \Sigma_n}\Big (\frac{\mu([x_{|n},v])}{\nu(x_{|n})}\Big )^s=\sum_{i=0}^{m-1} p'_iT_i(s).
$$
\end{lem}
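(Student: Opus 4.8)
The quantity $\sum_{v\in\Sigma_n}\big(\mu([x_{|n},v])/\nu(x_{|n})\big)^s$ is exactly $\sum_{v\in\Sigma_n} Y(x_{|n},v)^s\prod_{k=1}^n V_{x_k,v_k}(x_{|k-1},v_{|k-1})^s$, so up to the $Y$-factors it is a $V'_s$-type martingale sum in the random environment $x$. The strategy is a standard second-moment / Borel--Cantelli argument run under the skew-product measure $\widehat{\mathbb Q}'=\mathbb P\otimes\nu'$ (or rather the analogue with $\nu'$ in place of $\nu$ in the construction of $X_n$), exploiting that under our hypothesis $\sum_i p_i' T_i^*(T_i'(s))>0$ the relevant Mandelbrot martingale in random environment is $L^1$-bounded. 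First I would fix a countable dense subset of $\mathscr S$ and prove the limit along it; continuity (both sides depend continuously — indeed real-analytically — on $s$, the left side after taking the limit) then upgrades to all $s\in\mathscr S$. Second, I would reduce the ``$\nu'$-almost every $x$ in $\pi(K)$'' statement to an ``$\widehat{\mathbb Q}'$-almost sure'' statement via Lemma~\ref{4.9}: conditionally on $\mu_{W'_s}\neq 0$, $\pi_*\mu_{W'_s}\ll\nu'$ with $\nu'(\pi(K))>0$, so a property holding $\nu'$-a.e.\ holds $\pi_*\mu_{W'_s}$-a.e., and we transfer to $\mu_{W'_s}$-a.e., which (being a nondegenerate Mandelbrot measure with the same support as $\mu$) sees all of $\pi(K)$.

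\textbf{Upper bound for the exponent.} Fix $s$ in the dense subset. Using $\mathbb E(Y^s)\le 1$ when $s\le 1$ and $\mathbb E(Y^s)<\infty$ by definition of $\mathscr S$ when $s>1$, one computes $\mathbb E_{\mathbb P\otimes\nu'}\Big(\sum_{v\in\Sigma_n}Y(x_{|n},v)^s\prod_{k=1}^n V_{x_k,v_k}^s\Big)\asymp \prod$-type expression equal, by the branching property and the definition of $p_i'$, to $\mathbb E(Y^s)\cdot\big(\sum_i p_i' m^{-T_{V'_{s,i}}(1)+\dim\nu'\text{-type bookkeeping}}\big)^n$; tracking constants (this is precisely the content of Proposition~\ref{mom+estimate'} applied to the weights $W'_s$ and $\eta=\nu'$) gives $\mathbb E_{\mathbb P\otimes\nu'}(\cdot)\le C\,m^{n\,(\text{exponent})}$ with exponent $-\sum_i p_i' T_i(s)$ when $\sum_i p_i' T_i^*(T_i'(s))>0$ forces $A=\max\{1,\sum_i p_i' m^{-T_i(s)}\}=1$ — wait, more carefully: the exponent one extracts is $\sum_{i=0}^{m-1}p_i'T_i(s)$ up to sign, exactly the claimed limit. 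Then Markov's inequality plus Borel--Cantelli (Lemma~\ref{Borel}) along a geometric sequence of scales gives $\limsup_n \frac{-1}{n}\log_m(\cdot)\ge \sum_i p_i' T_i(s)$ for $\widehat{\mathbb Q}'$-a.e.\ $x$; actually this direction needs the \emph{lower} bound on the sum, i.e.\ a lower bound on $\mathbb E$ combined with a reverse-Markov / Paley--Zygmund step using the $L^{s'}$ bound for some $s'>1$ (here one uses Proposition~\ref{mom+estimate'} again at an exponent slightly above $1$, legitimate since $\sum_i p_i' T_i^*(T_i'(s))>0$ keeps us in the good regime), yielding $\liminf_n \frac{-1}{n}\log_m(\cdot)\le \sum_i p_i' T_i(s)$.

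\textbf{Matching the two bounds.} The clean way: let $\widetilde X_{V'_s,n}(x,\cdot)$ be the Mandelbrot martingale in the random environment $\nu'$ built from the weights $V'_s$ (Appendix~\ref{B1} with $U=V'_s$, $\eta=\nu'$). Its $L^1$-boundedness is equivalent to $\sum_i p_i' T_{V'_{s,i}}'(1-)=\sum_i p_i' T_i^*(T_i'(s))>0$ by Theorem~\ref{thmBK}, so it converges to a positive limit $\widehat{\mathbb Q}'$-a.s. Writing $\sum_{v\in\Sigma_n}Y(x_{|n},v)^s\prod_k V_{x_k,v_k}^s = m^{-n\sum_i p_i' T_i(s)+o(n)}\cdot(\text{martingale-type factor})$ is not literally true per-$\omega$; instead one runs the Markov bound in \emph{both} directions — upper as above, lower via the Paley--Zygmund inequality fed by the uniform $L^{1+\delta}$ bound (Proposition~\ref{mom+estimate'}, constant $f_q$ in the range $(1,2]$) — to sandwich $\frac{-1}{n}\log_m(\cdot)$ between $\sum_i p_i'T_i(s)-\epsilon$ and $\sum_i p_i'T_i(s)+\epsilon$ for all large $n$, $\widehat{\mathbb Q}'$-a.s.; a union over $\epsilon=1/k$ and over the countable dense set of $s$'s finishes the a.s.\ statement, and the transfer to $\nu'$-a.e.\ $x\in\pi(K)$ via Lemma~\ref{4.9} finishes the lemma.

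\textbf{Main obstacle.} The delicate point is the \emph{lower} bound on the exponent, equivalently preventing the martingale sum from occasionally being much larger than its mean: this is where one genuinely needs the strict inequality $\sum_i p_i' T_i^*(T_i'(s))>0$ (not merely $\ge 0$) together with the finiteness of a moment of order $>1$, so that Proposition~\ref{mom+estimate'} supplies a uniform $L^{1+\delta}$ bound and Paley--Zygmund applies; the borderline case $=0$ (handled separately in Lemma~\ref{lem-4.8} via $\widetilde\nu_q=\pi_*\mu_{W'_1}$) is precisely excluded here by the hypothesis. The rest is bookkeeping with the definitions of $p_i'$, $V'_{s,i,j}$ and $T_{V'_{s,i}}$.
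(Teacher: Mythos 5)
Your plan breaks down at the lower bound for the sum (the direction $\limsup_n \frac{-1}{n}\log_m\sum_v(\cdot)^s\le \sum_i p_i'T_i(s)$), and this is precisely the delicate point of the lemma. You propose to get it $\mathbb P\otimes\nu'$-almost surely by Paley--Zygmund plus Borel--Cantelli from a uniform $L^{1+\delta}$ bound. But the two-sided sandwich cannot hold $\mathbb P\otimes\nu'$-a.s.: on the event that the fibre tree $\{v\in\Sigma_n: Q(x_{|n},v)>0\}$ becomes extinct (an event of positive $\mathbb P\otimes\nu'$-probability in general), the sum vanishes for all large $n$. Paley--Zygmund only gives, for each fixed $n$, a uniform positive lower bound on the probability that $Z_{s,n}$ is not small; it gives no decay of $\mathbb P(Z_{s,n}\text{ small})$, and indeed that probability is bounded below by the extinction probability, hence is not summable, so Borel--Cantelli cannot be applied in this direction. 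Likewise, your fallback remark that uniform integrability of $\widetilde X_{V'_s,n}$ (Theorem~\ref{thmBK}) makes the limit ``positive $\widehat{\mathbb Q}'$-a.s.'' is false: uniform integrability only gives positivity with positive probability; the limit vanishes essentially exactly on the extinction event. Your transfer step via Lemma~\ref{4.9} does not repair this, because the statement to prove is a joint statement about $(\omega,x)$ with $x\in\pi(K(\omega))$: one must show that, for $\nu'$-a.e.\ $x$, the $\mathbb P$-null-ness of $\{\text{limit fails}\}\cap\{x\in\pi(K(\omega))\}$ holds, and no unconditional $\mathbb P\otimes\nu'$ argument reaches that.

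The paper's route is different and supplies exactly the missing ingredient. First, Proposition~\ref{XntoX} (with $U=V'_s$, $\eta=\nu'$, $Z=Y^s$) gives almost sure convergence of the normalized sums $Z_{s,n}(x)=\bigl(\prod_{k=1}^n m^{T_{x_k}(s)}\bigr)\sum_v(\mu([x_{|n},v])/\nu([x_{|n}]))^s$ to a limit $\widetilde Z_s(x)$ -- no second-moment sandwich is needed, and the rate $\sum_i p_i'T_i(s)$ then comes solely from the deterministic normalization and the strong law of large numbers under $\nu'$. Second, and this is the crux you omit, one must show that for $\nu'$-a.e.\ $x$ the zero set $\{\widetilde Z_s(x)=0\}$ coincides, up to $\mathbb P$-null sets and simultaneously for all $s$ in the dense set, with the fibre-extinction event $A_x^c$; the paper does this via the fixed-point equation for $\widetilde Z_s(x)$, the uniqueness of non-trivial $\{f_i\}$-stationary functions (Proposition~\ref{f-stat}), and the computation $\int\mathbb P(A_x)\,\nu'(\mathrm dx)=\E(\nu'(\pi(K)))$, which is what converts the statement into ``for $\nu'$-a.e.\ $x\in\pi(K)$''. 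Without this identification (or some substitute for it), the positivity of the limit on the fibre-survival event -- hence the upper bound on the exponent conditionally on $x\in\pi(K)$ -- is not established, so the proposal has a genuine gap. (A minor additional point: to pass from a countable dense set of $s$ to all of $\mathscr S$ you need the concavity of $s\mapsto\frac{-1}{n}\log_m\sum_v(\cdot)^s$, not mere continuity of the limit.)
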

\begin{proof}

By convexity,  we only need to check this for each $s$ in a dense countable set $\mathcal S$ of $\mathscr{S}$.  Indeed, if this is done, there exists a subset of $\{\mu\neq 0\}$  of probability $\mathbb P(\mu\neq 0)$ such that the sequence of concave functions $f_n(s)= \frac{-1}{n} \log_m\sum_{v\in \Sigma_n}\Big (\frac{\mu([x_{|n},v])}{\nu(x_{|n})}\Big )^s$ converge pointwise on $\mathcal S$, and this is enough to get the convergence over $\mathscr S$.

Fix $s\in\mathcal S$. For $n\ge 1$ and $x$ in the topological support of $\nu'$, set
\begin{eqnarray*}
Z_{s,n}(x)&=&\Big (\prod_{k=1}^n m^{T_{x_k}(s)}\Big )\sum_{v\in \Sigma_n}\Big (\frac{\mu([x_{|n},v])}{\nu([x_{|n}])}\Big )^s\\
&=&\sum_{v\in \Sigma_n} Y(x_{|n},v)^s\cdot \prod_{k=1}^n m^{T_{x_k}(s)} V_{x_k,v_k}(x_{|k-1},v_{|k-1})^s.
\end{eqnarray*}
Define $V'_{s,i}$, $0\le i\le m-1$, as in \eqref{V'i}. Since $\mathscr S$ is open, we have  $\E(Y^{q's})$ and $\sum_{i=0}^{m-1} p'_iT_{V'_{s,i}}(q')>-\infty$ for some $q'>1$, and since $\sum_{i=0}^{m-1} p'_iT'_{V'_{s,i}}(1)>0$, we also have $\sum_{i=0}^{m-1} p'_iT_{V'_{s,i}}(q')>0$ if $q'$ is close enough to 1. By Proposition~\ref{XntoX} applied with $\eta=\nu'$ and $U_i=V'_{s,i}$, the sequence $Z_{s,n}(x)$ converges $\mathbb{P}\otimes\nu'$ almost surely to the same non degenerate limit  $\widetilde Z_s(x)$ as the Mandelbrot martingale in random environment
$$
\widetilde Z_{s,n}(x)=\sum_{v\in \Sigma_n}  \prod_{k=1}^n m^{T_{x_k}(s)} V_{x_k,v_k}(x_{|k-1},v_{|k-1})^s.
$$
This variable satisfies the equation
\begin{equation}\label{eqZs}
\widetilde Z_{s}(x)=\sum_{j=0}^{m-1} V_{x_1,j}\widetilde Z_s(\sigma x,j),
\end{equation}
where the $\widetilde Z_s(\sigma x,j)$ are independent copies of $\widetilde Z_s(\sigma x)$, which are also independent of $V_{x_1}$.

Equation \eqref{eqZs} shows that $\mathbb{P}(\{\widetilde Z_{s}(x)=0\})$ is $\{f_i\}_{0\le i\le m-1; p_i>0}$-stationnary  in the sense of Appendix~\ref{A}, where $f_i$ stands for the generating function of the random integer $N_i$. Moreover, we assumed from the beginning that there exists  $0\le i\le m-1$ such that $p_i>0$ for which $\mathbb{P}(N_i=1)<1$. Consequently, Proposition~\ref{f-stat} shows that for $\nu'$-almost every $x$,  $\mathbb{P}(\{\widetilde Z_{s}(x)=0\})$ is less than 1 (because $\widetilde Z_{s,n}(x)$ is non degenerate) and  independent of $s\in \mathcal S$.

Also, for each $s\in\mathcal S$, the event $\{\widetilde Z_{s}(x)=0\}$ contains the event $\bigcup_{n\ge 1} \{\widetilde Z_{s,n}(x)=0\}$, which due to the definition of $\widetilde Z_{s,n}$ is independent of $s$ and is equal to the extinction of the branching process defining the Galton-Watson tree in random environment $T_n(x)=\{v\in\Sigma_n:\  Q(x_{|n},v)>0\}$.  In addition, the function $\mathbb{P}(\bigcup_{n\ge 1} \{T_n(x)=\emptyset\})$ is $\{f_i\}_{0\le i\le m-1; p_i>0}$-stationnary  as well, and it cannot be equal to 1 since it is smaller than or equal to $\mathbb{P}(\{\widetilde Z_{s}(x)=0\})$. Consequently, we conclude that for $\nu'$-almost every $x$, the event $\{\widetilde Z_{s}(x)>0\text{ for all $s\in\mathcal{S}$}\}$ equals  $A_x=\bigcap_{n\ge 1} (A_{x,n}:=\{\{v\in\Sigma_n:\  Q(x_{|n},v)>0\}\neq\emptyset\})$ up to a set of probability 0.

We have
\begin{eqnarray*}
&&\int\nu'(\{x:Z_s(\omega,x)> 0 \ \forall \ s\in \mathcal S,\ \text{and } \omega\in A_x\}\, \mathbb {P}(\mathrm{d}\omega)\\
&&\quad =\E_{\mathbb{P}\otimes \nu'}(\mathbf{1}_{\{Z_s(\omega,x)> 0 \ \forall \ s\in \mathcal S,\ \text{and } \omega\in A_x\}})\\
&&\quad=\int \mathbb{P}( Z_s(\omega,x)> 0 \ \forall \ s\in \mathcal S,\ \text{and } \omega\in A_x\})\,\nu'(\mathrm{d}x)\\
&&\quad= \int \mathbb{P}(A_x)\,\nu'(\mathrm{d}x).
\end{eqnarray*}
Notice that the events $A_{x,n}$ are non increasing so $\mathbb{P}(A_x=\bigcap_{n\ge 1} A_{x,n})=\lim_{n\to\infty} \mathbb{P}(A_{n,x})$. Consequently,
\begin{eqnarray*}
&&\int\nu'(\{x:Z_s(\omega,x)> 0 \ \forall \ s\in \mathcal S,\ \text{and } \omega\in A_x\}\, \mathbb {P}(\mathrm{d}\omega)\\&&\quad=\int\lim_{n\to\infty}\mathbb{P}(A_{x,n})\,\nu'(\mathrm{d}x)\\
&&\quad=\lim_{n\to\infty}\int\mathbb{P}(A_{x,n})\,\nu'(\mathrm{d}x)\\
&&\quad=\lim_{n\to\infty}\E(\nu'(\{x: \{v\in\Sigma_n:\  Q(x_{|n},v)>0\}\neq\emptyset\}))\\
&&\quad=\lim_{n\to\infty}\E(\nu'(\pi(K_n)))\\
&&\quad=\E(\nu'(\lim_{n\to\infty}\pi(K_n)))\\
&&\quad=\E(\nu'(\pi(K)).
\end{eqnarray*}
Since the inclusion $\{x: Z_s(\omega,x)> 0\}\subset \pi(K(\omega))$ holds by construction, we obtained that $\nu'(\{x\in \pi(K(\omega)):Z_s(\omega,x)> 0 \ \forall \ s\in \mathcal S\}=\nu'(\pi(K(\omega)))$ almost surely. In other words,  with probability 1, conditionally on $\mu\neq 0$, for $\nu'$-almost every $x$ in $\pi(K)$, for all $s\in\mathcal S$ we have $\widetilde Z_s(x)>0$. Finally, $\widetilde Z_s(x)$ is the positive limit of $Z_{s,n}(x)$. Since by definition we have
$$
\sum_{v\in \Sigma_n}\Big (\frac{\mu([x_{|n},v])}{\nu([x_{|n}])}\Big )^s=\Big (\prod_{k=1}^n m^{T_{x_k}(s)}\Big )^{-1}Z_{s,n}(x)
$$
we conclude that
$$
\lim_{n\to\infty} \frac{-1}{n} \log_m\sum_{v\in \Sigma_n}\Big (\frac{\mu([x_{|n},v])}{\nu(x_{|n})}\Big )^s=\lim_{n\to\infty}\frac{1}{n}\sum_{k=1}^n T_i(s)=\sum_{i=0}^{m-1} p'_iT_i(s),
$$
due to the ergodic theorem applied to $\nu'$.

\end{proof}

\begin{lem}\label{lem4.12}
Suppose that $\widetilde\nu_q= \nu'$.  Let
 $$
s_0=\sup\left\{s>0: \sum_{i=0}^{m-1}p_i'T_i^*(T_i'(s))>0\text{ and }\E(Y^s)<\infty\right\}.
$$
With probability 1, for $\nu'$-almost every $x\in \pi(K)$, we have $\lim_{n\to\infty} -\frac{1}{n}\log_m X_n(x)=0$ or $\sum_{i=0}^{m-1}p_i'T'_i(s_0)$  according to whether $s_0> 1$ or $s_0\le 1$.
\end{lem}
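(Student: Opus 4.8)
The plan is to treat the two cases $s_0>1$ and $s_0\le 1$ separately, the first being an immediate consequence of the preceding lemma and the second requiring a two-sided estimate of $X_n(x)$. Throughout, write $w_v=\prod_{k=1}^{n}V_{x_k,v_k}(x_{|k-1},v_{|k-1})$ for $|v|=n$ and $\Lambda_n(x,s)=\sum_{|v|=n}\big(\mu([x_{|n},v])/\nu([x_{|n}])\big)^s=\sum_{|v|=n}\big(Y(x_{|n},v)\,w_v\big)^s$, so that $X_n(x)=\Lambda_n(x,1)$. Since $\E(Y)=1$ forces $\E(Y^s)\le 1<\infty$ for every $s\in(0,1]$, the moment condition in the definition of $s_0$ is vacuous on $(0,1]$; in particular, when $s_0\le 1$ the interval $\mathscr S$ of the preceding lemma equals $(0,s_0)$, and since $s\mapsto\sum_i p_i'T_i^*(T_i'(s))$ is non-increasing on $(0,\infty)$ (its derivative being $s\mapsto\sum_i p_i'sT_i''(s)\le 0$) it vanishes at $s_0$. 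If $s_0>1$, then $1\in\mathscr S$, and applying the preceding lemma with $s=1$ gives, for $\nu'$-a.e.\ $x\in\pi(K)$, $\lim_n -\tfrac1n\log_m X_n(x)=\sum_i p_i'T_i(1)=0$, which is the claim.

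Assume now $s_0\le 1$. For the lower bound on $-\tfrac1n\log_m X_n(x)$, I would use subadditivity of $t\mapsto t^s$ on $\R_+$ for $s\in(0,s_0)\subset(0,1)$: $X_n(x)^s\le\Lambda_n(x,s)$, whence $\tfrac1n\log_m X_n(x)\le\tfrac1{sn}\log_m\Lambda_n(x,s)$ and, by the preceding lemma, $\limsup_n\tfrac1n\log_m X_n(x)\le-\tfrac1s\sum_i p_i'T_i(s)$ for every $s\in(0,s_0)$. The function $s\mapsto-\tfrac1s\sum_i p_i'T_i(s)$ has derivative $-\tfrac1{s^2}\sum_i p_i'T_i^*(T_i'(s))<0$ on $(0,s_0)$, so its infimum over $(0,s_0)$ is its left limit at $s_0$, namely $-\tfrac1{s_0}\sum_i p_i'T_i(s_0)$; by the freezing identity $\sum_i p_i'\big(s_0T_i'(s_0)-T_i(s_0)\big)=0$ this equals $-\sum_i p_i'T_i'(s_0)$. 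Hence $\liminf_n -\tfrac1n\log_m X_n(x)\ge\sum_i p_i'T_i'(s_0)$ for $\nu'$-a.e.\ $x\in\pi(K)$.

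For the matching upper bound on $-\tfrac1n\log_m X_n(x)$ — the main obstacle — fix $s\in(0,s_0)$ and $\epsilon>0$. The proof of the preceding lemma already shows that, conditionally on $\mu\ne0$, for $\nu'$-a.e.\ $x\in\pi(K)$ the Mandelbrot measure $\widetilde\mu^x$ in the random environment $x$ generated by the weight vectors $(V'_{s,i})_i$ (the one whose generation-$n$ total-mass martingale is $\widetilde Z_{s,n}(x)$) is non-degenerate; since $\sum_i p_i'T_i^*(T_i'(s))>0$, the methods of Sections~\ref{pfDIM} and~\ref{MEST} give that $\widetilde\mu^x$ is exactly dimensional of dimension $\sum_i p_i'T_i^*(T_i'(s))>0$, hence atomless, and a law of large numbers along its rays (under the associated Peyri\`ere measure, together with the ergodic theorem for $\nu'$) gives, for $\widetilde\mu^x$-a.e.\ $v$, $-\tfrac1n\log_m w_v\to\tfrac1s\big(\sum_i p_i'T_i^*(T_i'(s))+\sum_i p_i'T_i(s)\big)=\sum_i p_i'T_i'(s)=:\alpha_s$. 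Put $\mathcal V_n=\{v:\,Q(x_{|n},v)>0,\ w_v\ge m^{-n(\alpha_s+\epsilon)}\}$. As $\widetilde\mu^x$-a.e.\ ray lies in $\bigcup_{v\in\mathcal V_n}[v]$ for all large $n$ and $\widetilde\mu^x$ is atomless, $\widetilde\mu^x\big(\bigcup_{v\in\mathcal V_n}[v]\big)\ge\tfrac12\|\widetilde\mu^x\|$ eventually, so $|\mathcal V_n|\ge \tfrac12\|\widetilde\mu^x\|/\max_{|v|=n}\widetilde\mu^x([v])\to\infty$ — in fact exponentially fast, by the standard large-deviation lower bound for the minimal local dimension of $\widetilde\mu^x$. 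The variables $Y(x_{|n},v)$, $v\in\mathcal V_n$, being fresh i.i.d.\ copies of $Y$ with $\P(Y>0)>0$, a Borel–Cantelli argument yields a constant $c>0$ with $\sum_{v\in\mathcal V_n}Y(x_{|n},v)\ge c$ for all large $n$; thus $X_n(x)\ge m^{-n(\alpha_s+\epsilon)}\sum_{v\in\mathcal V_n}Y(x_{|n},v)\ge c\,m^{-n(\alpha_s+\epsilon)}$ eventually, so $\limsup_n -\tfrac1n\log_m X_n(x)\le\alpha_s+\epsilon$. Letting $\epsilon\downarrow0$ and then $s\uparrow s_0$ along countable sequences (so that $\alpha_s=\sum_i p_i'T_i'(s)\downarrow\sum_i p_i'T_i'(s_0)$ by concavity and continuity of the $T_i$ on $(0,1]$) and intersecting the corresponding probability-one events gives $\limsup_n -\tfrac1n\log_m X_n(x)\le\sum_i p_i'T_i'(s_0)$. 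Combined with the bound of the previous paragraph, this proves $\lim_n -\tfrac1n\log_m X_n(x)=\sum_i p_i'T_i'(s_0)$.

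Thus the hard point is the lower bound on $X_n(x)$: one must exhibit, at exponential rate arbitrarily close to $-\sum_i p_i'T_i'(s_0)$, exponentially many generation-$n$ cylinders of large $Q$-mass carrying a non-negligible total $Y$-mass. Approaching the freezing value $s_0$ from the left by the tilted random-environment cascades associated with $V'_s$ — non-degenerate precisely because $\sum_i p_i'T_i^*(T_i'(s))>0$ for $s<s_0$ — reduces this to their exact dimensionality (already available from the preceding lemma and the arguments of Section~\ref{pfDIM}) together with a Borel–Cantelli estimate for the independent copies $Y(x_{|n},v)$, and the endpoint identity $\sum_i p_i'T_i^*(T_i'(s_0))=0$, valid since $s_0\le 1$, is what makes the two one-sided bounds meet.
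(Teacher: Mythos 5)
Your proof is correct, but both halves follow a genuinely different route from the paper's. For $s_0>1$ you simply evaluate the preceding lemma at $s=1$ (legitimate, since both defining conditions hold on a neighbourhood of $1$ when $s_0>1$, so $1\in\mathscr S$), whereas the paper treats this case inside the same covering argument as $s_0\le 1$ with $\alpha_c=\widetilde T'(1)$. For $s_0\le 1$, the paper proves $\liminf_n-\frac1n\log_m X_n(x)\ge\widetilde T'(s_0)$ by a multifractal covering of the range of exponents, using the counting bound $f(n,\alpha,\epsilon)\le\widetilde T^*(\alpha)+\eta$ and the monotonicity of $\alpha\mapsto\widetilde T^*(\alpha)-\alpha$; you get the same bound more cheaply from the subadditivity inequality $X_n(x)^s\le\sum_v\big(\mu([x_{|n},v])/\nu([x_{|n}])\big)^s$, optimization over $s<s_0$, and the freezing identity $\sum_i p_i'T_i^*(T_i'(s_0))=0$. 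For the reverse bound the paper applies the G\"artner--Ellis lower bound to the already-established quenched $L^s$-convergence, so the cylinders it counts carry the full masses, $Y$-factors included; you instead count branches with $w_v\approx m^{-n\alpha_s}$ through the tilted random-environment cascade generated by the $V'_{s,i}$ and restore the $Y$-mass by Borel--Cantelli over the fresh i.i.d.\ copies $Y(x_{|n},v)$ (sound, since $\mathcal V_n$ is measurable with respect to the first $n$ generations of the $V$'s, hence independent of these $Y$'s), and your per-$s$ bound $\alpha_s$ and the paper's $\widetilde T'(s)-\widetilde T^*(\widetilde T'(s))$ both converge to $\widetilde T'(s_0)$ as $s\uparrow s_0$. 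What your route buys is elementarity in the first half and no appeal to G\"artner--Ellis in the second; what it costs is that you must import, for the tilted cascades, non-degeneracy, the Peyri\`ere law of large numbers, and an exponential lower bound on $\#\mathcal V_n$ (equivalently an upper bound on $\max_{|v|=n}\widetilde\mu^x([v])$), which you assert rather than prove; all of these are within reach of Theorem~\ref{thmBK} and the moment estimates of Section~\ref{MEST} applied to the weights $V'_{s,i}$, but they constitute a non-trivial supporting load that the paper's large-deviation argument bypasses. Two small caveats: your justification of the freezing identity (``the moment condition is vacuous on $(0,1]$'') does not by itself cover $s_0=1$; one also needs $\E(Y^s)<\infty$ for some $s>1$, which holds under \eqref{assumMA} because $T>0$ just above $1$. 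And the exponential growth of $\#\mathcal V_n$ should be justified either by a moment/Borel--Cantelli bound on $\max_{|v|=n}\widetilde\mu^x([v])$ or by Egorov applied to the almost-everywhere local dimension lower bound; atomlessness alone only gives that this maximum tends to $0$, not at an exponential rate.
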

\begin{proof}
We notice that $s_0=s(q)$ when $s(q)<1$. Due to the previous lemma,  with probability $1$,  for $\nu'$-almost every $x\in\mathrm{supp}(\pi(K))$, defining
$$
\tau_x(s)=\liminf_{n\to\infty} \frac{-1}{n} \log_m\sum_{v\in \Sigma_n}\Big (\frac{\mu([x_{|n},v])}{\nu([x_{|n}])}\Big )^s,
$$
we have
\begin{equation}\label{LDP}
\tau_x(s)=\lim_{n\to\infty} \frac{-1}{n} \log_m\sum_{v\in \Sigma_n}\Big (\frac{\mu([x_{|n},v])}{\nu([x_{|n}])}\Big )^s=\sum_{i=0}^{m-1} p'_iT_i(s)
\end{equation}
over $[0,s_0)$. On the other hand, we naturally have
\begin{equation}\label{ineqtau}
\tau_x(s)\ge \widetilde T(s):=\sum_{i=0}^{m-1} p'_iT_i(s)
\end{equation}
for all $s$. This is due to Lemma~\ref{Borel} and the fact that
$$
\E\Big(\sum_{v\in \Sigma_n}\Big (\frac{\mu([x_{|n},v])}{\nu([x_{|n}])}\Big )^s\Big)= \prod_{k=1}^n m^{-T_{x_k}(s)}.
$$

Now let us make a few remarks.

There exist $\alpha_0<\beta_0$ in $\R$ such that, with probability 1, conditionally on $\mu\neq 0$, we have  $m^{-n\beta_0}\le \frac{\mu([x_{|n},v])}{\nu([x_{|n}])}\le m^{-n\alpha_0}$, for all $x\in\pi(K)$, $n\ge 1$ and $v\in\Sigma_n$ such that $\mu([x_{|n},v])>0$.
Indeed, for all $x\in\pi(K)$, $n\ge 1$ we already have $(\min \{p_i: p_i>0\})^n\le \nu(x_{|n})\le (\max \{p_i: p_i>0\})^n$. Also,  we can fix $\eta>0$ such that $C_\eta=\max(\E(\mathbf{1}_{\{Y>0\}}Y^{-\eta}),\E(Y^{\eta}))<\infty$. Then, for any $A>0$, and $n\ge 1$, we have
\begin{eqnarray*}
&&\mathbb{P}\left (\exists\ (u,v)\in\Sigma_n\times\Sigma_n:  0<\mu([u,v])\le  m^{-nA} \text{ or } \mu([u,v])\ge  m^{nA}\right )\\
&&\quad\le M^{-n\eta A}\E\Big (\sum_{(u,v)\in\Sigma_n\times\Sigma_n} \mathbf{1}_{\{Q(u,v)>0\}}\mathbf{1}_{\{Y(u,v)>0\}}Q(u,v)^{-\eta}Y(u,v)^{-\eta}\Big )\\
&&\quad\quad\quad\quad+M^{-n\eta A}\E\Big (\sum_{(u,v)\in\Sigma_n\times\Sigma_n}Q(u,v)^\eta Y(u,v)^\eta\Big ) \\
&&\quad\le C_\eta M^{-n\eta A}\left ( m^{-nT(-\eta)}+m^{-nT(\eta)}\right ).
\end{eqnarray*}
Hence, if  $A$ is large enough so that $A\eta+\min (T(-\eta),T(\eta))>0$, by the Borel-Cantelli lemma we get $m^{-nA}\le \mu([x_{|n},v])\le m^{nA}$ for all $x\in\pi(K)$, $n\ge 1$ large enough and and $v\in\Sigma_n$ such that $\mu([x_{|n},v])>0$.
\medskip

If $s_0\le 1$, then $\widetilde T'(s_0)$ exists and by definition of $s_0$ we have  $\widetilde T^*(\widetilde T'(s_0))=0$. Moreover, $\widetilde T^*\circ \widetilde T'$ is strictly decreasing in a neighborhood of $s_0$ since we have already shown that when they are defined at some $s$, the functions $T_i''$ cannot vanish simultaneously there.   This, together with \eqref{ineqtau}  implies that for all $\alpha< \widetilde T'(s_0)$  we have $\widetilde\tau_x^*(\alpha)\le \widetilde T^*(\alpha)<0$. Thus,
\begin{equation}\label{LDP}
\alpha< \widetilde T'(s_0),\text{ for $n$ large enough, }\left \{v\in\Sigma_n: \frac{\mu([x_{|n},v])}{\nu(x_{|n})}\geq m^{-n\alpha}\right\}=\emptyset.
\end{equation}

Over its domain, which contains a neighborhood of $[0,1]$, the mapping $s\mapsto \widetilde T^*(\widetilde T'(s))-\widetilde T'(s)$ is increasing on the left of 1 and decreasing on the right, and it takes the maximum value 0 at 1. In other words, over its domain, the mapping $\alpha \mapsto \widetilde T^*(\alpha)-\alpha$ is strictly increasing on the left of $\widetilde T'(1)$ and strictly decreasing on the right of $\widetilde T'(1)$, since $q\mapsto T'(q)$ is decreasing.

Now, for $\alpha\in \R$, $n\ge 1$ and $\epsilon>0$ define
$$
f(n,\alpha,\epsilon)=\frac{1}{n}\log_m\#\left \{v\in\Sigma_n: m^{-n(\alpha+\epsilon)}\le \frac{\mu([x_{|n},v])}{\nu([x_{|n}])}\le m^{-n(\alpha-\epsilon)}\right\}.
$$
Fix $\eta>0$ and $\epsilon>0$. For any $\alpha\in [\alpha_0,\beta_0]$, there exists $\epsilon_\alpha\in (0,\epsilon)$ and $n_\alpha\ge 1$ such that for all $n\ge n_\alpha$ we have
$$
f(n,\alpha,\epsilon_\alpha)\le \tau_x^*(\alpha)+\eta \le \widetilde T^*(\alpha)+\eta.
$$
Set $\alpha_c=\widetilde T'(s_0)$ if $s_0\le 1$ and $\alpha_c=\widetilde T'(1)$ otherwise.
Fix a finite covering $\bigcup_{i=1}^N(\alpha_i-\epsilon_i,\alpha_i+\epsilon_i)$ of $[\alpha_0,\beta_0]\setminus (\alpha_c-\epsilon_c,\alpha_c+\epsilon_c)$, where $\epsilon_c$ stands for $\epsilon_{\alpha_c}$, and $\epsilon_i$ stands for $\epsilon_{\alpha_i}$, and set $n_0=\sup\{n_{\alpha}:\alpha\in \{\alpha_c\}\cup \{\alpha_i:1\le i\le N\}\}$. Without loss of generality we assume that  the $\alpha_i$ belong to  $[\alpha_0,\beta_0]\setminus (\alpha_c-\epsilon_c,\alpha_c+\epsilon_c)$. Moreover, due to \eqref{LDP},  if $s_0\le 1$ we can restrict the $\alpha_i$ to be larger than or equal to $\alpha_c$, and set $\alpha_0=\alpha_c$.
Then, there exists $\gamma>0$ such that for all $\alpha_i$ we have $\widetilde T^*(\alpha_i)-\alpha_i\le \widetilde T^*(\alpha_c)-\alpha_c-\gamma$.

For $n\ge n_0$ we have
\begin{eqnarray*}
X_n(x)&=&\sum_{v\in \Sigma_n}\frac{\mu([x_{|n},v])}{\nu(x_{|n})}\\
&\le& m^{f(n,\alpha_c,\epsilon_c)} m^{-n(\alpha_c-\epsilon_c)}+ \sum_{i=1}^N m^{f(n,\alpha_i,\epsilon_i)} m^{-n(\alpha_i-\epsilon_i)}\\
&\le& m^{n(\eta+\epsilon)}\Big (m^{n(\tau_x^*(\alpha_c)-\alpha_c)}+\sum_{i=1}^N m^{n(\tau_x^*(\alpha_i)-\alpha_i)}\Big )\\
&\le& m^{n(\eta+\epsilon)}\Big (m^{n(\widetilde T^*(\alpha_c)-\alpha_c)}+\sum_{i=1}^N m^{n(\widetilde T^*(\alpha_i)-\alpha_i)}\Big )\\
&\le&  m^{n(\eta+\epsilon)}m^{n(\widetilde T^*(\alpha_c)-\alpha_c)}(1+N m^{-n\gamma}).
\end{eqnarray*}
We conclude that $\liminf_{n\to\infty}-\frac{1}{n}\log_m(X_n(x))\ge \alpha_c-\widetilde T^*(\alpha_c)-\eta-\epsilon$. Since this holds for any positive $\eta$ and $\epsilon$, we get the desired lower bound: $\widetilde T'(s_0)$ if $s_0\le 1$, and 0 otherwise.

On the other hand, due to \eqref{LDP}, Gartner-Ellis theorem (see e.g. \cite{De-Zei}) ensures that for all $s\in (0,\min(s_0,1))$ one has $\lim_{\epsilon\to 0}\liminf_{n\to\infty} f(n,\widetilde T'(s),\epsilon)=\widetilde T^*(\widetilde T'(s))$.

This immediately yields $\limsup_{n\to\infty}-\frac{1}{n}\log(X_n(x))\le \widetilde T'(s))-\widetilde T^*(\widetilde T'(s))$ for all $s<\min (s_0,1)$ close enough to $\min (s_0,1)$, hence $\limsup_{n\to\infty}-\frac{1}{n}\log_m(X_n(x))\le \alpha_c-\widetilde T^*(\alpha_c)$.
\end{proof}

\begin{proof}[Proof of Proposition~\ref{tauneqpsi}] Recall that $\alpha$ stands for $\tau'(q)$ if $s(q)<1$ and $\tau_\nu'(q)$ if $s(q)=1$.
We use the writing $\pi_*\mu([x_{|n}])=\nu([x_{|n}]) X_n(x)$.

At first we suppose that $\widetilde\nu_q=\nu'$.

If $q\in (0,1)$ and $s(q)<1$, applying ergodic theorem to $\nu'$ to control the local dimension of $\nu$, and Lemma~\ref{lem4.12} to $X_n(x)$ after setting  $s_0=s(q)$, we obtain that conditionally on $\pi_*\mu\neq 0$,
for $\nu'$-almost every $x\in\pi(K)$,
\begin{eqnarray*}
\dim_{\rm loc}(\pi_*\mu,x)=\Big (-\sum_{i=0}^{m-1}p'_i\log_m(p_i) \Big )+ \widetilde T'(s_0)&=&-\sum_{i=0}^{m-1}p'_i\log_m(p_i) +\sum_{i=0}^{m-1} p'_i T_i'(s(q))\\&=&\tau'(q)=\alpha,
\end{eqnarray*}
by using \eqref{tau'q} and \eqref{tau'q2}. On the other hand,
\begin{eqnarray*}
\dim(\nu')=-\sum_{i=0}^{m-1}p'_i\log_m(p'_i)&=&-\sum_{i=0}^{m-1}p'_i(q\log_m(p_i)+\tau(q)-qT_i(q)/s(q))\\
&=&q\tau'(q)-\tau(q)=\tau^*(\alpha)
\end{eqnarray*}
by using  \eqref{tau'q} and \eqref{tau'q1}.

If $s(q)=1$, then  $\nu'=\nu_q$, and this time one applies Lemma~\ref{lem4.12} with $s_0=s(q)=1$ to control $X_n(x)$ . This yields  that conditionally on $\pi_*\mu\neq 0$,
for $\nu'$-almost every $x\in\pi(K)$,
\begin{eqnarray*}
\dim_{\rm loc}(\pi_*\mu,x)=\Big (-\sum_{i=0}^{m-1}p'_i\log_m(p_i) \Big )+ 0=\tau_\nu'(q).
\end{eqnarray*}
Moreover, $\dim (\nu_q)=\tau_\nu^*(\tau_\nu'(q))=\tau_\nu'(q) q-\tau_\nu(q)=\alpha q-\tau(q)=\tau^*(\alpha)$ since we have $\tau_\nu(q)=\tau(q)$ and $\alpha\in \{\tau'(q^+),\tau'(q^-)\}$.

Thus, at this stage, due to Corollary~\ref{4.6} and the conclusions  obtained in the previous lines, for all $q\in (0,\widetilde q_c)\setminus \widetilde S$ and  $\alpha=\tau'(q)$ or $\alpha\in\{\tau'(q^+),\tau'(q^-)\}$ if $q>1$ and the graphs of $\tau_\nu$ and $T$ cross transversally at $(q,T(q)$, we have established the desired inequality $\dim_H E(\pi_*\mu,\alpha)\ge \tau^*(\alpha)$, almost surely, conditionally on $\mu\neq 0$.

Now suppose that $q\in \widetilde S$. Recall that $\widetilde\nu_q=\pi_*\mu_{W'_1}$ and by Lemma~\ref{4.9} the measure $\mu_{W'_1}$ has almost surely the same topological support as $\mu$. Moreover, it follows from the theory of Mandelbrot measures \cite{B1999,Ba00} that, with probability 1, conditionally  on $\mu\neq 0$, for  $\mu_{W'_1}$-almost every $(x,y)$, we have
\begin{eqnarray*}
\lim_{n\to\infty}\frac{\mu([x_{|n},y_{|n}])}{-n\log(m)}&=&-\sum_{0\le i,j\le m-1}\E(W'_{1,i,j}\log_m (W_{i,j}))\\
&=&-\sum_{i=0}^{m-1}p'_i\log_m(p_i)-\sum_{i=0}^{m-1}p'_i\sum_{j=0}^{m-1}\E(V'_{1,i,j}\log_mV_{1,i,j})\\
&=& -\sum_{i=0}^{m-1}p'_i\log_m(p_i),
\end{eqnarray*}
since $V'_{1,i,j}=V_{i,j}$ and $0=\sum_{i=0}^{m-1}p'_iT'_i(1)=\sum_{i=0}^{m-1}p'_i\sum_{j=0}^{m-1}\E(V'_{1,i,j}\log_mV_{1,i,j}) $. Also,
\begin{eqnarray*}
\dim (\mu_{W'_1})&=&-\sum_{0\le i,j\le m-1}\E(W'_{1,i,j}\log_m (W'_{1,i,j}))\\
&=&-\sum_{i=0}^{m-1}p'_i\log_m(p'_i) -\sum_{i=0}^{m-1}p'_i\sum_{j=0}^{m-1}\E(V'_{1,i,j}\log_mV'_{1,i,j})\\
&=&-\sum_{i=0}^{m-1}p'_i\log_m(p'_i) +\sum_{i=0}^{m-1}p'_iT_i'(1)\\
&=&-\sum_{i=0}^{m-1}p'_i\log_m(p'_i)=\dim(\nu')=\dim (\E(\pi_*\mu_{W'_1})).
\end{eqnarray*}
Consequently, for $\pi_*\mu_{W'_1}$-almost every $x$, we have $\overline\dim_{\rm loc}(\pi_*\mu,x)\le -\sum_{i=0}^{m-1}p'_i\log_m(p_i)=\tau_\nu'(q)=\alpha$. Moreover, $\dim(\pi_*\mu_{W'_1})=\dim (\nu') =\tau_\nu^*(\tau'_\nu(q))=\tau^*(\alpha)$ (the last inequality coming from the equality $\tau_\nu(q)=\tau(q)$ and the fact that  $\alpha=\tau_\nu'(q)\in\{\tau'(q^+),\tau'(q^-)\}$. Then, the same arguments as in the proof of Corollary~\ref{4.6}  where $T$ is replaced by $\tau_\nu$ and $\mu_q$ by $\mu_{W'_1}$ yield $\dim_H E(\pi_*\mu,\alpha)\ge \tau^*(\alpha)$.
\end{proof}

\subsection{The case where $\alpha\in (\tau'(q^+),\tau'(q^-))$ when a first order phase transition occurs at $q\in(1,q_c)$} $\ $

Recall that in the case considered in this section we have $\tau_\nu(q)=T(q)$, $\tau_\nu'(q)\neq T'(q)$, and $(\tau'(q^+),\tau'(q^-))=(\tau_\nu'(q),T(q))$ or $(\tau'(q^+),\tau'(q^-))=(T'(q),\tau_\nu'(q))$.

Fix $\lambda\in [0,1]$. Let $(n_k)_{k\ge 1}$ be an increasing sequence of positive integers such that $n_k=o(n_1+\cdots n_{k-1})$ as $k\to\infty$, and  $n_1\min(\lambda,1-\lambda)>1$ if $\lambda>0$. For $k\ge 0$, let  $N_k=\sum_{i=1}^{k}n_i$ and $N_{k,\lambda}=N_{k-1}+\lfloor \lambda n_k\rfloor$. We will later further specify the sequence $(n_k)_{k\ge 1}$.

For each $n\ge 0$ and $(u,v)\in \Sigma_n\times\Sigma_n$, set
$$
\widetilde W_\lambda(u,v)=
\begin{cases}
W_q(u,v)=(p_i^qm^{T(q)}V_{i,j}^q)_{0\le i,j\le m-1}&\text{if }  N_{k-1}+1\le n\le N_{k,\lambda}\text{ for some $k$}\\
(p_{q,i} V_{i,j})_{0\le i,j\le m-1}&\text{otherwise}
\end{cases},
$$
where as previously $p_{q,i}=p_{i}^qm^{-\tau_\nu(q)}$. These random vectors can be used to build a non homogeneous Mandelbrot measure in the same way as $\mu$ and $\mu_q$: for each $n\ge 0$ and $(u,v)\in\Sigma_n\times\Sigma_n$, define
$$
\widetilde Y_\lambda(u,v)=\lim_{p\to\infty}\widetilde Y_{\lambda,p}(u,v), \text{ where }\widetilde Y_{\lambda,p}(u,v)=\sum_{|u'|=|v'|=p} \prod_{k=1}^p\widetilde W_{\lambda,u'_k,v'_k}(u\cdot u'_{|k-1},v\cdot v'_{|k-1}),
$$
and denote $\widetilde Y_\lambda(\epsilon,\epsilon)$ by $\widetilde Y_\lambda$ (each $\widetilde Y_\lambda(u,v)$ exists almost surely as limit of the non negative martingale (of expectation 1) $(\widetilde Y_{\lambda,p}(u,v))_{p\ge 0}$). Then,
$$
\widetilde\mu_\lambda([u]\times[v])=\widetilde Y_\lambda(u,v) \prod_{j=1}^n\widetilde W_{\lambda,u_j,v_j}(u_{|j-1},v_{|j-1})
$$
defines a measure almost surely. Moreover, the same argument as in Proposition~\ref{Kmu} shows that if $\widetilde\mu_\lambda$ is not equal to 0 almost surely, then  its topological support equals that of $\mu_\lambda$ almost surely. It is the situation which occurs as the following proposition shows. Also, $\widetilde\mu_1=\mu_q$, while $\widetilde\mu_0$ is a non degenerate Mandelbrot measure such that $\mathbb{E}(\pi_*\widetilde\mu_0)=\nu_q$ and by \eqref{dimrel} $\dim(\widetilde\mu_0)-\dim(\nu_q)=\sum_{i=0}^{m-1}p_{q,i}T_i'(1)$.

\begin{pro}\label{pro-4.13}
One has  $\mathbb E(\widetilde Y_\lambda)=1$; consequently $\widetilde\mu_\lambda$ is not almost surely degenerate, and with probability 1, conditionally on $\mu\neq 0$ we have $\mathrm{supp}(\widetilde\mu_\lambda)=\mathrm{supp}(\mu)$. Moreover, there exists $h\in (1,2]$ such that
$$
M(\lambda,h)=\sup\{\mathbb{E}(\widetilde Y_\lambda(u,v)^h): n\ge 0, \, u,v\in\Sigma_n\}<\infty.
$$
\end{pro}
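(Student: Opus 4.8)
The plan is to treat the martingale $(\widetilde Y_{\lambda,p}(u,v))_{p\ge 0}$ as a (non-homogeneous) Mandelbrot martingale driven by a sequence of weight vectors which at each level is one of two deterministic choices: the ``$\mu_q$-type'' vector $(p_i^qm^{T(q)}V_{i,j}^q)$ on the blocks of indices $N_{k-1}+1\le n\le N_{k,\lambda}$, and the ``$(p_{q,i}V_{i,j})$-type'' vector otherwise. For each of these two vectors one computes the associated $T$-function: the first is precisely $T_{W_q}$ with $T_{W_q}(1)=0$ and $T_{W_q}'(1-)=T^*(T'(q))>0$ (by the hypothesis $q\in(1,q_c)$, recalling $q_c$ is defined in \eqref{assumMA}); the second has $T$-function whose value at $1$ is $0$ as well, since $\sum_i p_{q,i}=\sum_i p_i^qm^{-\tau_\nu(q)}=1$ by definition of $\tau_\nu$. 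Thus for \emph{every} level $n$, conditioning on the choice at that level, the relevant weight vector has total expectation $1$, so $(\widetilde Y_{\lambda,p}(u,v))_{p\ge 0}$ is a non-negative martingale of expectation $1$ for each fixed $(u,v)$, and it converges almost surely; this gives the almost-sure existence of $\widetilde Y_\lambda(u,v)$ as asserted.

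The substantive point is $\mathbb E(\widetilde Y_\lambda)=1$, i.e. uniform integrability of $(\widetilde Y_{\lambda,p})_{p\ge 0}$. I would obtain this together with the stronger statement $M(\lambda,h)<\infty$ for some $h\in(1,2]$ by a single bounded-$h$-th-moment estimate. Fix $h\in(1,2]$ and use the recursion
$$
\widetilde Y_{\lambda,p}(u,v)=\sum_{0\le i,j\le m-1}\widetilde W_{\lambda,i,j}(u,v)\,\widetilde Y_{\lambda,p-1}^{(i,j)}(ui,vj),
$$
where the summands are conditionally independent given the level-$n$ weights. Applying the standard von~Bahr--Esseen / Burkholder-type inequality for sums of independent centred random variables at exponent $h\in(1,2]$ (exactly as in the proof of Proposition~\ref{mom+estimate} / Proposition~\ref{mom+estimate'}), one gets a recursion of the form
$$
\mathbb E(\widetilde Y_{\lambda,p}(u,v)^h)\le A_n\,\sup_{i,j}\mathbb E(\widetilde Y_{\lambda,p-1}^{(i,j)}(ui,vj)^h)+C_n,
$$
where $A_n$ and $C_n$ depend only on the \emph{type} used at level $n+1$. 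For the $\mu_q$-type levels, $A_n=m^{-T_{W_q}(h)}$ (with $T_{W_q}(h)>0$ for $h$ close enough to $1$, since $q\in(1,q_c)$ guarantees $T(qh)>-\infty$ and $T_{W_q}'(1-)>0$), and for the $(p_{q,i}V_{i,j})$-type levels $A_n\le\max\big(1,\sum_i p_{q,i}m^{-T_i(h)}\big)$, which is finite for $h\in(1,2]$ small enough and, crucially, the two factors over one full block $\{N_{k-1}+1,\dots,N_k\}$ combine so that the product of $A_n$ over the block is bounded by a constant independent of $k$. This is where the constraint $n_k=o(n_1+\cdots+n_{k-1})$ is used: it ensures that the $\lfloor\lambda n_k\rfloor$ ``expanding'' $\mu_q$-levels cannot accumulate faster than the ``contracting'' levels can absorb them, so the partial products $\prod_{n\le N}A_n$ stay bounded; iterating the moment recursion then yields $\sup_{p}\mathbb E(\widetilde Y_{\lambda,p}(u,v)^h)\le M(\lambda,h)<\infty$ uniformly in $(u,v)$ and $p$. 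Boundedness of $h$-th moments with $h>1$ gives uniform integrability, hence $\mathbb E(\widetilde Y_\lambda)=\lim_p\mathbb E(\widetilde Y_{\lambda,p})=1$, so $\widetilde\mu_\lambda$ is non-degenerate; and since the weights $\widetilde W_{\lambda,i,j}$ vanish exactly when $W_{i,j}$ does, Proposition~\ref{Kmu} (applied in the non-homogeneous setting, the proof being identical) shows $\mathrm{supp}(\widetilde\mu_\lambda)=\mathrm{supp}(\mu)$ almost surely conditionally on $\mu\ne0$.

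The main obstacle is the bookkeeping in the moment recursion: one must choose $h\in(1,2]$ close enough to $1$ so that \emph{simultaneously} $T_{W_q}(h)>0$, $\mathbb E(Y^h)<\infty$ (needed so that the terminal martingale limits $Y(u,v)$ entering the $\mu_q$-type blocks have finite $h$-th moment — here one uses that $T$ is finite near $1$, which follows from the standing assumption $T$ finite on a neighbourhood of $0$ together with concavity, or is built into \eqref{assumMA}), and $\sum_i p_{q,i}m^{-T_i(h)}$ is controlled; then one must verify that the alternation of the two regimes, under the growth condition on $(n_k)$, keeps $\prod_{n\le N}A_n$ bounded. None of these steps is deep, but getting the constants to line up uniformly over all $(u,v)\in\Sigma_n\times\Sigma_n$ and all block positions is the delicate part; the key structural input making it work is that both weight vectors have $T$-function vanishing at $1$, so each ``block'' is moment-neutral to first order and only the second-order ($h\to1$) behaviour, tamed by $n_k=o(N_{k-1})$, needs to be tracked.
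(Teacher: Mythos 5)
Your overall strategy (a level-by-level $h$-th moment recursion for $h\in(1,2]$ close to $1$, yielding a uniform bound and hence uniform integrability) is the same as the paper's, but the way you close the recursion has a genuine gap. The paper's key point is that \emph{both} weight types are strictly contracting at exponent $h$: for the $\mu_q$-type levels the coefficient is $m^{hT(q)-T(hq)}<1$ because $T^*(T'(q))>0$, and for the $(p_{q,i}V_{i,j})$-type levels the coefficient is $\sum_i p_{q,i}^h m^{-T_i(h)}$ (note the exponent $h$ on $p_{q,i}$, which you dropped), and this is shown to be $<1$ by a two-point convexity argument: with $\psi(h)=\sum_i p_{q,i}m^{-T_i(h)}=\sum_i p_i^q m^{T(q)}m^{-T_i(h)}$ one has $\psi(1)=\psi(q)=1$ \emph{because of the standing hypothesis $\tau_\nu(q)=T(q)$ of this subsection} (via \eqref{psi}), so convexity gives $\psi(h)\le 1$ on $(1,q)$, and then $\sum_i p_{q,i}^h m^{-T_i(h)}\le \max_i p_{q,i}^{h-1}\,\psi(h)<1$ since every positive $p_{q,i}$ lies in $(0,1)$. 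You never use $\tau_\nu(q)=T(q)$; you only record that $\sum_i p_{q,i}=1$, which gives the martingale property but says nothing about contraction at $h>1$, so your bound $A_n\le\max\bigl(1,\sum_i p_{q,i}m^{-T_i(h)}\bigr)$ leaves open the possibility $A_n=1$ (or worse, with the correct exponent, $A_n>1$), and the additive constants $C$ then do not sum geometrically.

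Your proposed remedy --- compensating the supposedly expanding levels against the contracting ones using $n_k=o(n_1+\cdots+n_{k-1})$ --- cannot work. That growth condition only forces $N_{k-1}/N_k\to 1$ (it is used for the Ces\`aro-type limits in Propositions~\ref{pro-4.14} and \ref{pro-4.15}); the proportion of the two regimes inside each block is governed by $\lambda$ alone, so it provides no balancing of products of the $A_n$. Worse, the quantity $M(\lambda,h)$ is a supremum over \emph{all} starting nodes $(u,v)\in\Sigma_n\times\Sigma_n$, $n\ge 0$: iterating the recursion from a node placed at the start of a regime with coefficient $\ge 1$ of length comparable to $(1-\lambda)n_k$ (which tends to infinity with $k$) would produce partial sums $C\sum_j\prod_\ell A_\ell$ that are unbounded in $k$, so no block-averaging argument can deliver the required uniform bound --- pointwise contraction at every level, i.e.\ the convexity step above, is exactly what is needed. (Two minor further points: the summands in the recursion are not centered, so von Bahr--Esseen does not apply directly; the paper instead uses Kahane's inequality $\bigl(\sum a_\ell\bigr)^h\le\bigl(\sum a_\ell^{h/2}\bigr)^2$ and separates diagonal from off-diagonal terms, the latter being bounded by a constant since $\mathbb{E}(W_{i,j}^{q'})<\infty$ for $q'<q_c$. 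Also, no ``terminal'' variables $Y(u,v)$ enter $\widetilde Y_\lambda$, so no moment condition on $Y$ is needed at this stage.)
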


We postpone the proof of Proposition~\ref{pro-4.13} for a while.

For all $k\ge 1$ and $(u,v)\in\Sigma_{N_k}\times\Sigma_{N_k}$, define
\begin{align*}
\widetilde \mu^{T}_1 (u,v)&=\prod_{i=1}^k \prod_{\ell=N_{i-1}+1}^{N_{i,\lambda}}   p_{u_\ell}^qm^{T(q)} V_{ u_\ell,v_\ell}(u_{|\ell-1},v_{|\ell-1})^q,\\
\mu^T(u,v)&=\prod_{i=1}^k \prod_{\ell=N_{i-1}+1}^{N_{i,\lambda}}   p_{u_\ell}V_{ u_\ell,v_\ell}(u_{|\ell-1},v_{|\ell-1}),\\
\widetilde \mu^{\tau_\nu}_0 (u,v)&=\prod_{i=1}^k  \prod_{\ell=N_{i,\lambda}+1}^{N_i}  p_{q,u_\ell}V_{u_\ell,v_\ell}(u_{|\ell-1},v_{|\ell-1}),\\
\mu^{\tau_\nu}(u,v)&=\prod_{i=1}^k  \prod_{\ell=N_{i,\lambda}+1}^{N_i}  p_{u_\ell}V_{u_\ell,v_\ell}(u_{|\ell-1},v_{|\ell-1}).
\end{align*}
We have
\begin{equation}
\label{decompmulambda}
\widetilde\mu_\lambda([u]\times[v])=\widetilde \mu^{T}_1 (u,v)\widetilde \mu^{\tau_\nu}_0 (u,v) \widetilde Y_\lambda(u,v)
\end{equation}
and
\begin{equation}
\label{decompmu}
\mu([u]\times[v])= \mu^{T} (u,v)\mu^{\tau_\nu} (u,v) Y(u,v).
\end{equation}
Define
$$
\alpha=\lambda T'(q)+(1-\lambda)\tau_\nu'(q)\quad\text{and} \quad \alpha'=\sum_{i=0}^{m-1}p_{q,i}T_i'(1).
$$
Since $\alpha\in [\tau'(q+),\tau'(q-)]$ and $\tau_\nu(q)=\tau(q)=T(q)$, we have
$$
\tau^*(\alpha)=\alpha q-\tau_\nu(q)=\lambda T^*(T'(q))+(1-\lambda)\tau_\nu^*(\tau_\nu'(q))
.
$$

We will prove the following propositions and corollary, which give the desired conclusion.

\begin{pro}\label{pro-4.14}
With probability 1, conditionally on $\mu\neq 0$, for $\widetilde\mu_\lambda$-almost every $(x,y)$, one has
\begin{align*}
\lim_{k\to\infty} \frac{\log(\widetilde \mu^T_1(x_{|N_k},y_{|N_k}))}{-N_k\log(m)}&=\lambda T^*(T'(q)),\\
 \lim_{k\to\infty}\frac{\log(\widetilde \mu^{\tau_\nu}_0(x_{|N_k},y_{|N_k}))}{-N_k\log(m)}&=(1-\lambda)\tau_\nu^*(\tau_\nu'(q))+(1-\lambda)\alpha'\\
 \lim_{k\to\infty} \frac{\log(\mu^T(x_{|N_k},y_{|N_k}))}{-N_k\log(m)}&=\lambda T'(q),\\
 \lim_{k\to\infty} \frac{\log(\mu^{\tau_\nu}(x_{|N_k},y_{|N_k}))}{-N_k\log(m)}&=(1-\lambda)\tau_\nu'(q)+(1-\lambda)\alpha',\\
 \lim_{k\to\infty} \frac{\log (\widetilde Y_\lambda(x_{|N_k},y_{|N_k}))}{-N_k\log(m)}&= \lim_{k\to\infty} \frac{\log (Y_(x_{|N_k},y_{|n}))}{-N_k\log(m)}=0;
 \end{align*}
in particular, $\dim_{\rm{loc}}(\mu,(x,y))=\alpha+(1-\lambda)\alpha'$ and $\dim_{\rm{loc}}(\widetilde\mu_\lambda,(x,y))=\tau^*(\alpha)+(1-\lambda)\alpha'$.

\end{pro}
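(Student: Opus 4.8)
The plan is to analyze each of the five factors in the decompositions \eqref{decompmulambda} and \eqref{decompmu} separately, identifying the relevant ``tilted'' product structure along the blocks $(N_{i-1},N_{i,\lambda}]$ and $(N_{i,\lambda},N_i]$, and then invoking the law of large numbers along the sparse subsequence $(N_k)_{k\ge 1}$. The key point is that under the Peyri\`ere-type measure attached to $\widetilde\mu_\lambda$, i.e.\ the measure $\mathbb P(\mathrm d\omega)\,\widetilde\mu_\lambda(\mathrm dx,\mathrm dy)$, the coordinate random variables $W_{x_\ell,y_\ell}(x_{|\ell-1},y_{|\ell-1})$ are independent, and on the $\ell$-th coordinate their law depends only on whether $\ell$ lies in a ``$W_q$-block'' or a ``$(p_{q,i}V_{i,j})$-block''. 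Concretely, in a $W_q$-block the size-biased law of $(x_\ell,y_\ell,W_{x_\ell,y_\ell})$ is the one driving the multifractal analysis of $\mu$ at exponent $T'(q)$ (this is exactly the content of \cite{Ba00} recalled in Section~\ref{tauegalpsi}, where $E(\mu,T'(q))$ is of full $\mu_q$-measure and $\dim\mu_q=T^*(T'(q))$), while in a $(p_{q,i}V_{i,j})$-block the size-biased law is the one attached to the Mandelbrot measure $\widetilde\mu_0$, whose expectation projects to $\nu_q$ and for which $\dim\widetilde\mu_0-\dim\nu_q=\sum_i p_{q,i}T_i'(1)=\alpha'$.

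First I would make the Peyri\`ere reduction precise: ``$\mathbb P$-almost surely, conditionally on $\mu\neq0$, $\widetilde\mu_\lambda$-almost everywhere'' is the same as ``almost surely with respect to $\mathbb P(\mathrm d\omega)\,\widetilde\mu_\lambda(\mathrm dx,\mathrm dy)$'', and under that measure compute the expectations of $\log p_{x_\ell}$, of $\log V_{x_\ell,y_\ell}$, and of $\log W_{x_\ell,y_\ell}$ in each of the two block types. In a $W_q$-block one gets $\mathbb E(-\log_m(p_{x_\ell}^qm^{T(q)}V^q))=T'(q)$ for the $\mu^T$-type contribution per coordinate, and $\mathbb E(-\log_m p_{x_\ell}^q m^{T(q)} V^q)$ computed against the $W_q$-biased law gives $T^*(T'(q))$ for the $\widetilde\mu_1^T$-type contribution, using $\dim\mu_q=T^*(T'(q))$. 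In a $(p_{q,i}V_{i,j})$-block the analogous per-coordinate expectations are $\tau_\nu'(q)+\alpha'$ for $\mu^{\tau_\nu}$ and $\tau_\nu^*(\tau_\nu'(q))+\alpha'$ for $\widetilde\mu_0^{\tau_\nu}$, using $\dim\widetilde\mu_0=\dim\nu_q+\alpha'=\tau_\nu^*(\tau_\nu'(q))+\alpha'$. Then, since by construction the proportion of indices $\ell\le N_k$ lying in $W_q$-blocks tends to $\lambda$ (because $N_{i,\lambda}-N_{i-1}=\lfloor\lambda n_i\rfloor$ and $N_k=n_1+\dots+n_k$), a block-wise strong law of large numbers — applied to the independent (within each block type, i.i.d.) summands — yields the four stated limits for $\widetilde\mu_1^T,\widetilde\mu_0^{\tau_\nu},\mu^T,\mu^{\tau_\nu}$. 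Summing gives $\dim_{\mathrm{loc}}(\mu,(x,y))=\lambda T'(q)+(1-\lambda)\tau_\nu'(q)+(1-\lambda)\alpha'=\alpha+(1-\lambda)\alpha'$ and $\dim_{\mathrm{loc}}(\widetilde\mu_\lambda,(x,y))=\lambda T^*(T'(q))+(1-\lambda)\tau_\nu^*(\tau_\nu'(q))+(1-\lambda)\alpha'=\tau^*(\alpha)+(1-\lambda)\alpha'$, using $\tau^*(\alpha)=\alpha q-\tau_\nu(q)=\lambda T^*(T'(q))+(1-\lambda)\tau_\nu^*(\tau_\nu'(q))$.

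The remaining, and most delicate, step is the negligibility of the martingale-limit factors, i.e.\ $\log\widetilde Y_\lambda(x_{|N_k},y_{|N_k})=o(N_k)$ and $\log Y(x_{|N_k},y_{|N_k})=o(N_k)$ along the subsequence, $\widetilde\mu_\lambda$-a.e. Here I would use the uniform moment bound from Proposition~\ref{pro-4.13}, namely $M(\lambda,h)=\sup_{n,u,v}\mathbb E(\widetilde Y_\lambda(u,v)^h)<\infty$ for some $h\in(1,2]$, together with a uniform negative-moment bound of the type used elsewhere in the paper (as in the proof of Lemma~\ref{lem4.12}, obtained from $\mathbb E(\mathbf 1_{\{Y>0\}}Y^{-\eta})<\infty$ and a Borel--Cantelli argument over the tree), to control $\widetilde Y_\lambda(x_{|N_k},y_{|N_k})^{\pm}$ uniformly; the condition $n_k=o(N_{k-1})$ is precisely what makes the error terms summable after taking expectations against $\widetilde\mu_\lambda$ and applying Borel--Cantelli along $(N_k)$. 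The main obstacle is thus the bookkeeping needed to bound $\sum_k \mathbb E_{\mathbb P\otimes\widetilde\mu_\lambda}(\widetilde Y_\lambda(x_{|N_k},y_{|N_k})^{\pm\epsilon}) e^{\mp \epsilon\delta N_k}<\infty$ uniformly in the (sparse, non-homogeneous) block structure and to verify that the block proportions converge fast enough that the LLN errors are also $o(N_k)$; the independence across blocks and the uniform bounds of Proposition~\ref{pro-4.13} are what make this go through, and the choice $n_k=o(N_{k-1})$ is used repeatedly to kill boundary effects at the $N_{k,\lambda}$ cut points.
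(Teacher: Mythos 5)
Your proposal follows essentially the same route as the paper's proof: the Peyri\`ere reduction for $\widetilde\mu_\lambda$ with block-dependent i.i.d.\ coordinate laws (matching those of $\widetilde Q_1$ and $\widetilde Q_0$), the strong law of large numbers with block proportions $\lambda$ and $1-\lambda$ giving the four product-term limits, and Markov's inequality with the uniform moment bound of Proposition~\ref{pro-4.13} (plus finite negative moments of $Y$) and Borel--Cantelli for the negligibility of $\widetilde Y_\lambda$ and $Y$, with $N_{k-1}/N_k\to1$ yielding the local dimensions. Two small slips worth fixing: in the $W_q$-block the per-coordinate expectation giving $T'(q)$ should be $\E\bigl(-\log_m(p_{x_\ell}V_{x_\ell,y_\ell})\bigr)$ under the size-biased law (you wrote the $\widetilde\mu_1^T$ expression twice), and the sparseness condition $n_k=o(N_{k-1})$ is not needed for summability of the Borel--Cantelli series (that holds since $N_k\ge k$) but for passing from the subsequence $(N_k)$ to the local dimension statements.
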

We will see in the proof that $\alpha'\ge 0$.

\begin{pro}\label{pro-4.15} Suppose that $\lambda\in (0,1)$.
\begin{enumerate}
\item With probability 1, conditionally on $\mu\neq 0$,  one has $\underline{\dim}_{\rm{loc}}(\widetilde\mu_\lambda,x)\ge \tau^*(\alpha)$ for  $\pi_*\widetilde\mu_\lambda$-almost every $x$.

\item With probability 1, conditionally on $\mu\neq 0$,  one has both $\overline{\dim}_{\rm{loc}}(\widetilde\mu_\lambda,x)\le \tau^*(\alpha)$ and $\overline{\dim}_{\mathrm{loc}}(\pi_*\mu,x)\le \alpha$.
\end{enumerate}
\end {pro}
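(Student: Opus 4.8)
The plan is to reduce everything to the subsequence $(N_k)_{k\ge1}$ and then to combine a moment estimate (for the upper bound on the masses $\pi_*\widetilde\mu_\lambda([x_{|N_k}])$, which yields part~(1)) with a fibrewise analysis (for the lower bounds on both $\pi_*\widetilde\mu_\lambda([x_{|N_k}])$ and $\pi_*\mu([x_{|N_k}])$, which yield parts~(1) and~(2)). Throughout I read $\dim_{\rm loc}(\widetilde\mu_\lambda,x)$ as $\dim_{\rm loc}(\pi_*\widetilde\mu_\lambda,x)$. Since cylinders are nested and $n_k=o(N_{k-1}+\cdots+n_{k-1})$ forces $N_k/N_{k-1}\to1$, for $N_{k-1}\le n\le N_k$ one has
\[
\frac{-\log\pi_*\widetilde\mu_\lambda([x_{|N_{k-1}}])}{N_k\log m}\ \le\ \frac{-\log\pi_*\widetilde\mu_\lambda([x_{|n}])}{n\log m}\ \le\ \frac{-\log\pi_*\widetilde\mu_\lambda([x_{|N_k}])}{N_{k-1}\log m},
\]
and similarly for $\pi_*\mu$; hence it suffices to prove, for $\pi_*\widetilde\mu_\lambda$-almost every $x$, that $\lim_k\frac{-\log\pi_*\widetilde\mu_\lambda([x_{|N_k}])}{N_k\log m}=\tau^*(\alpha)$ and $\limsup_k\frac{-\log\pi_*\mu([x_{|N_k}])}{N_k\log m}\le\alpha$. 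Using $\tau_\nu(q)=T(q)$, the deterministic horizontal factor in each generation of either block type equals $p_{x_\ell}^q m^{T(q)}$, so the prefactors telescope and $\pi_*\widetilde\mu_\lambda([u])=\nu_q([u])\,X_{\lambda,|u|}(u)$ with $X_{\lambda,n}(u)=\sum_{|v|=n}\widetilde Y_\lambda(u,v)\prod_{\ell\le n}V_{u_\ell,v_\ell}(u_{|\ell-1},v_{|\ell-1})^{q_\ell}$, where $q_\ell=q$ on the $\mu_q$-blocks and $q_\ell=1$ on the $\nu_q$-blocks.

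For the upper bound on $\pi_*\widetilde\mu_\lambda([x_{|N_k}])$ I would, for $p\in(1,h]$ with $h$ as in Proposition~\ref{pro-4.13}, estimate $\mathbb{E}\big(\sum_{u\in\Sigma_{N_k}}\pi_*\widetilde\mu_\lambda([u])^p\big)=\sum_u\nu_q([u])^p\mathbb{E}(X_{\lambda,N_k}(u)^p)$ block by block: by Jensen on the vertical sum and the branching property, the $\mu_q$-blocks contribute, per generation, a factor governed by the $L^p$-spectrum of $\pi_*\mu_q$ (via Corollary~\ref{momestimate'} applied to the weights $W_q$, whose derivative at $p=1$ equals $\dim(\pi_*\mu_q)=T^*(T'(q))$ by Proposition~\ref{preservdim}), while the $\nu_q$-blocks contribute $\dim(\nu_q)=\tau_\nu^*(\tau_\nu'(q))$ (via Theorems~\ref{ABS}--\ref{DIM}, the hypothesis $\alpha'=\sum_i p_{q,i}T_i'(1)\ge0$ keeping the $\nu_q$-block projection of full dimension). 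Since the $\mu_q$-time up to $N_k$ is $\sim\lambda N_k$, differentiating at $p=1$ gives $\mathbb{E}\big(\sum_{u\in\Sigma_{N_k}}\pi_*\widetilde\mu_\lambda([u])^p\big)\le f_p(N_k)\,m^{-N_k(p-1)(\tau^*(\alpha)+o(p-1))}$ as $p\to1+$, uniformly in $k$; a Borel--Cantelli argument along $(N_k)$ (summability from $N_k\ge k$) then yields $\pi_*\widetilde\mu_\lambda([x_{|N_k}])\le m^{-N_k(\tau^*(\alpha)-\epsilon)}$ eventually, $\pi_*\widetilde\mu_\lambda$-almost surely, hence $\underline\dim_{\rm loc}(\pi_*\widetilde\mu_\lambda,x)\ge\tau^*(\alpha)$.

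For the matching lower bounds I would invoke Proposition~\ref{pro-4.14}: for $\widetilde\mu_\lambda$-almost every $(x,y)$, along $(N_k)$, $\widetilde\mu_\lambda([x_{|N_k},y_{|N_k}])=m^{-N_k(\tau^*(\alpha)+(1-\lambda)\alpha'+o(1))}$ and $\mu([x_{|N_k},y_{|N_k}])=m^{-N_k(\alpha+(1-\lambda)\alpha'+o(1))}$. The structural point is that on the $\mu_q$-blocks the projection is dimension-conserving (Proposition~\ref{preservdim}: $\dim\pi_*\mu_q=\dim\mu_q$, so the fibrewise conditional measure grows only sub-exponentially there), whereas on the $\nu_q$-blocks the conditional measure of the $\nu_q$-type Mandelbrot measure is exactly dimensional with dimension $\alpha'$ (Theorem~\ref{DIM}(2), and $\alpha'\ge0$). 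Consequently, for $\pi_*\widetilde\mu_\lambda$-almost every $x$, the conditional measure $\widetilde\mu_\lambda^x$ charges at least $m^{N_k((1-\lambda)\alpha'-\epsilon)}$ cylinders $[v]$, $v\in\Sigma_{N_k}$, on each of which $\widetilde\mu_\lambda([x_{|N_k},v])\ge m^{-N_k(\tau^*(\alpha)+(1-\lambda)\alpha'+\epsilon)}$ and $\mu([x_{|N_k},v])\ge m^{-N_k(\alpha+(1-\lambda)\alpha'+\epsilon)}$, the uniformity over such $v$ coming from large-deviation/Borel--Cantelli estimates of the type used in Lemma~\ref{lem4.12} (uniform control of $Q(u,v)^{\pm\eta}Y(u,v)^{\pm\eta}$) together with the exact dimensionality of the conditional measures; summing over these $v$ gives $\pi_*\widetilde\mu_\lambda([x_{|N_k}])\ge m^{-N_k(\tau^*(\alpha)+2\epsilon)}$ and $\pi_*\mu([x_{|N_k}])\ge m^{-N_k(\alpha+2\epsilon)}$, which with the reduction completes parts~(1) and~(2).

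I expect the main obstacle to be this last fibrewise step: establishing (a lower bound for) the dimension of the conditional measures of the non-homogeneous measure $\widetilde\mu_\lambda$ and controlling the cylinder masses uniformly enough in $x$ and $v$ to sum over vertical cylinders. This is the analogue of Theorem~\ref{DIM}(2) in the two-phase environment and is precisely where both $\alpha'\ge0$ (Proposition~\ref{pro-4.14}) and the dimension-conservation of $\pi_*\mu_q$ (Proposition~\ref{preservdim}) are indispensable; the hypothesis $n_k=o(N_{k-1}+\cdots+n_{k-1})$ then merely lets these subsequence estimates propagate to all scales.
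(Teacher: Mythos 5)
Your plan is essentially the paper's own proof: part (1) is obtained, as in Lemma~\ref{lem-4.15}, from a moment estimate of $\sum_{|u|=N_k}\pi_*\widetilde\mu_\lambda([u])^h$ as $h\to1+$ (normalized block by block, with the error terms controlled by the sign conditions $\dim(\mu_q)\le\dim(\E(\pi_*\mu_q))$ and $\alpha'\ge0$ and the uniform $h$-moment bound on $\widetilde Y_\lambda$ from Proposition~\ref{pro-4.13}) followed by Borel--Cantelli along $(N_k)$ and the reduction via $N_k/N_{k-1}\to1$; part (2) is the paper's fibre-counting argument, transferring the exact dimension $\alpha'$ of the conditional measures of the $\nu_q$-type measure $\widetilde\mu_0$ (Theorem~\ref{DIM}(2)) and the a.e.\ limits of Proposition~\ref{pro-4.14} into $\approx m^{(1-\lambda)\alpha' N_k}$ vertical cylinders whose joint masses are bounded below, then summing to lower-bound $\pi_*\widetilde\mu_\lambda([x_{|N_k}])$ and $\pi_*\mu([x_{|N_k}])$. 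The only deviations are presentational (you invoke the homogeneous Corollary~\ref{momestimate'} where the paper redoes the moment recursion for the inhomogeneous cascade, and you do not single out the trivial case $\alpha'=0$), and they do not change the argument.
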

\begin{cor}\label{cor-4.16}
With probability 1, conditionally on $\mu\neq 0$, $\dim E(\pi_*\mu,\alpha)=\tau^*(\alpha)$.
\end{cor}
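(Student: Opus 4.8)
\textbf{Proof plan for Corollary~\ref{cor-4.16}.} The plan is to obtain the upper bound from the general multifractal inequality and the lower bound from the interpolating cascade $\widetilde\mu_\lambda$ through its projection $\pi_*\widetilde\mu_\lambda$. First I would fix the unique $\lambda\in(0,1)$ with $\alpha=\lambda T'(q)+(1-\lambda)\tau_\nu'(q)$, which lies in $(0,1)$ precisely because $\alpha$ is interior to the interval between $T'(q)$ and $\tau_\nu'(q)$; recall that in this first order transition situation $\tau(q)=\tau_\nu(q)=T(q)$ and $\tau^*(\alpha)=\alpha q-\tau(q)=\lambda T^*(T'(q))+(1-\lambda)\tau_\nu^*(\tau_\nu'(q))$. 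The upper bound is then immediate: conditionally on $\mu\neq0$, $\dim_H E(\pi_*\mu,\alpha)\le\tau_{\pi_*\mu}^*(\alpha)\le\alpha q-\tau_{\pi_*\mu}(q)\le\alpha q-\tau(q)=\tau^*(\alpha)$, using the always valid inequality $\dim_H E(\rho,\alpha)\le\tau_\rho^*(\alpha)$, the definition of the Legendre transform, and the already established inequality $\tau_{\pi_*\mu}\ge\tau$ on $[0,\widetilde q_c)$ (note $q\in(1,q_c)\subset[0,\widetilde q_c)$).

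For the lower bound I would work with $\pi_*\widetilde\mu_\lambda$. By Proposition~\ref{pro-4.15}(1) and the first assertion of Proposition~\ref{pro-4.15}(2), for $\pi_*\widetilde\mu_\lambda$-almost every $x$ one has $\underline\dim_{\rm loc}(\pi_*\widetilde\mu_\lambda,x)=\overline\dim_{\rm loc}(\pi_*\widetilde\mu_\lambda,x)=\tau^*(\alpha)$, so $\pi_*\widetilde\mu_\lambda$ is exactly dimensional with $\dim(\pi_*\widetilde\mu_\lambda)=\tau^*(\alpha)$; in particular $\dim_H G\ge\tau^*(\alpha)$ for every Borel $G$ with $\pi_*\widetilde\mu_\lambda(G)>0$. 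It therefore suffices to show that $\pi_*\widetilde\mu_\lambda$-almost every $x$ lies in $E(\pi_*\mu,\alpha)$. The bound $\overline\dim_{\rm loc}(\pi_*\mu,x)\le\alpha$ for $\pi_*\widetilde\mu_\lambda$-almost every $x$ is exactly the second assertion of Proposition~\ref{pro-4.15}(2).

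For the matching lower bound on $\underline\dim_{\rm loc}(\pi_*\mu,x)$ I would reproduce the argument of Corollary~\ref{4.6}. One first checks $\tau_{\pi_*\mu}'(0+)\ge\alpha$: since $-\tau_{\pi_*\mu}(0)$ is the upper box dimension of $\mathrm{supp}(\pi_*\mu)=\pi(K)$, hence $\tau_{\pi_*\mu}(0)=-\dim_B\pi(K)=\tau(0)$ by Corollary~\ref{DGF}, and $\tau_{\pi_*\mu}\ge\tau$ on $(0,1]$, it is enough to prove $\tau'(0+)\ge\alpha$; if $\tau'(0+)<\alpha$, then $\tau^*(\alpha)=\alpha q-\tau(q)>\tau'(0+)q-\tau(q)\ge\tau^*(\tau'(0+))=-\tau(0)=\dim_B\pi(K)$, whereas $\pi_*\widetilde\mu_\lambda$ is supported on $\pi(K)$ by Proposition~\ref{pro-4.13}, so $\tau^*(\alpha)=\dim(\pi_*\widetilde\mu_\lambda)\le\dim_H\pi(K)=\dim_B\pi(K)$, a contradiction. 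Then for every $\alpha'<\alpha$, since $\alpha'<\tau_{\pi_*\mu}'(0+)$, inequality \eqref{MF} gives $\dim_H\underline E^{\le}(\pi_*\mu,\alpha')\le\tau_{\pi_*\mu}^*(\alpha')\le\alpha'q-\tau_{\pi_*\mu}(q)\le\alpha'q-\tau(q)<\alpha q-\tau(q)=\tau^*(\alpha)=\underline\dim_H(\pi_*\widetilde\mu_\lambda)$, hence $\pi_*\widetilde\mu_\lambda(\underline E^{\le}(\pi_*\mu,\alpha'))=0$; running $\alpha'$ through a sequence increasing to $\alpha$ yields $\underline\dim_{\rm loc}(\pi_*\mu,x)\ge\alpha$ for $\pi_*\widetilde\mu_\lambda$-almost every $x$. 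Combining the two estimates, $\dim_{\rm loc}(\pi_*\mu,x)=\alpha$, i.e.\ $x\in E(\pi_*\mu,\alpha)$, for $\pi_*\widetilde\mu_\lambda$-almost every $x$ (recall $\mathrm{supp}(\pi_*\widetilde\mu_\lambda)=\pi(K)=\mathrm{supp}(\pi_*\mu)$), so $\pi_*\widetilde\mu_\lambda(E(\pi_*\mu,\alpha))=\|\pi_*\widetilde\mu_\lambda\|$, which is positive conditionally on $\mu\neq0$ by Proposition~\ref{pro-4.13}. Therefore $\dim_H E(\pi_*\mu,\alpha)\ge\tau^*(\alpha)$, and with the upper bound this proves the corollary.

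A word on where the difficulty lies: all the substance is contained in Propositions~\ref{pro-4.13}, \ref{pro-4.14} and \ref{pro-4.15} (the construction and fine analysis of the non-homogeneous cascade $\widetilde\mu_\lambda$ along the sparse sequence $(n_k)_{k\ge1}$, in particular the simultaneous control of the masses of $\pi_*\widetilde\mu_\lambda$ and of the fibers of $\pi_*\mu$ on the full sub-blocks where $\widetilde W_\lambda$ equals the $W_q$-weights versus the $(p_{q,i}V_{i,j})$-weights). Within the present proof the only points requiring care are the box-dimension comparison forcing $\tau_{\pi_*\mu}'(0+)\ge\alpha$, and the use of \eqref{MF} to discard the part of $\pi(K)$ where $\pi_*\mu$ has strictly smaller lower local dimension; one must also keep in mind that $\lambda\in(0,1)$ (the interval of exponents is open) is exactly what makes Proposition~\ref{pro-4.15} applicable.
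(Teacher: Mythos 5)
Your proof is correct and follows essentially the paper's route: the paper's own proof of Corollary~\ref{cor-4.16} is precisely ``by Proposition~\ref{pro-4.15}, argue as in Corollary~\ref{4.6}'', and you have faithfully carried out that adaptation (upper bound from $\tau_{\pi_*\mu}\ge\tau$, lower bound by showing $\tau_{\pi_*\mu}'(0+)\ge\alpha$ via the box-dimension comparison with $\dim(\pi_*\widetilde\mu_\lambda)=\tau^*(\alpha)$, then using \eqref{MF} to discard $\bigcup_{\alpha'<\alpha}\underline E^{\le}(\pi_*\mu,\alpha')$ and concluding that $\pi_*\widetilde\mu_\lambda$ gives full mass to $E(\pi_*\mu,\alpha)$). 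No gaps to report.
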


\begin{proof}[Proof of Proposition~\ref{pro-4.13}]

Let $h\in(1,2]$ and write
$$
\widetilde Y_{\lambda,p}(u,v)=\sum_{0\le i,j\le m-1} \widetilde W_{\lambda,i,j}(u,v) \widetilde Y_{\lambda,p-1}(ui,vj).
$$
We can use Kahane's original approach \cite{KP} to the moments of Mandelbrot martingales to write
$$
\widetilde Y_{\lambda,p}(u,v)^h\le\Big ( \sum_{0\le i,j\le m-1} \widetilde W_{\lambda,i,j}(u,v)^{h/2} \widetilde Y_{\lambda,p-1}(ui,vj)^{h/2}\Big )^2
$$
and then get
\begin{eqnarray*}
\mathbb{E} (\widetilde Y_{\lambda,p}(u,v)^h)&\le& \sum_{0\le i,j\le m-1} \mathbb{E}(\widetilde W_{\lambda,i,j}(u,v)^h) \mathbb{E} (\widetilde Y_{\lambda,p-1}(ui,vj)^h)\\
&&+\sum_{(i,j)\neq (i',j')}  \mathbb{E}(\widetilde W_{\lambda,i,j}(u,v)^{h/2}\widetilde W_{\lambda,i',j'}(u,v)^{h/2}).
\end{eqnarray*}
If $h$ is close enough to 1, there exists $C>0$ such that
$$
\sum_{(i,j)\neq (i',j')}  \mathbb{E}(\widetilde W_{\lambda,i,j}(u,v)^{h/2}\widetilde W_{\lambda,i',j'}(u,v)^{h/2})\le C$$ independently on $(u,v)$, by equidistribution of the $W(u,v)$ and the fact that our assumption on the domain of finiteness of $T$ we have $\mathbb{E}(W_{i,j}^{q'} )<\infty$ for all $q'<q_c$ and $0\le i,j\le m-1$.   Also, by construction $\mathbb{E} (\widetilde Y_{\lambda,p-1}(ui,vj)^h)$ does not depend on $(i,j)$. Thus, if $h<q_c$, we have
$$
\mathbb{E} (\widetilde Y_{\lambda,p}(u,v)^h)\le C+ \mathbb{E} (\widetilde Y_{\lambda,p-1}(u0,v0)^h)\sum_{0\le i,j\le m-1} \mathbb{E}(\widetilde W_{\lambda,i,j}(u,v)^h).
$$
By definition of $\widetilde W_{\lambda}(u,v)$, we have
$$\sum_{0\le i,j\le m-1} \mathbb{E}(\widetilde W_{\lambda,i,j}(u,v)^h)\in\Big \{\sum_{i=0}^{m-1}p_{q,i}^hm^{-T_i(h)}, m^{hT(q)-T(hq)}\Big\}.
$$
Since $T^*(T'(q))>0$ by our assumption $q\in (1,q_c)$, for $h$ close enough to 1 we have $hT(q)-T(hq)<0$, hence $m^{ hT(q)-T(hq)}<1$. On the other hand, since $\tau_\nu(q)=T(q)$ we have
$$
\psi(h):=\sum_{i=0}^{m-1}p_{q,i} m^{-T_i(h)}=\sum_{i=0}^{m-1}p_{i}^qm^{T(q)} m^{-T_i(h)},
$$
and $\psi (1)=\psi(q)=1$. Since $\psi$ is convex, it follows that if $h$ is taken in $(1,q)$, we have $\psi(h)\le 1$. Consequently, $\sum_{i=0}^{m-1}p_{q,i} ^hm^{-T_i(h)}\le \max\{p_{q,i}^{h-1}:0\le i\le m-1\}\psi(h)<1$, since all the positive $p_{q,i}$ belong to $(0,1)$. Notice in passing that since the derivative of $\psi$ at 1 is non positive, we have  $\alpha'=\sum_{i=0}^{m-1}p_{q,i} T'(1)\ge 0$.
Finally, if $h$ is close enough to 1,  there exists $c\in(0,1)$ independent of $(u,v)$ such that
$
\mathbb{E} (\widetilde Y_{\lambda,p}(u,v)^h)\le C+ c\, \mathbb{E} (\widetilde Y_{\lambda,p-1}(u0,v0)^h).
$
This  yields $\mathbb{E} (\widetilde Y_{\lambda,p}(u,v)^h)\le C \mathbb{E} (\widetilde Y_{\lambda,0}(u0^p,v0^p)^h)/(1-c)=C/(1-c)$, hence  both $\mathbb{E} (\widetilde Y_{\lambda}(u,v)^h)\le C/(1-c)$ and  $\mathbb{E} (\widetilde Y_{\lambda}(u,v))=1$.
\end{proof}

\begin{proof}[Proof of Proposition~\ref{pro-4.14}]  Define $\widetilde Q_\lambda(\mathrm{d}\omega, \mathrm{d}x,\mathrm{d}y)=\mathbb{P}(\mathrm{d}\omega)\widetilde\mu_{\lambda,\omega}(\mathrm{d}x,\mathrm{d}y)$, $\widetilde Q_1(\mathrm{d}\omega, \mathrm{d}x,\mathrm{d}y)=\mathbb{P}(\mathrm{d}\omega)\widetilde\mu_{1,\omega}(\mathrm{d}x,\mathrm{d}y)$ and $\widetilde Q_0(\mathrm{d}\omega, \mathrm{d}x,\mathrm{d}y)=\mathbb{P}(\mathrm{d}\omega)\widetilde\mu_{0,\omega}(\mathrm{d}x,\mathrm{d}y)$ the Peyri\`ere measures associated with $\widetilde\mu_\lambda$, $\widetilde\mu_1$ and $\widetilde\mu_0$ respectively. Also, set $\widetilde N_k=\sum_{i=1}^k\lfloor \lambda n_i\rfloor$ and $N'_k=N_k-\widetilde N_k$.

It is straightforward to write that under $\widetilde Q_\lambda$, the random vectors $\widetilde W_\lambda(x_{|n-1},y_{|n-1})$, $N_{k-1}+1\le n\le N_{k,\lambda}$, $k\ge 1$, are independent and equidistributed, with  the same law as the vectors $W_q(x_{|n-1},y_{|n-1})$, $n\ge 1$, with respect to $\widetilde Q_1(\mathrm{d}\omega, \mathrm{d}x,\mathrm{d}y)$. Moreover, since
$$
\mu_{1,n}([x_{|n}]\times [y_{|n}])=\prod_{k=1}^n W_{q,x_n,y_n}(x_{|n-1},y_{|n-1})),
$$
the strong law of large numbers yields
$$
\lim_{k\to\infty} \frac{\log (\mu_{1,\widetilde N_k}([x_{|\widetilde N_k}]\times [y_{|\widetilde N_k}]))}{-\widetilde N_k\log(m)}=-\mathbb{E}\sum_{0\le i,j\le m-1} p_i^qm^{T(q)}V_{i,j}^q\log_m(p_i^qm^{T(q)}V_{i,j}^q)=T^*(T'(q)),$$ $\widetilde Q_1$-almost surely. Since $\lim_{k\to\infty}\widetilde N_k/N_k=\lambda$, by definition of $\widetilde\mu_1^T(x_{|N_k},y_{|N_k}))$ we get the first claim.

The same idea applied with $ \mu^T(x_{|N_k},y_{|N_k})$ with respect to $\widetilde Q_\lambda$ and $\mu_{\widetilde N_k}([x_{|\widetilde N_k}]\times [y_{|\widetilde N_k}])$ with respect to $\widetilde Q_1$  yields
$$
\lim_{k\to\infty} \frac{\log (\mu^T(x_{|N_k},y_{|N_k}))}{-\widetilde N_k\log(m)}=-\mathbb{E}\sum_{0\le i,j\le m-1} p_i^qm^{T(q)}V_{i,j}^q\log_m(p_iV_{i,j})=T'(q),$$ $\widetilde Q_\lambda$-almost surely, i.e.
the third claim of the proposition since $\lim_{k\to\infty}\widetilde N_k/N_k=\lambda$.

For the second claim, one needs to consider $ \widetilde \mu_\lambda^{\tau_\nu}(x_{|N_k},y_{|N_k})$ and $\widetilde\mu_{0,N'_{k}}([x_{|N'_{k}}]\times [y_{|N'_{k}}])$ with respect to $\widetilde Q_\lambda$ and $\widetilde Q_0$ respectively; then one applyies the strong law of large numbers to $\log (\widetilde\mu_{0,N'_{k}}([x_{|N'_{k}}]\times [y_{|N'_{k}}]))/N'_k$ under $\widetilde Q_0$, and use the fact the $\lim_{k\to\infty} N'_k/N_k=1-\lambda$. The fourth claim follows similarly by considering $ \mu^{\tau_\nu}(x_{|N_k},y_{|N_k})$  and $\mu_{N'_{k}}([x_{|N'_{k}}]\times [y_{|N'_{k}}])$ with respect to to $\widetilde Q_\lambda$ and $\widetilde Q_0$ respectively.

For the last  two claims, an application of the Markov inequality shows that for any fixed $(u(k),v(k))$  in $\Sigma_{N_k}\times\Sigma_{N_k}$, for $Z\in \{Y,\widetilde Y_\lambda\}$ and $\gamma\in \{-1,1\}$, for any $\eta>0$ and $\epsilon>0$, one has
\begin{multline*}
\widetilde Q_\lambda(\{(x,y): \mathbf{1}_{\{Z(x_{|N_k},y_{|N_k})>0\}} Z(x_{|N_k},y_{|N_k})^{\gamma}>m^{N_k\epsilon})\\
\le m^{-N_k\eta\epsilon} \mathbb E \big ( \mathbf{1}_{\{Z(u(k),v(k))>0\}} \widetilde Y_\lambda (u(k),v(k)) Z^{\gamma \eta}(u(k),v(k))\big ).
\end{multline*}
Since conditionally on non vanishing $Y$ has finite negative moments, by Proposition~\ref{pro-4.13} and the H\"older inequality we can choose $\eta$ so that
$$
\sup\{\mathbb E \big ( \mathbf{1}_{\{Z(u(k),v(k))>0\}} \widetilde Y_\lambda (u(k),v(k)) Z^{\gamma \eta}(u(k),v(k))\big ):k\ge 1, \ Z\in\{Y,\widetilde Y_\lambda\}\}<\infty.
$$
Consequently
$$
\sum_{k\ge 1}\widetilde Q_\lambda(\{(x,y): \mathbf{1}_{\{Z(x_{|N_k},y_{|N_k})>0\}} Z(x_{|N_k},y_{|N_k})^{\gamma}>m^{N_k\epsilon})<\infty,
$$
and the desired claims follow from the Borel-Cantelli lemma.

Finally, the claim about the local dimensions follows from~\eqref{decompmulambda} and  \eqref{decompmu}, and the fact that $\lim_{k\to\infty}N_{k-1}/N_k=1$.
\end{proof}

\begin{proof}[Proof of Proposition~\ref{pro-4.15}(1)] We will use the following lemma.

\begin{lem}\label{lem-4.15}
We have
$$
\mathbb{E}\sum_{|u|=N_k} \pi_*\widetilde\mu_\lambda ([u])^{h}=O(N_k m^{N_k(-\tau^*(\alpha)(h-1)+ o(h-1)+O(k/N_k))})$$
as $h\to 1+$.
\end{lem}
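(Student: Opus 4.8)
The plan is to reduce the estimate to a uniform moment bound for the non-homogeneous Mandelbrot martingale naturally attached to $\pi_*\widetilde\mu_\lambda$. Let $\widetilde\nu_\lambda=\E(\pi_*\widetilde\mu_\lambda)$. By the definition of $\widetilde W_\lambda$, $\widetilde\nu_\lambda$ is the non-homogeneous Bernoulli product on $\Sigma$ whose probability vector at generation $n$ is $(p'_{q,i})_i$, with $p'_{q,i}=p_i^qm^{T(q)-T_i(q)}$, on the ``type-$1$'' generations $N_{k-1}+1\le n\le N_{k,\lambda}$ and $(p_{q,i})_i$ on the remaining ``type-$0$'' generations. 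Exactly as in \eqref{pimu}--\eqref{X(u)} one then has $\pi_*\widetilde\mu_\lambda([u])=\widetilde\nu_\lambda([u])\,X_\lambda(u)$, where $X_\lambda(u)$ is the limit of the non-homogeneous Mandelbrot martingale built from the weight vectors $V'_{q,i}$ of \eqref{V'i} on type-$1$ generations and $V_i$ on type-$0$ generations, so that $\E(X_\lambda(u))=1$; hence $\E\big(\sum_{|u|=N_k}\pi_*\widetilde\mu_\lambda([u])^h\big)=\sum_{|u|=N_k}\widetilde\nu_\lambda([u])^h\,\E(X_\lambda(u)^h)$.

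Next I would estimate $\E(X_\lambda(u)^h)$ for $h\in(1,2]$ close to $1$, uniformly in $u$ and $k$, by Kahane's recursive argument underlying Proposition~\ref{mom+estimate'}: writing the branching relation for $X_\lambda$, using $(\sum_ja_j)^h\le(\sum_ja_j^{h/2})^2$, Jensen's inequality $\E(X_\lambda(\cdot)^{h/2})\le\E(X_\lambda(\cdot))^{h/2}=1$, the uniform finiteness of the mixed second moments of the generation-$n$ ratio weights (finite because $qh<q_c$ for $h$ near $1$, so the relevant moments of $V_{i,j}^{qh}$ and $V_{i,j}^h$ are finite), and Proposition~\ref{pro-4.13} at the bottom of the recursion, one gets
$$
\E(X_\lambda(u)^h)\le M(\lambda,h)\prod_{n=1}^{N_k}a_n(u_n)+C\sum_{\ell=1}^{N_k}\prod_{n=1}^{\ell-1}a_n(u_n),
$$
where $a_n(i)=m^{\,hT_i(q)-T_i(hq)}$ on type-$1$ generations and $a_n(i)=m^{-T_i(h)}$ on type-$0$ generations, and $C<\infty$ is independent of $u$, $k$ and of $h$ in a right neighbourhood of $1$.

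Multiplying by $\widetilde\nu_\lambda([u])^h$ and summing over $u$ factorises each product into one-generation sums. Writing $c_n(i)$ for the generation-$n$ component of $\widetilde\nu_\lambda$: on type-$1$ generations $\sum_ic_n(i)^ha_n(i)=\sum_ip_i^{qh}m^{hT(q)-T_i(hq)}=m^{\,hT(q)-T(hq)}$ by \eqref{psi}, which equals $m^{-(h-1)\dim(\mu_q)+o(h-1)}$ since $\dim(\mu_q)=T^*(T'(q))=qT'(q)-T(q)$; on type-$0$ generations $\sum_ic_n(i)^ha_n(i)=m^{-(h-1)(\dim(\nu_q)+\alpha')+o(h-1)}$; and $\sum_ic_n(i)^h$ equals $m^{-(h-1)\dim(\E(\pi_*\mu_q))+o(h-1)}$, resp.\ $m^{-(h-1)\dim(\nu_q)+o(h-1)}$. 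Since the number of type-$1$ generations among $1,\dots,N_k$ is $\widetilde N_k=\lambda N_k+O(k)$, the leading term is $m^{-N_k(h-1)(\tau^*(\alpha)+(1-\lambda)\alpha')+N_ko(h-1)+O(k)}$, hence, using $\alpha'\ge 0$ (recorded in the proof of Proposition~\ref{pro-4.13}), it is $\le m^{-N_k(h-1)\tau^*(\alpha)+N_ko(h-1)+O(k)}$. For the $\ell$-sum, the $\ell$-th term is a product of $\ell-1$ type-A factors ($\sum_ic_n^ha_n$) and $N_k-\ell+1$ type-B factors ($\sum_ic_n^h$); replacing a type-A by a type-B factor changes the exponent by $-\alpha'\le 0$ on a type-$0$ generation and by $\dim(\mu_q)-\dim(\E(\pi_*\mu_q))\le 0$ on a type-$1$ generation, the last inequality being Lemma~\ref{muq/espmuq} (applicable since $\tau(q)=T(q)$ at the transversal crossing). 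A telescoping estimate then shows the exponent of every term is $\le -N_k(h-1)\tau^*(\alpha)+N_ko(h-1)+O(k)$. Adding the $N_k$ terms and the leading term, and absorbing $O(k)=N_k\cdot O(k/N_k)$ into the exponent, yields the asserted bound $O\big(N_k\,m^{N_k(-\tau^*(\alpha)(h-1)+o(h-1)+O(k/N_k))}\big)$ as $h\to1+$.

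I expect the main obstacle to be the control of the cross term produced by the Kahane recursion: one must check that the ``mixed'' products, in which the type-A and type-B one-generation factors occur in the wrong proportions, still do not beat the target exponent. This is exactly where the monotonicity $\dim(\mu_q)\le\dim(\E(\pi_*\mu_q))$ supplied by Lemma~\ref{muq/espmuq} is essential; everything else is routine Taylor expansion at $h=1$ together with the standard recursive moment estimate.
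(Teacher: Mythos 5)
Your proposal follows essentially the same route as the paper's proof: you factor $\pi_*\widetilde\mu_\lambda([u])$ into a deterministic part times a mean-one random part (you normalize by $\widetilde\nu_\lambda([u])$, where the paper instead multiplies by $m^{\Lambda(k,h)}$ and works with the renormalized vectors $p_{hq,i}$ and $p_i^{hq}m^{T(hq)-hT_i(q)}$ --- an equivalent bookkeeping), you bound the $h$-th moment of the normalized quantity by Kahane's recursion with the uniform bound $M(\lambda,h)<\infty$ of Proposition~\ref{pro-4.13} at the bottom, and you reduce everything to products of one-generation sums whose Taylor expansions at $h=1$ are controlled by exactly the two facts the paper uses: $\alpha'=\sum_{i}p_{q,i}T_i'(1)\ge 0$ (noted in the proof of Proposition~\ref{pro-4.13}) and $\dim(\mu_q)\le \dim(\E(\pi_*\mu_q))$ (Lemma~\ref{muq/espmuq}, applicable since $\tau(q)=T(q)$ here).

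One step is stated with the wrong sign, though the conclusion survives. On a type-$0$ generation the type-A factor is $\sum_i c_n(i)^h a_n(i)=m^{-(h-1)(\dim(\nu_q)+\alpha')+o(h-1)}$ and the type-B factor is $\sum_i c_n(i)^h=m^{-(h-1)\dim(\nu_q)+o(h-1)}$, so replacing A by B \emph{increases} the exponent by $(h-1)\alpha'+o(h-1)$; it does not change it by $-\alpha'\le 0$ as you wrote (your sign is correct only on type-$1$ generations, where the change is $(h-1)(\dim(\mu_q)-\dim(\E(\pi_*\mu_q)))\le 0$). Hence the $\ell$-terms are not all dominated by the leading (all type-A) term. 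They are nevertheless dominated by the target $m^{-N_k(h-1)\tau^*(\alpha)+N_ko(h-1)+O(k)}$, because the leading term carries the extra margin $-N_k(h-1)(1-\lambda)\alpha'$ and the total increase from switching type-$0$ generations is at most $(h-1)\alpha'(N_k-\widetilde N_k)=(h-1)(1-\lambda)\alpha'N_k+O(k)$, which is exactly absorbed by that margin. Concretely, writing $A_\ell$, $B_\ell$ for the numbers of type-$1$ and type-$0$ generations among $1,\dots,\ell-1$ and $A'_\ell$, $B'_\ell$ for those among $\ell,\dots,N_k$, the $\ell$-th term is $m^{-(h-1)[A_\ell\dim(\mu_q)+B_\ell(\dim(\nu_q)+\alpha')+A'_\ell\dim(\E(\pi_*\mu_q))+B'_\ell\dim(\nu_q)]+N_ko(h-1)}$, and since $A'_\ell(\dim(\E(\pi_*\mu_q))-\dim(\mu_q))+B_\ell\alpha'\ge 0$ the bracket is at least $\widetilde N_k\dim(\mu_q)+(N_k-\widetilde N_k)\dim(\nu_q)=N_k\tau^*(\alpha)+O(k)$, which is what is needed. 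With this repair your argument is complete and coincides with the paper's.
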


We deduce from the previous lemma that for all $\epsilon>0$, for $h$ close enough to $1+$ we have $\mathbb{E}\sum_{k\ge 1}\sum _{|u|=N_k} m^{N_k (h-1)(\tau^*(\alpha)-\epsilon)}\pi_*\widetilde\mu_\lambda ([u])^{h}<\infty$.

This implies that with probability 1, conditionally on $\mu\neq 0$, for all $\epsilon>0$, there exists $h>1$ such that $\sum_{k\ge 1}\sum _{|u|=N_k} m^{N_k (h-1)(\tau^*(\alpha)-\epsilon)}\pi_*\widetilde\mu_\lambda ([u])^{h}<\infty$. Due to Lemma~\ref{lemma2.3} and the fact that $\lim_{k\to\infty}N_k/N_{k-1}=1$,  we get  $\underline\dim(\widetilde \mu_\lambda)\ge \tau^*(\alpha)$.

If we were able to prove that the same estimate as in the lemma holds for $h$ near $1-$, we could derive the second part of the proposition quite easily (but maybe such a bound does not hold). We have to use another approach (see below).
\end{proof}

\begin{proof}[Proof of Lemma~\ref{lem-4.15}]
 If $k\ge 1$ and $u\in\Sigma_{N_k}$, by definition of $\widetilde\mu_\lambda$ we have
\begin{multline*}
\pi_*\widetilde\mu_\lambda ([u])=\sum_{|v|=N_k}  \widetilde Y_\lambda (u,v)
\prod_{i=1}^k \Big (\prod_{\ell=N_{i-1}+1}^{N_{i,\lambda}} p_{u_\ell}^qm^{T(q)} V_{ u_\ell,v_\ell}(u_{|\ell-1},v_{|\ell-1})^q \Big )\\
\cdot \Big (\prod_{\ell'=N_{i,\lambda}+1}^{N_i} p_{q,u_\ell'}V_{u_{\ell'},v_{\ell'}}(u_{|\ell'-1},v_{|\ell'-1})\Big ) .
\end{multline*}
Setting, for  $k\ge 1$ and $h>1$ such that $hq<q_c$ (recall that $q$ is fixed)
 \begin{equation}\label{Lambda}
 \Lambda(k,h)=\big (\sum_{i=1}^{k} \lfloor \lambda n_i\rfloor\big ) (T(hq)-hT(q))+ \big (N_k- \sum_{i=1}^{k} \lfloor \lambda n_i\rfloor\big ) (\tau_\nu(hq)-h\tau_\nu(q)),
\end{equation}
 we can write
 \begin{align*}
m^{\Lambda(k,h)}\pi_*\widetilde\mu_\lambda ([u])^{h}= Z(u)^h\,  \prod_{i=1}^k \Big (\prod_{\ell=N_{i-1}+1}^{N_{i,\lambda}}   p_{u_\ell}^{hq}m^{T(hq)}m^{-hT_{u_\ell}(q)}\Big ) \Big (\prod_{\ell'=N_{i,\lambda}+1}^{N_i} p_{hq,u_{\ell'}}\Big ),
\end{align*}
where
 \begin{multline*}
 Z(u)=\sum_{|v|=N_k}  \widetilde Y_\lambda (u,v)
\prod_{i=1}^k \Big (\prod_{\ell=N_{i-1}+1}^{N_{i,\lambda}} m^{T_{u_{\ell}}(q)}V_{ u_\ell,v_\ell}(u_{|\ell-1},v_{|\ell-1})^q\Big )\\
\cdot \Big (\prod_{\ell'=N_{i,\lambda}+1}^{N_i} V_{u_{\ell'},v_{\ell'}}(u_{|\ell'-1},v_{|\ell'-1})\Big ).
\end{multline*}

 Fix $h\in(1,2]$ as in Proposition~\ref{pro-4.13} such that $M(\lambda,h)<\infty$ and set (remind that $q$ is fixed)
$$
C_1(h)=\max_{0\le i\le m-1}\sum_{0\le j\neq j'\le m-1} \mathbb E(m^{T_i(q) h/2}V_{i,j}^{hq/2}m^{T_i(q) h/2}V_{i,j'}^{hq/2})
$$
and
$$
C_2(h)= \max_{0\le i\le m-1}\sum_{0\le j\neq j'\le m-1} \mathbb E(V_{i,j}^{h/2}V_{i,j'}^{h/2}).
$$
Taking $h$ closer to 1 if necessary we have $C(h)=\max (C_1(h),C_2(h))<\infty$.
We notice that $\mathbb E(Z(u))=1$, and we can use the same approach as in Section~\ref{pmeXu} to estimate the positive moments of $X(u)$ to get

$$
\mathbb{E}(Z(u)^h)\le M(\lambda,h) C(h)\sum_{\ell=1}^{N_k} m^{-(\theta^{(1)}_{u_1}+\cdots +\theta^{(\ell)}_{u_\ell})},
$$
where $\theta^{(\ell)}_i=hT_{i}(q)-T_{i}(hq)$ if $N_{j-1}+1\le \ell\le N_{j,\lambda}$ for some $j$ and  $\theta^{(\ell)}_i= T_{i}(h)$ otherwise. It follows that, if we set $\widetilde p^{(\ell)}_{i}=p_{i}^{hq}m^{T(hq)}m^{-hT_{i}(q)}$  whenever$N_{j-1}+1\le \ell\le N_{j,\lambda}$ for some $j$ and $\widetilde p^{(\ell)}_{i}=p_{hq,i}$ otherwise,  then
\begin{align*}
&m^{\Lambda(k,h)}\mathbb{E}\sum_{|u|=N_k} \pi_*\widetilde\mu_\lambda ([u])^{h}\\
&\le M(\lambda,h) C(h)\sum_{\ell=1}^{N_k} \sum_{|u|=N_k}m^{-(\theta^{(1)}_{u_1}+\cdots +\theta^{(\ell)}_{u_\ell})}\prod_{j=1}^{N_k} \widetilde p^{(j)}_{u_j}\\
&=M(\lambda,h) C(h)\sum_{\ell=1}^{N_k} \Big( \prod_{j=1}^\ell \sum_{i=0}^{m-1}\widetilde p^{(j)}_im^{-\theta^{(j)}_i}\Big )\Big (\prod_{j'=\ell+1}^{N_k} \sum_{i'=0}^{m-1}\widetilde p^{(j')}_{i'}\Big ).
\end{align*}

We have for each $0\le j\le m-1$, either $\sum_{i=0}^{m-1}\widetilde p^{(j)}_i=\sum_{i=0}^{m-1}p_{hq,i}=1$ or $\sum_{i=0}^{m-1}\widetilde p^{(j)}_i=\sum_{i=0}^{m-1} p_{i}^{hq}m^{T(hq)}m^{-hT_{i}(q)}$. On the other hand,   the computations achieved in the proof of Lemma~\ref{muq/espmuq}
show that the derivative of $h\mapsto \sum_{i=0}^{m-1} p_{i}^{hq}m^{T(hq)}m^{-hT_{i}(q)}$ equals $\log(m)(\dim(\mu_q)-\dim(\mathbb{E}(\pi_*\mu_q)))\le 0$. So $\sum_{i=0}^{m-1}\widetilde p^{(j)}_i\le 1+o(h-1)$.

On the other hand, we have $\sum_{i=0}^{m-1}\widetilde p^{(j)}_im^{-\theta^{(j)}_i}=\sum_{i=0}^{m-1} p_{i}^{hq}m^{T(hq)}m^{-T_{i}(hq)}=1$ or $\sum_{i=0}^{m-1}\widetilde p^{(j)}_im^{-\theta^{(j)}_i}=\sum_{i=0}^{m-1}p_{hq,i}m^{-T_i(h)}$, and  the derivative at 1 of $h\mapsto \sum_{i=0}^{m-1}p_{hq,i}m^{-T_i(h)}$ equals $-\log(m)\sum_{i=0}^{m-1} p_{q,i}T'_i(1)$ which is non positive by a remark made in the proof of Proposition~\ref{pro-4.13}. So $\sum_{i=0}^{m-1}\widetilde p^{(j)}_im^{-\theta^{(j)}_i}\le 1+o(h-1)$.

Finally,
$$
m^{\Lambda(k,h)}\mathbb{E}\sum_{|u|=N_k} \pi_*\widetilde\mu_\lambda ([u])^{h}=O(N_k m^{o(h-1) N_k}).
$$
Since it is easily seen from \eqref{Lambda} that
$$
\Lambda(k,h)=N_k(\lambda\, T^*(T'(q))+(1-\lambda)\tau_\nu^*(\tau_\nu'(q)))(h-1)+ N_k\, o(h-1)+O(k)
$$
and  we know that $\tau^*(\alpha)=\lambda\, T^*(T'(q))+(1-\lambda)\tau_\nu^*(\tau_\nu'(q))$, we get the desired conclusion.
\end{proof}

\begin{proof}[Proof of Proposition~\ref{pro-4.15}(2)] If $\alpha'=0$, the result  directly follows from Proposition~\ref{pro-4.14} since projecting does not increase the upper local dimensions.

Suppose now that $\alpha'>0$. Recall that $N'_k=\sum_{i=1}^k n_i-\lfloor \lambda n_i\rfloor$. Conditionnaly on $\mu\neq 0$, the behavior of $\widetilde\mu^{\tau_\nu}_0(x_{|N_k},y_{|N_k})$, $\widetilde\mu_\lambda$-almost everywhere,  is the same as that of $\widetilde \mu_{0,N'_k}([x_{|N'_k}]\times [y_{|N'_k}])$, $\widetilde \mu_0$-almost everywhere. Moreover,  we deduce from Theorem~\ref{DIM}(2) and the proof of Proposition~\ref{pro-4.14}  that for $\pi_*\widetilde\mu_0$-almost every $x$, for $\widetilde\mu_0^x$-almost every $y$, we have both
\begin{eqnarray}
\label{lim1}
\lim_{k\to\infty} \frac{\log (\widetilde \mu_{0,N'_k}([x_{|N'_k}]\times [y_{|N'_k}]))}{-N'_k\log(m)}&=&\tau_\nu^*(\tau_\nu'(q))+\alpha'\\
\label{lim2}\text{and}\quad \lim_{k\to\infty} \frac{\log(\widetilde\mu_0^x([y_{|N'_k}]))}{-N'_k\log(m)}&=&\alpha'.
\end{eqnarray}
In particular, if we denote by $E$ a set of full $\widetilde\mu_0$-measure such that \eqref{lim1} holds for all $(x,y)\in E$, due to the exact dimensionality \eqref{lim2} of $\widetilde\mu_0^x$ we can find a subset $E'$ of $E$ of full  $\widetilde\mu_0$-measure such that in addition for $\pi_*\widetilde\mu_0$-almost every $x\in \pi(E')$, we have
$$
\lim_{k\to\infty}\frac{\log \#\{v\in\Sigma_{N'_k}: [x_{|N'_k}]\times [v]\cap E\neq\emptyset\}}{N'_k\log(m)}=\alpha'.
$$

Now we can transfer these properties to $\widetilde\mu_\lambda$. We can find two sets $\widetilde {E}'\subset \widetilde E$ of full $\widetilde\mu_\lambda$-measure such that for all $(x,y) \in \widetilde E $ we have
$$
\lim_{k\to\infty} \frac{\log (\widetilde \mu_{0}^{\tau_\nu}(x_{|N_k},y_{|N_k}))}{-N'_k\log(m)}=\tau_\nu^*(\tau_\nu'(q))+\alpha'
$$
and for all $x\in \pi(\widetilde{E}')$,
$$
\lim_{k\to\infty}\frac{\log \#\{v\in\Sigma_{N_k}: [x_{|N_k}]\times [v]\cap \widetilde E\neq\emptyset\}}{N'_k\log(m)}=\alpha'.
$$
Due to Proposition~\ref{pro-4.14}, we can also assume that for all $(x,y)\in \widetilde E$ we have
$$
\lim_{k\to\infty} \frac{\log(\widetilde \mu^T_1(x_{|N_k},y_{|N_k}))}{-N_k\log(m)}=\lambda T^*(T'(q))\quad\text{and}\quad \lim_{k\to\infty} \frac{\log(\widetilde Y_\lambda(x_{|N_k},y_{|N_k}))}{N_k\log (m)}=0.
$$
Set  $\beta_q=T^*(T'(q))$, $\widetilde \beta_q=\tau_\nu^*(\tau_\nu'(q))$, and for $N\ge 1$ and $\epsilon>0$ set
$$\widetilde E_{N,\epsilon}=\left \{(x,y): \forall\, k\ge N,
\begin{cases}
m^{-N'_k(\widetilde \beta_q+\alpha'-\epsilon)}\ge \widetilde \mu_{0}^{\tau_\nu}(x_{|N_k},y_{|N_k}))\ge m^{-N'_k(\widetilde \beta_q+\alpha'+\epsilon)},\\
m^{-N_k (\lambda \beta_q -\epsilon)}\ge   \widetilde \mu^T_1(x_{|N_k},y_{|N_k}))\ge m^{-N_k (\lambda \beta_q +\epsilon)},\\
 m^{N_k\epsilon} \widetilde Y_\lambda(x_{|N_k},y_{|N_k}))\ge  m^{-N_k\epsilon}
  \end{cases}
\right\}.
$$
The previous properties can be precised as follows: for $\pi_*\widetilde\mu_\lambda$-almost every $x$, for all $\epsilon>0$, there exists $N\ge 1$ such that for $k\ge N$ there are at least $m^{N'_k (\alpha'-\epsilon)}$ words $v\in\Sigma_{N_k}$ such that $[x_{|N'_k}]\times[v]\cap \widetilde E_{N,\epsilon}\neq\emptyset$, so due to \eqref{decompmulambda}
$$
\widetilde \mu_\lambda([x_{|N_k}]\times [v])\ge m^{-N_k (\lambda T^*(T'(q)) +\epsilon)} m^{-N'_k (\tau_\nu^*(\tau_\nu'(q))+\alpha'+\epsilon)} m^{-N_k\epsilon}.
$$
Consequently
$$
\pi_*\widetilde\mu_\lambda([x_{|N_k}])\ge m^{-N_k \lambda T^*(T'(q)) -N'_k \tau_\nu^*(\tau_\nu'(q))} m^{-(2N_k+2N'_k)\epsilon}.
$$
Since $\lim_{k\to\infty} N'_k/N_k=1-\lambda$ and $\lim_{k\to\infty} N_{k-1}/N_k=1$, we can conclude that
$$
\overline\dim_{\rm{loc}}(\pi_*\widetilde\mu_\lambda,x)\le \lambda T^*(T'(q))+(1-\lambda)\tau_\nu^*(\tau_\nu'(q))+4\epsilon=\tau^*(\alpha) +4\epsilon,
$$
for all $\epsilon>0$. This yields $\overline\dim_{\rm{loc}}(\pi_*\widetilde\mu_\lambda,x)\le \tau^*(\alpha)$ for $\widetilde\mu_\lambda$-almost every $x$, and similar arguments using again Theorem~\ref{DIM}(1) and the information provided by Proposition~\ref{pro-4.14} about $\mu$ as well as \eqref{decompmu} yield $\overline\dim_{\rm{loc}}(\pi_*\mu,x)\le \alpha$.
\end{proof}

\begin{proof}[Proof of Corollary~\ref{cor-4.16}] Due to Proposition~\ref{pro-4.15}, we can use an argument similar to that used in the proof of Corollary~\ref{4.6}.
\end{proof}

\subsection{The case $\alpha=\tau'(0+)$} We distinguish the three cases of Proposition~\ref{tau'0}.

Notice that by the results obtained in the previous sections we know that $\tau_{\pi_*\mu}=\tau$ over $[0,\widetilde q_c)$ conditionally on $\mu\neq 0$. In particular, $\tau_{\pi_*\mu}'(0+)=\tau'(0+)$.

{\bf (i) $\tau=T$ near $0+$.} In this case, we have $\tau'(0+)=T'(0)$, and by continuity the property $\dim(\mu_q)\le \dim(\E(\mu_q))$ which holds near $0+$ by Lemma~\ref{muq/espmuq} extends to the Mandelbrot measure $\mu_0$. Also,  the approach developed in Section~\ref{tauegalpsi} still applies to give $\dim_H E(\pi_*\mu,T'(0))\ge \tau^*(T'(0))$.

\medskip

\bf (ii)  $\tau=\tau_\nu$ near $0+$.} We have $\tau'(0+)=\tau_\nu'(0)$. Let $p'=(p'_i)_{0\le i\le m-1}$ be defined as in \eqref{e-p'} and recall that $\nu'$ is the Bernoulli product associated with $p'$. Since we have $\sum_{i=0}^{m-1}p'_iT_i^*(T_i'(1))\ge 0$,  the approach used in  Section~\ref{taudifpsi} when $s(q)=1$ still works and shows that conditionally on $\mu\neq 0$,  $\dim_{\rm loc}(\pi_*\mu,x)=-\sum_{i=0}^{m-1}p_i'\log_m(p_i)=\tau_\nu'(0)$, either at $\nu'$-almost every $x\in \pi(K)$, or at $\pi_*\mu_{W'_1}$-almost every~$x$ if $\sum_{i=0}^{m-1}p'_iT_i^*\circ T_i'$ equals 0 over $[0,1]$ ($\mu_{W'_1}$ is the Mandelbrot measure associated with $p'$ and the vectors $V'_{1,i}$ defined in~\eqref{V'i}). Moreover,  by definition of the vector $p'$ we have  $\dim(\nu')=\dim_H (\pi(K))=-\tau(0)=\tau^*(\tau_\nu'(0))$ in the first case and  $\dim(\nu')=\dim(\pi_*\mu_{W'_1})=\dim_H (\pi(K))=-\tau(0)=\tau^*(\tau_\nu'(0))$ in the second case. This yields $\dim_H E(\pi_*\mu,\tau_\nu'(0))\ge \tau^*(\tau_\nu'(0))$. We notice that in the second case $\mu_{W'_1}$ coincides with the measure $\mu'$ considered in the proof of Corollary~\ref{DGF}.

\medskip

 {{\bf (iii) $\tau>\max(\tau_\nu, T)$ near $0+$.} Using the notations of Proposition~\ref{tau'0}, we see that if $s_0>0$ we are exactly in the same situation as in Section~\ref{taudifpsi}, with in addition the fact that $\sum_{i=0}^{m-1}p'_iT_i^*(T'_i(1))>0$ is excluded if $s_0=1$. This yields $\dim_H E(\pi_*\mu,\tau'(0+))\ge \tau^*(\tau'(0+))$ in this case.}

If $s_0=0$, consider the Mandelbrot measure $\mu_{W'_0}$ associated with $p'$ and the vectors $V'_{0,i}$ defined in~\eqref{V'i}.  Using the theory of Mandelbrot measures  \cite{B1999,Ba00} here again yields, with probability 1, conditionally on $\mu\neq 0$, for  $\mu_{W'_0}$-almost every $(x,y)$,
\begin{eqnarray*}
\lim_{n\to\infty}\frac{\mu([x_{|n},y_{|n}])}{-n\log(m)}&=&-\sum_{0\le i,j\le m-1}\E(W'_{0,i,j}\log_m (W_{i,j}))\\
&=& -\sum_{i=0}^{m-1}p'_i\log_m(p_i) -\sum_{i=0}^{m-1}p'_i\sum_{j=0}^{m-1}\E(V'_{0,i,j}\log_mV_{i,j})\\
&=& -\sum_{i=0}^{m-1}p'_i\log_m(p_i) +\sum_{i=0}^{m-1}p'_iT_i'(0)=\tau'(0+).
\end{eqnarray*}
Also,
\begin{eqnarray*}
\dim (\mu_{W'_0})&=&-\sum_{0\le i,j\le m-1}\E(W'_{0,i,j}\log_m (W'_{0,i,j}))\\
&=&-\sum_{i=0}^{m-1}p'_i\log_m(p'_i) -\sum_{i=0}^{m-1}p'_i\sum_{j=0}^{m-1}\E(V'_{0,i,j}\log_mV'_{0,i,j})\\
&=&-\sum_{i=0}^{m-1}p'_i\log_m(p'_i) -\sum_{i=0}^{m-1}p'_iT_i(0)\\
&=&-\sum_{i=0}^{m-1}p'_i\log_m(p'_i)=\dim(\nu')=\dim (\mathbb{E}(\pi_*\mu_{W'_0}))
\end{eqnarray*}
(notice that this time $\mu_{W'_0}$ is here again  the Mandelbrot measure $\mu'$ considered in the proof of Corollary~\ref{DGF}). Consequently, for $\pi_*\mu_{W'_0}$-almost every $x$, we have $\overline\dim_{\rm loc}(\pi_*\mu,x)\le \tau'(0+)$, and $\dim(\pi_*\mu_{W'_0})=\dim (\nu')=\tau(0)=\tau^*(\tau'(0+))$. Then, an argument similar to that used in the proof of Corollary~\ref{4.6} again yields the desired conclusion.

\section{Moment estimates}\label{MEST}

We start by establishing two basic lemmas on concave functions in Section~\ref{sec-lem}.  Then Sections~\ref{pmeXu} and~\ref{nmeXu} respectively provide positive moments and negative moments estimates for $X(x_{|n})$ with respect to $\mathbb{P}\otimes \eta$, where $\eta$ is a Bernoulli product.

\subsection{Lemmas}\label{sec-lem}
We begin with an elementary observation.
\begin{lem}
\label{lem-1.1}
Let $q>1$ and $f:[1,q]\to \R$ be a continuous concave function with $f(1)=0$.  Let $k\in \N$.
Suppose that $q_1,\ldots, q_k\geq 1$ with $\sum_{i=1}^k q_k\leq q$. Then
\begin{itemize}
\item[(i)] $\sum_{i=1}^k f(q_i)\geq  f(q)$ provided that  $\sum_{i=1}^k f(q_i)\leq 0$;
\item[(ii)]
$\sum_{i=1}^k f(q_i)\geq \min \{0, f(q)\}$.
\end{itemize}
\end{lem}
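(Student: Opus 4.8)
The statement is purely about concave functions, so the plan is to reduce everything to the one‑variable inequality for a concave $f$ with $f(1)=0$, namely that for $t\ge 1$ one has $f(t)\le (t-1)f'(1+)$ and, more usefully here, that $t\mapsto f(t)/(t-1)$ is non‑increasing on $(1,\infty)$. I would first record this monotonicity: for $1<a\le b$, concavity of $f$ together with $f(1)=0$ gives $\frac{f(a)}{a-1}\ge \frac{f(b)}{b-1}$ (write $a-1 = \frac{b-a}{b-1}\cdot 0 + \frac{a-1}{b-1}(b-1)$ at the level of the arguments $1,a,b$ and apply the definition of concavity). Equivalently, $f(a)\ge \frac{a-1}{b-1}\,f(b)$ whenever $1\le a\le b\le q$.

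\textbf{Proof of (i).} Assume $\sum_{i=1}^k f(q_i)\le 0$. The trivial case is when all $q_i=1$, giving sum $0\ge f(q)$ only if $f(q)\le 0$; but if some $q_i>1$ we argue as follows. Set $Q=\sum_{i=1}^k q_i\le q$; we may assume $Q>1$ since otherwise $k=$ number of indices with $q_i>1$ is zero and the sum is $0$. For each $i$ with $q_i>1$, apply the monotonicity with $a=q_i\le b=Q$: since $q_i\le Q$,
\[
f(q_i)\ge \frac{q_i-1}{Q-1}\,f(Q).
\]
For indices with $q_i=1$ this reads $0\ge 0\cdot f(Q)$ trivially (and $f(Q)$ could be of either sign, but adding these zero terms is harmless). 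Summing over $i$,
\[
\sum_{i=1}^k f(q_i)\ \ge\ \frac{\sum_{i=1}^k (q_i-1)}{Q-1}\,f(Q)\ =\ \frac{Q-k}{Q-1}\,f(Q).
\]
Here I must be slightly careful about the sign of $f(Q)$: the displayed lower bound is only in the right direction when $f(Q)\ge 0$, because $\frac{Q-k}{Q-1}\le 1$. So split into two cases. If $f(Q)\ge 0$, then since each $f(q_i)\ge 0$ (by $f(1)=0$ and $f(q_i)\ge\frac{q_i-1}{Q-1}f(Q)\ge0$), the hypothesis $\sum f(q_i)\le 0$ forces $\sum f(q_i)=0$, and also $f(Q)=0$; then by monotonicity $f(q)/(q-1)\le f(Q)/(Q-1)=0$, so $f(q)\le 0\le \sum f(q_i)$ and we are done. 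If instead $f(Q)<0$, then from $\sum f(q_i)\ge \frac{Q-k}{Q-1}f(Q)$ and $\frac{Q-k}{Q-1}\le 1$ we get $\sum f(q_i)\ge \frac{Q-k}{Q-1}f(Q)\ge f(Q)$ — wait, this needs $f(Q)<0$ and the coefficient $\le 1$, which indeed gives $\frac{Q-k}{Q-1}f(Q)\ge f(Q)$. Finally, monotonicity with $a=Q\le b=q$ gives $f(Q)\ge\frac{Q-1}{q-1}f(q)$; and since $f(q)$ here is also $\le 0$ (because $f(Q)<0$ and $f(Q)\ge\frac{Q-1}{q-1}f(q)$ would be violated if $f(q)>0$... actually $f(Q)\ge \frac{Q-1}{q-1}f(q)$ with $f(Q)<0$ forces $f(q)<0$), and $\frac{Q-1}{q-1}\le1$ gives $f(Q)\ge\frac{Q-1}{q-1}f(q)\ge f(q)$. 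Chaining, $\sum f(q_i)\ge f(Q)\ge f(q)$, which is (i).

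\textbf{Proof of (ii).} This follows from (i) by a dichotomy. If $\sum_{i=1}^k f(q_i)\le 0$, part (i) gives $\sum f(q_i)\ge f(q)\ge \min\{0,f(q)\}$. If $\sum_{i=1}^k f(q_i)>0$, then trivially $\sum f(q_i)>0\ge \min\{0,f(q)\}$. In both cases $\sum_{i=1}^k f(q_i)\ge\min\{0,f(q)\}$, as claimed.

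\textbf{Main obstacle.} The only subtle point is the bookkeeping of signs in (i): the naive estimate $\sum f(q_i)\ge\frac{Q-k}{Q-1}f(Q)$ points the correct way only when $f(Q)\ge0$, so the argument must be organized so that the coefficient $\frac{Q-k}{Q-1}\in[0,1]$ is used to \emph{enlarge} a negative quantity and the nonnegativity of the individual $f(q_i)$ is used in the complementary case. Once the monotonicity of $t\mapsto f(t)/(t-1)$ is in hand, everything else is routine; I would present it cleanly by first proving that lemma about the difference quotient and then doing the two‑case split exactly as above.
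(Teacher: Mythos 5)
Your proof is correct and rests on the same key fact as the paper's: for concave $f$ with $f(1)=0$, the chord slope $t\mapsto f(t)/(t-1)$ is non-increasing. The difference is only in the bookkeeping. The paper sets $t_i=f(q_i)/(q_i-1)$ and $t=f(q)/(q-1)$, notes $t\le t_i$ for all $i$, observes that the hypothesis $\sum_i f(q_i)\le 0$ forces some $t_i\le 0$ and hence $t\le 0$, and then concludes in a single chain $\sum_i f(q_i)=\sum_i t_i(q_i-1)\ge t\sum_i(q_i-1)\ge t(q-k)\ge t(q-1)=f(q)$; the sign discussion you carry out by hand (comparing each $q_i$ to the intermediate point $Q=\sum_i q_i$, then $Q$ to $q$, with a split on the sign of $f(Q)$) is absorbed there into the single observation $t\le 0$, so the paper's version is shorter but not different in substance, and nothing in your two-step variant would fail. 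One point worth making explicit: the degenerate case you brush past, namely all $q_i=1$, is genuinely outside the reach of either argument, and in fact part (i) as literally stated fails there (take $f(x)=x-1$, $k=1$, $q_1=1<q$: the hypothesis $0\le 0$ holds but $0\ge f(q)=q-1$ does not); the paper's proof tacitly assumes some $q_i>1$ so that a well-defined slope $t_i\le 0$ exists. This is harmless where the lemma is used (in Lemma 1.2 the sums in question are strictly negative, and in the moment estimates the exponents $q_i$ all exceed $1$), so your implicit restriction to the case where some $q_i>1$ is exactly the restriction the paper itself makes.
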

\begin{proof}
Clearly (ii) follows from (i). To prove (i), assume that  $\sum_{i=1}^k f(q_i)\leq 0$. We show below that  $\sum_{i=1}^k f(q_i)\geq  f(q)$.

Set $\displaystyle t_i= \frac{f(q_i)-f(1)}{q_i-1}=\frac{f(q_i)}{q_i-1}$ for $1\leq i\leq k$, and $\displaystyle t=\frac{f(q)}{q-1}$. By concavity we have
$t\leq t_i$ for every  $1\leq i\leq k$. Since  $\sum_{i=1}^k f(q_i)\leq 0$, we have $t_i=f(q_i)/(q_i-1)\leq 0$ for some~$i$, and thus $t\leq t_i\leq 0$.  Therefore
\begin{equation*}
\begin{split}
\sum_{i=1}^k f(q_i)=\sum_{i=1}^k t_i(q_i-1)&\geq \sum_{i=1}^k t (q_i-1)\\
&\geq t (q-k)\\
&\geq t(q-1)=f(q).
\end{split}
\end{equation*}
\end{proof}

\begin{rem}{\rm
Under the condition of  Lemma \ref{lem-1.1}, it is possible that $0<\sum_{i=1}^k f(q_i)< f(q)$; for instance letting $f(x)=x-1$, $q_1=2$ and $q=3$, we have $0<f(q_1)<f(q)$.
}\end{rem}
\begin{lem} \label{lem-1.2}
Let $q>1$ and $f_1,\ldots, f_m$ be continuous concave functions defined on $[1,q]$  satisfying $f_j(1)=0$ for $1\leq j\leq m$. Let $(p_1', \dots, p_m')$ be a probability vector.
Suppose that $q_1,\ldots, q_k\geq 1$ with $\sum_{i=1}^k q_k\leq q$. Then
\begin{equation}
\label{e-0}
\sum_{j=1}^m p_j' m^{- \sum_{i=1}^k f_j(q_i)}\leq \max\left\{1,  \sum_{j=1}^m p_j' m^{- f_j(q)}\right\}.
\end{equation}
Moreover, if $ \sum_{j=1}^m p_j' m^{- f_j(q)}<1$, then $\sum_{j=1}^m p_j' m^{- \sum_{i=1}^k f_j(q_i)}<1$.
\end{lem}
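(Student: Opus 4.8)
The statement is a concavity/convexity fact about the function $h\mapsto \sum_{j=1}^m p_j' m^{-\sum_{i=1}^k f_j(q_i)}$, and it follows essentially from Lemma~\ref{lem-1.1}(ii) applied coordinatewise. First I would observe that for each fixed index $j\in\{1,\ldots,m\}$, the function $f_j$ is continuous and concave on $[1,q]$ with $f_j(1)=0$, so that Lemma~\ref{lem-1.1}(ii) applies with $f=f_j$ and with the same exponents $q_1,\ldots,q_k\ge 1$ satisfying $\sum_{i=1}^k q_i\le q$. This gives, for every $j$,
$$
\sum_{i=1}^k f_j(q_i)\ge \min\{0,\, f_j(q)\}.
$$

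Next I would exponentiate. Since $t\mapsto m^{-t}$ is decreasing, the inequality above yields
$$
m^{-\sum_{i=1}^k f_j(q_i)}\le m^{-\min\{0, f_j(q)\}}=\max\{1,\, m^{-f_j(q)}\}
$$
for every $j$. Multiplying by $p_j'\ge 0$ and summing over $j$ gives
$$
\sum_{j=1}^m p_j' m^{-\sum_{i=1}^k f_j(q_i)}\le \sum_{j=1}^m p_j'\max\{1,\, m^{-f_j(q)}\}.
$$
The remaining point is to bound the right-hand side by $\max\{1,\sum_{j=1}^m p_j' m^{-f_j(q)}\}$. This is a trivial but slightly fiddly step: split the index set into $A=\{j: f_j(q)\ge 0\}$ (where $\max\{1,m^{-f_j(q)}\}=1$) and $B=\{j: f_j(q)<0\}$ (where $\max\{1,m^{-f_j(q)}\}=m^{-f_j(q)}$), so the sum equals $\sum_{j\in A}p_j' + \sum_{j\in B}p_j' m^{-f_j(q)}$. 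If $B=\emptyset$ this is $\le 1$; if $B\ne\emptyset$, then since $\sum_{j\in A}p_j' \le \sum_{j\in A}p_j' m^{-f_j(q)}$ is false in general, I instead note $\sum_{j\in A}p_j'\le 1\le \max\{1,\sum_j p_j' m^{-f_j(q)}\}$ is not quite it either — the clean way is: $\sum_{j\in A}p_j' + \sum_{j\in B}p_j' m^{-f_j(q)} \le \max\{\sum_{j\in A}p_j' + \sum_{j\in B}p_j',\ \sum_{j\in A}p_j' m^{-f_j(q)} + \sum_{j\in B}p_j' m^{-f_j(q)}\}$, using that each term $p_j' m^{-f_j(q)}$ with $j\in A$ is $\le p_j'$ and each $p_j'$ with $j\in B$ is $\le p_j' m^{-f_j(q)}$, so replacing the mixed sum by either pure sum can only increase it; the first pure sum is $\sum_j p_j' =1$ and the second is $\sum_j p_j' m^{-f_j(q)}$, giving exactly \eqref{e-0}.

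For the last assertion, suppose $\sum_{j=1}^m p_j' m^{-f_j(q)}<1$. Then in particular $B\ne\emptyset$ forces nothing, but the same chain of inequalities applies: from $m^{-\sum_i f_j(q_i)}\le \max\{1, m^{-f_j(q)}\}$ one still gets $\sum_j p_j' m^{-\sum_i f_j(q_i)}\le \sum_j p_j'\max\{1,m^{-f_j(q)}\} = \sum_{j\in A}p_j'+\sum_{j\in B}p_j' m^{-f_j(q)}$. Now I would bound this strictly: we have $\sum_{j\in A}p_j' + \sum_{j\in B}p_j' m^{-f_j(q)} = \sum_j p_j' m^{-f_j(q)} + \sum_{j\in A}p_j'(1-m^{-f_j(q)})$, and since $\sum_j p_j' m^{-f_j(q)}<1$ while each summand $p_j'(1-m^{-f_j(q)})\ge 0$ with total at most $\sum_{j\in A}p_j'$, one sees $\sum_{j\in A}p_j' + \sum_{j\in B}p_j' m^{-f_j(q)} = 1 - \sum_{j\in B}p_j'(1-m^{-f_j(q)}) = 1 - \sum_{j\in B}p_j' + \sum_{j\in B}p_j' m^{-f_j(q)} \le 1$, with equality only if $B=\emptyset$, in which case $\sum_j p_j' m^{-\sum_i f_j(q_i)}\le \sum_{j\in A}p_j' = 1$, and moreover then each $f_j(q)\ge 0$ while $\sum_j p_j' m^{-f_j(q)}<1$ forces $f_j(q)>0$ for some $j$, hence $\sum_i f_j(q_i)\ge \min\{0,f_j(q)\}=0$ but in fact by Lemma~\ref{lem-1.1}(i), since $\sum_i f_j(q_i)$ could be negative... — the cleanest route when $B=\emptyset$ is to observe directly that $\sum_j p_j' m^{-\sum_i f_j(q_i)} \le \sum_j p_j' m^{-f_j(q)}<1$ whenever all $f_j(q)\ge 0$ and $\sum_i f_j(q_i)\le f_j(q)$ — but that last inequality need not hold, so instead I simply invoke the already-proved weak inequality \eqref{e-0} together with the case analysis to conclude strictness. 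I expect the main (minor) obstacle to be organizing this final case analysis so that strictness is genuinely derived rather than merely asserted; the concavity input itself is entirely handled by Lemma~\ref{lem-1.1}.
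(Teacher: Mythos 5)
Your reduction to Lemma~\ref{lem-1.1}(ii) applied coordinatewise only yields $\sum_j p_j' m^{-\sum_i f_j(q_i)}\le \sum_j p_j'\max\{1,m^{-f_j(q)}\}$, and the ``fiddly'' step you then need --- that this right-hand side is at most $\max\{1,\sum_j p_j' m^{-f_j(q)}\}$ --- is false. A weighted sum of coordinatewise maxima dominates the maximum of the weighted sums; it is not dominated by it. Concretely, take $m=2$, $p_1'=p_2'=1/2$, $m^{-f_1(q)}=1/2$ and $m^{-f_2(q)}=2$ (linear $f_j$ with $f_j(1)=0$ realize this): then $\sum_j p_j'\max\{1,m^{-f_j(q)}\}=3/2$ while $\max\{1,\sum_j p_j'm^{-f_j(q)}\}=5/4$. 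Your justification (``replacing the mixed sum by either pure sum can only increase it'') is backwards: for $j\in A$ the mixed sum already uses the larger term $p_j'$, and for $j\in B$ it uses the larger term $p_j'm^{-f_j(q)}$, so passing to either pure sum can only decrease it, i.e. the mixed sum is $\ge$ both pure sums. Thus the coordinatewise bound is too lossy to give \eqref{e-0}. The paper's proof avoids this: it uses Lemma~\ref{lem-1.1}(i) only on the set $\Lambda=\{j:\ \sum_i f_j(q_i)<0\}$ to obtain the additive bound $\sum_j p_j' m^{-\sum_i f_j(q_i)}\le 1+\sum_j p_j' m^{-f_j(q)}$, and then removes the additive constant by tensorization: applying this additive bound to the product weights $p_{j_1\cdots j_n}'$ and the concave functions $f_{j_1}+\cdots+f_{j_n}$ gives $\bigl(\sum_j p_j' m^{-\sum_i f_j(q_i)}\bigr)^n\le 1+\bigl(\sum_j p_j' m^{-f_j(q)}\bigr)^n$ for every $n$, and taking $n$-th roots and letting $n\to\infty$ yields \eqref{e-0}. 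Some multiplicative self-improvement of this kind is genuinely needed; a single application of Lemma~\ref{lem-1.1} cannot produce the maximum of the sums on the right-hand side.

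The strict-inequality part also does not go through. Your computation is miscarried: $\sum_{j\in A}p_j'+\sum_{j\in B}p_j'm^{-f_j(q)}=1+\sum_{j\in B}p_j'\bigl(m^{-f_j(q)}-1\bigr)\ge 1$, with equality only when the $B$-part is trivial, so it is not $\le 1$; and strictness is not a formal consequence of \eqref{e-0}, so ``invoking the weak inequality together with the case analysis'' does not conclude. In the paper, strictness comes from a separate convexity--analyticity argument: if $\sum_j p_j'm^{-f_j(q)}<1$ but $\sum_j p_j'm^{-\sum_i f_j(q_i)}=1$, consider the convex analytic function $g(x)=\sum_j p_j'm^{-x\sum_i f_j(q_i)}$; applying \eqref{e-0} to the functions $xf_j$ for $x$ near $1$ gives $g(x)\le 1$ there, so $g$ has a local maximum at $x=1$, hence is constant, forcing $\sum_i f_j(q_i)=0$ for every $j$; Lemma~\ref{lem-1.1}(i) then yields $f_j(q)\le 0$ for all $j$, contradicting $\sum_j p_j'm^{-f_j(q)}<1$. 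You need an argument of this type (or another genuine input) to obtain the strict inequality.
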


\begin{rem}
{\rm Under the condition of  Lemma \ref{lem-1.2}, it is possible that
$$1>\sum_{j=1}^m p_j' m^{- \sum_{i=1}^k f_j(q_i)}>\sum_{j=1}^m p_j' m^{- f_j(q)}.$$
 For instance letting $f(x)=x-1$, $q_1=2$ and $q=3$,  we have
$1>m^{-f(q_1)}>m^{-f(q)}$. 
}\end{rem}

\begin{proof}[Proof of Lemma \ref{lem-1.2}]
 We first show that
\begin{equation}
\label{e-1}
\sum_{j=1}^m p_j' m^{- \sum_{i=1}^k f_j(q_i)}\leq 1+ \sum_{j=1}^m p_j' m^{- f_j(q)}.
\end{equation}
Set $\Lambda=\{1\leq j\leq m:\; \sum_{i=1}^k f_j(q_i)<0\}$. By Lemma \ref{lem-1.1}, we have
$\sum_{i=1}^k f_j(q_i)\geq f_j(q)$ for each $j\in \Lambda$. Hence we have
\begin{equation*}
\begin{split}
\sum_{j=1}^m p_j' m^{- \sum_{i=1}^k f_j(q_i)}& \leq 1+\sum_{j\in \Lambda}p_j' m^{- \sum_{i=1}^k f_j(q_i)}\\
&\leq 1+ \sum_{j\in \Lambda}p_j' m^{- f_j(q)}\\
&\leq 1+\sum_{j=1}^m p_j' m^{- f_j(q)}.
\end{split}
\end{equation*}
This proves \eqref{e-1}.

Next we show that
\begin{equation}
\label{e-2}
\left( \sum_{j=1}^m p_j' m^{- \sum_{i=1}^k f_j(q_i)}\right)^n \leq 1+ \left(\sum_{j=1}^m p_j' m^{- f_j(q)}\right)^n
\end{equation}
for any $n\in \N$, from which \eqref{e-0} follows.
Indeed setting $p_{j_1\ldots j_n}'=p_{j_1}'\ldots p_{j_n}'$ and $f_{j_1\ldots j_n}=f_{j_1}+\ldots+f_{j_n}$,  then \eqref{e-2} can be re-written as
$$
\sum_{1\leq j_1,\ldots, j_n\leq m} p_{j_1\ldots j_n}' m^{- \sum_{i=1}^k f_{j_1\ldots j_n}(q_i)}\leq 1+ \sum_{1\leq j_1,\ldots, j_n\leq m} p_{j_1\ldots j_n}' m^{-  f_{j_1\ldots j_n}(q)};
$$
but this is just the application of \eqref{e-1} to the probability weight $(p_{j_1\ldots j_n}')$ and the concave functions $f_{j_1\ldots j_n}$. This finishes the proof of \eqref{e-0}.

In the end, assume that $\sum_{j=1}^m p_j' m^{- f_j(q)}<1$. By \eqref{e-0}, $\sum_{j=1}^m p_j' m^{- \sum_{i=1}^k f_j(q_i)}\leq 1$.  We need to show that the inequality is strict.  Suppose on the contrary that $$\sum_{j=1}^m p_j' m^{- \sum_{i=1}^k f_j(q_i)}=1.$$
Define $g(x)=\sum_{j=1}^m p_j' m^{- x\sum_{i=1}^k f_j(q_i)}$ for $x\in \R$. Then $g$ is convex.  Notice that on a small neighborhood $U$ of $1$,
we have $\sum_{j=1}^m p_j' m^{- xf_j(q)}<1$ for $x\in U$.  For any fixed $x\in U$, applying \eqref{e-0} to the functions $xf_i$, we obtain that
$g(x)\leq 1$. Hence $g$ takes a local maximum at $x=1$. However $g$ is convex and analytic on $\R$, it follows that  $g$ is constant  on $\R$ and therefore
 $$
 \sum_{i=1}^k f_j(q_i)=0
 $$
 for any $1\leq j\leq m$. Then by Lemma \ref{lem-1.1}(i), we have $f_j(q)\leq 0$ for all $1\leq j\leq m$, which contradicts the assumption
 that $\sum_{j=1}^m p_j' m^{- f_j(q)}<1$. This finishes the proof of the lemma.
\end{proof}

\subsection{Positive moments estimates for $X_n$}\label{pmeXu}

Let us first recall  some notations.  We are given $W=(W_{i,j})_{0\leq  i,j \leq m-1}$, a non-negative random vector  with $\E(\sum_{i,j}W_{i,j})=1$.
Let $q>1$ and assume that $\E(\sum_{i,j}W_{i,j}^q)<\infty$.  Set
$p_i=\E(\sum_j W_{i,j})$.   Set
$$V_{i,j}=\left\{ \begin{array}{ll}
W_{i,j}/p_i, & \mbox{ if }p_i\neq 0,\\
{1}/{m}, & \mbox{ if }p_i=0.
\end{array}
\right.
$$

 For $t\in [0,q]$, set
$$T(t)=-\log_m \E(\sum_{i,j}W_{i,j}^t),\quad T_i(t)=-\log_m \E(\sum_{j}V_{i,j}^t).$$
Then $T$ and $T_i$ ($0\leq i\leq m-1$) are well defined continuous concave functions on $[0,q]$, with $T(1)=T_i(1)=0$.
Set $\Sigma=\{0,1,\ldots, m-1\}^\N$. Let $\mu$ be the (random) Mandelbrot measure on $\Sigma\times \Sigma$ generated by $W$.  Set $Y=\|\mu\|$ to be the total mass of $\mu$ and assume that $T(q)>0$.  By Kahane-Peyriere~\cite{KP} and Durrett-Liggett~\cite{DL},  this is equivalent to the property that $0<\E(Y^q)<\infty$.  For  each
$(u,v)\in (\Sigma\times \Sigma)_*$, let  $Y(u,v)$ be defined as in \eqref{Yuv}.  We defined in Section~\ref{pfDIM}
\begin{equation}
\label{e-3}
X(u)=\sum_{v\in \Sigma_{|u|}} Y(u,v) \prod_{j=1}^{|u|} V_{u_j, v_j}(u|_{j-1}, v|_{j-1}),\quad u\in \Sigma_*.
\end{equation}
and $X_n(x)=X(x_{|n})$ for all $x\in\Sigma$ and $n\ge 1$.

Given any Bernoulli product $\eta$ on $\Sigma$ generated by a probability vector $(p'_0,\ldots,p'_{m-1})$, we are seeking for estimates of $\E_{\mathbb{P}\otimes\eta}(X_n^q)$, i.e. $\sum_{|u|=n} \eta([u]) \E(X(u)^q)$.

For short we write $V_{u_1,v_1}=V_{u_1, v_1}(\epsilon,\epsilon)$ and
$$X_1(u,j)= \sum_{v\in \Sigma_{|u|}:\; v_1=j} Y(u,v) \prod_{k=2}^{|u|} V_{u_k, v_k}(u|_{k-1}, v|_{k-1}),\quad j=0,\ldots, m-1.$$
Then we have
\begin{equation}
\label{e-4}
X(u)=\sum_{j=0}^{m-1} V_{u_1,j} X_1(u,j).
\end{equation}

We emphasize that $X_1(u,j)$ ($j=0,\ldots, m-1$) are independent copies of $X(\sigma u)$.  Moreover, they are independent of $V_{u_1, j'}$ ($j'=0,\ldots, m-1$).

By \eqref{e-3} and the assumption that  $\E(Y^q)<\infty$, we have $\E(X(u)^q)<\infty$ for each $u\in \Sigma_*$. In particular, $\E(X(u))=1$.

For $n\in \N$, set
$$
\R^n_\leq =\{(x_1,\ldots, x_n)\in \R^n:\; x_1\leq \cdots\leq x_n\}.
$$

\begin{lem}
\label{lem-2.1}
Let $t\in (1,q]$ and $u\in \Sigma_*$.  
Then
\begin{itemize}
\item[(i)] $\E(X(u)^t)\geq m^{-T_{u_1}(t)} \E(X(\sigma u)^t)$.
\item [(ii)] There exists a positive constant $C$ (depending on $q$) such that
\begin{equation}
\label{e-5}
\E(X(u)^t)\leq m^{-T_{u_1}(t)} \E(X(\sigma u)^t)+ C+C\sum_{(q_1,\ldots, q_s)\in {\mathcal I}_t} \prod_{j=1}^s
\E\left(X(\sigma u)^{q_j}\right),
\end{equation}
 where $\lceil t \rceil$ denotes the smallest integer $\geq t$, and

 \begin{equation}
 \label{e-6}
 \begin{split}
 {\mathcal I}_t: =& \left\{  \left(\frac{k_1t}{\lceil t \rceil},\ldots, \frac{k_st}{\lceil t \rceil}\right)\in \R^s_\leq :\; s,  k_i\in \N\cap[2,\infty),\;  \sum_{i=1}^s k_i \leq \lceil t \rceil
 \right\}\\
 & \cup \left\{ \frac{kt}{\lceil t \rceil}\in \R:\; k\in \N,\; 2\leq k \leq \lceil t \rceil-1\right\}.
  \end{split}
 \end{equation}
\end{itemize}
\end{lem}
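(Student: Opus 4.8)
The plan is to iterate the one-step branching relation \eqref{e-4}, $X(u)=\sum_{j=0}^{m-1}V_{u_1,j}X_1(u,j)$, keeping in mind that $X_1(u,0),\ldots,X_1(u,m-1)$ are independent copies of $X(\sigma u)$ and are independent of the vector $(V_{u_1,0},\ldots,V_{u_1,m-1})$, and that all the moments $\E(X(\sigma u)^s)$ with $s\le q$ are finite by \eqref{e-3} and $\E(Y^q)<\infty$. For (i): since $t\ge 1$, the map $x\mapsto x^t$ is superadditive on $\R_+$, so $X(u)^t\ge\sum_{j=0}^{m-1}V_{u_1,j}^t X_1(u,j)^t$; taking expectations and using the independence above together with $\E(X_1(u,j)^t)=\E(X(\sigma u)^t)$ gives $\E(X(u)^t)\ge\E\big(\sum_j V_{u_1,j}^t\big)\E(X(\sigma u)^t)=m^{-T_{u_1}(t)}\E(X(\sigma u)^t)$.

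For (ii) I would use Kahane's device for moments of non-integer order larger than $1$. Put $p=\lceil t\rceil\ge 2$, so that $t/p\le 1$; by subadditivity of $x\mapsto x^{t/p}$ on $\R_+$,
$$
X(u)^t=\big(X(u)^{t/p}\big)^p\le\Big(\sum_{j=0}^{m-1}\big(V_{u_1,j}X_1(u,j)\big)^{t/p}\Big)^p,
$$
and expanding the $p$-th power of the sum by the multinomial theorem yields a sum over multi-indices $\mathbf k=(k_0,\ldots,k_{m-1})$ with $k_0+\cdots+k_{m-1}=p$ of the terms $\binom{p}{k_0,\ldots,k_{m-1}}\prod_{j}\big(V_{u_1,j}X_1(u,j)\big)^{k_jt/p}$. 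The \emph{diagonal} terms, those with $k_{j_0}=p$ and all other $k_j=0$, sum to $\sum_j V_{u_1,j}^tX_1(u,j)^t$, whose expectation is exactly $m^{-T_{u_1}(t)}\E(X(\sigma u)^t)$ as in (i); this is what produces the sharp first term on the right-hand side of \eqref{e-5}. For any non-diagonal $\mathbf k$ I would bound $\prod_j V_{u_1,j}^{k_jt/p}\le\big(\sum_jV_{u_1,j}\big)^t$ (each factor is $\le(\sum_jV_{u_1,j})^{k_jt/p}$ and $\sum_j k_jt/p=t$) and use $\E\big((\sum_jV_{u_1,j})^t\big)\le m^{t-1}\E\big(\sum_jV_{u_1,j}^t\big)=m^{t-1}m^{-T_{u_1}(t)}$, which is bounded by a constant depending only on $q$ since $T_{u_1}(t)\in\{T_0(t),\ldots,T_{m-1}(t)\}$ and all these numbers are finite ($t\le q$, and $T_i(t)=t-1$ when $p_i=0$). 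Then, by independence of the $X_1(u,j)$ and Jensen's inequality $\E(X(\sigma u)^{t/p})\le\E(X(\sigma u))^{t/p}=1$ used to discard every factor with $k_j=1$, the expectation of the term attached to $\mathbf k$ is at most $C\binom{p}{k_0,\ldots,k_{m-1}}\prod_{j:\,k_j\ge2}\E\big(X(\sigma u)^{k_jt/p}\big)$.

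It then remains the combinatorial bookkeeping, which is the step I expect to be the main obstacle: one must check that, for a non-diagonal $\mathbf k$, the sorted tuple $(k_jt/p)_{j:\,k_j\ge2}$ of surviving exponents lands exactly in the set $\mathcal I_t$ of \eqref{e-6}. If at least two $k_j$ are $\ge2$, each such entry is $\ge 2$ and their sum is $\le p=\lceil t\rceil$, so the tuple is in the first set of \eqref{e-6}. If exactly one $k_{j_0}\ge 2$, then (the multi-index being non-diagonal) some further $k_{j'}=1$, hence $k_{j_0}\le p-1$ and $k_{j_0}t/p$ lies in the second set of \eqref{e-6}; the crucial point is precisely that a lone surviving exponent is at most $(\lceil t\rceil-1)t/\lceil t\rceil<t$, so that no extra multiple of $\E(X(\sigma u)^t)$ is created. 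Finally, if every positive $k_j$ equals $1$, the whole term is $\le C\binom{p}{k_0,\ldots,k_{m-1}}$ and is absorbed into the additive constant. Summing over the at most $m^p$ multi-indices and enlarging $C$ to dominate all binomial coefficients and the finitely many numbers $m^{t-1-T_i(t)}$ (so that $C$ is uniform in $u$, depending only on $m$, $\lceil t\rceil$ and the family $\{m^{-T_i(t)}\}_{0\le i\le m-1}$) gives \eqref{e-5}.
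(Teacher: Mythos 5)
Your proof is correct and follows essentially the same route as the paper: superadditivity of $x\mapsto x^t$ for (i), and for (ii) the subadditivity of $x\mapsto x^{t/\lceil t\rceil}$ followed by a multinomial expansion, with the diagonal multi-indices producing the sharp term $m^{-T_{u_1}(t)}\E(X(\sigma u)^t)$, the factors with $k_j=1$ discarded via $\E(X(\sigma u)^{t/\lceil t\rceil})\le 1$, and the surviving exponents of non-diagonal terms identified with the set $\mathcal{I}_t$ exactly as in the paper. The only cosmetic difference is how the $V$-product is controlled off the diagonal (your pointwise bound $\prod_j V_{u_1,j}^{k_jt/\lceil t\rceil}\le\bigl(\sum_j V_{u_1,j}\bigr)^t$ combined with the power-mean inequality, versus the paper's H\"older estimate $\E\bigl(\prod_j V_{u_1,j}^{k_jt/\lceil t\rceil}\bigr)\le\E\bigl(\sum_j V_{u_1,j}^t\bigr)$), which changes nothing of substance.
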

\begin{proof}
Since $t>1$, by \eqref{e-4} and using the super-additivity of $x\mapsto x^t$ on $\R_+$, we have
$$
X(u)^t=\left(\sum_{j=0}^{m-1} V_{u_1,j} X_1(u,j)\right)^t\geq \sum_{j=0}^{m-1} V_{u_1,j}^t X_1(u,j)^t.
$$
Taking  expectations on both sides, we obtain  (i).

To see (ii), by \eqref{e-4} and using the sub-additivity of $x\mapsto x^{t/ \lceil t \rceil}$ on $\R_+$, we have
\begin{equation*}
\begin{split}
X(u)^t&=\left(\left(\sum_{j=0}^{m-1} V_{u_1,j} X_1(u,j)\right)^{t/ \lceil t \rceil}\right)^{ \lceil t \rceil}\\
& \leq \left(\sum_{j=0}^{m-1} V_{u_1,j}^{t/\lceil t \rceil} X_1(u,j)^{t/\lceil t \rceil}\right)^{ \lceil t \rceil}\\
&=\sum_{k_0+\ldots+k_{m-1}=\lceil t \rceil}  \frac{\lceil t \rceil !}{k_0!\cdots k_{m-1}!} \prod_{j=0}^{m-1}  (V_{u_1, j} X_1(u, j))^{k_jt/\lceil t \rceil}.
\end{split}
\end{equation*}
Taking  expectations on both sides, we have
\begin{equation*}
\begin{split}
\E(X(u)^t)\leq \sum_{k_0+\ldots+k_{m-1}=\lceil t \rceil}  \frac{\lceil t \rceil !}{k_0!\cdots k_{m-1}!}  \E\left( \prod_{j=0}^{m-1} V_{u_1, j}^{k_jt/\lceil t \rceil}\right) \prod_{s=0}^{m-1} \E(X(\sigma u)^{k_st/\lceil t \rceil}),
\end{split}
\end{equation*}
from which \eqref{e-5} follows, thanks to the fact that $\E(X(\sigma u)^p)\leq 1$ for $p\in [0,1]$; the involved constant $C$ can be taken as
$ m^{q} \sup_{1\leq q'\leq q}  \E(\sum_{j}V_{u_1,j}^{q'})$.   Here we use the fact that
\[
\begin{split}
\E\left( \prod_{j=0}^{m-1} V_{u_1, j}^{k_jt/\lceil t \rceil}\right)&\leq
\prod_{j=0}^{m-1}  (\E(V_{u_1, j}^t))^{k_j/\lceil t \rceil}\\
&\leq \prod_{j=0}^{m-1}  \left(\E\left(\sum_{s=0}^{m-1}V_{u_1, s}^t\right)\right)^{k_j/\lceil t \rceil}\\
&=\E\left(\sum_{s=0}^{m-1}V_{u_1, s}^t\right)\leq\sup_{1\leq q'\leq q} \E\left(\sum_{j}V_{u_1, j}^{q'}\right),
\end{split}
\]
where the first `$\leq$'  comes from the  H\"{o}lder inequality.
\end{proof}

Next we would like to establish an analogue of Lemma \ref{lem-2.1} for $\prod_{j=1}^k \E(X(u)^{t_j}$, where $t_1,\ldots, t_k\in (1,q]$ with $t_1+\ldots+t_k\leq q$.  First we introduce some notation.
For $(x_1,\ldots, x_n)\in \R^n_\leq $ and $(y_1,\ldots, y_m)\in \R^m_\leq $, let $(z_1,\ldots, z_{n+m})\in \R^{n+m}_\leq $ be the vector re-ordered from the numbers $x_1,\ldots, x_n, y_1,\ldots, y_m$; and write
$$
(x_1,\ldots, x_n)\oplus (y_1,\ldots, y_m):=(z_1,\ldots, z_{n+m}).
$$
    Clearly, the operation  `$\oplus$' is commutative.   By convention, we write $(x_1,\ldots, x_n)\oplus \emptyset=(x_1,\ldots, x_n)$, where $\emptyset$ denotes the empty set.

For $t_1,\ldots, t_k\in (1,q]$ with $t_1\leq \cdots \leq t_k$ and  $t_1+\ldots+t_k\leq q$, we  write
\begin{equation}
\label{e-8}
\I_{t_1,\ldots, t_k}=\{w_1\oplus \cdots\oplus w_k:\; w_i\in \I_{t_i}\cup \{t_i\}\cup\{\emptyset\}\}\backslash \{(t_1,\ldots, t_k)\},
\end{equation}
where $\I_t$ is defined as in \eqref{e-6}. The following simple property comes from the definition of $\I_{(\cdot)}$:

\begin{lem}
\label{lem-n}
Assume that  $\I_{t_1,\ldots, t_k}\neq \emptyset$. Then for any $(q_1,\ldots, q_\ell)\in  \I_{t_1,\ldots, t_k}$, we have $q_1+\cdots+q_\ell\leq t_1+\cdots+ t_k$. Moreover, we have either $\ell\geq k+1$ or
$q_1+\ldots+q_\ell\leq t_1+\ldots+ t_k-1/2$.
\end{lem}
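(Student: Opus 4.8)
The plan is to attach to every tuple $w$ occurring in these sets two quantities: its \emph{weight} $S(w)$, the sum of its entries, and its \emph{length} $L(w)$, the number of its entries. Both are additive under the merge operation, $S(w\oplus w')=S(w)+S(w')$ and $L(w\oplus w')=L(w)+L(w')$, so the whole statement reduces to understanding a single building block $w_i\in\I_{t_i}\cup\{t_i\}\cup\{\emptyset\}$ and then summing over $i$.

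First I would tabulate the weight and length of a single block. If $w_i=(k_1t_i/\lceil t_i\rceil,\dots,k_st_i/\lceil t_i\rceil)$ is of the first type in $\I_{t_i}$, then $S(w_i)=\frac{t_i}{\lceil t_i\rceil}\sum_j k_j\le t_i$, with equality exactly when $\sum_j k_j=\lceil t_i\rceil$, and $L(w_i)=s$. If $w_i=(kt_i/\lceil t_i\rceil)$ is of the second type, so $k\le\lceil t_i\rceil-1$, then $S(w_i)\le t_i-t_i/\lceil t_i\rceil$ and $L(w_i)=1$. If $w_i=(t_i)$ then $S(w_i)=t_i$ and $L(w_i)=1$, and if $w_i=\emptyset$ then $S(w_i)=L(w_i)=0$. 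The one elementary inequality needed is $t_i/\lceil t_i\rceil>1/2$, which holds because $\lceil t_i\rceil\le t_i+1<2t_i$ for $t_i>1$. Calling $w_i$ \emph{full} when $S(w_i)=t_i$, the above yields: (a) $S(w_i)\le t_i$ always; (b) if $w_i$ is not full then in fact $S(w_i)\le t_i-1/2$; (c) a full block is either $(t_i)$, of length $1$, or a first-type block with $\sum_j k_j=\lceil t_i\rceil$ and $s\ge2$, of length $\ge2$; in particular a full block different from $(t_i)$ has length $\ge2$, while every block other than $\emptyset$ has length $\ge1$.

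Next, write $(q_1,\dots,q_\ell)=w_1\oplus\cdots\oplus w_k\in\I_{t_1,\dots,t_k}$. Additivity together with (a) gives $q_1+\cdots+q_\ell=\sum_i S(w_i)\le\sum_i t_i$, which is the first assertion. For the dichotomy I would split into two exhaustive cases. If some block $w_{i_0}$ is not full, then by (b) and (a), $q_1+\cdots+q_\ell\le(t_{i_0}-1/2)+\sum_{i\ne i_0}t_i=\sum_i t_i-1/2$, and we are in the second branch of the alternative. If instead every $w_i$ is full, then they cannot all equal $(t_i)$: that choice produces precisely the sorted tuple $(t_1,\dots,t_k)$, which is exactly the element removed from $\I_{t_1,\dots,t_k}$ in its definition. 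So some $w_{i_0}$ is full and different from $(t_{i_0})$, hence has length $\ge2$ by (c), while each of the remaining $k-1$ blocks is full, hence nonempty, hence of length $\ge1$; therefore $\ell=\sum_i L(w_i)\ge2+(k-1)=k+1$, the first branch of the alternative.

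I do not expect a genuine obstacle: the argument is pure bookkeeping. The only points warranting a line of verification are the inequality $\lceil t_i\rceil\le2t_i$ for $t_i>1$ (which is what makes a non-full block lose at least $1/2$ of its weight) and the remark that the excluded tuple $(t_1,\dots,t_k)$ is exactly the ``all blocks trivial'' assembly; the small point of care is checking that the two cases are genuinely exhaustive and that ``full and $\ne(t_i)$'' forces length $\ge2$, which is true because the only full block of length $1$ is $(t_i)$ itself (second-type singletons and first-type singletons with $k_1<\lceil t_i\rceil$ all have weight $<t_i$).
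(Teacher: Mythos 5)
Your proof is correct and follows essentially the same route as the paper's: decompose the tuple into blocks $w_i$, use additivity of the coordinate sum, bound each block's sum by $t_i$ via $t_i/\lceil t_i\rceil>1/2$, and obtain the dichotomy from the fact that the only way all blocks can have full weight and length $1$ is the excluded tuple $(t_1,\ldots,t_k)$. Your "full/not full" bookkeeping (which also absorbs the $w_i=\emptyset$ case into the loss-of-$1/2$ branch) is just a slightly more systematic organization of the paper's case analysis, so nothing further is needed.
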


 \begin{proof}
 For any vector $w\in \R^m$, let $\|w\|$ denote the sum of the absolute values of its components.  Clearly by \eqref{e-6},  for any $t>1$ and $w\in \I_t$, we have
 $\|w\|\leq t$.  Fix $(q_1,\ldots, q_\ell)\in \I_{t_1,\ldots, t_k}$. Then there exist $w_i\in \I_{t_i}\cup \{t_i\}\cup\{\emptyset\}$ ($i=1,\ldots, k$) such that $(q_1,\ldots, q_\ell)=w_1\oplus \cdots\oplus w_k$. Therefore $q_1+\cdots+q_\ell=\|w_1\|+\cdots+ \| w_k\|\leq t_1+\cdots+t_k$.  If $w_i=\emptyset$ for some $i$, then $q_1+\cdots+q_\ell\leq (t_1+\cdots t_k)-t_i<(t_1+\cdots t_k)-1$. If otherwise, we have $w_i\in \I_{t_i}\cup \{t_i\}$ for all $1\leq i\leq k$, and $w_j\in \I_{t_j}$ for  at least one $j$; in such case, either $\|w_j\|\leq t_j-\frac{t_j}{\lfloor t_j\rfloor}\leq t_j-\frac{1}{2}$ or the dimension of $w_j$ is $\geq 2$, hence we have either $q_1+\ldots+q_\ell\leq t_1+\cdots+t_k-1/2$ or $\ell\geq k+1$.
 \end{proof}
 As a direct application of Lemma \ref{lem-2.1}, we have

\begin{lem}
\label{lem-2.2}Let $t_1,\ldots, t_k\in (1,q]$ so that $t_1\leq \cdots \leq t_k$ and  $t_1+\ldots+t_k\leq q$.
Let  $u\in \Sigma_*$.  
Then
\begin{itemize}
\item[(i)] $\prod_{j=1}^k \E(X(u)^{t_j})\geq m^{-\sum_{i=1}^k T_{u_1}(t_i)} \prod_{j=1}^k \E(X(\sigma u)^{t_j})$.
\item [(ii)] There exists a positive constant $C'$ (depending on $q$) such that
\begin{equation}
\label{e-5}
\begin{split}
\prod_{j=1}^k \E(X(u)^{t_j})\leq & m^{-\sum_{i=1}^k T_{u_1}(t_i)} \prod_{j=1}^k \E(X(\sigma u)^{t_j})\\
& \mbox{} + C'+C'\sum_{(q_1,\ldots, q_\ell)\in {\mathcal I}_{t_1,\ldots, t_k}} \prod_{j=1}^\ell
\E\left(X(\sigma u)^{q_j}\right).
\end{split}
\end{equation}
\end{itemize}
\end{lem}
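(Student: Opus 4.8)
The plan is to reduce Lemma~\ref{lem-2.2} to an iterated application of Lemma~\ref{lem-2.1}, exploiting the combinatorial structure encoded in the operation $\oplus$ and the definition \eqref{e-8} of $\I_{t_1,\ldots, t_k}$. For part~(i), I would simply multiply together the $k$ inequalities provided by Lemma~\ref{lem-2.1}(i): for each $1\le j\le k$, $\E(X(u)^{t_j})\ge m^{-T_{u_1}(t_j)}\E(X(\sigma u)^{t_j})$, and since all factors are non-negative, taking the product gives $\prod_{j=1}^k\E(X(u)^{t_j})\ge m^{-\sum_{j=1}^k T_{u_1}(t_j)}\prod_{j=1}^k\E(X(\sigma u)^{t_j})$, which is exactly~(i). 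No obstacle there.

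For part~(ii), the idea is to expand the product $\prod_{j=1}^k\E(X(u)^{t_j})$ using the upper bound \eqref{e-5} from Lemma~\ref{lem-2.1}(ii) in each factor. Writing, for each $j$,
\[
\E(X(u)^{t_j})\le m^{-T_{u_1}(t_j)}\E(X(\sigma u)^{t_j})+C+C\sum_{w\in\I_{t_j}}\prod_{i}\E(X(\sigma u)^{w_i}),
\]
and expanding the $k$-fold product, one obtains a sum of terms. The ``principal'' term, in which every factor contributes $m^{-T_{u_1}(t_j)}\E(X(\sigma u)^{t_j})$, is precisely $m^{-\sum_{j=1}^k T_{u_1}(t_j)}\prod_{j=1}^k\E(X(\sigma u)^{t_j})$. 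Every other term in the expansion corresponds to a choice $w_j\in\I_{t_j}\cup\{t_j\}\cup\{\emptyset\}$ for each $j$, \emph{not} all equal to $t_j$ (here $w_j=\{\emptyset\}$ records that the $j$-th factor contributed the constant $C$, $w_j=\{t_j\}$ that it contributed $m^{-T_{u_1}(t_j)}\E(X(\sigma u)^{t_j})$ — except we must be slightly careful that the $m^{-T_{u_1}(t_j)}$ factors are bounded by a constant, which they are since $T_{u_1}$ is continuous on the compact $[0,q]$ — and $w_j\in\I_{t_j}$ that it contributed one of the product terms). Such a term is, up to a bounded multiplicative constant depending only on $q$, of the form $\prod_{i}\E(X(\sigma u)^{q_i})$ where $(q_1,\ldots,q_\ell)=w_1\oplus\cdots\oplus w_k$; by \eqref{e-8} this tuple lies in $\I_{t_1,\ldots,t_k}$, and there are only finitely many such tuples (bounded in number by a function of $q$ alone), so collecting these terms and absorbing all constants into a single $C'=C'(q)$ yields \eqref{e-5}.

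The step requiring the most care is the bookkeeping of which contributions produce which tuples: one must check that when a factor contributes the constant $C$ (i.e.\ $w_j=\emptyset$), the resulting term is still $\le C'+C'\sum_{(q_1,\ldots,q_\ell)\in\I_{t_1,\ldots,t_k}}\prod\E(X(\sigma u)^{q_i})$, which holds because the remaining factors give a product over a tuple in $\bigoplus$ of the other $w_i$'s (this is in $\I_{t_1,\ldots,t_k}$ when at least one other $w_i\in\I_{t_i}$, and is a pure constant or a product over a sub-collection of the $t_i$'s otherwise — but the latter is itself bounded, since each $\E(X(\sigma u)^{t_i})$ with $t_i\le q$ can be absorbed: here one should note $\E(X(\sigma u)^{t_i})$ need not be bounded a priori, so the cleanest route is to observe that in the full expansion the ``all-$\{t_j\}$-except-some-$\emptyset$'' terms still contain a genuine product $\prod_{i\in S}\E(X(\sigma u)^{t_i})$ with $|S|<k$, and $(t_i)_{i\in S}$ — reordered — does lie in $\I_{t_1,\ldots,t_k}$ because dropping a factor produces $w_i=\emptyset$ for that index, which is an allowed choice in \eqref{e-8}). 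So the only genuinely bounded (constant) term is the one where \emph{every} factor contributes $C$, giving the standalone $C'$. With this observation the proof closes: apply \eqref{e-5} of Lemma~\ref{lem-2.1} factor-by-factor, expand, identify the principal term, bound the $m^{-T_{u_1}(\cdot)}$ prefactors by $\max_{1\le t\le q}m^{-\min_i T_i(t)}$, and absorb everything else into $C'\big(1+\sum_{(q_1,\ldots,q_\ell)\in\I_{t_1,\ldots,t_k}}\prod_j\E(X(\sigma u)^{q_j})\big)$.

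I expect the main obstacle to be purely notational rather than conceptual: keeping track, across the $k$-fold expansion, of the correspondence between a choice of $(w_1,\ldots,w_k)$ and the tuple $w_1\oplus\cdots\oplus w_k\in\I_{t_1,\ldots,t_k}$, and verifying that \eqref{e-8} has been defined precisely so that \emph{every} non-principal term lands in $\I_{t_1,\ldots,t_k}$ (including those arising from $\emptyset$-choices, via the $\{\emptyset\}$ entry in the definition). Once that correspondence is pinned down, the inequality follows by summing, and the finiteness of $\I_{t_1,\ldots,t_k}$ together with the continuity of the $T_i$ on $[0,q]$ furnishes the uniform constant $C'$.
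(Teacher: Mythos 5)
Your proof is correct and follows exactly the route the paper intends: the paper presents Lemma~\ref{lem-2.2} as a direct application of Lemma~\ref{lem-2.1}, namely multiplying the lower bounds of Lemma~\ref{lem-2.1}(i) for part~(i), and for part~(ii) expanding the product of the upper bounds \eqref{e-5} factor by factor, with the set $\I_{t_1,\ldots, t_k}$ in \eqref{e-8} defined (via the $\{t_i\}$ and $\{\emptyset\}$ entries and the removal of $(t_1,\ldots,t_k)$) precisely so that every non-principal term of the expansion is, up to constants bounded in terms of $q$ and $W$, a product indexed by a tuple of $\I_{t_1,\ldots,t_k}$. Your bookkeeping of the $\emptyset$-choices and of the bounded prefactors $m^{-T_{u_1}(t_j)}$ is exactly the verification needed, so there is nothing to add.
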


\begin{pro}\label{mom+estimate} Let $q>1$ such that $T(q)>0$.  Let $\eta$ be the Bernoulli product measure on $\Sigma$ generated by  a probability vector $(p_0',\ldots, p_{m-1}')$.  Set $A:=\max\{1, \sum_{i=0}^{m-1} p_i' m^{-T_i(q)}\}$. Then the following statements hold:\begin{itemize}
\item[(i)] There exists  a polynomial  $f_q$ depending on $W$ and $q$ such that
\begin{equation}
\label{e-p1}
 A^n \leq  \sum_{u\in\Sigma_n}\eta([u])\mathbb{E}(X(u)^q)\leq  f_q(n)  A^n,\quad \forall n\in \N.
\end{equation}
Moreover, if $q\in (1,2]$ and $\sum_{i=0}^{m-1} p_i' m^{-T_i(q)}<1$, then  the polynomial $f_q$  can be replaced by a positive constant.
\item[(ii)]
More generally,  for any $t_1,\ldots, t_k\in (1, q]$ with $t_1\leq \cdots \leq t_k$ and $t_1+\ldots+t_k\leq q$, there exists a polynomial $f_{t_1,\ldots, t_k}$ such that
\begin{equation}
\label{e-p1'}
 1 \leq \sum_{u\in\Sigma_n}\eta([u])\prod_{j=1}^k \mathbb{E} (X(u)^{t_j}) \leq  f_{t_1,\ldots, t_k}(n)  A^n,\quad \forall n\in \N.
\end{equation}
\end{itemize}
  \end{pro}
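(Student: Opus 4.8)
\textbf{Proof strategy for Proposition \ref{mom+estimate}.}

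The plan is to iterate the recursive inequalities from Lemma \ref{lem-2.1} and Lemma \ref{lem-2.2}, taking expectations against the Bernoulli product $\eta$, and then solve the resulting linear recursion. First I would treat part (i). The lower bound $A^n \le \sum_{u\in\Sigma_n}\eta([u])\E(X(u)^q)$ comes directly from iterating Lemma \ref{lem-2.1}(i): since $\E(X(\epsilon)^q)=\E(Y^q)\ge 1$ (because $T(q)>0$ forces $\E(Y^q)\ge \E(Y)^q=1$ by Jensen, or one simply uses $\E(X(u)^q)\ge \E(X(u))^q=1$), one gets $\sum_{u\in\Sigma_n}\eta([u])\E(X(u)^q)\ge \prod_{j=1}^n\big(\sum_{i}p_i'm^{-T_i(q)}\big)$ when that product exceeds $1$, and $\ge 1$ always; combining gives $\ge A^n$. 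For the upper bound, set $a_n=\sup_{u\in\Sigma_*}\E(X(u)^q)$ localized appropriately, or better, work directly with $S_n:=\sum_{u\in\Sigma_n}\eta([u])\E(X(u)^q)$. Applying Lemma \ref{lem-2.1}(ii), summing over $u\in\Sigma_n$ with weights $\eta([u])$ and using that $\eta$ is a Bernoulli product (so $\eta([u])=p'_{u_1}\eta([\sigma u])$), one obtains
\begin{equation*}
S_n \le \Big(\sum_{i=0}^{m-1}p_i'm^{-T_i(q)}\Big) S_{n-1} + C + C\sum_{(q_1,\ldots,q_s)\in\mathcal I_q}\prod_{j=1}^s S_{n-1}^{(q_j)},
\end{equation*}
where $S_{n-1}^{(p)}=\sum_{u\in\Sigma_{n-1}}\eta([u])\E(X(u)^p)$. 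The terms with $q_j\le 1$ contribute $S_{n-1}^{(q_j)}\le 1$ by concavity of $x\mapsto x^{q_j}$, so the genuinely problematic terms involve exponents $q_j\in(1,q)$ with $\sum q_j\le q$ — and this is exactly what part (ii) is designed to control inductively.

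So the main structural step is an induction: one proves (i) and (ii) simultaneously by induction on the "complexity" of the tuple $(t_1,\ldots,t_k)$, ordered by $\sum t_j$ (finitely many relevant values once we restrict to $\mathcal I_q$ and its iterates, which by Lemma \ref{lem-n} strictly decrease the sum or increase the length, and lengths are bounded since each $t_j>1$ and the sum is $\le q$). For the base cases (sums close to the minimum, where $\mathcal I_{t_1,\ldots,t_k}$ is empty or only contains tuples of exponents $\le 1$), the recursion is $S_n^{(t_1,\ldots,t_k)}\le A\cdot S_{n-1}^{(t_1,\ldots,t_k)}+C'$, which telescopes to $S_n^{(\cdots)}\le C'' n A^n$ when $A=1$ and $\le C''A^n$ when $A>1$ — hence polynomial-times-$A^n$. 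For the inductive step, each term $\prod_{j=1}^\ell S_{n-1}^{(q_j)}$ with $(q_1,\ldots,q_\ell)\in\mathcal I_{t_1,\ldots,t_k}$ is, after grouping the exponents $>1$ and discarding those $\le 1$, bounded by the induction hypothesis as $f_{\cdots}(n-1)A^{n-1}$ (using $\sum_{q_j>1}q_j\le\sum t_j\le q$ and Lemma \ref{lem-n} to ensure it is a strictly smaller instance, and that a product over a single group, or over disjoint groups each with sum $\le q$, is again controlled — note $\prod A^{n-1}$ over several groups would give $A^{(n-1)\cdot(\text{number of groups})}$, so one must be careful here: actually the relevant bound is that $\mathcal I$ only ever produces \emph{one} block of exponents summing to more than, roughly, $\sum t_j - 1$, so there is essentially one "dominant" factor $\le f(n-1)A^{n-1}$ and the rest are $\le 1$). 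Feeding this back into the recursion $S_n\le A\,S_{n-1}+(\text{polynomial in }n)\cdot A^{n-1}$ and iterating yields $S_n\le f(n)A^n$ with $f$ polynomial, and $\deg f$ increasing by at most $1$ at each level of the induction. The "moreover" clause of (i) — that $f_q$ is a constant when $q\in(1,2]$ and $\sum_i p_i'm^{-T_i(q)}<1$ — is cleaner: when $q\le 2$ we have $\lceil q\rceil=2$, so $\mathcal I_q=\emptyset$, the recursion is simply $S_n\le A'S_{n-1}+C$ with $A'=\sum_i p_i'm^{-T_i(q)}<1$, and this gives $S_n\le C/(1-A')$, a constant; note that under the stated hypothesis $A=\max\{1,A'\}=1$, consistent with the displayed bound.

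\textbf{Main obstacle.} The delicate point is the bookkeeping in the inductive step: one must verify that the sets $\mathcal I_{t_1,\ldots,t_k}$ from \eqref{e-8} really do always decompose into "one heavy block plus light exponents" so that iterating the recursion does not blow up the power of $A$ (we want $A^n$, not $A^{cn}$ for $c>1$), and that the induction is genuinely well-founded — i.e., that the partial order on tuples given by "smaller $\sum t_j$, or equal sum but longer" has no infinite descending chains among tuples with entries in $(1,q]$ and sum $\le q$. This is where Lemma \ref{lem-n} is essential, and I would spend most of the care there: proving that the recursion's inhomogeneous term is always of the form $(\text{polynomial})\cdot A^{n-1}$ with the polynomial degree bounded by the (finite) depth of the induction tree. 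Everything else — telescoping geometric-type recursions, the $x\mapsto x^p$ convexity/concavity bounds, the Bernoulli-product factorization of $\eta([u])$ — is routine.
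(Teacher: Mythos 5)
Your overall architecture — sum the recursions of Lemmas \ref{lem-2.1} and \ref{lem-2.2} against $\eta$, induct over tuples of exponents with well-foundedness supplied by Lemma \ref{lem-n}, then telescope — is the paper's, and your lower bound and the treatment of the ``moreover'' clause (where $\I_q=\emptyset$ and the coefficient is $<1$) are exactly right. But the central recursion you display is not: summing Lemma \ref{lem-2.1}(ii) against $\eta([u])=p'_{u_1}\eta([\sigma u])$ produces the inhomogeneous term $C\sum_{(q_1,\ldots,q_s)\in\I_q} S_{n-1}(q_1,\ldots,q_s)$ with $S_{n-1}(q_1,\ldots,q_s)=\sum_{u\in\Sigma_{n-1}}\eta([u])\prod_{j}\E(X(u)^{q_j})$, i.e.\ the $\eta$-average of a \emph{product}, not the product of averages $\prod_j S_{n-1}^{(q_j)}$ that you wrote; replacing the former by the latter is not a valid upper bound in general. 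That substitution is also what manufactures your worry about $A^{(n-1)\cdot(\text{number of groups})}$, and the patch you propose — that $\I$ only ever yields one ``heavy'' exponent with all others $\le 1$ — is false: every entry of every tuple in $\I_{t_1,\ldots,t_k}$ lies in $(1,q]$ (each entry is $kt/\lceil t\rceil$ with $k\ge 2$, hence $>1$; e.g.\ $(2,2)\in\I_4$), so several factors can be ``heavy''. The correct resolution is precisely why part (ii) is stated at all: the induction is run on the quantities $S_n(t_1,\ldots,t_k)$ themselves, each of which is bounded by a \emph{single} $f_{q_1,\ldots,q_\ell}(n-1)A^{n-1}$ by the induction hypothesis, so powers of $A$ never multiply.

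A second omission: for a general tuple the linear coefficient in the recursion is $\sum_{j=0}^{m-1} p_j' m^{-\sum_{i=1}^k T_j(t_i)}$, not $\sum_j p_j' m^{-T_j(q)}$, and since the sums $\sum_i T_j(t_i)$ may be negative it is not automatic that this coefficient is $\le A$; you simply assert $S_n\le A\,S_{n-1}+\cdots$ for general tuples. Controlling this coefficient by $A$ is exactly the content of Lemma \ref{lem-1.2} (resting on the concavity Lemma \ref{lem-1.1}), and it is indispensable: without it the telescoping yields $B^n$ for a possibly larger base $B$, not $A^n$. Once the recursion is corrected to involve $S_{n-1}(q_1,\ldots,q_\ell)$ and Lemma \ref{lem-1.2} is invoked, your well-founded induction via Lemma \ref{lem-n} and the telescoping argument do complete the proof as in the paper.
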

 \begin{proof}
    Since $q>1$,  we have $\E(X(u)^q)\geq \E(X(u))^q=1$ for each $u\in \Sigma_*$, and thus
  \begin{equation}
  \label{e-10}
  \sum_{u\in\Sigma_n}\eta([u]) \E (X(u)^q)\geq 1.
  \end{equation}
  Similarly we have
  \begin{equation}
  \label{e-10'}
  \sum_{u\in\Sigma_n}\eta([u]) \prod_{j=1}^k\E (X(u)^{t_j})\geq 1.
  \end{equation}

 On the other hand,  by Lemma \ref{lem-2.1}(i),  we have
 \begin{equation}\label{e-11}
 \begin{split} \sum_{u\in\Sigma_n}\eta([u]) \E(X(u)^q)& \geq \left(\sum_{i=0}^{m-1} p_i' m^{-T_i(q)}\right)
   \sum_{u\in\Sigma_{n-1}}\eta([u])\E(X(u)^q)\\
 &\geq \left(\sum_{i=0}^{m-1} p_i' m^{-T_i(q)}\right)^n \E(Y^q)\geq \left(\sum_{i=0}^{m-1} p_i' m^{-T_i(q)}\right)^n.
 \end{split}
 \end{equation}
 Combining \eqref{e-11} with \eqref{e-10}, we have
 \begin{equation}
 \label{e-12}
  \sum_{u\in\Sigma_n}\eta([u]) \E( X(u)^q)\geq A^n.
 \end{equation}
This completes the proof of the first inequality in \eqref{e-p1}.

 To show the second inequality in \eqref{e-p1}, let  $t_1,\ldots, t_k\in (1,q]$ with $t_1\leq \cdots \leq t_k$ and  $t_1+\ldots+t_k\leq q$. By Lemma \ref{lem-1.2},
 \begin{equation}
 \sum_{j=0}^{m-1} p_j' m^{-\sum_{i=1}^k T_j(t_i)}\leq A.
 \end{equation}
 This together with Lemma \ref{lem-2.2}(ii) yields
 \begin{equation}
\label{e-14}
\begin{split}
\sum_{u\in \Sigma_n} \eta([u])\prod_{j=1}^k \E(X(u)^{t_j})\leq & A \sum_{u\in \Sigma_{n-1}} \eta([u]) \prod_{j=1}^k \E(X(u)^{t_j})+C'\\
& \mbox{} +C'\sum_{(q_1,\ldots, q_\ell)\in {\mathcal I}_{t_1,\ldots, t_k}} \sum_{u\in \Sigma_{n-1}}\eta([u]) \prod_{j=1}^\ell
\E\left(X( u)^{q_j}\right),
\end{split}
\end{equation}
Write $S_n(t_1,\ldots, t_k):=\sum_{u\in \Sigma_n} \eta([u])\prod_{j=1}^k \E(X(u)^{t_j})$. Then \eqref{e-14} can be re-written as
\begin{equation}
\label{e-14'}
\begin{split}
S_n(t_1,\ldots, t_k)\leq & A S_{n-1}(t_1,\ldots, t_k)+C'+C'\sum_{(q_1,\ldots, q_\ell)\in {\mathcal I}_{t_1,\ldots, t_k}} S_{n-1} (q_1,\ldots, q_\ell)
\end{split}
\end{equation}
for $n\in \N$.

 We claim that there exists an increasing  polynomial function $f_{t_1,\ldots, t_k}$  such that
\begin{equation}
\label{e-13}
S_n(t_1,\ldots, t_k) \leq f_{t_1,\ldots, t_k}(n) A^n,\qquad \forall n\in \N.
\end{equation}

Clearly the claim is true in the case when ${\mathcal I}_{t_1,\ldots, t_k}=\emptyset$. Indeed in such case, by \eqref{e-14'},
we have
$$
S_n(t_1,\ldots, t_k)\leq A\; S_{n-1}(t_1,\ldots, t_k)+ C',\quad \forall n\in \N,
$$
and thus
\[
\begin{split}
S_n(t_1,\ldots, t_k)&=A^n \; S_0(t_1,\ldots, t_k)+\sum_{j=1}^n A^{n-j}\left(S_j(t_1,\ldots, t_k)-A\;S_{j-1}(t_1,\ldots, t_k)\right)\\
&\leq A^n \;S_0(t_1,\ldots, t_k)+\sum_{j=1}^n C' A^{n-j}\leq nA^n (C'+S_0(t_1,\ldots, t_k)).
\end{split}
\]

Next we consider the case when ${\mathcal I}_{t_1,\ldots, t_k}\neq\emptyset$. Suppose that for each $(q_1,\ldots, q_\ell)\in {\mathcal I}_{t_1,\ldots, t_k}$, there exists  an increasing  polynomial function $f_{q_1,\ldots, q_\ell}$ such that
\begin{equation*}
S_n(q_1,\ldots, q_\ell) \leq f_{q_1,\ldots, q_\ell}(n) A^n,\qquad \forall n\in \N.
\end{equation*}
Set $g=C'+C'\sum_{(q_1,\ldots, q_\ell)\in {\mathcal I}_{t_1,\ldots, t_k}}f_{q_1,\ldots, q_\ell}$. Then $g$ is an increasing polynomial. By \eqref{e-14'}, we have
$$
S_n(t_1,\ldots, t_k)-A\;S_{n-1}(t_1,\ldots, t_k)\leq g(n-1)A^{n-1},\qquad \forall n\in \N.
$$
Therefore
\[
\begin{split}
S_n(t_1,\ldots, t_k)-A^n \; S_0(t_1,\ldots, t_k)&=\sum_{j=1}^n A^{n-j}( S_j(t_1,\ldots, t_k)-A\; S_{j-1}(t_1,\ldots, t_k))\\
& \leq A^{n-1} \sum_{j=1}^n g(j-1)\leq A^{n-1}n g (n),
\end{split}
\]
Hence $S_n(t_1,\ldots, t_k)$ is bounded by $f_{t_1,\ldots, t_k}(n) A^n$ with $f_{t_1,\ldots, t_k}(x): = xg(x)+S_0(t_1,\ldots, t_k)$.

 According to the arguments in the above two paragraphs, if the claim \eqref{e-13} is false at $T_1:=(t_1,\ldots, t_k)$, then $\I_{T_1}\neq \emptyset$ and moreover there exists $T_2\in \I_{T_1}$ such that \eqref{e-13} is false at $T_2$.  Repeatedly applying the arguments, we see that there exist
 $$T_n\in \I_{T_{n-1}}\neq \emptyset,\; n=2, 3,\ldots $$
  such that  \eqref{e-13} is false at $T_n$. However, by Lemma \ref{lem-n}, the sequence $(\|T_n\|)_{n=1}^\infty$ is non-increasing and is bounded above by $q$; and moreover, there are infinitely many $n$ such that $\|T_n\|\leq  \|T_{n-1}\|-1/2$ (because the dimension of $T_n$ can not keep strictly increasing for $q$ consecutive integers of  $n$), which leads to a contradiction. This proves the claim \eqref{e-13}.

  Applying \eqref{e-13} to the particular case when $k=1$, we have
  $$
  \sum_{u\in \Sigma_n} \eta([u]) \E(X(u)^q)\leq f_q(n) A^n, \qquad \forall n\in \N
  $$
  for some polynomial $f_q$. This, together with  \eqref{e-12}, yields \eqref{e-p1}.
  In the meantime, \eqref{e-p1'} follows from \eqref{e-13} and \eqref{e-10'}.

  In the end, assume that $q\in (1,2]$ and $\sum_{i=0}^{m-1} p_i' m^{-T_i(q)}<1$. By the definition \eqref{e-6}, we have $\I_q=\emptyset$. Hence applying \eqref{e-5} yields
  \begin{equation}\label{e-6''}
\sum_{u\in \Sigma_n} \eta([u]) \E(X(u)^{q})\leq B\sum_{u\in \Sigma_{n-1}} \eta([u])  \E(X(u)^q)+C',
\end{equation}
  with $B:= \sum_{i=0}^{m-1} p_i' m^{-T_i(q)}<1$.
  Iterating \eqref{e-6''} yields that $$\sum_{u\in \Sigma_n} \eta([u]) \E(X(u)^{q})\leq C'(1+B+B^2+\cdots)=\frac{C'}{1-B}.$$
     This finishes the proof of the proposition.
   \end{proof}

   \begin{cor}\label{momestimate} Let $q>1$ such that $T(q)>0$. Then there exists  a polynomial  $f_q$ depending on $W$ and $q$ such that
\begin{equation}
\label{e-pp1}
 m^{-n \min \{\tau_\nu(q), T(q)\}} \leq  \mathbb{E}\Big (\sum_{u\in\Sigma_n}\pi_*\mu([u])^q\Big )\leq  f_q(n)  m^{-n \min \{\tau_\nu(q), T(q)\}}
\end{equation}
for all $n\in \N$. Furthermore, if $q\in (1,2]$ and $\tau_\nu(q)<T(q)$, the polynomial  $f_q$  can be replaced by a positive constant.
 \end{cor}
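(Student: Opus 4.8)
The plan is to deduce Corollary~\ref{momestimate} directly from Proposition~\ref{mom+estimate} by reorganizing the sum $\sum_{u\in\Sigma_n}\pi_*\mu([u])^q$ according to the first-level splitting $\pi_*\mu([u])=\nu([u])X(u)$ recorded in \eqref{pimu}. Precisely, applying $\E$ and using that $X(u)$ has the same distribution as $X_{|u|}$ under the shift, and that $\nu$ is the Bernoulli product generated by $(p_0,\dots,p_{m-1})$, we get
\begin{equation*}
\E\Big(\sum_{u\in\Sigma_n}\pi_*\mu([u])^q\Big)=\sum_{u\in\Sigma_n}\nu([u])^q\,\E(X(u)^q)=\sum_{u\in\Sigma_n}\Big(\prod_{k=1}^n p_{u_k}^{q-1}\Big)\,\nu([u])\,\E(X(u)^q).
\end{equation*}
I would like to rewrite $\prod_{k=1}^n p_{u_k}^{q-1}$ as a reweighting of $\nu$: introduce the probability vector $p'=(p_i'):=\big(p_i^q m^{\tau_\nu(q)}\big)_i$ (so $p_i^{q-1}=p_i' m^{-\tau_\nu(q)}/p_i\cdot p_i = \dots$; more simply $\sum_i p_i^q=m^{-\tau_\nu(q)}$, hence $p_i'$ is a probability vector and $p_i^q=p_i' m^{-\tau_\nu(q)}$), and let $\eta=\nu_q$ be the associated Bernoulli product. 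Then $\nu([u])^q=\prod_k p_{u_k}^q=m^{-n\tau_\nu(q)}\eta([u])$, so the left-hand side equals $m^{-n\tau_\nu(q)}\sum_{u\in\Sigma_n}\eta([u])\E(X(u)^q)=m^{-n\tau_\nu(q)}\,\E_{\mathbb P\otimes\eta}(X_n^q)$.

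At this point Proposition~\ref{mom+estimate}(i) applies with this particular $\eta=\nu_q$: it gives, with $A=\max\{1,\sum_{i=0}^{m-1}p_i' m^{-T_i(q)}\}$,
\begin{equation*}
A^n\le \E_{\mathbb P\otimes\eta}(X_n^q)\le f_q(n)A^n,\qquad\forall n\in\N.
\end{equation*}
So the corollary reduces to the arithmetic identity $m^{-\tau_\nu(q)}\cdot A=m^{-\min\{\tau_\nu(q),T(q)\}}$. To see it, compute $\sum_i p_i' m^{-T_i(q)}=m^{\tau_\nu(q)}\sum_i p_i^q m^{-T_i(q)}$, and recall the key construction identity \eqref{psi}, namely $m^{-T(q)}=\sum_i \mathbf 1_{\{p_i>0\}}p_i^q m^{-T_i(q)}$ (the $p_i=0$ terms contribute nothing, since then $\pi_*\mu$ charges no cylinder starting with $i$). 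Hence $\sum_i p_i' m^{-T_i(q)}=m^{\tau_\nu(q)-T(q)}$, and therefore $A=\max\{1,m^{\tau_\nu(q)-T(q)}\}=m^{\max\{0,\tau_\nu(q)-T(q)\}}$. Multiplying by $m^{-\tau_\nu(q)}$ gives $m^{-\tau_\nu(q)}A=m^{\max\{-\tau_\nu(q),-T(q)\}}=m^{-\min\{\tau_\nu(q),T(q)\}}$, which is exactly what is needed for \eqref{e-pp1}.

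Finally, for the improvement when $q\in(1,2]$ and $\tau_\nu(q)<T(q)$: in that regime $m^{\tau_\nu(q)-T(q)}<1$, so $\sum_{i=0}^{m-1}p_i' m^{-T_i(q)}=m^{\tau_\nu(q)-T(q)}<1$, and the last clause of Proposition~\ref{mom+estimate}(i) (or the iteration of \eqref{e-6''}) shows $f_q$ can be taken constant, which transfers verbatim to $f_q$ in \eqref{e-pp1}. I do not expect a genuine obstacle here — the only point requiring a little care is the bookkeeping with the indices $i$ for which $p_i=0$ (these must be discarded consistently in both $\tau_\nu(q)$ and the identity \eqref{psi}, which is legitimate because such coordinates carry no $\pi_*\mu$-mass and, per the standing conventions in Section~3, play no role); everything else is the substitution $\nu([u])^q=m^{-n\tau_\nu(q)}\nu_q([u])$ followed by a direct invocation of Proposition~\ref{mom+estimate}.
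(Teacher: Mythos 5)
Your proof is correct and follows essentially the same route as the paper: the paper's own argument also rewrites $\nu([u])^q=m^{-n\tau_\nu(q)}\nu_q([u])$ with $\nu_q$ generated by $p_i'=p_i^q/\sum_j p_j^q$, applies Proposition~\ref{mom+estimate} with $\eta=\nu_q$, and uses \eqref{psi} to get $A=\max\{1,m^{\tau_\nu(q)-T(q)}\}$. Your explicit treatment of the constant-$f_q$ clause via the last part of Proposition~\ref{mom+estimate} is exactly what the paper leaves implicit, so nothing is missing.
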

 \begin{proof}
 Let $\nu_q$ denote the Bernoulli product measure on $\Sigma$ generated by the probability weight $(p_0',\ldots, p_{m-1}')$, where
 $p_i':=p_i^q/\sum_{j=0}^{m-1}p_j^q$.   Then
 \begin{equation}
 \label{e-9'}
 \sum_{u\in\Sigma_n}\pi_*\mu([u])^q=\sum_{u\in\Sigma_n}\nu([u])^qX(u)^q=m^{-n\tau_\nu(q)} \sum_{u\in\Sigma_n}\nu_q([u])X(u)^q.
 \end{equation}
 Set $A=\max\{1, \sum_{j=0}^{m-1} p_j' m^{-T_j(q)}\}$. Then $A=\max\{1, m^{\tau_v(q)-T(q)}\}$, due to the fact that
 $\sum_{j=0}^{m-1} p_j' m^{-T_j(q)}=m^{\tau_\nu(q)} \sum_{j=0}^{m-1} p_j^q m^{-T_j(q)} =m^{\tau_\nu(q)-T(q)}$ (cf. \eqref{psi}).

    By Proposition \ref{mom+estimate},   there is a polynomial function $f_q$ such that
    \begin{equation}
    \label{e-XA}
    A^n\leq  \sum_{u\in\Sigma_n}\nu_q([u])\E(X(u)^q)\leq f_q(n) A^n, \quad \forall n\in \N.
    \end{equation}
  Now   \eqref{e-pp1} follows directly from \eqref{e-9'} and \eqref{e-XA}.
  \end{proof}

\begin{cor}\label{cor6.11}
Let $q>1$ such that $T(q)>0$. Then $\tau_{\pi_*\mu}(q)\geq \min \{\tau_\nu(q), T(q)\}$.
\end{cor}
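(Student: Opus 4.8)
The statement is an immediate consequence of the expectation estimate in Corollary~\ref{momestimate} together with a standard Borel--Cantelli argument (the same mechanism encapsulated in Lemma~\ref{Borel} and used repeatedly above). Here is the plan. Set $\gamma=\min\{\tau_\nu(q),T(q)\}$ and, for $n\ge 1$, write $Z_n=\sum_{u\in\Sigma_n}\pi_*\mu([u])^q$ (the indicator $\mathbf{1}_{\{\pi_*\mu([u])>0\}}$ appearing in the definition of $\tau_{\pi_*\mu}$ is harmless, since $x\mapsto x^q$ is nonnegative on $\R_+$). By Corollary~\ref{momestimate} there is a polynomial $f_q$, depending only on $W$ and $q$, such that $\mathbb{E}(Z_n)\le f_q(n)\,m^{-n\gamma}$ for all $n\in\N$.

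First I would fix $\epsilon>0$ and apply Markov's inequality: $\mathbb{P}\big(Z_n\ge m^{-n(\gamma-\epsilon)}\big)\le m^{n(\gamma-\epsilon)}\,\mathbb{E}(Z_n)\le f_q(n)\,m^{-n\epsilon}$. Since $\sum_{n\ge 1}f_q(n)\,m^{-n\epsilon}<\infty$, the Borel--Cantelli lemma gives that, with probability $1$, $Z_n<m^{-n(\gamma-\epsilon)}$ for all $n$ large enough; equivalently $-\tfrac1n\log_m Z_n>\gamma-\epsilon$ eventually, hence $\liminf_{n\to\infty}\big(-\tfrac1n\log_m Z_n\big)\ge\gamma-\epsilon$ almost surely. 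Intersecting the almost sure events so obtained for $\epsilon=1/k$, $k\in\N$, yields $\tau_{\pi_*\mu}(q)=\liminf_{n\to\infty}\big(-\tfrac1n\log_m Z_n\big)\ge\gamma$ almost surely, which is the claim. On the event $\{\pi_*\mu=0\}$ the inequality holds trivially, with the usual convention $\log_m 0=-\infty$.

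There is no real obstacle here: all the analytic work---the upper bound on $\mathbb{E}\big(\sum_{|u|=n}\pi_*\mu([u])^q\big)$ provided by the concavity lemmas of Section~\ref{sec-lem}, the super-additivity of $x\mapsto x^q$, and Proposition~\ref{mom+estimate}---has already been carried out in Corollary~\ref{momestimate}. The present corollary is merely the passage from an $L^1(\mathbb{P})$ bound to an almost sure statement about the $L^q$-spectrum, which is precisely the role Lemma~\ref{Borel} plays throughout the paper; one could simply invoke Lemma~\ref{Borel} in place of the explicit Markov--Borel--Cantelli step above.
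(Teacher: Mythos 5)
Your proof is correct and is essentially the paper's own argument: the paper deduces the corollary directly from Corollary~\ref{momestimate} together with Lemma~\ref{Borel}, and your Markov--Borel--Cantelli step is exactly the proof of Lemma~\ref{Borel}, which you yourself note could be invoked instead.
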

\begin{proof}
It is a direct consequence of Corollary \ref{momestimate} and Lemma \ref{Borel}.
\end{proof}

 \subsection{Negative moments estimates for $X_n$}\label{nmeXu}

We begin with a known lemma.

\begin{lem}[Lemma 4.4, \cite{Liu99}]
\label{lem-liu99}
 Let $Z$ be a positive random variable. For $0 < a < \infty$, consider
the following statements:
\begin{itemize}
\item[(i)] $\E(Z^{-a}) < \infty$;
\item[(ii)] $\E(e^{-t Z}) = O(t^{-a})\;\; (t \to \infty)$;
\item[(iii)]${\Bbb P}(Z\leq z) = O(z^a)\;  \; (z\to 0)$;
\item[(iv)] $\forall \; b \in (0, a)$, $\E(Z^{-b})<\infty$.
\end{itemize}
Then the following implications hold: $(i)\Longrightarrow (ii)\Longleftrightarrow(iii)\Longrightarrow (iv)$.
\end{lem}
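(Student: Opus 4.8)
The statement to prove is Lemma~\ref{lem-liu99} (Lemma 4.4 of \cite{Liu99}), characterizing negative moments of a positive random variable $Z$ via the decay of its Laplace transform and of its distribution function near $0$. Since this is a cited result, the plan is simply to recall the short standard argument.

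\begin{proof} We prove the chain of implications in order.

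$(i)\Rightarrow(ii)$: Suppose $\E(Z^{-a})<\infty$. For $t>0$ write $\E(e^{-tZ})=\E\big((tZ)^{-a}(tZ)^{a}e^{-tZ}\big)=t^{-a}\E\big(Z^{-a}\cdot g(tZ)\big)$, where $g(x)=x^{a}e^{-x}$ is bounded on $[0,\infty)$ by some constant $M_a$ (attained at $x=a$). Hence $\E(e^{-tZ})\le M_a\, t^{-a}\,\E(Z^{-a})=O(t^{-a})$ as $t\to\infty$.

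$(ii)\Leftrightarrow(iii)$: For the direction $(ii)\Rightarrow(iii)$, use Markov's inequality in the form $\Bbb P(Z\le z)=\Bbb P(e^{-Z/z}\ge e^{-1})\le e\,\E(e^{-Z/z})$, and apply $(ii)$ with $t=1/z$ to get $\Bbb P(Z\le z)\le e\cdot O(z^{a})=O(z^{a})$ as $z\to 0$. For the converse $(iii)\Rightarrow(ii)$, assume $\Bbb P(Z\le z)\le Cz^{a}$ for $z\le z_0$ and integrate by parts: for $t>0$,
$$
\E(e^{-tZ})=\int_0^\infty t e^{-tz}\,\Bbb P(Z\le z)\,\mathrm{d}z
\le \int_0^{z_0} t e^{-tz} C z^{a}\,\mathrm{d}z+\int_{z_0}^\infty t e^{-tz}\,\mathrm{d}z
\le C t^{-a}\Gamma(a+1)+e^{-tz_0},
$$
which is $O(t^{-a})$ as $t\to\infty$.

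$(iii)\Rightarrow(iv)$: Fix $b\in(0,a)$. Using the layer-cake representation $\E(Z^{-b})=\int_0^\infty \Bbb P(Z^{-b}>s)\,\mathrm{d}s=\int_0^\infty \Bbb P(Z<s^{-1/b})\,\mathrm{d}s$, split the integral at $s=1$. The part over $[0,1]$ is bounded by $1$. For $s\ge 1$ we have $s^{-1/b}\le 1$, so by $(iii)$ (valid for small arguments, hence for $s$ large enough, say $s\ge s_1$) we get $\Bbb P(Z<s^{-1/b})\le C s^{-a/b}$; since $a/b>1$, the integral $\int_{s_1}^\infty C s^{-a/b}\,\mathrm{d}s$ converges, and the remaining bounded range $[1,s_1]$ contributes a finite amount. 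Hence $\E(Z^{-b})<\infty$.
\end{proof}

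\textbf{Remark on the main point.} There is no real obstacle here: the only care needed is the boundedness of $x\mapsto x^{a}e^{-x}$ in $(i)\Rightarrow(ii)$ and the convergence of the integral $\int^\infty s^{-a/b}\,\mathrm{d}s$ (which uses strictly $b<a$) in $(iii)\Rightarrow(iv)$. Note that the reverse implications $(iv)\not\Rightarrow(i)$ and $(ii)\not\Rightarrow(i)$ genuinely fail, which is why the statement is phrased as a one-directional chain; this is why in the applications (Proposition~\ref{negmom2'}) one only ever extracts \emph{some} negative moment $b>0$ rather than a prescribed one.
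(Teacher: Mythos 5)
Your proof is correct and complete: the paper itself gives no argument for this lemma, stating it as a citation of Lemma 4.4 in \cite{Liu99}, and your write-up is exactly the standard proof underlying that reference (boundedness of $x\mapsto x^{a}e^{-x}$ for $(i)\Rightarrow(ii)$, a Chernoff-type bound and integration by parts of the Laplace transform for $(ii)\Leftrightarrow(iii)$, and the layer-cake representation with $a/b>1$ for $(iii)\Rightarrow(iv)$). All steps check out, including the implicit use of $P(Z=0)=0$, so nothing needs to be changed.
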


Next we present our assumptions and some direct consequences.

The random variables $X(u)$ and $X_n$ are still defined as in the previous section.

Let $\eta$ be the Bernoulli product measure on $\Sigma$ generated by a probability vector $(p_0',\ldots, p_{m-1}')$. Suppose that both $T(q)>0$ and $\sum_{i=0}^{m-1} p_i' m^{-T_i(q)}<1$ for some $q\in (1,2)$.

The first assumption yields  $0<\E(Y^s)<\infty$ for all $t\in [1,q]$.  By convexity of $s\mapsto \sum_{i=0}^{m-1} p_i' m^{-T_i(s)}$ and the fact that $\sum_{i=0}^{m-1} p_i' m^{-T_i(1)}=1$, we have  $\sum_{i=0}^{m-1} p_i' m^{-T_i(s)}<1$ for all $s\in (1,q]$, hence $\sum_{i=0}^{m-1} p_i' T_i'(1)>0$.  Consequently, \begin{equation}
\label{positive}
\sum_{i=0}^{m-1} p_i' T_i(s)>0
\end{equation}
for any $s\in (1,q]$ close enough to 1.

Fix  a number $s\in (1,q]$ so that  \eqref{positive} holds.
Then, Proposition~\ref{XntoX} applied with $q=s$, $U_i=V_i$ and $Z=Y$ implies that $X_n$ converges $\Bbb P\times \eta$-a.e. to a random variable $\widetilde X_V(\omega,x)$ that we simply denote by $X$.  Also,  since $\sum_{i=0}^{m-1} p_i' m^{-T_i(s)}<1$, $\E(Y^s)<\infty$ and $1<s<2$, Proposition~\ref{mom+estimate}  yields $\sup_{n\ge 1}\E_{\Bbb P\times \eta} \left( X_n^{s}\right)  <\infty$, from which we get
$$
\E_{\Bbb P\times \eta} \left( X^{s}\right)  <\infty.
$$
This fact will be used to evaluate the negative moments of $X$ in Propositions~\ref{pro-2.11} and~\ref{negmom1}. For this purpose,  we extend an idea  used in~\cite{HuLi12} in the  context of branching processes in random environments. Then we proceed to the uniform control of the negative moments of $X_n$ in Proposition~\ref{negmom2}. We notice that these moments cannot be directly derived from those of $X$ because $X_n$ cannot be expressed as the conditional expectation of $X$, due to the presence of the factors $Y(u,v)$ in the definition of $X(u)$.

Define \begin{equation}
\label{delta}
\delta=s-1.
\end{equation}
For $t>0$ and $x\in \Sigma$, define
$$
\phi_x(t)=\E(e^{-X(\cdot,x)t}).
$$

Choose $C>1$ large enough so that
$$
e^{-z}<1-z+\frac{C}{1+\delta}z^{1+\delta}, \qquad \forall z>0.
$$

Then we have
$$
e^{-tX(x)} \leq 1-tX(x)+Ct^{1+\delta} X(x)^{1+\delta}/(1+\delta).
$$
Taking expectation and using the fact that $\E(X(x))=1$ for $x\in \Sigma$,  we have
\begin{equation}
\label{phixt}
\phi_x(t)<1-t+Ct^{1+\delta} \E(X(x)^{1+\delta})/(1+\delta).
\end{equation}

Set $K_x=(C\E(X(x))^{1+\delta})^{1/\delta}$. Then $K_x>1$.
Notice that the polynomial in the right-hand side of \eqref{phixt} takes its minimum
$$
\beta_{x}:=1-\frac{\delta}{(1+\delta)K_x} <1
$$
at $t=\frac{1}{K_x}$.  Hence whenever $t\geq 1/K_x$, we have $\phi_x(t)\leq \phi_x(1/K_x)\leq \beta_x$.
In particular, we have
\begin{equation}
\label{phi1}
\phi_x(t)\leq \beta_x, \quad \forall t\geq 1,  \; x\in \Sigma.
\end{equation}

Notice that $X(x)$ satisfies
$$
X(x)=\sum_{j=0}^{m-1} V_{x_1,j} X(x, j),
$$
where $X(x, j)$, $j=0,\ldots, m-1$,  are independent copies of  $X(\sigma x)$, and  are independent to $V_{x_1, j'}$ ($j'=0,\ldots, m-1$).
Hence
\begin{equation}
\label{e-phi}
\phi_x(t)=\int \prod_{j=0}^{m-1} \phi_{\sigma x} (t V_{x_1, j}) \;d\Bbb P.
\end{equation}

First consider a simple model.
\begin{pro}
\label{pro-2.11}
Assume that $T(q)>0$ and $\sum_{i=0}^{m-1} p_i' m^{-T_i(q)}<1$ for some $q\in (1,2]$. Assume, moreover,  that there exists $c\in (0,1)$ such that ${\Bbb P} (V_{i,j}>c)=1$ for all $0\leq i,j\leq m-1$.
Then
$$
\E_{\Bbb P\times \eta}\left( X(x)^{-b}\right)<\infty
$$
for any $b\in (0, -\delta \log m/\log c)$, where $\delta$ is given as in \eqref{delta}.
\end{pro}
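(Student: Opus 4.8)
\textbf{Proof plan for Proposition~\ref{pro-2.11}.}
The plan is to exploit the functional equation \eqref{e-phi} together with the uniform lower bound $V_{i,j}>c$ to show that $\phi_x(t)$ decays like a negative power of $t$ as $t\to\infty$, uniformly in $x$, and then invoke the equivalence $(ii)\Longrightarrow(iv)$ in Lemma~\ref{lem-liu99}. Concretely, I would first record that by \eqref{phi1} there is a uniform bound $\beta:=\sup_{x\in\Sigma}\beta_x<1$; indeed $K_x=(C\,\E(X(x)^{1+\delta}))^{1/\delta}$ is bounded above because $\sup_{x}\E(X(x)^{1+\delta})=\sup_n\E_{\mathbb P\otimes\eta}(X_n^{s})<\infty$ by Proposition~\ref{mom+estimate} (using $s=1+\delta\le q$, $T(q)>0$, $\sum_i p_i'm^{-T_i(q)}<1$), and $\beta_x=1-\delta/((1+\delta)K_x)$ is increasing in $K_x$. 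So $\phi_x(t)\le\beta$ for all $t\ge1$ and all $x$.

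Next I would iterate \eqref{e-phi}: since $\mathbb P(V_{x_1,j}>c)=1$ for every $j$, each factor $\phi_{\sigma x}(tV_{x_1,j})$ with $t\ge 1/c^{\,0}=1$ — more precisely whenever $tc^{k}\ge 1$ — can be bounded by $\phi_{\sigma x}(tc)\le\dots$. Unwinding $n$ steps of \eqref{e-phi} along the sequence $x_1,\dots,x_n$, one gets, using monotonicity of $\phi$ and $V_{x_j,\cdot}>c$,
$$
\phi_x(t)\le \phi_{\sigma^n x}(tc^n)\le \beta
$$
as long as $tc^n\ge 1$, i.e. as long as $n\le \log t/\log(1/c)$. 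Taking $n=\lfloor \log t/\log(1/c)\rfloor$ and actually keeping the product structure (each application of \eqref{e-phi} introduces $\int\prod_j$, and every factor is $\le\beta$ once its argument is $\ge 1$), I would obtain
$$
\phi_x(t)\le \beta^{\,\lfloor \log t/\log(1/c)\rfloor}\le \beta^{\,\log t/\log(1/c)-1}= \beta^{-1}\, t^{\log\beta/\log(1/c)}=O\big(t^{-a}\big),
$$
where $a=\log(1/\beta)/\log(1/c)>0$, uniformly in $x$. This is exactly statement (ii) of Lemma~\ref{lem-liu99} for the variable $X(x)$ with this exponent $a$, hence (iv) gives $\E(X(x)^{-b})<\infty$ for every $b<a$; integrating over $x$ against $\eta$ and using uniformity (or Fatou/monotone convergence on the finite bound $\E(e^{-tX(x)})\le \beta^{-1}t^{-a}$) yields $\E_{\mathbb P\otimes\eta}(X(x)^{-b})<\infty$.

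Finally I would reconcile the exponent: the claim is $b\in(0,-\delta\log m/\log c)$, so I must check $a\ge -\delta\log m/\log c$, i.e. $\log(1/\beta)\ge -\delta\log m$, i.e. $\beta\le m^{\delta}$ — which is automatic since $\beta<1\le m^\delta$; so in fact the stated range is contained in $(0,a)$ and the proposition follows. The main obstacle I anticipate is making the iteration of \eqref{e-phi} fully rigorous while preserving a \emph{product} of $\beta$'s rather than a single $\beta$: one must track that at the $k$-th unfolding the relevant argument is $tV_{x_1,j_1}\cdots V_{x_k,j_k}\ge tc^k$, so the bound $\phi\le\beta$ applies to that whole inner factor only while $tc^k\ge1$, and one has to argue (e.g. by a downward induction on $n$, or by bounding the product by its smallest factor) that $\phi_x(t)\le\beta^{\,n}$ for $n=\lfloor\log t/\log(1/c)\rfloor$. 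This is the one genuinely delicate point; everything else is the uniform moment bound from Proposition~\ref{mom+estimate} and a direct application of Lemma~\ref{lem-liu99}.
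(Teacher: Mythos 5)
Your overall scheme (bound the Laplace transform $\phi_x$ via the recursion \eqref{e-phi} and conclude through Lemma~\ref{lem-liu99}) is the paper's, but the execution has a genuine gap at the crucial step. You need a \emph{uniform} constant $\beta=\sup_x\beta_x<1$, and you justify it by the identity $\sup_x\E(X(x)^{1+\delta})=\sup_n\E_{\mathbb{P}\otimes\eta}(X_n^{s})$; this is false, since the right-hand side is the $\eta$-\emph{average} $\sum_{|u|=n}\eta([u])\E(X(u)^{s})$ controlled by Proposition~\ref{mom+estimate}, not a supremum over environments $u$. Under the stated hypotheses only the weighted sum $\sum_i p_i'm^{-T_i(q)}<1$ is assumed, so some letter $i$ may well have $T_i(1+\delta)<0$; then by Lemma~\ref{lem-2.1}(i) the conditional moments along words beginning with $i^n$ satisfy $\E(X(i^nu')^{1+\delta})\ge m^{-nT_i(1+\delta)}\to\infty$ (and for $x=i^\infty$ the limit variable has infinite $(1+\delta)$-moment by the Kahane--Peyri\`ere criterion in that constant environment), so $K_x$ is unbounded, $\sup_x\beta_x=1$, and your $\beta$ does not exist. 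The paper avoids any supremum in $x$: it keeps the branch exponent, $\phi_x(c^{-n})\le(\beta_{\sigma^nx})^{m^n}\le\exp(-\gamma m^n/K_x)$, integrates in $x$ using the shift invariance of $\eta$, and controls the bad set by Markov's inequality, $\eta\{K_x\ge(m-\epsilon)^n\}\le(m-\epsilon)^{-\delta n}\int K_x^{\delta}\,d\eta$, the last integral being finite exactly because $\E_{\mathbb{P}\otimes\eta}(X^{1+\delta})<\infty$.

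Second, even granting a uniform $\beta<1$, your exponent $a=\log(1/\beta)/\log(1/c)$ does not reach the claimed range: you would need $a\ge\delta\log m/\log(1/c)$, i.e. $\beta\le m^{-\delta}$, not ``$\beta\le m^{\delta}$'' (a sign slip), and this fails because $K_x\ge C^{1/\delta}>1$ forces $\beta_x>1-\delta/(1+\delta)=1/(1+\delta)$, which already equals or exceeds $m^{-\delta}$ (e.g. $m=2$, $\delta=1$). The factor $\log m$ in the stated exponent comes precisely from the power $m^n$ that your per-level bound $\beta^{\lfloor\log t/\log(1/c)\rfloor}$ discards. By contrast, the step you flag as delicate is the easy one: $\phi_x(t)\le(\phi_{\sigma x}(tc))^{m}$ follows at once from \eqref{e-phi}, $V_{x_1,j}>c$ a.s.\ and monotonicity of $\phi_{\sigma x}$, and iterates to $\phi_x(t)\le(\phi_{\sigma^n x}(tc^n))^{m^n}$, whence $\phi_x(c^{-n})\le(\beta_{\sigma^n x})^{m^n}$ by \eqref{phi1}. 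The missing ingredients are therefore: keep the $m^n$, average over $x$ instead of taking a supremum, and obtain the rate $O(c^{na})$ with $a=-\delta\log(m-\epsilon)/\log c$ from the competition between $\exp(-\gamma m^n/K_x)$ on the good set and the $\delta$-moment tail of $K_x$ under $\eta$ — which is exactly how the paper derives the exponent $-\delta\log m/\log c$.
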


\begin{proof}
Since ${\Bbb P} (V_{i,j}>c)=1$ for all $0\leq i,j\leq m-1$,
by \eqref{e-phi}, we have
$$
\phi_x(t)\leq (\phi_{\sigma x}(tc))^m \leq \ldots\leq (\phi_{\sigma^n} (t c^n))^{m^n}, \quad \forall  n\in \N.
$$
 Therefore by \eqref{phi1},
$$
\phi_x(c^{-n})\leq (\phi_{\sigma^n x}(1))^{m^n}\leq (\beta_{\sigma^n x})^{m^n}.
$$
Hence
$$\int \phi_x(c^{-n}) \; d\eta(x)\leq \int (\beta_{\sigma^n x})^{m^n}\; d\eta(x)=\int (\beta_x)^{m^n}\; d\eta(x).$$

Notice that
\begin{eqnarray*}
\begin{split}
(\beta_x)^{m^n} &=\exp\left(m^n \log \left(1-\frac{\delta}{(1+\delta)K_x}\right)\right)\\
 &\leq \exp (- \gamma  m^n /K_x)
\end{split}
\end{eqnarray*}
for some constant $\gamma>0$.  Take $\epsilon>0$ so that $m-\epsilon>1$.  Then
$$
\int \exp (- \gamma  m^n /K_x) \; d\eta (x) \leq \exp \Big(- \gamma  \Big(\frac{m}{m-\epsilon}\Big)^n \Big) +\eta\{x: \; K_x\geq (m-\epsilon)^{n}\}.
$$
Notice that by Markov's inequality,
\begin{eqnarray*}
\eta(\{x: \; K_x\geq (m-\epsilon)^{n}\})&\leq & (m-\epsilon)^{-\delta n} \int K_x^\delta \; d\eta(x)\\
&=&  C (m-\epsilon)^{-\delta n} \int \E(X(x))^{1+\delta} \; d\eta(x)\\
&\leq & C' (m-\epsilon)^{-\delta n}
\end{eqnarray*}
for some constant $C'>0$. It follows that
\begin{eqnarray*}
\E_{\Bbb P\times \eta}\left( \exp(-c^{-n}X(x))\right)  &= & \int \phi_x(c^{-n}) \; d\eta(x)\\
& \leq & \exp \Big(- \gamma  \Big(\frac{m}{m-\epsilon}\Big)^n \Big) +C' (m-\epsilon)^{-\delta n}\\
&\leq & O(c^{na}),
\end{eqnarray*}
with  $a=-\delta \log (m-\epsilon)/\log c$.  Hence by Lemma \ref{lem-liu99},
$\E_{\Bbb P\times \eta} \left(X(x)^{-b}\right)<\infty$ for any $b\in (0, a)$.
\end{proof}

 Next we consider a more general model.

 \begin{pro}\label{negmom1}
 Assume that $T(q)>0$ and $\sum_{i=0}^{m-1} p_i' m^{-T_i(q)}<1$ for some $q\in(1,2]$. Assume that there exists $c\in (0,1)$ such that the following two conditions are satisfied:
 \begin{itemize}
 \item[(i)] $\Bbb P(\sup_{0\leq j\leq m-1} V_{i,j}>c)=1$,\quad  $\forall i$;
 \item[(ii)] $\E(\#\{j: V_{i,j}>c\})>1,\quad \forall i$.
 \end{itemize}
 Then there exists $a>0$ such that
 $$\E_{\Bbb P\times \eta} \left(X(x)^{-a} \right)<\infty.$$
 \end{pro}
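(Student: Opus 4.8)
\textbf{Proof plan for Proposition~\ref{negmom1}.}

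The plan is to reduce the general model to the simple model of Proposition~\ref{pro-2.11} by looking several generations ahead and discarding all the branches whose weight is not bounded below by a fixed constant. Concretely, fix $i$ and observe that by conditions (i)--(ii) we may, after grouping a bounded number of generations together, produce a random vector that with positive probability has at least two ``good'' coordinates $\ge c^{p}$ for some fixed integer~$p$; by the Galton--Watson comparison, the subtree of those branches that stay good forever survives with positive probability and, conditionally on survival, has offspring $\ge 2$ at every step. The key point is that the total mass $X(x)$ dominates the analogous sum $\underline X(x)$ built only from the good subtree, for which the weights are uniformly bounded below. So the first step is to make this domination precise: write $X(x)\ge \underline X(x)$ where $\underline X(x)$ is a sum over a sub-collection of $v$'s along which every factor $V_{x_j,v_j}\ge c$ (after passing to the $p$-fold iterate of the construction so that ``at least two good children'' holds surely rather than merely with positive probability; this is exactly where assumption (ii) combined with (i) is used to pass from ``$\sup>c$ a.s.'' to ``at least two coordinates $>c'$ a.s.'' after grouping).

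The second step is to run the Laplace-transform argument of Proposition~\ref{pro-2.11} on $\underline X$ rather than on $X$. The inequality $\phi_x(t)\le (\phi_{\sigma x}(tc))^{2}$ used there required \emph{surely} at least two children with weight $\ge c$, which is why the regrouping in step one is needed. Once that holds surely for the grouped construction, the recursion $\phi_x(c^{-pn})\le (\phi_{\sigma^{pn}x}(1))^{2^n}\le(\beta_{\sigma^{pn}x})^{2^n}$ goes through verbatim, and then the estimate
$$
\int (\beta_x)^{2^n}\,d\eta(x)\le \exp\!\big(-\gamma (2-\epsilon)^n\big)+\eta\{x:\,K_x\ge (2-\epsilon)^{n/\delta}\}\le O(c^{p n a})
$$
for a suitable $a>0$, using Markov's inequality on $K_x$ together with $\sup_n\E_{\Bbb P\otimes\eta}(X_n^{s})<\infty$ (hence $\E_{\Bbb P\otimes\eta}((\E(X(x)))^{1+\delta})<\infty$), follows as before. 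An appeal to Lemma~\ref{lem-liu99} then gives $\E_{\Bbb P\otimes\eta}(\underline X(x)^{-a'})<\infty$ for all $a'$ in an interval $(0,a)$, and since $X(x)\ge \underline X(x)$ this yields $\E_{\Bbb P\otimes\eta}(X(x)^{-a'})<\infty$.

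The main obstacle I expect is the regrouping argument in the first step: one must check that after collapsing $p$ generations of the Mandelbrot construction into one, conditions (i)--(ii) still furnish, \emph{almost surely} (not just with positive probability), at least two coordinates bounded below by a fixed $c'\in(0,1)$, uniformly over $i$, and that the resulting ``good subtree'' inherits enough independence to make $\underline X(x)$ itself a Mandelbrot-type martingale limit amenable to the functional equation $\underline X(x)=\sum_{j}V'_{x_1,j}\underline X(x,j)$ needed for the recursion on $\phi_x$. A clean way to handle this is to choose $p$ so that $\Bbb P(\#\{j:V_{i,j}>c\}\ge 1)=1$ guarantees, along $p$ independent levels, a binary tree of good vertices: the probability that a given vertex fails to have two good descendants $p$ levels down can be made less than any prescribed threshold by taking $p$ large, using $\E(\#\{j:V_{i,j}>c\})>1$ and a standard supercritical Galton--Watson survival estimate; one then keeps only vertices lying on the (a.s.\ infinite, since we can arrange mean $>1$ and, after one more regrouping, surely $\ge 2$) good subtree. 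The remaining verifications — the domination $X\ge\underline X$, the uniform lower bound $c'$ on the retained weights, and the independence needed for \eqref{e-phi} — are then routine, and the Laplace-transform machinery of Proposition~\ref{pro-2.11} applies with $m$ replaced by $2$ and $c$ by $c'$.
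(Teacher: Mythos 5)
Your reduction to the simple model has a genuine gap at exactly the point you flag as the ``main obstacle'': no finite regrouping of generations can turn hypotheses (i)--(ii) into ``surely at least two good descendants with weights bounded below''. The assumptions only give, for each letter $i$, almost surely \emph{at least one} coordinate $V_{i,j}>c$ and a \emph{mean} number of such coordinates exceeding $1$; they are perfectly compatible with $\Bbb P(\#\{j:V_{i,j}>c\}=1)>0$. In that case, for every fixed $p$, the event that the good subtree consists of a single vertex at generation $p$ (each of the $p$ successive good vertices having exactly one good child) has positive probability, so the grouped construction still has, with positive probability, only one good child. Consequently the recursion $\phi_x(c^{-pn})\le(\phi_{\sigma^{pn}x}(1))^{2^n}$ fails on an event of positive probability, and your claim ``after one more regrouping, surely $\ge 2$'' cannot be repaired: with only one good branch per level the Laplace-transform bound does not decay in $n$, and decay of $\E_{\Bbb P\otimes\eta}(e^{-c^{-n}X})$ is precisely what Lemma~\ref{lem-liu99} needs.

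What is missing is a quantitative control of the exceptional event on which the good subtree grows slowly. This is how the paper proceeds: it introduces $N_n(x_{|n})$, the number of length-$n$ paths all of whose weights are $\ge c$, observes that under $\Bbb P\otimes\eta$ this is a branching process in the random environment $x$ with no extinction ($\gamma_i(0)=0$ by (i)) and mean offspring $M_i>1$ (by (ii)), and invokes the large deviation estimate of Huang--Liu \cite{HuLi12} to get $\Bbb P\otimes\eta(N_n(x_{|n})\le e^{nz_0})\le e^{nT_1}$ with $T_1<0$ for a suitable $z_0>0$. Splitting according to whether $N_n(x_{|n})\ge e^{nz_0}$ or not, the first contribution is handled exactly as in Proposition~\ref{pro-2.11} (with $m^n$ replaced by $e^{nz_0}$), and the second is exponentially small, so $\E_{\Bbb P\otimes\eta}(e^{-c^{-n}X})=O(c^{na'}+e^{nT_1})$ and Lemma~\ref{lem-liu99} concludes. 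Your overall strategy (Laplace transform plus counting good branches) is the right one, but the almost-sure binary-tree reduction must be replaced by this probabilistic estimate on the lower deviations of $N_n$; without it the argument does not close.
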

\begin{proof}
For $x\in \Sigma$, set $N_0(\epsilon)=1$ and
$$
N_n(x_{|n}):=\# \{v=v_1\ldots v_n\in \Sigma_n: \; V_{x_j, v_j}(x_{|j-1}, v_{|j-1})\geq c \mbox{ for } 1\leq j\leq n\}
$$
for $n\geq 1$.
Then $(N_n(x_{|n}))_{n\geq 0}$ satisfies the following relation:
$$
N_{n+1}(x_{|n+1})=\sum_{v\in {\mathcal A}(x,n)} \#\{j: \; V_{x_{n+1}, j}(x_{|n}, v)>c\},
$$
with ${\mathcal A}(x,n):=\{v=v_1\ldots v_n\in \Sigma_n: \; V_{x_j, v_j}(x_{|j-1}, v_{|j-1})\geq c \mbox{ for } 1\leq j\leq n\}$.

Since  the random variables $\#\{j: \; V_{x_{n+1}, j}(x_{|n}, v)>c\}$, $v\in  {\mathcal A}(x,n)$, are independent with the same distribution (depending on $x$), and are independent of $N_n(x_{|n})$,    $(N_n(x_{|n}))_{n\geq 0}$ is exactly a branching process in the random environment  $x$ (cf. \cite{HuLi12}) picked according to~$\eta$.
For $0\leq i\leq m-1$ and $0\leq k\leq m$, set
$$
\gamma_i(k)=\Bbb P(\#\{j:\; V_{i, j}>c\}=k).
$$
and
$$
M_i=\sum_{k=0}^m k \gamma_i(k).
$$
The assumptions (i)-(ii) guarantee that $\gamma_i(0)=0$ and $M_i>1$ for all $i$.

For $t\in \R$, set $\Lambda(t)=\log (\sum_{i=0}^{m-1} p_i' M_i^t)$.
By \cite[Corollary 1.2]{HuLi12}, we have
\begin{equation}
\label{e-large}
\lim_{n\to \infty} \frac{1}{n} \log \Bbb P\times \eta  \left(\frac{\log N_n(x_{|n})}{n}\leq z\right)\leq  \inf_{t\in \R} \{\Lambda (t)-tz\}
\end{equation}
 for $z <\Lambda'(0)=\sum_{i=0}^{m-1} p_i' \log M_i$. Pick $0<z_0<\sum_{i=0}^{m-1} p_i' \log M_i$. Since $\Lambda$ is  convex and smooth, we have
$$ T_0:=\inf_{t\in \R} \{\Lambda (t)-tz_0\}<0.$$
Take any $T_1\in (T_0, 0)$. By \eqref{e-large}, there exists $L>0$ such that
\begin{equation}
\label{e-Peta}
\Bbb P\times \eta  \left( N_n(x_{|n})\leq e^{nz_0}\right)\leq e^{nT_1}
\end{equation}
for any $n\geq L$.

Similar to \eqref{e-phi}, we have for any $n\in \N$ and $t>0$,
\begin{equation}
\label{e-phi'}
\phi_x(t)=\int \prod_{v=v_1\ldots v_n\in \Sigma_n}  \phi_{\sigma^n x} (t Q_V(x_{|n}, v)) \;d\Bbb P,
\end{equation}
with $Q_V(x_{|n}, v)=\prod_{j=1}^n V_{x_j, v_j}(x_{|j-1}, v_{|j-1}).$  Since
$$
\#\{v\in \Sigma_n:\; Q_V(x_{|n}, v)\geq c^n\}\geq N_n(x_{|n}),
$$
by \eqref{phi1} we have
\begin{equation}
\label{e-phi''}
\phi_x(c^{-n})\leq \int   (\phi_{\sigma^n x}(1))^{N_n(x_{|n})}\;d\Bbb P\leq \int   (\beta_{\sigma^n x})^{N_n(x_{|n})}\;d\Bbb P.
\end{equation}

Hence for $n\geq L$,  we have
\begin{eqnarray*}
\E_{\Bbb P \times \eta} (\exp(-c^{-n}X(x)))&\leq & \iint (\beta_{\sigma^nx})^{N_n(x_{|n})} d\Bbb Pd\eta \\
&\leq & \iint {\bf 1}_{\{N_n(x_{|n})\geq e^{nz_0} \}} (\beta_{\sigma_n x})^{e^{nz_0} }d\Bbb Pd\eta+\Bbb P\times \eta\left(N_n(x_{|n})\leq e^{nz_0}\right) \\
&\leq & \int (\beta_x)^{e^{nz_0}} \; d\eta(x)+ e^{nT_1}.
\end{eqnarray*}

Repeating the argument in the proof of Proposition \ref{pro-2.11}, we can show that there exists $a'>0$ such that
$$\int (\beta_x)^{e^{nz_0}}\; d\eta(x) =O(c^{na'}).$$
Hence $\E_{\Bbb P \times \eta} (e^{-c^{-n}X(x)})=O(c^{na'}+e^{nT_1})$. Since $T_1<0$, by Lemma \ref{lem-liu99} this is enough to conclude the desired result.
\end{proof}

Now we estimate moments of negative orders of $X_n=X(x_{|n})$.

\begin{pro} \label{negmom2} Assume the hypotheses of Proposition~\ref{negmom1}. Then there exists $b>0$ such that $\sup_{n\ge 1}\E_{\mathbb P\otimes \eta} (X_n^{-b})<\infty$.
\end{pro}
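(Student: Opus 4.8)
The plan is to reduce the uniform bound to a uniform control of Laplace transforms: by Lemma~\ref{lem-liu99} it is enough to produce $b>0$ and $C<\infty$ with $\int_1^\infty t^{b-1}\,\E_{\mathbb P\otimes\eta}(e^{-tX_n})\,\mathrm dt\le C$ for all $n\ge1$ (the part of $\int_0^\infty$ over $(0,1)$ is bounded by $1/b$, and $X_n>0$ $\mathbb P\otimes\eta$-a.s.\ thanks to hypothesis (i) and $Y>0$ a.s.). Writing $X_n(x)=\sum_{|v|=n}Y(x_{|n},v)\,Q_V(x_{|n},v)$ with $Q_V(x_{|n},v)=\prod_{j=1}^nV_{x_j,v_j}(x_{|j-1},v_{|j-1})$ and conditioning on the $\sigma$-field $\mathcal F_k$ generated by the environment and the weights down to generation $k$, the branching property gives, for every $k\le n$,
$$
\E\big(e^{-tX_n}\mid\mathcal F_k\big)=\prod_{|v|=k}\phi_{n-k}\!\big(tQ_V(x_{|k},v);\sigma^kx\big)\ \le\ \phi_{n-k}\!\big(tc^{k};\sigma^kx\big)^{N_k(x_{|k})},
$$
where $\phi_m(\,\cdot\,;y)$ denotes the Laplace transform of $X(y_{|m})$ (so $\phi_0=\phi_Y$), $N_k(x_{|k})=\#\mathcal A(x,k)$ is the generation-$k$ size of the (immortal, supercritical by (i)--(ii)) branching process in random environment of paths with all weights $\ge c$, and the inequality drops the $v\notin\mathcal A(x,k)$ and uses that $\phi_{n-k}(\,\cdot\,;y)$ is non-increasing together with $Q_V(x_{|k},v)\ge c^{k}$ on $\mathcal A(x,k)$. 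For given $t\ge1$ and $n$ I would descend to the level $k=k(t,n):=\min\{n,\lfloor\log t/\log(1/c)\rfloor\}$, so that the residual scale $s=tc^{k}$ always lies in $[1,1/c)$. Two regimes then appear: either $k(t,n)<n$ (i.e.\ $t<c^{-n}$, and one stops at an internal $X_{n-k}$-object) or $k(t,n)=n$ (i.e.\ $t\ge c^{-n}$, and one has descended all the way onto the generation-$n$ copies of $Y=X_0$).

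In the regime $k<n$ I would use the base estimate from the proof of Proposition~\ref{negmom1}: since $\E(X(u))=1$ for every $u$ and $e^{-z}\le 1-z+\tfrac{C}{1+\delta}z^{1+\delta}$, and since $\phi_m(\,\cdot\,;u)$ is decreasing, one gets $\phi_m(s;u)\le\beta_{m,u}:=1-\tfrac{\delta}{1+\delta}\big(CB_m(u)\big)^{-1/\delta}<1$ for all $s\ge1$, where $\delta\in(0,1]$ is the small exponent fixed in the proof of Proposition~\ref{negmom1} and $B_m(u)=\E(X(u)^{1+\delta})\ge1$ (so the minimiser $(CB_m(u))^{-1/\delta}$ of the above polynomial is $\le1$). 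Crucially, Proposition~\ref{mom+estimate}(i) — applicable since $T(1+\delta)>0$ and $\sum_ip_i'm^{-T_i(1+\delta)}<1$ for $\delta$ small — yields $\sup_m\E_{\mathbb P\otimes\eta}(X_m^{1+\delta})=\sup_m\sum_{|u|=m}\eta([u])B_m(u)=:C_\delta<\infty$. Combining the conditional bound with the independence under $\eta$ of $\sigma^kx$ and $(x_{|k},\text{weights}<k)$, the large-deviation estimate $\mathbb P\otimes\eta(N_k\le e^{kz_0})\le e^{kT_1}$ ($k\ge L$, $0<z_0<\sum_ip_i'\log M_i$, $T_1<0$) already used for Proposition~\ref{negmom1}, and Markov's inequality for $B_m(\cdot)$ against $C_\delta$, I expect $\E_{\mathbb P\otimes\eta}(e^{-tX_n})\le C\,k^{\delta}e^{-k\delta z_0}+e^{kT_1}$ with $k=k(t,n)$ (and a fixed constant $<1$, uniform in $n$, when $t<c^{-L}$, by Jensen applied to $\E_\eta[\beta_{m,\cdot}]$). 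Splitting $\int_1^{c^{-n}}$ according to the value of $k(t,n)\in\{0,\dots,n-1\}$ then bounds this part of the integral by $\mathrm{const}\cdot\sum_{k\ge L}c^{-kb}\big(k^{\delta}e^{-k\delta z_0}+e^{kT_1}\big)$, which is finite and independent of $n$ as soon as $b<\min\{\delta z_0,|T_1|\}/\log(1/c)$ (choosing $z_0$ small enough that $\delta z_0<1$).

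The regime $k=n$ is where the real work lies, and it is the point that forces a genuine input on $Y$ itself. Here the residual object is $Y$, which has no uniformly-bounded-in-average moment structure, so one needs a negative moment of $Y$. This is exactly where the \emph{uniform} lower bound in hypothesis (i), $\sup_jV_{i,j}>c$ (not merely $\sup_jV_{i,j}>0$ a.s.), is used: the Galton--Watson process of indices $(i,j)$ with $W_{i,j}>c\min\{p_{i'}:p_{i'}>0\}$ is then immortal, and supercritical by (ii), so running the argument of Propositions~\ref{pro-2.11}--\ref{negmom1} for this process (or invoking \cite{Liu99}) produces $b_Y>0$ with $\phi_Y(s)=O(s^{-b_Y})$ as $s\to\infty$. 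Using $\phi_Y(tc^n)^{N_n}\le\phi_Y(1)^{N_n-1}\phi_Y(tc^n)$ (valid since $tc^n\ge1$ and $\phi_Y$ is decreasing), $\phi_Y(1)<1$, the large-deviation bound for $N_n$, and $\int_{c^{-n}}^\infty t^{b-1-b_Y}\,\mathrm dt$, one gets that $\int_{c^{-n}}^\infty t^{b-1}\E_{\mathbb P\otimes\eta}(e^{-tX_n})\,\mathrm dt$ is at most a constant times $c^{-nb}\big(\phi_Y(1)^{e^{nz_0}-1}+e^{nT_1}\big)$, which tends to $0$ as $n\to\infty$ once $b<\min\{b_Y,|T_1|/\log(1/c)\}$. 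Finally the finitely many $n<L$ are handled directly from $X_n\ge c^{\,n}Y(x_{|n},v)$ for a fixed $v$ with positive weight, which gives $\E_{\mathbb P\otimes\eta}(X_n^{-b})\le c^{-nb}\E(Y^{-b})<\infty$. Choosing $b$ positive and below all the thresholds above makes every bound uniform in $n$, which completes the proof.

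The main obstacle, as just indicated, is this last regime: since $X_n$ carries the factors $Y(u,v)$ at generation exactly $n$, one cannot descend below generation $n$, so for large $t$ one is forced onto the $Y$'s and must know that $Y$ has a small negative moment — and that fact genuinely uses the uniform constant $c$ in (i). A secondary nuisance is that Proposition~\ref{mom+estimate} controls $\E(X_m^{1+\delta})$ only in $\eta$-average, so in the regime $k<n$ the base constant $\beta_{m,u}$ is environment-dependent and close to $1$ on a set of environments of small mass; absorbing this by a Markov inequality is what turns the final choice of $b$ into a balancing act between $b_Y$, $z_0$, $T_1$ and $\delta$.
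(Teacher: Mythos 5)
Your proof is correct, and it reaches the uniform bound by a genuinely different route from the paper's. The paper estimates $\E_{\mathbb P\otimes\eta}(e^{-tX_n})$ in two regimes glued by log-convexity of the Laplace transform: for $t\le m^{\alpha n}$ it compares $X_n$ with the limit variable $\widetilde X$ (using $\phi_Y(u)\le e^{-u/2}$ near $0$, uniform integrability of $(\widetilde X_n)$ plus Jensen, then Proposition~\ref{negmom1} and Lemma~\ref{lem-liu99} applied to $\widetilde X$), while for $t\ge m^{3\theta n}$ it proves the recursive negative-moment bound $\E_{\mathbb P\otimes\eta}(X_n^{-\beta})\le g(\beta)^n\,\E(Y^{-\beta})$ via a convexity/H\"older trick and a truncation at $u=\log(t)^2/t$, and finally interpolates between the two ranges when they do not overlap. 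You instead descend, for each $t$, exactly $k(t)\wedge n$ levels of the tree so that the residual Laplace transforms are evaluated at arguments in $[1,1/c)$: when $k<n$ the residual objects are the finite-level variables $X_m$ themselves and you rerun the mechanism of Propositions~\ref{pro-2.11}--\ref{negmom1} at finite level, the environment-dependence of $\E(X_m(u)^{1+\delta})$ being absorbed by a Markov step against the uniform bound $\sup_m\E_{\mathbb P\otimes\eta}(X_m^{1+\delta})<\infty$ furnished by Proposition~\ref{mom+estimate}; when $k=n$ you land directly on the copies of $Y$ and use $\phi_Y(s)=O(s^{-b_Y})$. Both arguments rest on the same two key inputs, which you have correctly isolated: the negative moments of $Y$ coming from the uniform constant $c$ in hypothesis (i) (the paper quotes \cite{liu01}), and the supercritical branching process $N_k$ of $c$-good paths together with the large deviation bound of \cite{HuLi12}. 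What your single descent scheme buys is that the two $t$-regimes meet exactly at $t=c^{-n}$, so you dispense with both the log-convexity interpolation and the recursive negative-moment estimate; the price is the extra finite-level bookkeeping (the uniform $(1+\delta)$-moment bound and the threshold/Markov balancing of constants), which is routine and does go through as you expect.
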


\begin{proof} Fix $q\in (1,2]$ as in the beginning of this section, i.e. $T(q)>0$ and  $ \sum_{i=1}^{m-1}p'_im^{-T_i(q)}<1$, and  notice that for $n\ge 1$, we have $\mathbb{E}_{\mathbb P\otimes\eta}(\sum_{v\in\Sigma_n}Q_V(x_{|n}, v)^q)=\Big (\sum_{i=1}^{m-1}p'_im^{-T_i(q)}\Big )^n$. Consequently,  there exist two positive numbers  $\alpha$ and $\gamma$ such that
\begin{equation}
\label{e-PE}
\mathbb P\otimes\eta(\max_{v\in \Sigma_n}Q_V(x_{|n}, v)>m^{-\alpha n})\le  m^{-\gamma n}.
\end{equation}
 Now, we estimate  $\mathbb{E}_{\mathbb P\otimes\eta}(e^{-tX_n})$ in two steps.

At first, we write
$$
X_n=\sum_{v\in \Sigma_n}  Q_V(x_{|n}, v) Y(x_{|n},v)
$$
and use the independence and equidistribution properties of the random variables $Y(x_{|n},v)$ with respect to $\mathbb{E}_{\mathbb P\otimes\eta}$  to get that, for any $t\ge 0$, $$\mathbb{E}_{\mathbb P\otimes\eta}(e^{-tX_n})=\mathbb{E}_{\mathbb P\otimes\eta}\Big (\prod_{v\in \Sigma_n}\phi_Y(tQ_V(x_{|n}, v))\Big ).
$$
We know that $\phi_Y(u)\le e^{-u/2} $ for $u$ small enough since $\E(Y)=1$. Consequently, there exists $\epsilon_0>0$ such that for $n$ large enough, if $0<t\le \epsilon_0 m^{n\alpha}$, on $\{\max_{v\in \Sigma_n}Q_V(x_{|n}, v)\le m^{-n\alpha})\}$ we have
$$
\prod_{v\in \Sigma_n}\phi_Y(tQ_V(x_{|n}, v))\le e^{-\frac{t}{2}\sum_{v\in\Sigma_n}Q_V(x_{|n}, v)}=e^{-\frac{t}{2}\widetilde X_n}.
$$
Moreover, the Mandelbrot martingale in random environment $(\widetilde X_n)$ being uniformly integrable and $u\mapsto e^{-u/2}$ being convex,  for all $n\ge 1$ we have
$$
\mathbb{E}_{\mathbb P\otimes\eta}(e^{-\frac{t}{2}\widetilde X_n})\le \mathbb{E}_{\mathbb P\otimes\eta}(e^{-\frac{t}{2}\widetilde X}).
$$
Thus,  for $n$ large enough, for $0< t\le \epsilon_0 m^{n\alpha}$, we have
\begin{eqnarray*}
\mathbb{E}_{\mathbb P\otimes\eta}(e^{-tX_n})
&=&\mathbb{E}_{\mathbb P\otimes\eta}\Big (\prod_{v\in \Sigma_n}\phi_Y(tQ_V(x_{|n}, v))\Big )\\
&\le & \mathbb{E}_{\mathbb P\otimes\eta}(e^{-\frac{t}{2}\widetilde X})+ \mathbb P\otimes\eta(\max_{v\in \Sigma_n}Q_V(x_{|n}, v)>m^{-\alpha n}).
\end{eqnarray*}
Due to Proposition \ref{negmom1}, Lemma \ref{lem-liu99} and the estimate \eqref{e-PE}, we get  constants $C_0>0$ and $b_0>0$ such that for $0< t\le \epsilon_0 m^{n\alpha}$ we have
$$
\mathbb{E}_{\mathbb P\otimes\eta}(e^{-tX_n})\le C_0 t^{-b_0}.
$$
Moreover, taking $\alpha$ slightly smaller we can assume without loss of generality that $\epsilon_0=1$. %

The second estimate of  $\mathbb{E}_{\mathbb P\otimes\eta}(e^{-tX_n})$ is as follows. Since, with probability 1, we have $\#\{(i,j)\in\{0,\ldots,m-1\}^2: W_{i,j}\ge c p_i\}\ge 1$ almost surely, we know from \cite[Theorem 4.1]{liu01} that for some $\beta_0>0$ we have $\E(Y^{-\beta})<\infty$ for all $\beta\in(0,\beta_0]$, hence $\mathbb{E}_{\mathbb P\otimes\eta}(Y^{-\beta}(x_{|n},v))<\infty$ for all $n\ge 1$, $x\in\Sigma$ and $v\in\Sigma_n$ and  $\beta\in(0,\beta_0]$. Fix $\beta\in (0,\beta_0]$ and write
$$
X_n(x)=\sum_{j=0}^{m-1,*}  V_{x_1,j} X_{n-1}(\sigma x,j),
$$
where $*$ means that  we sum over those $j$ such that $V_{x_1,j}>0$. Setting $N_{x_1}=\#\{j:V_{x_1,j}>0\}$  and using the convexity of $t\mapsto t^{-N_{x_1}}$ in  a similar way as in \cite{Mol,B1999} to study the moments of negative orders of $Y$,  we get
$$
X_n(x)^{-\beta}\le N_{x_1}^{-\beta} \Big (\prod_{j=0}^{m-1,*} V_{x_1,j}^{-\beta/N_{x_1}}\Big )\prod_{j=0}^{m-1,*}  X_{n-1}(\sigma x,j)^{-\beta/N_{x_1}},
$$
which yields
\begin{eqnarray*}
\nonumber\mathbb{E}_{\mathbb P\otimes\eta}(X_n(x)^{-\beta})&\le& \mathbb{E}_{\mathbb P\otimes\eta}\left (N_{x_1}^{-\beta} \Big (\prod_{j=0}^{m-1,*}V_{x_1,j}^{-\beta/N_{x_1}}\Big ) \mathbb{E}_{\mathbb P\otimes\eta}\Big (\prod_{j=0}^{m-1,*}  X_{n-1}(\sigma x,j)^{-\beta/N_{x_1}}|(x_1,V_{x_1})\Big )\right )\\
&\le & \mathbb{E}_{\mathbb P\otimes\eta}\left (N_{x_1}^{-\beta} \Big (\prod_{j=0}^{m-1,*}V_{x_1,j}^{-\beta/N_{x_1}}\Big ) \prod_{j=0}^{m-1,*} \E_{\mathbb P\otimes\eta}(X_{n-1}(\sigma x,j)^{-\beta}|(x_1,V_{x_1}))^{1/N_{x_1}}\right )\\
&= &\mathbb{E}_{\mathbb P\otimes\eta}\Big (N_{x_1}^{-\beta} \Big (\prod_{j=0}^{m-1,*}V_{x_1,j}^{-\beta/N_{x_1}}\Big )\Big ) \mathbb{E}_{\mathbb P\otimes\eta} (X_{n-1}(x)^{-\beta}),
\end{eqnarray*}
where we have successively used generalized H\"older inequality  to bound from above  the term $\mathbb{E}_{\mathbb P\otimes\eta}\Big (\prod_{j=0}^{m-1,*}  X_{n-1}(\sigma x,j)^{-\beta/N_{x_1}}|(x_1,V_{x_1})\Big )$,  and the equality in distribution of the random variables $X_{n-1}(\sigma x,j)$ with $X_{n-1}(x)$ as well as their independence with respect to $(x_1,V_{x_1})$ under $\mathbb P\otimes\eta$.  Consequently, using the previous inequality recursively we obtain
$$
\mathbb{E}_{\mathbb P\otimes\eta}(X_n^{-\beta})\le g(\beta)^n  \mathbb E(Y^{-\beta}),
 $$
where
$$
g(\beta)=\mathbb{E}_{\mathbb P\otimes\eta}\Big (N_{x_1}^{-\beta} \Big (\prod_{j=0}^{m-1,*}V_{x_1,j}^{-\beta/N}\Big )\Big ).
$$
Now, for any $t>1$, setting $u=\log(t)^2/t$ we have
\begin{eqnarray*}
\mathbb{E}_{\mathbb P\otimes\eta}(e^{-tX_n})&\le&\mathbb P\otimes\eta(X_n\le u)+e^{-tu}\\
&\le & u^\beta\mathbb{E}_{\mathbb P\otimes\eta}(X_n^{-\beta})+e^{-\log(t)^2}\\
&\le& \mathbb E(Y^{-\beta})  \log(t)^{2\beta} t^{-\beta} g(\beta)^n +e^{-\log(t)^2}.
\end{eqnarray*}
It is not hard to check that $g(\beta)$ is differentiable at $0$. Hence we can fix  $\theta>0$ such that for $\beta$ small enough we have $g(\beta)^n\le m^{\beta\theta n}$. Fix such a $\beta\in (0,\beta_0]$. For $t> m^{3\theta n}$, the previous inequalities yield
$$
\mathbb{E}_{\mathbb P\otimes\eta}(e^{-tX_n})=O( \log(t)^{2\beta} t^{-2\beta /3})=O(t^{-\beta/3}),
$$
where $O$ is uniform with respect to $n$. Consequently, if $3\theta< \alpha$, so that $m^{3\theta n}<  m^{\alpha n}$, our two estimates for $\mathbb{E}_{\mathbb P\otimes\eta}(e^{-tX_n})$ yield  $C>0$ such that for all $n\ge 1$ and $t>1$ we have   $\mathbb{E}_{\mathbb P\otimes\eta}(e^{-tX_n})\le C t^{-\min (b_0,\beta/3)}$.  Otherwise,  for  $ m^{\alpha n}< t <m^{3\theta n}$, using the log convexity of $u\mapsto \mathbb{E}_{\mathbb P\otimes\eta}(e^{-uX_n})$ and our estimates of this map at $m^{\alpha n}$ and $m^{3\theta n}$, we get  $\mathbb{E}_{\mathbb P\otimes\eta}(e^{-tX_n})\le C t^{-\min (b_0\alpha/(3\theta),\beta/3)}$ for all $t>1$ and independently of~$n$.

Finally, since for $h>0$ we have  $\mathbb{E}_{\mathbb P\otimes\eta}(X_n^{-h})=\Gamma(h)^{-1}\int_0^\infty \mathbb{E}_{\mathbb P\otimes\eta}(e^{-tX_n}) t^{h-1}\mathrm{d}t$, we conclude that $\sup_{n\ge 1}\mathbb{E}_{\mathbb P\otimes\eta}(X_n^{-b})<\infty$ for all $b\in (0,\min (b_0,b_0\alpha/(3\theta),\beta/3))$.
\end{proof}

\section{Final remarks}

As a consequence of our study of the multifractal formalism, we can achieve a part of the multifractal analysis of the number  $N_n(x)$  of cylinders of generation $n$ of the form $[x_{|n},v]$, $v\in\Sigma_n$,  which intersect the support $K_n$ of $\mu_n$.  Specifically, if $n\ge 1$ and $u\in\Sigma_n$ we set
$$
N(u)=\#\{v\in\Sigma_n: Q(u,v)>0\}.
$$
Then $N_n(x)=N(x_{|n})$. This number measures  the overlapping amount over $[x_{|n}]$ when one projects $K_n$ onto $\pi(K)$.

\begin{cor}\label{covnumb}
\begin{enumerate}
\item  Suppose that $\E(N_i)\le 1$ for all $0\le i\le m-1$ such that $\E(N_i)>0$. With probability one, conditionally on $K\neq\emptyset$, for all $x\in \pi(K)$ one has $\lim_{n\to\infty}\frac{\log N_n(x)}{n}=0$.

\item Suppose that $\E(N_i)>1$ for at least one $0\le i\le m-1$.  Let $\varphi$ be defined as in  \eqref{varphi}. Let $q_0$ be the unique point at which $\varphi$ attains its minimum over $[0,1]$. Define
\begin{equation}\label{pressure}
P: q\mapsto \begin{cases}
\log(m)\cdot \varphi(q_0) &\text{ if }0\le q\le q_0\\
\log(m)\cdot \inf\{\varphi(q/s): q\le s\le 1\} &\text{ if }q_0<q\le 1\\
\max (\log\E(N), \log(m)\cdot \varphi(q))&\text{ if }q> 1
\end{cases}.
\end{equation}
If $q_0<1$ or $q_0=1$ and  $\varphi'(1)=0$, then $P$ is differentiable over $\R_+$, analytic over $[0,q_0)\cup(q_0,\infty)$ and it has a second order phase transition at $q_0$. Specifically, $P\equiv \log(m)\cdot \varphi(q_0)$ over $ [0,q_0)$ and $P\equiv \log(m)\cdot \varphi $ over $(q_0,\infty)$.

If $q_0=1$ and  $\varphi'(1)<0$, then  there exists a unique $q'_0>1$ such that $P(q'_0)= \log \E(N)$, and  $P$ is analytic over $[0,q'_0)\cup(q'_0,\infty)$, with  $P\equiv  \log \E(N)$ over $ [0,q'_0)$ and $P\equiv \log(m)\cdot \varphi $  over $(q'_0,\infty)$. Moreover, $P$ has a first order phase transition at $q'_0$.

With probability 1, conditionally on $\pi(K)\neq\emptyset$, for all $q\ge 0$ we have
\begin{equation}\label{PntoP}
\lim_{n\to\infty} \frac{1}{n}\log \sum_{|u|=n}\mathbf{1}_{\{N(u)\ge 1\}}N(u)^q=P(q).
\end{equation}
\item If $\alpha\in \{P'(q^-),P'(q^+)\} $ for some $q>0$ or $\alpha=P'(0+)$, then, with probability 1, conditionally on $K\neq\emptyset$,
$$
\dim_H\left\{x\in\pi(K):\frac{\log N_n(x)}{n}=\alpha\right\}=\frac{1}{\log(m)}\inf\{P(q)-\alpha q: q\ge 0\}.
$$

\end{enumerate}

\end{cor}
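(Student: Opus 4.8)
\textbf{Proof proposal for Corollary~\ref{covnumb}.}

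The plan is to reduce the entire statement to the multifractal analysis of $\pi_*\mu'$, where $\mu'$ is the specific ``branching-type'' Mandelbrot measure whose weights encode the combinatorial quantities $N_i$. Concretely, for each $i$ such that $\E(N_i)>0$ set $W'_{i,j}=\mathbf{1}_{\{W_{i,j}>0\}}$ (up to normalization so that $\E(\sum W'_{i,j})$ makes sense one rather works with the measure $\mu'$ generated by the vectors $(\mathbf 1_{\{W_{i,j}>0\}})_{i,j}$, which is exactly the one appearing in the proof of Corollary~\ref{DGF} when $h_0=1$, i.e. the branching measure on $K$ when $\E(N)>1$). Then $N(u)=\sum_{v\in\Sigma_{|u|}}\mathbf 1_{\{Q(u,v)>0\}}$ is precisely the total mass at level $n$ of the $\pi$-projected counting structure, and one has the two-sided comparison $N_n(x)\asymp \pi_*\mu'_n([x_{|n}])\cdot(\text{bounded multiplicative error})$ in the relevant regime; more precisely $\sum_{|u|=n}\mathbf 1_{\{N(u)\ge 1\}}N(u)^q$ is, up to the deterministic factor coming from the $p'_i$ normalization, exactly $\sum_{|u|=n}\pi_*\mu'_n([u])^q$ rescaled. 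So the whole corollary is a transcription of Theorem~\ref{MA} applied to $\mu'$, together with Remark~\ref{rem-2.9} which lets us replace $\pi_*\mu'([x_{|n}])$ by $\pi_*\mu'_n([x_{|n}])$ in the level-set definitions --- this is crucial here since $N_n(x)$ is a ``level-$n$'' object, not a limit object.

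Here are the steps in order. First I would treat part (1): when $\E(N_i)\le 1$ for all relevant $i$, the function $\varphi$ of \eqref{varphi} is non-increasing on $[0,1]$, the branching measure $\mu'$ (or rather the associated $\nu'$) has $\dim_H\pi(K)=\varphi(1)=\log_m\E(N)$, and the key point is the \emph{uniform} (over all $x\in\pi(K)$, not just $\pi_*\mu$-a.e.) statement $\lim_n \frac1n\log N_n(x)=0$. This uniform bound is obtained exactly as the deterministic bounds in the proof of Lemma~\ref{lem4.12}: a first-moment/Borel--Cantelli argument gives $m^{-nA}\le N(u)\le m^{nA}$-type control, and then one upgrades $0\le \frac1n\log N_n(x)$ to an exact limit $0$ using that $\E(N_i)\le 1$ forces the relevant martingale to have subexponential growth uniformly; the lower bound $N_n(x)\ge 1$ for $x\in\pi(K)$ is automatic. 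Second, for part (2), I would check that the function $P$ defined in \eqref{pressure} is exactly $-\log(m)\,\tau_{\pi_*\mu'}(q)$ on $[0,\widetilde q_c)$ for this choice of $\mu'$: indeed for $0<q\le 1$ the variational formula $\tau(q)=-\inf\{\log_m\sum_i p_i^q m^{-qT_i(s)/s}:q\le s\le 1\}$ specializes, because here $T_i(s)=(s-1)\log_m\E(N_i)$ is affine, $p_i=\E(N_i)/\E(N)$ (up to the normalization inherent in the branching measure), and $p_i^q m^{-qT_i(s)/s}$ collapses to a power of $\E(N_i)$ giving $\varphi(q/s)$; for $q>1$ one gets $\min(\tau_\nu,T)$ which translates to $\max(\log\E(N),\log(m)\varphi(q))$, and the ``either $q_0<1$/ $q_0=1,\varphi'(1)=0$ \dots'' dichotomy matches the second-order vs first-order phase transition alternative in Theorem~\ref{MA}, established in Section~\ref{study of tau}. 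The limit \eqref{PntoP} is then \eqref{convtaun} of Theorem~\ref{MA} transported through Remark~\ref{rem-2.9}, once one notices $\widetilde q_c=\infty$ here (affine $T_i$, $T$ finite everywhere, so $q_c=\infty$). Third, for part (3), $\dim_H\{x\in\pi(K):\frac1n\log N_n(x)\to\alpha\}$ is $\dim_H E(\pi_*\mu'_n,\widetilde\alpha)$ for the corresponding exponent $\widetilde\alpha$ (differing from $\alpha$ by the deterministic linear change of variables), and Theorem~\ref{MA}(2) together with Remark~\ref{rem-2.9} gives the value $\tau_{\pi_*\mu'}^*(\widetilde\alpha)$, which after the change of variables is $\frac1{\log m}\inf\{P(q)-\alpha q:q\ge0\}$.

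The main obstacle, I expect, is \textbf{the uniformity over all of $\pi(K)$} in parts (1) and (3). Theorem~\ref{MA} and its proof control $\pi_*\mu$-a.e.\ (or $\widetilde\nu_q$-a.e.) behavior and give Hausdorff-dimension lower bounds for level sets, but here we want a statement valid simultaneously for \emph{every} $x\in\pi(K)$ in part (1), and we want the level sets in part (3) to be described by the \emph{level-$n$} quantity $N_n(x)$ rather than a limiting measure of a ball. For part (1) the uniformity comes cheaply from the first-moment estimates (this is why the hypothesis $\E(N_i)\le 1$ is needed --- it kills exponential growth deterministically), essentially the argument already run inside Lemma~\ref{lem4.12} for the quantities $\mu([x_{|n},v])/\nu([x_{|n}])$. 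For part (3) the subtlety is that $\{x:\frac1n\log N_n(x)\to\alpha\}$ must be related to $E(\pi_*\mu',\cdot)$; the upper bound uses the general inequality $\dim_H E\le\tau^*$ plus a covering argument exactly as in Section~\ref{sec-4.3} (the paragraph deriving \eqref{convtaun}), while the lower bound reuses the auxiliary Mandelbrot-measure-in-random-environment construction from Section~\ref{taudifpsi}/\ref{taudifpsi}, applied with the affine $T_i$'s; one must also invoke Remark~\ref{rem-2.9} to legitimately pass between $\pi_*\mu'([x_{|n}])$ and $\pi_*\mu'_n([x_{|n}])$ (equivalently $N_n(x)$). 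The remaining bookkeeping --- checking $q_c=\widetilde q_c=\infty$, verifying the formula for $P$ and its phase-transition structure, and performing the affine change of variables between $\alpha$ and the $L^q$-spectrum exponent --- is routine given Theorem~\ref{MA} and the computations of Section~\ref{study of tau}.
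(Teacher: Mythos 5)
Your proposal is correct and follows essentially the same route as the paper: parts (2)–(3) are obtained, exactly as in the paper, by applying Theorem~\ref{MA} together with Remark~\ref{rem-2.9} to the branching measure $\mu'$ generated by the weights $\E(N)^{-1}\mathbf{1}_{\{W_{i,j}>0\}}$, via the identity $N_n(x)=\E(N)^n\,\pi_*\mu'_n([x_{|n}])$, with the computation of $P$, the verification that $q_c=\widetilde q_c=\infty$ for the affine $T_i$'s, and the affine change of exponent being the intended routine bookkeeping. For part (1) the paper runs the same uniformity argument you sketch, but be aware that a literal first moment ($q=1$) cannot work since $\E\sum_{|u|=n}N(u)=\E(N)^n$ with $\E(N)>1$; the paper instead uses \eqref{PntoP} (still valid here, with $P\equiv\log\E(N)$ thanks to $\E(N_i)\le 1$) and chooses $q$ large enough that $\#\{u\in\Sigma_n:N(u)\ge m^{n\epsilon}\}\le m^{-nq\epsilon}\sum_u\mathbf{1}_{\{N(u)>0\}}N(u)^q<1$, so this integer count is zero and the bound holds for \emph{every} $x\in\pi(K)$ — this large-$q$ counting step is precisely the ``upgrade'' your sketch leaves implicit.
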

 Parts (2)-(3) of this corollary follow from the application of Theorem~\ref{MA} to the branching measure, i.e. the Mandelbrot measure $\mu'$ associated with $$W'=(\E(N)^{-1}\mathbf{1}_{\{W_{i,j}>0\}})_{0\le i,j\le m-1}.$$
 More precisely, one writes that $N_n(x)=\E(N)^n \mu'_n([x_n])$ and use Remark \ref{rem-2.9}.

 For Part (1),  under our assumptions property \eqref{PntoP} still holds, with $P$ given by~\eqref{pressure}, for the same reason as in item (2). It is then direct to check that $P(q)=\log \E(N)$ for all $q\ge 0$. Consequently, conditionally on $\pi(K)\neq\emptyset$, for any $\epsilon>0$, for any $q>0$, if $n$ is large enough, we have
 \begin{eqnarray*}
 \#\{u\in\Sigma_n: N(u)\ge m^{n\epsilon}\}&\le& m^{-nq\epsilon}  \sum_{|u|=n}\mathbf{1}_{\{N(u)>0\}}N(u)^q\\
 &\le& m^{-nq\epsilon}m^{n(\log(\E(N))+\epsilon)}\\
 &=& m^{-n((q-1)\epsilon-\log\E(N))}.
 \end{eqnarray*}
 Choosing $q>1$ such that $(q-1)\epsilon-\log\E(N)>0$ yields that for $n$ large enough, $\#\{u\in\Sigma_n: N(u)\ge m^{n\epsilon}\}<1$  so $\{u\in\Sigma_n: N(u)\ge m^{n\epsilon}\}$ is empty. Thus,  for all $x\in\pi(K)$, we have $\limsup_{n\to\infty}\frac{\log N_n(x)}{n}\le \epsilon$. Since $\epsilon$ is arbitrary and $N_n(x)\ge 1$, this yields $\lim_{n\to\infty}\frac{\log N_n(x)}{n}=0$ for all $x\in\pi(K)$.

\appendix \label{A}
\section{\ }
\begin{pro}\label{Kmu}The events $\{\mu\neq 0\}$ and $\{K:=\bigcap_{n\ge 1}\mathrm{supp}(\mu_n)\neq\emptyset\}$ coincide up to a set of probability 0, over which $K=\mathrm{supp}(\mu)$.
\end{pro}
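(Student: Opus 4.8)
\emph{Approach.} The plan is to compare the extinction probability of the underlying Galton--Watson process with the probability that $\mu$ vanishes, show these agree, and then upgrade this equality of probabilities to an almost sure identification of the sets $\{\mu\neq 0\}$, $\{K\neq\emptyset\}$ and of $\mathrm{supp}(\mu)$, $K$. First I would record the trivial inclusion: since $\mu([u]\times[v])=Q(u,v)Y(u,v)$, the measure $\mu$ gives no mass to a cylinder with $Q(u,v)=0$, so $\mathrm{supp}(\mu)=\{(x,y):\mu([x_{|n}]\times[y_{|n}])>0\ \forall n\}\subseteq K_n=\mathrm{supp}(\mu_n)$ for every $n$, whence $\mathrm{supp}(\mu)\subseteq\bigcap_{n\ge1}K_n=K$. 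In particular $\{\mu\neq 0\}\subseteq\{K\neq\emptyset\}$ surely, and one of the two inclusions between $\mathrm{supp}(\mu)$ and $K$ is already in hand.

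\emph{Identifying the two probabilities.} Write $N=\#\{(i,j):W_{i,j}>0\}$ and let $f_N$ be its probability generating function. The statistical self-similarity $Y=\sum_{i,j}W_{i,j}Y^{(i,j)}$, with the $Y^{(i,j)}$ i.i.d.\ copies of $Y$ independent of $W$, gives, after conditioning on $W$, that $q_\mu:=\mathbb{P}(Y=0)$ satisfies $q_\mu=\mathbb{E}(q_\mu^{N})=f_N(q_\mu)$; likewise the classical recursion for the Galton--Watson process with offspring law that of $N$ identifies $q_{\mathrm{GW}}:=\mathbb{P}(K=\emptyset)$ with the smallest fixed point of $f_N$ in $[0,1]$. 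Under the standing hypotheses $T'(1-)>0$ and $\mathbb{P}(N\in\{0,1\})<1$ these two numbers coincide: $T'(1-)>0$ forces $\mathbb{E}(N)>1$ and $\mathbb{E}(Y)=1$, so $q_\mu<1$ and $q_{\mathrm{GW}}<1$, while $\mathbb{P}(N\in\{0,1\})<1$ makes $f_N$ strictly convex on $(0,1)$, hence $s\mapsto f_N(s)-s$ has at most two zeros in $[0,1]$, one of them being $s=1$; thus $f_N$ has a unique fixed point in $[0,1)$, which must equal both $q_\mu$ and $q_{\mathrm{GW}}$. Combined with $\{\mu\neq 0\}\subseteq\{K\neq\emptyset\}$ and $\mathbb{P}(\mu\neq 0)=1-q_\mu=1-q_{\mathrm{GW}}=\mathbb{P}(K\neq\emptyset)$, the two events coincide up to a $\mathbb{P}$-null set.

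\emph{The reverse inclusion $K\subseteq\mathrm{supp}(\mu)$.} For a node $(u,v)$ let $Y(u,v)$ be the associated copy of $Y$ (as in \eqref{Yuv}) and $K^{(u,v)}:=\bigcap_{p\ge0}\mathrm{supp}(\mu_p^{(u,v)})$ the corresponding copy of $K$, so $(Y(u,v),K^{(u,v)})$ has the law of $(Y,K)$ and $\{K^{(u,v)}=\emptyset\}\subseteq\{Y(u,v)=0\}$ surely (if $K^{(u,v)}=\emptyset$, compactness makes some generation of that subtree empty, which forces $Y(u,v)=0$). By the previous step both events have probability $q_\mu=q_{\mathrm{GW}}$, so they coincide up to a null set for each fixed $(u,v)$; taking the countable union over all nodes, almost surely $\{Y(u,v)=0\}=\{K^{(u,v)}=\emptyset\}$ for every $(u,v)$ at once. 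Now fix $(x,y)\in K$: then $Q(x_{|n},y_{|n})>0$ for all $n$, and the factorisation $Q(x_{|n+p},y_{|n+p})=Q(x_{|n},y_{|n})\,Q^{x_{|n},y_{|n}}((\sigma^nx)_{|p},(\sigma^ny)_{|p})$ shows $(\sigma^nx,\sigma^ny)\in K^{(x_{|n},y_{|n})}$, so this subtree is non-empty and hence $Y(x_{|n},y_{|n})>0$. Therefore $\mu([x_{|n}]\times[y_{|n}])=Q(x_{|n},y_{|n})Y(x_{|n},y_{|n})>0$ for every $n$, i.e.\ $(x,y)\in\mathrm{supp}(\mu)$. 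Thus $K\subseteq\mathrm{supp}(\mu)$ a.s.\ on $\{K\neq\emptyset\}$, which with the first paragraph yields $K=\mathrm{supp}(\mu)$ a.s.\ on the common event.

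\emph{Main obstacle.} The genuinely delicate point is this last step: transferring the one-node equality $q_\mu=q_{\mathrm{GW}}$ simultaneously to all nodes of the tree and then running it along an arbitrary infinite ray of $K$. Some care is also needed in the self-similarity bookkeeping (the joint law of $(Y(u,v),K^{(u,v)})$ and its independence structure) and in verifying that $\mathbb{P}(N\in\{0,1\})<1$ is exactly what pins down the sub-unit fixed point of $f_N$ — without it $f_N$ may be affine and the two extinction-type probabilities need not agree.
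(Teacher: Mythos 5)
Your proposal is correct and follows essentially the same route as the paper: both probabilities $\mathbb{P}(\mu=0)$ and $\mathbb{P}(K=\emptyset)$ are identified as fixed points of the generating function of $N$, the supercriticality forced by $T'(1-)>0$ pins down the unique sub-unit fixed point, and statistical self-similarity applied node-by-node (a countable union) upgrades this to the almost sure inclusion $K\subseteq\mathrm{supp}(\mu)$. Your ray-by-ray argument via the subtree copies $K^{(u,v)}$ is just a slightly more explicit phrasing of the paper's cylinder-level statement that $\{K\cap[u]\times[v]\neq\emptyset\}$ coincides with $\{Q(u,v)>0\}\cap\{Y(u,v)>0\}$ up to a null set.
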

\begin{proof}
Recall that we defined $N=\sum_{1\le i,j\le m-1} \mathbf{1}_{\{W_{i,j}>0}$ and that our assumptions on~$W$ imply that $\E(N)>1$. Consequently, the generating function of $N$, i.e. $f(x)=\sum_{n\ge 0} \mathbb{P}(N=n)x^n$, has a unique fixed point smaller than 1, which equals the probability of extinction of the associated Galton-Watson process generated by $N$, i.e. the probability of the event  $\{K=\emptyset\}=\bigcup_{n\ge 1} \{K_n=\emptyset\}$. Also, since
$$
Y=\|\mu\|=\sum_{1\le i,j\le m-1} W_{i,j} Y(i,j),
$$
where the $Y(i,j)$ are independent copies of $Y$, and are also independent of $W$, the probability of $\{\mu=0\}$ is also a fixed point of $f$. Moreover, by construction, $\{K=\emptyset\}\subset \{\mu=0\}$. Since $\mathbb P(\mu\neq 0)<1$, $\{K=\emptyset\}$ and $\{\mu=0\}=\{Y=0\}$ must be equal up to a set of probability 0.

Also, we have $\mathrm{supp}(\mu)\subset K$ almost surely. Moreover,  by the previous paragraph and statistical self-similarity, for each $n\ge 1$ and each cylinder $[u,v]$ of the $n$-th generation,  $\{K\cap [u,v]\neq\emptyset\}$ coincides with the event $\{Q(u,v)>0\}\cap\{Y(u,v)>0\}$ up to a set of probability 0. Consequently, with probability 1, for all the cylinder $[u,v]$, $K\cap [u,v]\neq\emptyset$ implies $\mu([u,v])>0$, that is $K\subset \mathrm{supp}(\mu)$.
\end{proof}

\bigskip

For $i=0,1,\ldots, m-1$, define polynomial functions $f_i$ by
$$
f_i(x)=\sum_{\ell=0}^m {\Bbb P}(N_i=\ell)\;x^\ell,
$$
where $N_i=\#\{0\leq j\leq m:  \; V_{i, j}\neq 0\}$.

A Borel measurable function $p:\Sigma\to [0,1]$ is called {\it $\{f_i\}_{i=0}^{m-1}$-stationary}, if $p$ satisfies the following condition:\;
$$p({\bf i})=f_{i_1}(p(\sigma {\bf i})), \quad \forall\; {\bf i}=(i_n)_{n=1}^\infty\in \Sigma.$$   Let $\nu'$ be a Bernoulli product measure on $\Sigma$. Two functions $p$ and $p'$ on $\Sigma$ are called {\it equivalent} if $p({\bf i})=p'({\bf i})$ for $\nu'$-a.e.~${\bf i}$; for brevity we write $p=p'$ a.e.~ if they are equivalent.

Notice that the constant function
$1$ on $\Sigma$ is always $\{f_i\}_{i=0}^{m-1}$-stationary.

\begin{pro}\label{f-stat} Assume that there exists at least one $i$ so that $\mathbb P (N_i=1)<1$; equivalently,  there exists $i$ so that $f_i(x)\not\equiv x$.
Then there exist at most one  $\{f_i\}_{i=0}^{m-1}$-stationary function on $\Sigma$ which is not equivalent to the constant function $1$.
\end{pro}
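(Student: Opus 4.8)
Write $h=1-p$ for a $\{f_i\}_{i=0}^{m-1}$-stationary $p$, and introduce the dual polynomials $\tilde f_i(t)=1-f_i(1-t)$, $t\in[0,1]$. Each $\tilde f_i$ satisfies $\tilde f_i(0)=0$, is non-decreasing and \emph{concave} on $[0,1]$ (since $\tilde f_i''=-f_i''\le0$), has $\tilde f_i(1)=\mathbb{P}(N_i\ge1)$ and $\tilde f_i'(0)=f_i'(1)=\E(N_i)$. Stationarity of $p$ becomes $h(\mathbf i)=\tilde f_{i_1}(h(\sigma\mathbf i))$ for every $\mathbf i\in\Sigma$, and ``$p$ not equivalent to the constant $1$'' becomes $\nu'(\{h>0\})>0$. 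The plan is to combine elementary monotonicity/concavity facts about the $\tilde f_i$ with the ergodicity of the Bernoulli shift $(\Sigma,\sigma,\nu')$. First I would dispose of a trivial case: if $\nu'$ charges some symbol $i$ with $\mathbb{P}(N_i\ge1)=0$ (so $\tilde f_i\equiv0$), then since $\nu'$-a.e.\ $\mathbf i$ has $i_n=i$ for some $n$ and $\tilde f_j(0)=0$ for all $j$, every stationary $h$ vanishes $\nu'$-a.e.\ and there is nothing to prove; so from now on every $\nu'$-charged symbol $i$ has $\mathbb{P}(N_i\ge1)>0$, which forces $\tilde f_i$ to be \emph{strictly} increasing on $[0,1]$ (its derivative $f_i'(1-t)$ is positive on $(0,1)$). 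I would also use the hypothesis in the form that some $\nu'$-charged symbol $i$ has $f_i\not\equiv\mathrm{id}$ (this holds in all the applications, where $\nu'$ charges exactly the symbols relevant to the construction).

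The first genuine step is a zero--one law. Because $\tilde f_{i_1}$ is strictly increasing with $\tilde f_{i_1}(0)=0$ for $\nu'$-a.e.\ $\mathbf i$, the functional equation gives $h(\mathbf i)>0\iff h(\sigma\mathbf i)>0$ for $\nu'$-a.e.\ $\mathbf i$; hence $\{h>0\}$ is $\sigma$-invariant modulo $\nu'$-null sets, and ergodicity of the Bernoulli shift yields $\nu'(\{h>0\})\in\{0,1\}$. In particular, a stationary $p$ not equivalent to $1$ has $h=1-p>0$ $\nu'$-a.e. Now let $p_1,p_2$ be two such functions, with duals $h_1,h_2>0$ $\nu'$-a.e. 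Since each $\tilde f_i$ is non-decreasing, $h:=\max(h_1,h_2)$ and $h':=\min(h_1,h_2)$ are again $\{f_i\}$-stationary, and $h\ge h'>0$ $\nu'$-a.e. It suffices to prove $h=h'$ $\nu'$-a.e.

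Here the key is that, $\tilde f_i$ being concave with $\tilde f_i(0)=0$, the map $t\mapsto \tilde f_i(t)/t$ is non-increasing on $(0,1]$, so $\tilde f_i(b)/\tilde f_i(a)\le b/a$ whenever $0<a\le b\le1$. Applying this with $a=h'(\sigma\mathbf i)$ and $b=h(\sigma\mathbf i)$ gives, for $\nu'$-a.e.\ $\mathbf i$,
\[
\frac{h(\mathbf i)}{h'(\mathbf i)}=\frac{\tilde f_{i_1}(h(\sigma\mathbf i))}{\tilde f_{i_1}(h'(\sigma\mathbf i))}\le\frac{h(\sigma\mathbf i)}{h'(\sigma\mathbf i)}.
\]
Thus $\phi:=h/h'\ge1$ satisfies $\phi\le\phi\circ\sigma$ $\nu'$-a.e.; then for each $c$ one has $\{\phi>c\}\subseteq\sigma^{-1}\{\phi>c\}$ with equal $\nu'$-measure, so $\{\phi>c\}$ is $\sigma$-invariant mod null and hence $\nu'$-trivial, i.e.\ $\phi$ is $\nu'$-a.e.\ equal to a constant $c_0\ge1$. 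Plugging $h=c_0h'$ into the equations for $h$ and $h'$ gives $\tilde f_{i_1}(c_0\,t)=c_0\,\tilde f_{i_1}(t)$ for $\nu'$-a.e.\ $\mathbf i$, with $t=h'(\sigma\mathbf i)\in(0,1]$.

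The main obstacle is ruling out $c_0>1$, and this is exactly where the hypothesis is used. If $c_0>1$, then the non-increasing function $s\mapsto\tilde f_{i_1}(s)/s$ takes the same value at $t$ and at $c_0t$, hence is constant on $[t,c_0t]$; as $\tilde f_{i_1}$ is a polynomial, this forces $\tilde f_{i_1}(s)=\lambda_{i_1}s$ identically, i.e.\ $f_{i_1}$ is affine and $N_{i_1}\in\{0,1\}$ a.s.\ with $\lambda_{i_1}=\mathbb{P}(N_{i_1}=1)=\E(N_{i_1})\le1$. So every $\nu'$-charged symbol $i$ has $f_i$ affine with $\lambda_i=\mathbb{P}(N_i=1)$, and the stationary equation for $h'$ becomes $h'(\mathbf i)=\lambda_{i_1}h'(\sigma\mathbf i)=\big(\prod_{k=1}^n\lambda_{i_k}\big)h'(\sigma^n\mathbf i)$. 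Since $h'\le1$ and, by the hypothesis, some $\nu'$-charged symbol $i$ has $\lambda_i<1$, the strong law of large numbers gives $\tfrac1n\sum_{k=1}^n\log\lambda_{i_k}\to\sum_i\nu'([i])\log\lambda_i<0$ for $\nu'$-a.e.\ $\mathbf i$, so $\prod_{k=1}^n\lambda_{i_k}\to0$ and therefore $h'=0$ $\nu'$-a.e., contradicting $h'>0$ $\nu'$-a.e. Hence $c_0=1$, $h_1=h_2$ $\nu'$-a.e., i.e.\ $p_1=p_2$ $\nu'$-a.e. The soft parts (ergodicity of the Bernoulli shift, the convexity lemmas) are routine; the delicate point is the rigidity argument of this last paragraph, which turns $c_0>1$ into affineness of all charged $f_i$ and then contradicts it via the law of large numbers.
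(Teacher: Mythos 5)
Your proof is correct, but it takes a genuinely different route from the paper's. The paper argues through the pointwise dynamics of the individual generating functions: it splits into the case where every non-trivial $f_i$ satisfies $f_i(x)>x$ on $[0,1)$ (then every stationary $p$ is $1$ a.e., via Poincar\'e recurrence) and the case where some non-trivial $f_{j_0}$ has an attracting fixed point $b<1$; there it uses recurrence into $[j_0^n]\cap\sigma^{-n}A$ together with equicontinuity of the compositions $f_{i_1}\circ\cdots\circ f_{i_{k_n}}$ (Arzel\`a--Ascoli) to evaluate both stationary functions along a common subsequence as $g(b)$, hence to identify them. You instead dualize ($h=1-p$, $\tilde f_i(t)=1-f_i(1-t)$), note that the max and min of two stationary functions are stationary, obtain $h>0$ a.e.\ from a zero--one law, and then exploit concavity: since $t\mapsto \tilde f_i(t)/t$ is non-increasing, the ratio of the max to the min is sub-invariant under $\sigma$, hence a.e.\ constant by ergodicity; a constant $c_0>1$ forces every charged $\tilde f_i$ to be linear (polynomial rigidity), and the strong law of large numbers then forces the min to vanish a.e., a contradiction. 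This avoids both the case analysis and the compactness step, works verbatim for any ergodic environment, and makes transparent that the only delicate configuration is the all-linear one ($N_i\in\{0,1\}$ a.s.\ for all charged $i$), where in fact no non-trivial stationary function exists. One caveat, which you flag yourself: you use the hypothesis in the strengthened form ``some $\nu'$-charged symbol has $f_i\not\equiv x$''. This is a genuine strengthening of the literal statement (which can fail if the only non-trivial symbols are not charged by $\nu'$), but the paper's own proof relies on the same implicit assumption---its recurrence arguments need $\nu'([i_0^n])>0$ for the chosen non-trivial symbol---and it is satisfied in the application, where the family is indexed by $\{i:p_i>0\}$ and $\nu'$ charges exactly those symbols; so this does not constitute a gap relative to the paper.
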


\begin{proof}
 Let $\mathcal G$ denote the collection of functions $f:\; [0,1]\to [0,1]$ so that $f$ is increasing, continuous, convex and $f(1)=1$. Notice that by convexity, for any $0<a<1$, and $f\in \mathcal G$, we have
$$\sup_{0\leq x,y\leq a}\left|\frac{f(y)-f(x)}{y-x}\right|\leq \frac{ f(1)-f(a)}{1-a}\leq \frac{1}{1-a}.$$
Therefore, $\mathcal G$ is equicontinuous on $[0,a]$ for any $a\in (0,1)$.

Let $\mathcal{I}=\{i:\; f_i(x)\not\equiv x\}$. By our assumption, $\mathcal{I}\neq \emptyset$.  Notice that  by convexity, for each $i\in \mathcal{I}$, either $f_i(x)>x$ for any $x\in [0,1)$,  or $f$ has exactly one attractive fixed point in $[0,1)$.   In the first case, $f_i^n(x)\to 1$ uniformly on $[0, a]$ for each $0<a<1$,  whilst in the second case, $f_i^n(x)$ converges uniformly to the attractive fixed point of $f_i$, on $[0,a]$ for each $0<a<1$.

Now we consider the following two cases separately: (A) $f_i(x)>x$ on $[0,1)$ for each $i\in \mathcal{I}$; (B) there exists at one $i\in \mathcal{I}$  such that $f$ has one fixed point in $[0,1)$.

First suppose that (A) occurs. Then $f_i(x)\geq x$ for any $0\leq i\leq m-1$.
 Pick $i_0\in \mathcal{I}$ and  let $0\leq a<1$. Then  there exists $n\in \N$ such that
$f_{i_0}^n(x)>a$ for any $[0,1]$. Let $p$ be $\{f_i\}_{i=0}^{m-1}$-stationary.
Then for $\nu'$-a.e.  ${\bf i} \in \Sigma$, there exists $k\in \N$ such that $\sigma^k {\bf i}\in [i_0^n]$, and thus
$$
p({\bf i})=f_{i_1}\circ \ldots \circ f_{i_k} \circ f_{i_0}^n (p(\sigma^{k+n}{\bf i}))\geq f_{i_0}^n (p(\sigma^{k+n}{\bf i}))\geq a.
$$
Since $a\in [0,1)$ is arbitrarily, we see that $p({\bf i})=1$ for $\nu'$-a.e. ${\bf i}$.

Next suppose that (B) occurs.
Assume that $p$ and $p'$ are both
$\{f_i\}_{i=0}^{m-1}$-stationary, and not equivalent to the constant function $1$. We show below that $p=p'$ a.e.

First we claim that $p({\bf i})<1$ and $p'({\bf i})<1$ for $\nu'$-a.e. ${\bf i}$. Without loss of generality we only prove the first inequality. Suppose on the contrary that $p=1$ on a Borel set $A\subset \Sigma$ with $\nu'(A)>0$. By the Poincare recurrence theorem,  for $\nu'$-a.e. ${\bf i}$, there exists $k=k({\bf i})\in \N$ such that $\sigma^{k}{\bf i}\in A$; and thus
$$
p({\bf i})=f_{i_1}\circ \ldots \circ f_{i_n}(p(\sigma^n {\bf i}))=f_{i_1}\circ \ldots \circ f_{i_n}(1)=1.
$$
This contradicts the assumption that $p$ is not equivalent to the constant function $1$. Hence we have $p({\bf i})<1$ for $\nu'$-a.e. ${\bf i}$.

By the above claim, we can pick $\delta>0$ such that there exists a Borel set $A=A_\delta\subset \Sigma$ with $\nu'(A)>0$ such that
$$p({\bf i})\leq 1-\delta \mbox{ and } p'({\bf i})\leq 1-\delta,\quad \forall \; {\bf i}\in A.
$$
Pick $j_0\in \mathcal{I}$ so that $f_{j_0}$ has an attracting point in $[0, 1)$, say, $b$. Then we have
$$\lim_{n\to \infty} f_{j_0}^n([0,1-\delta])=b,$$
here and afterwards, $f^n_{j_0}$ denotes the $n$-th iteration of $f_{j_0}$.
Since for each $n\in \N$,
$\nu'([j_0^n]\cap \sigma^{-n}A)>0$, by the Poincar\'e recurrence theorem, for $\nu'$-a.e. ${\bf i}$, there exist $k_1<k_2<\ldots$,  such that
$$
\sigma^{k_n}{\bf i} \in [j_0^n]\cap \sigma^{-n}A.
$$
Clearly $\lim_{n\to \infty} p(\sigma^{k_n}{\bf i})=\lim_{n\to \infty} p'(\sigma^{k_n}{\bf i})=b$.

Notice that the family of functions
$$
\{f_{i_1}\circ \ldots\circ f_{i_{k_n}}\}_{n\in \N}
$$
is a subset of $\mathcal G$. Hence it  is equi-continuous on $[0,a]$ for any $a<1$. By the Arzel\`{a}-Ascoli theorem, there exists a subsequence  $(t_\ell)$ of $(n_k)$ and a continuous function $g$ on $[0,1)$ such that
$$f_{i_1}\circ \ldots\circ f_{i_{t_\ell}} \mbox{ converges to $g$  uniformly } $$
 on any interval $[0, a]$ with $a<1$.   Since $b<1$, we have
 $$
 p({\bf i})=\lim_{\ell\to \infty}f_{i_1}\circ \ldots\circ f_{i_{t_\ell}}(p(\sigma^{t_\ell} {\bf i}))=g(b),
 $$
 and similarly  $p'({\bf i})=g(b)$. Hence $p({\bf i})=p'({\bf i})$. Therefore we have $p=p'$ a.e.
 \end{proof}

\section{Basic properties of Mandelbrot martingales in Bernoulli environment}\label{B1} Let $U=(U_{i,j})_{(i,j)\in\Sigma_1\times\Sigma_1}$ be a non negative random vector such that for each $0\le i\le m-1$ we have $\sum_{j=0}^{m-1} \E(U_{i,j})=1$. Let $(U(u,v))_{(u,v)\in \bigcup_{n\ge 0}\Sigma_n\times\Sigma_n}$ be a sequence of independent copies of $U$.

For each $n\ge 1$ and $(u,v)\in\Sigma_n\times\Sigma_n$ let
$$
Q_U(u,v)=\prod_{k=1}^n U_{u_k,v_k}(u_{k-1},v_{k-1})
$$
and
$$
\widetilde X_U(u):=\sum_{|v|=n}Q_U(u,v).
$$
Now for each fixed $x\in \Sigma$ and $n\ge 1$ let $\widetilde \mu_{U,n}^x$ be the measure on $\Sigma$ whose density with respect to the measure of maximal entropy is given by $m^n Q_U(x_{|n},v)$ over any cylinder $[v]$ of generation $n$. The sequence $(\widetilde \mu_{U,n}^x)_{n\ge 1}$  almost surely converges to an inhomogeneous Mandelbrot measure $\widetilde \mu_{U}^x$.

Let $\eta$ be a Bernoulli product on $\Sigma$ associated with a probability vector $(p'_0,\ldots,p'_{m-1})$. Then,  for $\eta$-almost every $x$ the sequence  $(\widetilde \mu_{U,n}^x)_{n\ge 1}$ converges almost surely weakly to a measure $\nu^x$ as well, and  $(\|\widetilde \mu_{U,n}^x\|=\widetilde X_U(x_{|n})=\widetilde X_{U,n}(x))_{n\ge 1}$ is a Mandelbrot martingale in the random environment given by $\eta$, which almost surely converges to $\|\widetilde \mu_{U}^x\|$, which we denote by $\widetilde X_U(x)$.

By construction, for each $n\ge 0$ and $J\in\Sigma_n$, we have the relation
\begin{equation}\label{muUnx}
\widetilde \mu_{U}^x([J])=\widetilde X^{x_{|n},J}(\sigma^nx)\prod_{k=1}^n U_{x_k,J_k}(x_{|k-1},J_{|k-1}),
\end{equation}
where
\begin{equation}\label{XUnx}
\widetilde X_U^{x_{|n},J}(\sigma^nx)=\lim_{p\to\infty} \sum_{K\in\Sigma_p}\prod_{\ell=1}^p U_{x_{n+\ell},K_\ell}(x_{|n+\ell-1},J(K_{|\ell-1})).
\end{equation}

For $0\le i\le m-1$, let
$$
T_{U_i}(q)=-\log_m\E\sum_{j=0}^{m-1} U_{i,j}^q\quad (q\ge 0).
$$
Suppose that there exists $0\le i\le m-1$ such that $\mathbb{P}(\{U_{i,j}\in\{0,1\}\ \forall\, 0\le j\le m-1\})<1$.

We have the following consequence of the a general result by Biggins and Kyprianou \cite[Theorem 7.1]{BK04}.

\begin{thm}\label{thmBK} Suppose that $\mathbb{P}(\sum_{j=0}^{m-1}\mathbf{1}_{\{U_{i,j}>0\}}=1)<1$ for some $0\le i\le m-1$. The following properties are equivalent:

\begin{itemize}
\item [(i)] $\mathbb P\otimes\eta(\widetilde X_U>0)>0$;
\item [(ii)] $(\widetilde X_n)_{n\ge 1}$ is uniformly integrable with respect to $\mathbb P\otimes\eta$;
\item [(iii)] $\sum_{i=0}^{m-1} p'_i T_{U_i}'(1-)=-\mathbb{E}_{\mathbb{P}\otimes\nu}\left(\sum_{j=0}^{m-1} U_{x_1,j}\log(U_{x_1,j})\right )>0$.
\end{itemize}
\end{thm}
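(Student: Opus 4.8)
\emph{The plan.} The strategy is to read $(\widetilde X_{U,n})_{n\ge 1}$, under $\mathbb{P}\otimes\eta$, as the natural additive martingale of a branching random walk in a random i.i.d.\ environment, and then to quote \cite[Theorem~7.1]{BK04}. Concretely, view the environment at generation $k$ as the letter $x_k$, which under $\eta$ is i.i.d.\ with law $(p'_0,\dots,p'_{m-1})$; an individual alive at generation $k-1$ has $m$ possible children, the one of index $j$ carrying the multiplicative weight $U_{x_k,j}(\cdot)$, where $(U_{x_k,0},\dots,U_{x_k,m-1})$ is a fresh copy of a random vector whose law depends only on $x_k$. Since $\sum_{j}\E(U_{i,j})=1$ for every $i$, one has $\E_{\mathbb{P}\otimes\eta}(\widetilde X_{U,n})=1$, and with respect to the filtration that progressively reveals $x_1,x_2,\dots$ and the weights $U(u,v)$ up to generation $n-1$, $(\widetilde X_{U,n})$ is a non-negative martingale. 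The relevant annealed Laplace-type function is
$$
\Psi(q)=\mathbb{E}_{\mathbb{P}\otimes\eta}\Big(\sum_{j=0}^{m-1}U_{x_1,j}^q\Big)=\sum_{i=0}^{m-1}p'_i\,m^{-T_{U_i}(q)},
$$
which satisfies $\Psi(1)=1$; thus $\widetilde X_{U,n}$ is exactly the parameter-$1$ additive martingale in the formalism of \cite{BK04}.

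\emph{Translating the hypotheses.} I would next match the conditions of \cite[Theorem~7.1]{BK04}. Under the size-biasing (spine) change of measure attached to this martingale, the spine carries a random walk whose i.i.d.\ increments are distributed as $\log U_{x_1,J}$, where $(x_1,U_{x_1})$ is drawn from $\mathbb{P}\otimes\eta$ and, given $(x_1,U_{x_1})$, the index $J$ is chosen proportionally to $U_{x_1,\cdot}$; hence the mean increment is
$$
\mathbb{E}_{\mathbb{P}\otimes\eta}\Big(\sum_{j=0}^{m-1}U_{x_1,j}\log U_{x_1,j}\Big)=-\log(m)\sum_{i=0}^{m-1}p'_i\,T_{U_i}'(1-).
$$
The criterion of \cite{BK04} for this martingale to be uniformly integrable --- equivalently, to have a non-degenerate limit --- is that this spine drift be strictly negative, together with an $X\log X$-type integrability condition on the reproduction law, which here reads $\sum_i p'_i\,\E\big[(\sum_j U_{i,j})\log^+(\sum_j U_{i,j})\big]<\infty$. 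The strict-negativity condition is precisely (iii), since $-\log(m)\sum_i p'_i T_{U_i}'(1-)<0\iff\sum_i p'_i T_{U_i}'(1-)>0$. The integrability condition holds automatically in all our applications: there are at most $m$ children, and $T_{U_i}$ is finite on a right neighbourhood of $1$ (so that $\E(\sum_j U_{i,j}^{1+\varepsilon})<\infty$ for some $\varepsilon>0$), which dominates it. Finally, the hypothesis $\mathbb{P}(\sum_j\mathbf{1}_{\{U_{i,j}>0\}}=1)<1$ for some $i$ guarantees that the branching is genuinely non-deterministic on a set of environments of positive $\eta$-measure; combining this with Proposition~\ref{f-stat} applied to the function $x\mapsto\mathbb{P}(\widetilde X_U(x)=0)$, which is $\{f_i\}_{i=0}^{m-1}$-stationary by the fixed-point identity $\widetilde X_U(x)=\sum_{j}U_{x_1,j}\widetilde X_U(\sigma x,j)$, rules out the degenerate case excluded in \cite{BK04}.

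\emph{Closing the equivalences.} The implication (ii)$\Rightarrow$(i) is immediate, since uniform integrability plus $\E_{\mathbb{P}\otimes\eta}(\widetilde X_{U,n})=1$ force $\E_{\mathbb{P}\otimes\eta}(\widetilde X_U)=1$, hence $\mathbb{P}\otimes\eta(\widetilde X_U>0)>0$. For (iii)$\Rightarrow$(ii) one invokes \cite[Theorem~7.1]{BK04} with the hypotheses checked above. For (i)$\Rightarrow$(iii) I would argue by contraposition: if $\sum_i p'_i\,T_{U_i}'(1-)\le 0$, the spine drift is non-negative, and \cite[Theorem~7.1]{BK04} then places us in the regime where $\widetilde X_{U,n}\to 0$ $\mathbb{P}\otimes\eta$-a.s., so $\widetilde X_U=0$ a.s.\ and (i) fails. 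This closes the loop (iii)$\Rightarrow$(ii)$\Rightarrow$(i)$\Rightarrow$(iii).

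\emph{The main obstacle.} The real work is the translation dictionary: verifying that $\widetilde X_{U,n}$ is literally the additive martingale of \cite{BK04} at the right parameter; computing the spine drift and seeing that it equals $-\log(m)\sum_i p'_i T_{U_i}'(1-)$; and --- the delicate point --- checking that the non-degeneracy hypothesis imposed here matches the one required in \cite{BK04}, so that the clean three-way equivalence carries no hidden $L\log L$ restriction beyond what the standing assumptions on $U$ already provide. Once this correspondence is in place, the argument is bookkeeping.
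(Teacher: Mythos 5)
Your proposal follows the paper's own route exactly: the paper offers no argument for Theorem~\ref{thmBK} beyond asserting it is a consequence of \cite[Theorem 7.1]{BK04}, and your derivation — identifying $\widetilde X_{U,n}$ as the parameter-$1$ additive martingale of a branching random walk in i.i.d.\ environment, computing the spine drift as $-\log(m)\sum_i p'_i T_{U_i}'(1-)$, and using the non-degeneracy hypothesis (via the stationarity argument of Proposition~\ref{f-stat}) to exclude the trivial case — is precisely the intended reading, so this is the same approach, only spelled out in more detail than the paper. The one caveat is that your dismissal of the $X\log X$-type condition appeals to hypotheses not in the statement ($T_{U_i}$ finite on a right neighbourhood of $1$); it is cleaner to observe that (iii) itself forces $\E\big[\sum_j U_{i,j}\log^+ U_{i,j}\big]<\infty$ for every $i$ with $p'_i>0$ (otherwise $T_{U_i}'(1-)=-\infty$), and since each family has at most $m$ members the inequality $ab\le a\log^+ a+e^b$ controls the cross terms and gives the required $\E[W_1\log^+W_1]$-type finiteness, so no assumption beyond those of the theorem is needed.
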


We also have the following useful fact. Let $Z$ be an integrable random variable, and let $(Z(u,v))_{(u,v)\in\Sigma_n,\, n\ge 1}$ be a collection of copies of $Z$ such that for each $n\ge 1$ the random variables $Z(u,v)$, $(u,v)\in\Sigma_n$ are independent, and independent of $\sigma(U(u',v'): |u'|=|v'|\le n-1)$.

Let
$$
X_{U}(x_{|n})=\sum_{|v|=n}  Q_U(x_{|n},v)Z(x_{|n},v).
$$
\begin{pro}\label{XntoX}
Let $q\in (1,2]$. Suppose that $\E(|Z|^q)<\infty$, $T_{U_i}(q)$ is finite for all $0\le i\le m-1$ such that $p'_i>0$, and  $\sum_{i=0}^{m-1} p'_i T_{U_i}(q)>0$. Then, $\mathbb P\otimes\eta(\widetilde X_U>0)>0$, and with probability 1, for $\eta$-almost every $x$, we have $\lim_{n\to\infty} X_{U}(x_{|n})=\E(Z) \widetilde X_U$.
\end{pro}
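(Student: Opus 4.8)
The plan is to treat both assertions together through the decomposition
$X_U(x_{|n}) = \E(Z)\,\widetilde X_{U,n}(x) + D_n(x)$, where
$D_n(x) := \sum_{|v|=n} Q_U(x_{|n},v)\bigl(Z(x_{|n},v) - \E(Z)\bigr)$, which is an identity by definition of the objects involved. The statement then reduces to: (a) $\widetilde X_{U,n}\to \widetilde X_U$ $\mathbb P\otimes\eta$-a.s.\ and $\mathbb P\otimes\eta(\widetilde X_U>0)>0$; and (b) $D_n \to 0$ $\mathbb P\otimes\eta$-a.s.

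For (a): each $T_{U_i}$ is concave, finite on $[0,q]$ and real-analytic in the interior of its domain of finiteness, with $T_{U_i}(1)=0$; hence for $q>1$ concavity gives $T_{U_i}'(1-)\ge T_{U_i}(q)/(q-1)$, so the hypothesis $\sum_i p_i'T_{U_i}(q)>0$ forces $\sum_i p_i'T_{U_i}'(1-)>0$. By Theorem~\ref{thmBK} this is equivalent to $\mathbb P\otimes\eta(\widetilde X_U>0)>0$ and to uniform integrability of the martingale $(\widetilde X_{U,n})_n$; in any case, for each fixed $x$, $\widetilde X_{U,n}(x)$ is a non-negative $\mathbb P$-martingale whose $\mathbb P$-a.s.\ limit is $\widetilde X_U(x)$ by definition, so $\widetilde X_{U,n}\to\widetilde X_U$ holds $\mathbb P\otimes\eta$-a.s.\ by Fubini.

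For (b), the point requiring care is that the hypothesis concerns $\sum_i p_i'T_{U_i}(q)$ (equivalently the slope $\sum_i p_i'T_{U_i}'(1)$) rather than the quantity $h(s):=\sum_i p_i' m^{-T_{U_i}(s)}=\sum_i p_i'\,\E\bigl(\sum_j U_{i,j}^s\bigr)$ that will govern summability. So first I would pick an exponent $s\in(1,q]$ close to $1$ with $h(s)<1$: the function $h$ is convex, $h(1)=1$, and analytic near $1$ with $h'(1)=-\log(m)\sum_i p_i'T_{U_i}'(1)<0$, hence $h<1$ on a right neighbourhood of $1$; note also $\E(|Z|^s)\le 1+\E(|Z|^q)<\infty$. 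Then, conditioning on $\mathcal G_n:=\sigma(x_{|n})\vee\sigma\bigl(U(u',v'):|u'|=|v'|\le n-1\bigr)$, the weights $Q_U(x_{|n},v)$ with $|v|=n$ are $\mathcal G_n$-measurable while the leaves $Z(x_{|n},v)$ with $|v|=n$ are, by hypothesis, i.i.d.\ of mean $\E(Z)$ and independent of $\mathcal G_n$; the von Bahr–Esseen inequality (exponent $s\in[1,2]$) gives $\E\bigl(|D_n|^s\mid\mathcal G_n\bigr)\le 2\,\E|Z-\E(Z)|^s\sum_{|v|=n}Q_U(x_{|n},v)^s$. Taking expectations and using the tree-independence of the environment $U$ together with $\eta([u])=\prod_k p_{u_k}'$, the first-moment identity $\E_{\mathbb P\otimes\eta}\bigl(\sum_{|v|=n}Q_U(x_{|n},v)^s\bigr)=\prod_{k=1}^n\bigl(\sum_i p_i' m^{-T_{U_i}(s)}\bigr)=h(s)^n$ yields $\E_{\mathbb P\otimes\eta}(|D_n|^s)\le C\,h(s)^n$ with $C=2\E|Z-\E(Z)|^s<\infty$. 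Since $h(s)<1$ this is summable, so Markov's inequality and the Borel–Cantelli lemma give $D_n\to 0$ $\mathbb P\otimes\eta$-a.s., hence by Fubini, for $\mathbb P$-a.e.\ $\omega$ one has $D_n(\omega,\cdot)\to 0$ for $\eta$-a.e.\ $x$; combining this with (a) in the decomposition above finishes the proof.

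The one genuinely non-routine step is the exponent reduction from $q$ to a well-chosen $s\in(1,q]$ with $\sum_i p_i' m^{-T_{U_i}(s)}<1$: one must observe that the hypothesis controls a slope at $1$ rather than $h(q)$ itself, and exploit the convexity and analyticity of $h$ near $1$. Everything else — the martingale decomposition, the conditional von Bahr–Esseen bound, the first-moment computation of $\sum_{|v|=n}Q_U^s$, and Borel–Cantelli — is the same mechanism already used in Proposition~\ref{mom+estimate} and in the classical treatment of sums $\sum_{|v|=n}Q(v)Z(v)$ over Mandelbrot cascades.
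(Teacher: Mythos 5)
Your proof is correct, and it shares the paper's skeleton: the same decomposition $X_U(x_{|n})=\E(Z)\,\widetilde X_{U,n}(x)+D_n(x)$, the von Bahr--Esseen inequality (Lemma~\ref{martdiff}) applied conditionally on the environment and on the weights $Q_U(x_{|n},v)$, and Theorem~\ref{thmBK} for $\mathbb P\otimes\eta(\widetilde X_U>0)>0$. Where you genuinely diverge is in how the exponential decay of the error term is extracted. The paper stays at the exponent $q$ of the hypothesis and argues quenched: for fixed $x$ it bounds the $L^q(\mathbb P)$ norm of the difference at level $n$ by a constant times $\prod_{k=1}^n m^{-T_{U_{x_k}}(q)}$, and then the strong law of large numbers under $\eta$ makes this product decay exponentially for $\eta$-a.e.\ $x$, precisely because $\sum_i p'_iT_{U_i}(q)>0$ is the LLN drift; summability of these norms then gives $\mathbb P$-a.s.\ convergence for $\eta$-a.e.\ $x$. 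You instead reduce the exponent: concavity of the $T_{U_i}$ (with $T_{U_i}(1)=0$) turns the hypothesis into $\sum_i p'_iT_{U_i}'(1+)>0$, whence $h(s)=\sum_i p'_i m^{-T_{U_i}(s)}<1$ for some $s\in(1,q]$ close to $1$, and you get the fully annealed bound $\E_{\mathbb P\otimes\eta}(|D_n|^s)\le C\,h(s)^n$, concluding by Borel--Cantelli and Fubini. Your route avoids the SLLN along the environment and delivers a geometric annealed estimate of the type already used in Proposition~\ref{mom+estimate}; as a by-product it shows the conclusion holds under the formally weaker assumptions that $\E(|Z|^s)<\infty$, $T_{U_i}$ finite on $[1,s]$ and $\sum_i p'_iT_{U_i}'(1+)>0$ for some $s>1$. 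The paper's route uses the hypothesis exactly as stated, with no exponent reduction. Two cosmetic points: only one-sided derivatives $T_{U_i}'(1\pm)$ are guaranteed, so your derivative computation for $h$ should be read as a right-derivative (or convexity) statement at $1$, which is all you use; and, like the paper, you implicitly rely on the standing non-degeneracy assumption of Appendix~\ref{B1} when invoking Theorem~\ref{thmBK}.
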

\begin{proof} At first we notice by concavity of the mappings $T_{U_i}$, the fact that all the functions $T_{U_i}$ vanish at 1 together with the assumption $\sum_{i=0}^{m-1} p'_i T_{U_i}(q)>0$ imply that $\sum_{i=0}^{m-1} p'_i T_{U_i}'(1)>0$. Consequently, due to Theorem~\ref{thmBK} we have $\mathbb{P}\otimes\eta(\widetilde X_U>0)>0$.

Next, recall the following standard lemma.
\begin{lem}\cite{vB-E}\label{martdiff}
 Let $(L_j)_{j\geq 1}$ be a sequence of centered independent  real valued  random variables. For every finite  $I\subset\N_+$ and $q\in (1, 2]$,
$$\E\Big (\Big |\displaystyle\sum_{i\in I} L_{i}\Big|^q\Big ) \leq 2^{q-1}  \sum_{i\in I}  \E( \left|L_{i}\right|^q).$$
\end{lem}
For all $n\ge 1$, we have
$$
X_{U}(x_{|n+1})-\E(Z) \widetilde X_{U}(x_{|n+1})= \sum_{|v|=n} Q_U(x_{|n},v) \widetilde U(x_{|n},v),
$$
where
$$
\widetilde U(x_{|n},v)=\sum_{j=0}^{m-1} U_{x_{|n+1},v_{n+1}}(x_{|n},v) \big(Z(x_{|n+1},vj)-\E(Z)\big).
$$
By construction, conditionally on $x$ (recall that we work under $\mathbb P\otimes\eta$) the random variables $\widetilde U(x_{|n},v)$ are i.i.d and centered, and they are also independent of the $Q_U(x_{|n},v)$ invoked in $X_{U}(x_{|n+1})-\E(Z) \widetilde X_{U}(x_{|n+1})$ (with respect to $\mathbb P$). Consequently, conditioning with respect to the $Q_U(x_{|n},v)$ makes it possible to apply Lemma~\ref{martdiff} to $\{L_v=\widetilde U(x_{|n},v)\}_{v\in\Sigma_n}$ weighted by the constants $Q_U(x_{|n},v)$ and finally to get, for $q\in (1,2]$:
\begin{equation}\label{expconv}
\mathbb{E}(|X_{U}(x_{|n+1})-\E(Z)\widetilde X_{U}(x_{|n+1})|^q)\le 2^{q-1} \sum_{|v|=n} \E(Q_U(x_{|n},v)^q)\E(|\widetilde U(x_{|n},v_0)|^q),
\end{equation}
where $v_0$ is any element of $\Sigma_n$.

The branching property yields  $\E(Q_U(x_{|n},v)^q)=\prod_{k=1}^n m^{-T_{U_{x_k}}(q)}$, and applying triangular inequality and a convexity inequality yields
$\E(|\widetilde U(x_{|n},v_0)|^q)\le 2^q\E(|Z|^q) m^{-T_{x_{n+1}}(q)}$, which  is bounded by a constant independent of $x$ since $T_{U_i}(q)$ is finite for all $0\le i\le m-1$ such that $p'_i>0$. Also, the strong law of large numbers yields
$$
\lim_{n\to\infty} n^{-1}\log \prod_{k=1}^n m^{-T_{U_{x_k}}(q)}=-\sum_{i=0}^{m-1} p'_i T_{U_i}(q)<0
$$
for $\eta$-almost every $x$. Consequently, the estimate \eqref{expconv} shows that for $\eta$-almost every $x$ we have $\sum_{n\ge 1} \big (\mathbb{E}(|X_{U}(x_{|n+1})-\E(Z)\widetilde X_{U}(x_{|n+1})|^q)\big )^{1/q}<\infty$, which implies that $X_{U}(x_{|n+1})$ converges $\mathbb{P}$-almost surely to the same limit as $\E(Z) \widetilde X_{U}(x_{|n+1})$, that is $\E(Z) \widetilde X_{U}(x)$.
\end{proof}

\section{}
\begin{lem}
\label{Borel}
Let $(Z_n)_{n\geq 1}$ be a sequence of non-negative random variables on a probability space $(\Omega, {\Bbb P})$. Then we have almost surely
$$
\limsup_{n\to \infty} \frac{\log Z_n}{n}\leq \limsup_{n\to \infty} \frac{\log {\Bbb E}(Z_n)}{n}.
$$

\begin{proof}
Let $b>a>\limsup_{n\to \infty} \frac{\log {\Bbb E}(Z_n)}{n}$. Then by  Markov's inequality,
$${\Bbb P}(\{(1/n)\log z_n\geq b\})\leq {\Bbb E}(Z_n)e^{-bn}\leq e^{n(a-b)}$$
when $n$ is large enough. Hence $\sum_{n=1}^\infty{\Bbb P}\{(1/n)\log z_n\geq b\}<\infty$. The Borel-Cantelli lemma implies that
$\limsup_{n\to \infty} \frac{\log Z_n}{n}\leq b$ almost surely. Letting $b$ tend to $\limsup_{n\to \infty} \frac{\log {\Bbb E}(Z_n)}{n}$ yields the
desired result.
\end{proof}

\end{lem}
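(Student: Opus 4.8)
The plan is to deduce the inequality from Markov's inequality together with the Borel--Cantelli lemma, which is the standard route for turning a bound on first moments into an almost sure upper bound on exponential growth rates. Write $L=\limsup_{n\to\infty}\frac{\log\mathbb{E}(Z_n)}{n}\in[-\infty,+\infty]$. If $L=+\infty$ there is nothing to prove, so from now on I would assume $L<\infty$.

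First I would fix $\varepsilon>0$. By definition of the $\limsup$, there is an $n_0$ such that $\mathbb{E}(Z_n)\le e^{n(L+\varepsilon)}$ for all $n\ge n_0$; this also harmlessly covers the degenerate case $\mathbb{E}(Z_n)=0$, for which $Z_n=0$ almost surely and $\log Z_n=-\infty$. Applying Markov's inequality to the non-negative random variable $Z_n$ at the threshold $e^{n(L+2\varepsilon)}$ gives, for every $n\ge n_0$,
$$
\mathbb{P}\big(Z_n\ge e^{n(L+2\varepsilon)}\big)\le \mathbb{E}(Z_n)\,e^{-n(L+2\varepsilon)}\le e^{-n\varepsilon}.
$$
Hence $\sum_{n\ge 1}\mathbb{P}\big(Z_n\ge e^{n(L+2\varepsilon)}\big)<\infty$, and the Borel--Cantelli lemma yields that, almost surely, $Z_n<e^{n(L+2\varepsilon)}$ for all $n$ large enough, i.e. $\limsup_{n\to\infty}\frac{\log Z_n}{n}\le L+2\varepsilon$ on an event of full probability.

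Finally I would let $\varepsilon$ run through the countable set $\{1/k:\ k\ge 1\}$: the intersection of the corresponding countably many full-probability events is again of full probability, and on that event $\limsup_{n\to\infty}\frac{\log Z_n}{n}\le L$, which is precisely the desired conclusion. There is no genuine obstacle here; the only points requiring a word of care are the trivial reduction to the case $L<\infty$, the degenerate case $\mathbb{E}(Z_n)=0$ where $\log Z_n=-\infty$, and the fact that the exceptional null set must be obtained as a countable union over $\varepsilon=1/k$ rather than by taking all $\varepsilon>0$ simultaneously.
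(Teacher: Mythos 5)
Your proposal is correct and follows essentially the same route as the paper: Markov's inequality at an exponential threshold, summability of the tail probabilities, the Borel--Cantelli lemma, and then a limit along a countable family of thresholds. The extra remarks about the cases $L=+\infty$, $\mathbb{E}(Z_n)=0$, and the countable intersection of full-probability events are just careful bookkeeping of steps the paper leaves implicit.
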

\section{}
\label{D}
The following proposition explains how to transfer our result on the multifractal analysis of projections to $\Sigma$ of Mandelbrot measures on $\Sigma\times\Sigma$ to  projections on $[0,1]\times \{0\}$ of  Mandelbrot measures on $[0,1]^2$. Indeed, the natural projection form $\Sigma$ to $[0,1]$ does not map cylinders to centered balls, so we need some additional argument.
\begin{pro} Let $\rho$ be a positive and finite Borel measure on $\Sigma$. Let  $\alpha\in [0,\tau_\rho'(0+)]$ and suppose that  there exists a  positive and finite Borel measure $\widetilde\rho_\alpha$  on $\Sigma$ such that $\rho_\alpha(E(\rho,\alpha))>0$ and $\underline \dim_H(\rho_\alpha)\ge \tau_\rho^*(\alpha)>0$. Then $\dim_H E(\widetilde \rho,\alpha)=\tau_{\widetilde\rho}^*(\alpha)$, where  $\widetilde\rho$ and $\widetilde\rho_\alpha$ respectively stand for the natural projections of $\rho$ and $\rho_\alpha$  onto $[0,1]$.
\end{pro}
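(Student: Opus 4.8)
The plan is to prove the two inequalities $\dim_H E(\widetilde\rho,\alpha)\le\tau_{\widetilde\rho}^*(\alpha)$ and $\dim_H E(\widetilde\rho,\alpha)\ge\tau_\rho^*(\alpha)$ separately, after noting that $\tau_{\widetilde\rho}=\tau_\rho$. This last equality is the routine comparison between the $m$-adic intervals of $[0,1]$ and closed balls: each $B(t,m^{-n})$ contains one and is contained in at most three generation-$n$ $m$-adic intervals, so the packing sums defining $\tau_{\widetilde\rho}$ and the partition sums defining $\tau_\rho$ differ only by multiplicative constants. The inequality $\dim_H E(\widetilde\rho,\alpha)\le\tau_{\widetilde\rho}^*(\alpha)$ is then the general upper bound of the multifractal formalism recalled in Section~\ref{GenMA}, valid since $([0,1],|\cdot|)$ has the Besicovitch property. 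Observe also that $\tau_\rho^*(\alpha)>0$ forces $\alpha>0$ (because $\tau_\rho(1)=0$ gives $\tau_\rho^*(0)\le 0$), hence $\rho_\alpha$, and thus $\widetilde\rho_\alpha:=\pi_*\rho_\alpha$, is non-atomic, so $\widetilde\rho_\alpha$-almost every $t$ has a unique, non $m$-adic, address $x\in\Sigma$ and $\widetilde\rho_\alpha(\pi(A))=\rho_\alpha(A)$ whenever $\rho_\alpha(A)>0$; the same holds for $\widetilde\rho,\rho$ if $\rho$ is non-atomic (which we may assume, the general case following by a routine truncation of the countably many atoms). Since $\pi$ is $1$-Lipschitz and almost injective, $\dim_H\pi(A)=\dim_H A$ for every $A\subseteq\Sigma$, whence $\underline\dim_H(\widetilde\rho_\alpha)=\underline\dim_H(\rho_\alpha)\ge\tau_\rho^*(\alpha)$.

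For the lower bound I would produce a set $F\subseteq E(\widetilde\rho,\alpha)$ with $\widetilde\rho_\alpha(F)>0$ and $\underline\dim_{\mathrm{loc}}(\widetilde\rho_\alpha,t)\ge\tau_\rho^*(\alpha)$ for every $t\in F$; the mass distribution principle then gives $\dim_H E(\widetilde\rho,\alpha)\ge\dim_H F\ge\tau_\rho^*(\alpha)$, and combined with the upper bound this is the claim. For $t=\pi(x)$ with $x$ non $m$-adic, write $I_n(t)$ for the generation-$n$ $m$-adic interval containing $t$. Then $I_{n+1}(t)\subseteq B(t,m^{-n})$, so for $x\in E(\rho,\alpha)$ the estimate $\widetilde\rho(B(t,m^{-n}))\ge\widetilde\rho(I_{n+1}(t))=\rho([x_{|n+1}])=m^{-(n+1)(\alpha+o(1))}$ already yields $\overline\dim_{\mathrm{loc}}(\widetilde\rho,t)\le\alpha$.

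Next, for $\delta>0$ put
\[
B^{(a)}_\delta=\{t:\ \widetilde\rho(B(t,m^{-n}))\ge m^{-n(\alpha-\delta)}\text{ for infinitely many }n\},\quad
B^{(b)}_\delta=\{t:\ \widetilde\rho_\alpha(B(t,m^{-n}))\ge m^{-n(\tau_\rho^*(\alpha)-\delta)}\text{ for infinitely many }n\},
\]
and let $F$ be the set of $t=\pi(x)$ with $x\in E(\rho,\alpha)$, $\underline\dim_{\mathrm{loc}}(\rho_\alpha,x)\ge\tau_\rho^*(\alpha)$, and $t\notin\bigcup_{\delta\in\mathbb Q,\,\delta>0}(B^{(a)}_\delta\cup B^{(b)}_\delta)$. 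By construction every $t\in F$ satisfies $\underline\dim_{\mathrm{loc}}(\widetilde\rho,t)\ge\alpha$ and (with the previous paragraph) $\overline\dim_{\mathrm{loc}}(\widetilde\rho,t)=\alpha$, so $F\subseteq E(\widetilde\rho,\alpha)$, and also $\underline\dim_{\mathrm{loc}}(\widetilde\rho_\alpha,t)\ge\tau_\rho^*(\alpha)$. Thus it remains only to check $\widetilde\rho_\alpha(B^{(a)}_\delta)=\widetilde\rho_\alpha(B^{(b)}_\delta)=0$ for each $\delta$, since then $\widetilde\rho_\alpha(F)=\rho_\alpha(E(\rho,\alpha))>0$ (the set $\{\underline\dim_{\mathrm{loc}}(\rho_\alpha,\cdot)\ge\tau_\rho^*(\alpha)\}$ having full $\rho_\alpha$-measure).

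This null-set claim is the main step, and the main obstacle. The set $\{t:\widetilde\rho(B(t,m^{-n}))\ge m^{-n(\alpha-\delta)}\}$ is covered by the generation-$n$ $m$-adic intervals $J$ with $\widetilde\rho(J)\ge\tfrac13 m^{-n(\alpha-\delta)}$ together with their ($\le2$) neighbours; by Markov's inequality and the definition of $\tau_\rho$ — and here the hypothesis $\alpha\le\tau_\rho'(0+)$ enters, making the relevant Legendre infimum run over $q\ge0$ — the number of such $J$ is at most $m^{n(\tau_\rho^*(\alpha-\delta)+\varepsilon)}$ for $n$ large, so adjoining neighbours only triples the covering number. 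This is exactly the covering argument proving \eqref{MF}; summing $m^{n(\tau_\rho^*(\alpha-\delta)+\varepsilon)}m^{-ns}$ over $n\ge N$ gives $\dim_H B^{(a)}_\delta\le\tau_\rho^*(\alpha-\delta)$, and the same scheme with the crude count $m^{n(\tau_\rho^*(\alpha)-\delta)}$ gives $\dim_H B^{(b)}_\delta\le\tau_\rho^*(\alpha)-\delta$. Since $\alpha\le\tau_\rho'(0+)$, concavity of $\tau_\rho^*$ gives $\tau_\rho^*(\alpha-\delta)<\tau_\rho^*(\alpha)$ for $\delta>0$, so both dimensions are $<\tau_\rho^*(\alpha)\le\underline\dim_H(\widetilde\rho_\alpha)$, whence $\widetilde\rho_\alpha(B^{(a)}_\delta)=\widetilde\rho_\alpha(B^{(b)}_\delta)=0$. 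The delicate point is precisely that the neighbour of an $m$-adic interval carrying large $\widetilde\rho$-mass may itself be a cylinder of large $\rho$-mass, so the crude bound $\dim_H B^{(a)}_\delta\le\alpha-\delta$ does not suffice: one genuinely needs the refined count $\tau_\rho^*(\alpha-\delta)$, which is why both the hypothesis $\alpha\le\tau_\rho'(0+)$ and the dimension hypothesis on $\rho_\alpha$ are used.
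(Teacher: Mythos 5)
Your argument is correct and is essentially the paper's own proof: the identity $\tau_{\widetilde\rho}=\tau_\rho$ for $q\ge 0$, the general upper bound $\dim_H E(\widetilde\rho,\alpha)\le \tau_{\widetilde\rho}^*(\alpha)$, the containment of the generation-$(n+1)$ $m$-adic interval in $B(t,m^{-n})$ to get $\overline\dim_{\rm loc}(\widetilde\rho,t)\le\alpha$, and the removal of a $\widetilde\rho_\alpha$-null bad set of Hausdorff dimension at most $\tau_\rho^*(\alpha-\delta)<\tau_\rho^*(\alpha)\le\underline{\dim}_H(\widetilde\rho_\alpha)$, which is exactly the paper's use of \eqref{MF} combined with the strict monotonicity of $\tau_\rho^*$ on $(-\infty,\tau_\rho'(0+)]$. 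The only differences are cosmetic: you reprove the covering estimate behind \eqref{MF} by hand, the reduction to non-atomic $\rho$ is never actually needed (only the non-atomicity of $\widetilde\rho_\alpha$, which follows from $\underline{\dim}_H(\rho_\alpha)\ge\tau_\rho^*(\alpha)>0$, is used to discard $m$-adic points), and the sets $B^{(b)}_\delta$ together with the mass distribution principle are redundant, since $\widetilde\rho_\alpha(F)>0$ and $\underline{\dim}_H(\widetilde\rho_\alpha)\ge\tau_\rho^*(\alpha)$ already give $\dim_H F\ge\tau_\rho^*(\alpha)$ by the very definition of $\underline{\dim}_H$.
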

\begin{proof}
It is a simple fact that for all $q\ge 0$ we have $\tau_\rho(q)=\tau_{\widetilde\rho}(q)$, hence $ \tau_\rho^*$ and $\tau_{\widetilde \rho}^*$ coincide on $[0,\tau_\rho'(0+)]$.

It  is also clear that $\underline \dim_H(\widetilde \rho_\alpha)=\underline \dim_H(\rho_\alpha)$, hence $\underline \dim_H(\widetilde \rho_\alpha)\ge \tau_{\widetilde \rho}^*(\alpha)$. According to \eqref{MF}, for all $0\le \alpha'<\alpha$, we have $\dim_H \underline E^{\le }(\widetilde\rho,\alpha')\le \tau_{\widetilde\rho}^*(\alpha')$. Moreover, it follows from the fact that $\tau^*_\rho$ takes at lesat one positive value that  $\tau^*_\rho=\tau_{\widetilde\rho}^*$ is strictly increasing over $(-\infty,\tau_\rho'(0+)]\cap \mathrm{dom}(\tau_\rho^*)$ if this interval is not reduced to a singleton. This implies that $\widetilde\rho_\alpha ( \bigcup_{0\le \alpha'<\alpha}\underline E^{\le }(\widetilde\rho,\alpha'))=0$. Now, denote by $\Pi$ the natural projection from $\Sigma$ onto $[0,1]$. Let  $G_\alpha\subset \Pi(E(\rho,\alpha))$ of full $\widetilde \rho_\alpha$-positive measure. Without loss of generality we assume that  $G_\alpha$ contains no $m$-adic number and no element of $\bigcup_{0\le \alpha'<\alpha}\underline E^{\le }(\widetilde\rho,\alpha')$, i.e. $\underline{\dim}_{\rm loc}(\widetilde\rho,t)\ge \alpha$ for all $t\in G_\alpha$. Fix $t\in G_\alpha$. For $n\ge1$, denote by $I_n(t)$ the $m$-adic interval of generation $n$ which contains $t$. For all $\epsilon>0$, for $n$ large enough, we have $\widetilde \rho(I_n(t))\ge  \rho([(\Pi^{-1}(x))_{|n}])\ge m^{n(\alpha+\epsilon)}$,  hence $\widetilde \rho(B(t,m^{-n}))\ge m^{n(\alpha+\epsilon)}$. Consequently,
$\overline{\dim}_{\rm loc}(\widetilde\rho,t)\le \alpha+\epsilon$ for all $\epsilon>0$. Since we also have  $\underline{\dim}_{\rm loc}(\widetilde\rho,t)\ge \alpha$, we get $G_\alpha\subset E(\widetilde\rho,\alpha)$ hence the desired lower bound $\dim_H E(\widetilde\rho,\alpha)\ge \tau_{\widetilde\rho}^*(\alpha)$.
\end{proof}

Finally, we add a general about $L^q$ spectra. It is certainly not new but difficult to find explicitly written in the literature, except in the context of multiplicative chaos and statistical mechanics (see \cite{CK} for instance), where it signs a glassy phase transition.
\begin{pro}\label{linearization}
Let $\rho$ be a positive and finite Borel measure on $\Sigma$. For all $q \ge q'> 0$ we have $\tau_\rho(q)\ge \frac{q}{q'}\tau_\rho(q')$. As a consequence, if $\tau_\rho(q_c)=\tau_\rho'(q_c-)q_c$ at some $q_c>0$, then $\tau_\rho(q)=\tau_\rho'(q_c-)q$ for all $q\ge q_c$.
\end{pro}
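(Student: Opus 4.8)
The plan is to prove the superadditivity-type inequality $\tau_\rho(q)\ge \frac{q}{q'}\tau_\rho(q')$ directly from the definition of the $L^q$-spectrum together with the elementary inequality $\sum_i a_i^{q}\le \big(\sum_i a_i^{q'}\big)^{q/q'}$ valid for $q\ge q'>0$ and non-negative $a_i$ (this is just the inclusion $\ell^{q'}\hookrightarrow \ell^{q}$, i.e.\ $\|\cdot\|_{q}\le\|\cdot\|_{q'}$, applied to the sequence $(a_i)$). First I would fix $q\ge q'>0$ and write, for each $n\ge 1$,
$$
\sum_{w\in\Sigma_n}\mathbf{1}_{\{\rho([w])>0\}}\rho([w])^q\le \Big(\sum_{w\in\Sigma_n}\mathbf{1}_{\{\rho([w])>0\}}\rho([w])^{q'}\Big)^{q/q'}.
$$
Taking $-\frac1n\log_m$ of both sides reverses the inequality and pulls the exponent $q/q'$ out as a factor; then taking $\liminf_{n\to\infty}$ on the left and noting that $\liminf$ of $-\frac{q}{q'n}\log_m(\,\cdot\,)$ equals $\frac{q}{q'}$ times the corresponding $\liminf$ (since $q/q'>0$) gives exactly $\tau_\rho(q)\ge\frac{q}{q'}\tau_\rho(q')$. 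There is a minor point to check: one must make sure the $\liminf$ on the right-hand side, after composing with the monotone affine map $x\mapsto \frac{q}{q'}x$, indeed produces $\frac{q}{q'}\tau_\rho(q')$; this holds because $x\mapsto cx$ with $c>0$ commutes with $\liminf$.

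For the consequence, suppose $\tau_\rho(q_c)=\tau_\rho'(q_c-)q_c$ for some $q_c>0$. Since $\tau_\rho$ is concave (being an $L^q$-spectrum) and finite near $q_c$, the left derivative $\tau_\rho'(q_c-)$ exists, and concavity gives $\tau_\rho(q)\le \tau_\rho(q_c)+\tau_\rho'(q_c-)(q-q_c)=\tau_\rho'(q_c-)q$ for all $q\ge q_c$, using $\tau_\rho(q_c)=\tau_\rho'(q_c-)q_c$. Conversely, applying the inequality just proved with $q'=q_c$ yields $\tau_\rho(q)\ge \frac{q}{q_c}\tau_\rho(q_c)=\frac{q}{q_c}\cdot\tau_\rho'(q_c-)q_c=\tau_\rho'(q_c-)q$ for all $q\ge q_c$. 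Combining the two inequalities gives $\tau_\rho(q)=\tau_\rho'(q_c-)q$ on $[q_c,\infty)$, as claimed.

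I do not expect a serious obstacle here; the only thing requiring a little care is the interchange of $\liminf$ with the monotone rescaling $x\mapsto \frac{q}{q'}x$ (trivial since the map is continuous and increasing) and the fact that concavity of $\tau_\rho$ together with finiteness near $q_c$ legitimizes using the supporting line at $q_c$ from the left — both are standard. One should also briefly remark that if $\tau_\rho(q')=-\infty$ the inequality is vacuous, and that in the situation of the corollary $\tau_\rho$ is finite on a neighborhood of $q_c$ so $\tau_\rho'(q_c-)$ is a genuine real number, which is all that is needed. No further input beyond the definition of $\tau_\rho$ and its concavity (already recalled in the excerpt) is required.
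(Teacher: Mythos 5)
Your proof is correct and is essentially the paper's own argument: the inequality $\sum_w\rho([w])^q\le\bigl(\sum_w\rho([w])^{q'}\bigr)^{q/q'}$ (superadditivity of $x\mapsto x^{q/q'}$ for $q\ge q'$, i.e.\ the $\ell^{q'}\hookrightarrow\ell^q$ embedding) followed by taking $-\frac1n\log_m$ and $\liminf$, and then for the consequence the supporting line from concavity at $q_c$ combined with the first claim applied at $q'=q_c$. No gaps.
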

\begin{proof}
The first claim follows from writing $\sum_{|u|=n}\rho([u])^q=\sum_{|u|=n}(\rho([u])^{q'})^{q/q'}$ and using the subbadditivity of $x\ge 0\mapsto x^{q/q'}$. The second claim follows from the concavity of $\tau_\rho$. which implies that for $q\ge q_c$ one has $\tau_\rho(q)\le \tau_\rho'(q_c-)q$ for $q\ge q_c$, while the first claim implies $\tau_\rho(q)\ge \frac{q}{q_c}\tau_\rho(q_c)=\tau_\rho'(q_c-)q$.
\end{proof}

\noindent{\bf Acknowledgements}. The research of both authors was supported in part by University of Paris 13, the
HKRGC GRF grants (projects  CUHK401013, CUHK14302415),  and the
France/Hong Kong joint research scheme PROCORE (33160RE,   F-CUHK402/14).

\end{document}